\newtheorem{theorem}{Theorem}
\newtheorem{corollary}[theorem]{Corollary}
\newtheorem{lemma}[theorem]{Lemma}
\newtheorem{proposition}[theorem]{Proposition}
\theoremstyle{definition}
\newtheorem{definition}{Definition}[section]
\theoremstyle{remark}
\newtheorem*{remark}{Remark}
\newcommand{\R}{\mathbb{R}}
\newcommand{\Z}{\mathbb{Z}}
\newcommand{\N}{\mathbb{N}}
\newcommand{\tf}{\tilde{F}}
\newcommand{\tfo}{\tilde{f_1}}
\newcommand{\tft}{\tilde{f_2}}
\newcommand{\tfou}{\prescript{}{u}{\tilde{f_1}}}
\newcommand{\tfol}{\prescript{}{l}{\tilde{f_1}}}
\newcommand{\tftu}{\prescript{}{u}{\tilde{f_2}}}
\newcommand{\tftl}{\prescript{}{l}{\tilde{f_2}}}
\newcommand{\tg}{\tilde{g}}
\newcommand{\td}{\tilde{\Delta}_{A_2}}
\newcommand{\tth}{\tilde{\theta}}
 \newcommand{\CC}{C}
\begin{document}

\begin{frontmatter}[classification=text]

\author[dph]{Douglas P. Hardin}
\author[njt]{Nathaniel J. Tenpas}

\begin{abstract}
We develop linear programming bounds for the energy of configurations in $\R^d$,     periodic with respect to a lattice. 
In certain cases, the construction of sharp bounds can be formulated as a finite dimensional, multivariate polynomial interpolation problem. We use this framework to show a scaling of the equitriangular lattice $A_2$ is universally optimal among all configurations of the form $\omega_4+ A_2$ where $\omega_4$ is a 4-point configuration in $\R^2$. Likewise, we show a scaling and rotation of $A_2$ is universally optimal among all configurations of the form $\omega_6+L$ where $\omega_6$ is a 6-point configuration in $\R^2$ and $L=\Z \times \sqrt{3} \Z$. 
\end{abstract}
\end{frontmatter}


\section{Introduction and Overview of Results}


Let   $F:\R^d  \rightarrow (-\infty,\infty]$ be a  lower-semicontinuous potential.  
For a finite multiset $\omega_n=\{x_1,...,x_n\}\subseteq \R^d$ of cardinality $n$, we consider the \textit{$F$-energy}  of $\omega_n$ defined by
\[
E_F(\omega_n):=\sum_{i=1}^n\sum_{\substack{j=1\\ j\neq i}}^n F(x_i-x_j).
\]
 If for some lattice $\Lambda\subseteq \R^d$ (a lattice is a discrete, additive, rank $d$ subgroup of $\R^d$, more on lattices in Section \ref{lattbasics}), $F$ is $\Lambda$-periodic  (i.e., $F(\cdot+v)=F$ for all $v\in \Lambda$), then we also refer to the \textit{$F$-energy} as \textit{periodic energy}.  In this case, without loss of generality, we may assume that $\omega_n$ lies in the flat torus given by some specified fundamental domain $\Omega_\Lambda:=\R^d/\Lambda$, since   replacing a point $x\in\omega_n$ with any point in $x+\Lambda$ does not change $E_F(\omega_n)$.

The \textit{minimal discrete $n$-point $F$-energy} is subsequently defined as
\begin{equation}\label{minEndef}
\mathcal{E}_F(n):= \inf \{E_F(\omega_n)\mid  \omega_n \subseteq \R^d, \, \lvert \omega_n\rvert=n\},
\end{equation} 
and for $n\not \in \N$, the definition is extended by interpolation.
An $n$-point configuration $\omega_n\subset \R^d$  satisfying $E_F(\omega_n)=\mathcal{E}_F(n)$ is called \textit{$F$-optimal}. Note that the lower-semicontinuity of $F$ and the compactness of $\Omega_\Lambda$ in the torus topology imply the existence of at least one $F$-optimal configuration for each cardinality $n>1$. 

We consider potentials generated by  a function  $f: [0,\infty)\to [0,\infty]$ with $d$-rapid decay (i.e. $f(r^2) \in \mathcal{O}(r^{-s} ),  r \rightarrow \infty$, for some $s>d$) using 
\begin{equation}\label{Ffdef}
F_{f,\Lambda}(x):=\sum_{v\in \Lambda}f(|x+v|^2).
\end{equation}
The potential $F_{f,\Lambda}$ has the following  physical interpretation: if $f(r^2)$ represents the energy required to place a pair of unit charge particles at distance $r$ from each other, then $F_{f,\Lambda}(x)$ is the energy required to place such a particle at the point $x$ in the presence of existing particles at points of $\Lambda$.    We write the pair interaction in terms of the distance squared in order to be compatible with the notion  of {\em universal optimality} discussed below.  See \cite{HSS_2014} and \cite{HSSS_2017} for constructions of periodic potentials from interactions $f$ without rapid decay.  \\

The periodization of Gaussian potentials $f_a(r^2):=\exp(-ar^2)$ for $a>0$ leads to a type of lattice theta function (cf. \cite[Chapter 10]{MEbook}) and plays a central role in our analysis.  For convenience,  we write   $F_{a,\Lambda}:= F_{f_a,\Lambda}$ or just $F_a$ when the choice of lattice is unambiguous.  
\begin{definition}\label{UniOptDef} Let $\Lambda$ be a lattice in $\R^d$. \begin{itemize}
    \item[(a)] 
We say that a  configuration $\omega_n\subset \R^d$ is \textit{$\Lambda$-universally optimal} if it is $F_{a,\Lambda}$-optimal for all $a>0$ (cf.,   \cite{Cohn_Kumar_2007}).

\item[(b)] We say  $\Lambda$ is \textit{universally optimal} if for any sublattice 
$\Phi\subseteq \Lambda$, the configuration\footnote{A {\em fundamental domain} for a group $G$ acting on a set $X$  is a subset of $X$ consisting of exactly one point from each $G$-orbit. Note that $X/G$ will be used to denote both a fundamental domain and the set of $G$ orbits in $X$.  In particular, if $\Omega_\Phi$ is a choice of fundamental domain for $\R^d/\Phi$, then $\Lambda \cap \Omega_\Phi$ is a choice of fundamental domain for $\Lambda/\Phi$ in \eqref{genConfig}. The {\em index} of $\Phi$ in $\Lambda$ is   $[\Lambda,\Phi]:=|\Lambda/\Phi|$.}  \begin{equation}\label{genConfig}
 \omega(\Phi,\Lambda):=   \Lambda/\Phi
\end{equation}is $\Phi$-universally optimal; i.e., if the index of $\Phi$ in $\Lambda$ is  $n$, then    any choice of representatives $\Lambda/\Phi$ for $\Lambda/\Phi$ satisfies $E_{F_{a,\Phi}}(\omega_n)=\mathcal{E}_{F_{a,\Phi}}(n)$ for all $a>0$.
\end{itemize}
\end{definition}
If $\omega_n$ is $\Lambda$-universally optimal, then it follows from  a theorem of Bernstein \cite{Bernstein_1929} (see \cite{Cohn_Kumar_2007},\cite{MEbook}) that $\omega_n$ is $F_{f,\Lambda}$-optimal for any $f$ with $d$-rapid decay  that is  completely monotone on $(0,\infty)$.\footnote{Recall that a function $g$ is completely monotone on an interval $I$ if $(-1)^ng^{(n)}\geq 0$ on $I$ for all positive integers $n$.}. Furthermore,  taking $a\to \infty$, it follows that a universally optimal configuration must also form the centers of an optimal sphere packing.

 As discussed in Appendix Section \ref{notionsofunivopt}, it follows from classical results of Fisher \cite{Fisher_1964}  that a lattice is universally optimal in the sense of Definition~\ref{UniOptDef} if and only if it is  universally optimal in the    sense of Cohn and Kumar  \cite{Cohn_Kumar_2007} (also see \cite{CKMRV_2022})  which we review at the end of this section.

 We further show that to establish the universal optimality of $\Lambda$, it is sufficient to prove that there is a  sublattice $\Phi\subseteq \Lambda$ such that $\omega(m\Phi,\Lambda)$ is $m\Phi$-universally optimal for infinitely many $m\in \N$.  
 Observing that the notion of lattice universal optimality in Definition~\ref{UniOptDef} is scale-invariant, it is sufficient to prove the $\Phi$-universal optimality of configurations of the form  
 $\omega(\Phi,\frac{1}{m}\Lambda)= \left(\frac{1}{m}\Lambda\right)\cap\Omega_\Phi$
 for a  sublattice $\Phi$ of a lattice $\Lambda$.
\\

Recently it was shown in \cite{CKMRV_2022} that the $E_8$ and Leech lattices are universally optimal in dimensions 8 and 24, respectively. 
It was also shown in \cite{Cohn_Kumar_2007} that $\mathbb{Z}$ is universally optimal in $\R$.
These 3 cases are the only proven examples of universally  optimal (in the sense of Cohn and Kumar) configurations in $\R^d$. 
However, it was  conjectured in \cite{Cohn_Kumar_2007} that the hexagonal $A_2$ lattice
\[
A_2:=\begin{bmatrix}
   1& 1/2\\
  0& \sqrt{3}/2
    \end{bmatrix}\mathbb{Z}^2,
\] 
 is universally optimal in $\R^2$. Though $A_2$ has long been known to be optimal for circle packing (see \cite{Toth_1940}) and was proved to be universally optimal among lattices in \cite{Montgomery_88}, its conjectured universal optimality among all infinite configurations (of fixed density) surprisingly remains open.  
 
 The  proofs of universal optimality for $\Z$, $E_8$, and the Leech lattice given in  \cite{Cohn_Kumar_2007} and \cite{CKMRV_2022} are based on so-called ``linear programming bounds" originally developed in the context of coding theory for point configurations on the $d$-dimensional sphere 
(e.g., see \cite{Delsarte_1977}, \cite{Levenshtein}, \cite{Yudin_92}) 
 and extended to bounds for the energy and sphere-packing density of point configurations in $\R^d$ in (e.g., see \cite{Cohn_2003}, \cite{Cohn_Kumar_2007}, and \cite{CKMRV_2022}).

In Section~\ref{sec2},  we formulate linear programming bounds (see Proposition~\ref{linprogbounds}) for  lattice periodic configurations in $\R^d$, find sufficient conditions to permit a certain polynomial structure (see Theorem~\ref{genlinpoly}) and develop conditions for the $F$-optimality of  configurations of the form \eqref{genConfig} in terms of polynomial interpolation (see Corollary~\ref{mukd}).

We apply this framework  to the following four families of configurations (arising from scalings of $A_2$) using the notation of \eqref{genConfig}. Taking $L:=\Z\times \sqrt{3}\Z$, we set
\begin{enumerate}
    \item[(a)] $\Phi=A_2$ and $\omega^*_{m^2}:=\omega(\Phi,\frac{1}{m}A_2),$
    \item[(b)]  $\Phi=L$ and $\omega^*_{2m^2}:=\omega(\Phi,\frac{1}{m}A_2),$ 
     \item[(c)] $\Phi= \sqrt{3}R_{\pi/6}A_2$  and    $\omega^*_{3m^2}:=\frac{1}{\sqrt{3}}R_{-\pi/6}\omega(\Phi,\frac{1}{m}A_2),$ where $R_{\theta}$ denotes   rotation by $\theta$,
     \item[(d)] $\Phi= \sqrt{3}R_{\pi/6}L$ and   $\omega^*_{6m^2}:=\frac{1}{\sqrt{3}}R_{-\pi/6}\omega(\Phi,\frac{1}{m}A_2).$ 
\end{enumerate}

The $A_2$-universal optimality of $\omega^*_{m^2}$ and $\omega^*_{3m^2}$ as well as the $L$-universal optimality of $\omega_{2 m^2}^*$ and $\omega_{6 m^2}^*$ would follow immediately should the conjectured universal optimality of $A_2$ be true.   Conversely, as discussed above, the universal optimality of $A_2$ would follow if analogous results are established for any of these four families   for infinitely many $m\in \mathbb{N}$.\\

The proofs of universal optimality of two of the base cases, $\omega_2^*$ and $\omega_3^*$, follow immediately from results on theta functions, some classical and some from \cite{Baernstein_1997} (cf. \cite{Su_2015} or \cite{Faulhuber_2023} for proofs in the context of periodic energy). Our main results are the universal optimality of the next two cases, $\omega_4^*$ and $\omega_6^*$, with proofs utilizing the linear programming bounds.

\begin{theorem}\label{mainThm}
The configurations $\omega_{4}^*$ and $\omega_{6}^*$ are $A_2$ and $L$-universally optimal, respectively.
\end{theorem}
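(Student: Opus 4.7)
The plan is to derive Theorem~\ref{mainThm} from the linear‑programming / polynomial‑interpolation criterion of Corollary~\ref{mukd}. Fix one of the two base pairs $(\Phi,\omega_n^*)\in\{(A_2,\omega_4^*),(L,\omega_6^*)\}$. By Corollary~\ref{mukd} it is enough to produce, for every Gaussian potential $f_a(r^2)=e^{-ar^2}$ with $a>0$, a polynomial $p_a$ of the prescribed bounded degree that (i) matches $f_a$ to the prescribed Hermite order at each squared pairwise distance appearing between points of $\omega_n^*$ modulo $\Phi$ and at each squared vector length $|v|^2$, $v\in\Phi\setminus\{0\}$, that enters the interpolation list, and (ii) has nonnegative coefficients in the dual‑lattice expansion arising from Poisson summation against $\Phi^*$. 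Once this is done for all $a>0$, Bernstein's theorem promotes the Gaussian statement to every completely monotone $f$ of $d$‑rapid decay, yielding $\Phi$‑universal optimality.

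I would first enumerate the squared‑distance spectrum explicitly in each case. For $\omega_4^*\subseteq\tfrac{1}{2}A_2$ the interpolation nodes are the few distinct squared distances between representatives of $\tfrac12 A_2/A_2$, together with the initial squared norms of $A_2$; for $\omega_6^*\subseteq\tfrac{1}{\sqrt{3}}A_2$ (viewed modulo $L$) the list is slightly longer but still small and symmetric under the obvious point‑group. With the nodes and multiplicities fixed, the polynomial $p_a$ is uniquely determined by the Hermite conditions and depends analytically on $a$ through the values of $e^{-au_k}$ and their derivatives at the nodes $u_k$; writing its coefficients in closed form is a routine linear‑algebra computation.

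The main obstacle is verifying condition (ii) uniformly in $a>0$. After Poisson summation, the dual coefficients are indexed by $\xi\in\Phi^*$ and can be expressed as weighted Gaussian derivatives evaluated at $|\xi|^2$, which in turn assemble into derivatives and translates of the Jacobi theta function of $\Phi^*$. My plan is to split the dual series into a short low‑frequency segment, where nonnegativity is checked by direct algebraic manipulation (augmented, where useful, by the theta inequalities of Baernstein~\cite{Baernstein_1997} and Montgomery~\cite{Montgomery_88}), and a high‑frequency tail, which is dominated uniformly in $a$ by the leading Gaussian factor via a standard monotonicity estimate. This is the step I expect to be delicate: for $\omega_6^*$ the dual $L^*$ is rectangular rather than hexagonal, so the required positivity inequalities are less symmetric and must be reworked from the $A_2$‑case, and the splitting threshold must be chosen carefully so that the low‑frequency check remains a finite algebraic task.

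Assuming the positivity of the dual coefficients is established in both cases, Corollary~\ref{mukd} immediately yields $E_{F_{a,A_2}}(\omega_4^*)=\mathcal{E}_{F_{a,A_2}}(4)$ and $E_{F_{a,L}}(\omega_6^*)=\mathcal{E}_{F_{a,L}}(6)$ for every $a>0$, and the Bernstein extension delivers the full Theorem~\ref{mainThm}.
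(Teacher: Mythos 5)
Your high-level outline is right: the theorem is reduced to Corollary~\ref{mukd}, one constructs a family of magic interpolants indexed by $a>0$, and Bernstein's theorem then upgrades the Gaussian statement to all completely monotone potentials of $d$-rapid decay. But the interpolation problem you describe is not the one that Corollary~\ref{mukd} produces, and this misidentification hides the real difficulty. The periodized potential $F_{a,\Phi}(x)=\sum_{v\in\Phi}e^{-a|x+v|^2}$ is $\Phi$-periodic but \emph{not} radial, so the auxiliary function $g$ in the linear-programming bound is a genuinely bivariate object, and the interpolation conditions are imposed at the finitely many $(t_1,t_2)$-points of $\tilde\tau_{\kappa m^2}\setminus\{\mathbf 1\}$ under the change of variables $(t_1,t_2)=(\cos 2\pi x_1,\cos(2\pi x_2/\sqrt 3))$ --- not, as you write, ``at each squared pairwise distance'' or at the $|v|^2$ for $v\in\Phi$. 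The paper's interpolants are explicit low-degree bivariate polynomials (for $\omega_4^*$ one term beyond a constant, namely $c_0+c_1P_{v'}$; for $\omega_6^*$ an element of $\mathcal P_1(t_1)\times\mathcal P_2(t_2)$), so there is no infinite dual series, no ``high-frequency tail,'' and nothing to control by Poisson summation: the CPSD condition reduces to the nonnegativity of a handful of concrete coefficients, which is dispatched by a short calculation (Proposition~\ref{nonnegcoeff} and the piecewise choice of $b_1$ in Section~\ref{section4}).

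The step you treat as routine is actually where essentially all of the work lies, and your proposal never engages with it: one must verify the pointwise inequality $\tg_a\le\tf_a$ on the whole interpolation domain ($\tilde\Delta_{A_2}$, respectively $[-1,1]^2$), for every $a>0$. Unlike the one-dimensional situation in Section~\ref{Zunivopt}, there is no bivariate Hermite error formula whose sign one can read off, so the inequality has to be established by hand: absolute monotonicity and convexity arguments slice by slice, separate treatments for small and large $a$ using different truncations of the theta series, a linearization-and-rectangle-decomposition scheme (Lemma~\ref{linearizationlemma}) reducing to finitely many point evaluations, and so on. Also, the interpolant is not ``uniquely determined by the Hermite conditions'' --- there is residual freedom (the piecewise $b_1$ in \eqref{coeffchoice4pt}, the choice of $c$ after \eqref{tgH}) that is exploited to make the inequality tractable. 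Without addressing the pointwise lower-bound inequality, and without recognizing that the interpolation is bivariate with a finite polynomial ansatz, the proposal does not close the argument.
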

  
\begin{figure}
    \centering
    \begin{minipage}{0.45\textwidth}
        \centering
        \includegraphics[width=0.9\textwidth]{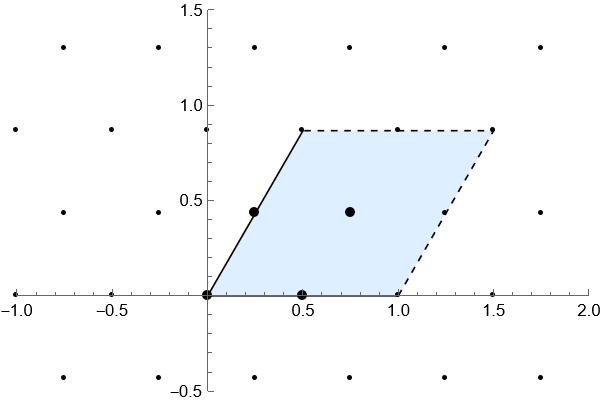} 
        \caption{The 4-point  $A_2$-universally optimal configuration   $\omega^*_4$.}
    \end{minipage}\hfill
    \begin{minipage}{0.45\textwidth}
        \centering
        \includegraphics[width=0.9\textwidth]{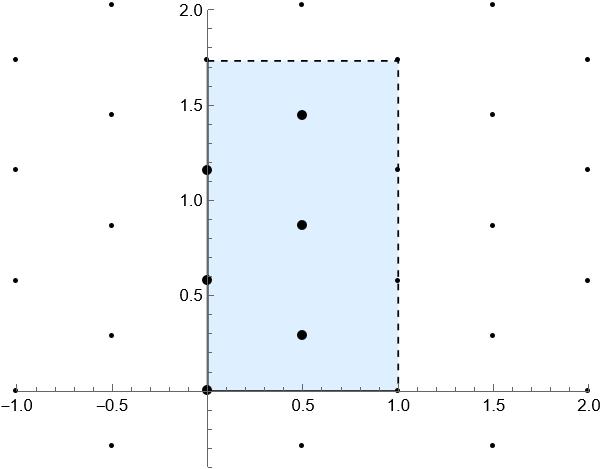} 
        \caption{The 6-point  $L$-universally optimal configuration   $\omega^*_6$.}
    \end{minipage}
\end{figure}

We can rephrase Theorem \ref{mainThm} in terms of the energies of infinite configurations, for which we follow the notation of  \cite{CKMRV_2022}.
Let $B(x,r)$ be the ball of radius $r>0$ centered at $x$. If $C$ is an infinite multiset in $\R^d$ such that every ball intersects finitely many points, we call it an \textit{infinite configuration}. Define $C_r:C \cap B(0,r) $ and  the \textit{density of $C$} as
\[
\lim_{r\rightarrow \infty} \frac{\vert C_r \vert}{{\rm Vol}(B(0,r))},
\]
assuming the limit exists and is finite. 
Then for a configuration $C$ of density $\rho$, the \textit{lower $f$-energy of $C$} is 
\[
E^{l}_f(C):=\liminf_{r\rightarrow \infty} \frac{E_f(C_r)}{\vert C_r\vert}.
\] 
If the limit exists, we'll write it as $E_f(C)$ and call it the \textit{average $f$-energy of $C$}. A configuration $C'$ of density $\rho$ is \textit{$f$-optimal} if 
\[
E_{f}(C') \leq E^{l}_{f}(C) 
\]
for every configuration $C$ of density $\rho$, and  \textit{universally optimal} if it is $f_a$-optimal for all $a>0$.
Similarly, a configuration $C'$ is universally optimal among $S$ if we further restrict $C$ to elements of $S$. 
We'll also say an infinite configuration $C$ is an \textit{$N$-point $\Lambda$-periodic configuration} if 
\[
C=\cup_{i=1}^{N} (x_i+ \Lambda)
\]
for some set of \textit{representatives} $\omega_N^C=\{x_1,\dots,x_n\}$. 
Then we have the following connection between the $f$-energy of $C$, and the $F_{f,\Lambda}$ energy of $\omega_N^C$ (where we abuse notation by identifying the one-variable function $f$ with the map from $\R^d\rightarrow (-\infty, \infty]$ taking $x$ to $f(\vert x \vert^2)$. 
\begin{proposition}
\label{periodictoaverage}
Let $C$ be an $N$-point $\Lambda$-periodic configuration with $\omega_N^C$ a set of representatives, and $f$ has  $d$-rapid decay. 
Then $E_f(C)$ exists and 
\[
E_f(C)= \frac{1}{N}\left(E_{F_{f,\Lambda}} (\omega_{N}^{C})+ N\sum_{0\neq v\in \Lambda} f(v)\right).
\]
\end{proposition}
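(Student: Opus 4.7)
The plan is to expand $E_f(C_r)$ as an ordered double sum and compare it, term by term, to the analogous sum over the full infinite configuration $C$. Write $\omega_N^C=\{x_1,\dots,x_N\}$, so that every $p\in C$ has the form $x_i+v$ for a unique class $i\in\{1,\dots,N\}$ and $v\in \Lambda$. Because $C$ is $\Lambda$-periodic as a multiset, the complete one-point energy of $p=x_i+v$ against $C\setminus\{p\}$ depends only on the class $i$, namely
\begin{equation*}
c_i:=\sum_{j\neq i} F_{f,\Lambda}(x_i-x_j)+\sum_{0\neq v\in \Lambda} f(|v|^2),
\end{equation*}
which is finite by the $d$-rapid decay hypothesis on $f$. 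A direct computation gives $\sum_{i=1}^N c_i = E_{F_{f,\Lambda}}(\omega_N^C)+N\sum_{0\neq v\in \Lambda} f(|v|^2)$, so the proposition reduces to showing $E_f(C_r)/|C_r|\to (1/N)\sum_{i=1}^N c_i$.

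To control truncation I would fix a parameter $R>0$ and split $C_r$ into an interior piece $I_r:=C_r\cap B(0,r-R)$ and a boundary piece $C_r\setminus I_r$. For any $p\in I_r$, any $q\in C$ with $|q|>r$ satisfies $|p-q|>R$, so replacing the inner sum over $q\in C_r\setminus\{p\}$ with one over $q\in C\setminus\{p\}$ introduces an error at most $\sum_{q\in C,\,|p-q|>R} f(|p-q|^2)$. The decay bound $f(r^2)=O(r^{-s})$ with $s>d$, combined with the standard lattice tail estimate $\sum_{w\in \Lambda,\,|w|>R}|w|^{-s}=O(R^{d-s})$ (and absorbing the finitely many translates $p-x_j$ into constants by $\Lambda$-periodicity), makes this per-point truncation error uniformly $O(R^{d-s})$ in $p$, $r$, and $i$.

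Two types of boundary terms remain. First, the annulus $B(0,r)\setminus B(0,r-R)$ contains $O(r^{d-1}R)$ points of $C_r$ by volume comparison, each contributing at most $\max_i c_i+O(R^{d-s})$ to the double sum. Second, a lattice-point count in a dilated ball gives $|I_r\cap (x_i+\Lambda)|=|C_r|/N + O(r^{d-1}R)$ uniformly in $i$. Assembling the estimates,
\begin{equation*}
\frac{E_f(C_r)}{|C_r|}=\frac{1}{N}\sum_{i=1}^N c_i + O(R^{d-s}) + O(R/r),
\end{equation*}
and sending $r\to \infty$ first with $R$ fixed, then $R\to \infty$, yields both the existence of $E_f(C)$ and the claimed identity.

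The main obstacle is orchestrating the three sources of error --- the truncation of the infinite inner sum at interior points, the total energy coming from boundary points, and the class imbalance between interior and full configuration --- so that the iterated limits $r\to\infty$ and $R\to\infty$ can be taken cleanly in the correct order. The strict inequality $s>d$ in the $d$-rapid decay hypothesis is precisely what is needed to make the lattice tail summable and to force the $R\to \infty$ limit of the truncation error to vanish.
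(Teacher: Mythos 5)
Your argument is correct. The paper itself does not include a proof of Proposition~\ref{periodictoaverage} (it is stated and used, e.g.\ in Appendix~\ref{notionsofunivopt}, as a standard consequence of $\Lambda$-periodicity and $d$-rapid decay), so there is nothing to compare against directly; but the proof you give is the standard one and all the pieces fit. The key reduction---that the per-point energy of $p=x_i+v$ against all of $C\setminus\{p\}$ is the class-dependent constant $c_i=\sum_{j\neq i}F_{f,\Lambda}(x_i-x_j)+\sum_{0\neq w\in\Lambda}f(|w|^2)$---is exactly right, as is the observation that $\sum_i c_i$ telescopes to the claimed right-hand side. The interior/boundary split with the auxiliary radius $R$, the uniform $O(R^{d-s})$ tail bound (using $s>d$), the $O(r^{d-1}R)$ count of annulus points, and the iterated limit $r\to\infty$ then $R\to\infty$ all work as you describe. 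One small cosmetic point: the lattice-point counting error $|I_r\cap(x_i+\Lambda)|-|C_r|/N$ is really $O\bigl(r^{d-1}(R+1)\bigr)$ rather than $O(r^{d-1}R)$ since even at $R=0$ there is an $O(r^{d-1})$ discrepancy; this does not affect the conclusion since $R$ is held fixed when $r\to\infty$.
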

Thus, Theorem~\ref{mainThm} can be restated as follows: $A_2/2$ is universally optimal among all 4-point $A_2$-periodic configurations, and a rotation and scaling of $A_2$ is universally optimal among all 6-point $L$-periodic configurations.

As motivation for studying the above periodic energy problems arising from the $A_2$ lattice, we review in Section~\ref{Zunivopt} a proof of the universal optimality of $\Z$, which proceeds through the analogous periodic approach.
As far as the authors are aware, this proof is the simplest route to showing the universal optimality of $\Z$.
The main difference between the $\Z$ and $A_2$ cases is the presence of a simple error formula for univariate hermite interpolation that is not available for general bivariate interpolation.
As a result, the most difficult portions of our proof of Theorem \ref{mainThm} involve showing that our proposed interpolants stay below their potentials on the relevant domains. \\
Moreover, we find the small cardinality examples of the main theorem interesting regardless of whether the periodic energy approach leads to a proof of the universal optimality of $A_2$. Such optimality results can often be surprisingly difficult, even in the case of simple potentials with configurations restricted to nice spaces. For example, the case of proving optimality for the Riesz potentials among configurations on $S^2$ is notoriously difficult even for 5 points  (rigorously handled almost completely with computer-assisted calculations in \cite{Schwartz_2016}) and remains open for $n\not \in  \{2,3,4,5,6,12\}$.



\section{Lattices and Linear Programming Bounds for Periodic Energy \label{sec2}} 
\subsection{Preliminaries: Lattices and Fourier Series}
\label{lattbasics}
We first gather some basic definitions and properties of lattices in $\R^d$. 
\begin{definition}
Let  $\Lambda\subset \R^d$.
  \begin{itemize} 
  \item  $\Lambda$ is a {\em lattice in $\R^d$}  if 
   $\Lambda:= V \mathbb{Z}^d=\left\{\sum_{i=1}^da_iv_i\mid a_1,a_2,\ldots,a_d\in \mathbb{Z}\right\}$ for some nonsingular $d\times d$ matrix $V$  with columns $v_1, \ldots v_d$. We refer to $V$ as a {\em generator} for $\Lambda$.   
   \item Once a choice of generator $V$ is specified, we let  $\Omega_\Lambda:=V\mathbb[0,1)^d$  denote the parallelepiped {\em fundamental domain} for $\Lambda$. The {\em co-volume of $\Lambda$} defined by $\left|\Lambda\right|:=|\det V|$ is the volume  of $\Omega_\Lambda$  which is, in fact, the same for any  Lebesgue measurable fundamental domain for $\R^d/\Lambda$ where $\Lambda$ acts on $\R^d$ by translation.
   \item The {\em dual lattice} $\Lambda^*$ of a lattice $\Lambda$ with generator $V$ is the lattice generated by $V^{-T}=(V^T)^{-1}$ or, equivalently,
   $\Lambda^*:=\{v\in \R^d\mid w\cdot v\in \mathbb{Z} \text{ for all } w\in \Lambda    \}.$
   \item We denote by $S_\Lambda$ the {\em symmetry group of $\Lambda$} consisting of isometries on $\R^d$ fixing $\Lambda$  and   denote by $G_\Lambda$ the subgroup of $S_\Lambda$ fixing the origin (and thus can be considered as elements of the orthogonal group $O(d)$).     Note that $G_\Lambda=S_\Lambda/\Lambda$ where we identify $v\in\Lambda$ with  the translation $\cdot+v$.   Further, note that $G_{\Lambda^*}=G_{\Lambda}$  since elements of $O(d)$ preserve inner products. 
   \end{itemize}
\end{definition} 

 Let $\Lambda$ be a lattice in $\R^d$ with generator $V$  and fundamental domain $\Omega_\Lambda$.  We
 let $L^2(\Omega_\Lambda)$ denote the Hilbert space of complex-valued $\Lambda$-periodic functions on $\R^d$ with  inner product $\langle f,g\rangle=\int_{\Omega_\Lambda}f(x)\overline{g(x)}\, dx$. Then $\{e^{2\pi i v\cdot x}\mid v\in \Lambda^*\}$ 
forms an orthogonal basis of $L^2(\Omega_\Lambda)$ yielding the Fourier  expansion of a function $g\in L^2(\Omega_\Lambda)$:
\begin{equation}
    \label{FExp}
g(x)=\sum_{v\in \Lambda^*} \hat{g}_v e^{2\pi i v\cdot x}
\end{equation}
with Fourier coefficients   $\hat{g}_v:=\frac{1}{|\Lambda|}\int_{\Omega_\Lambda} g(x)\, dx$ for $v\in \Lambda^*$
where equality (and the implied unconditional limit on the right hand side) holds in $L^2(\Omega_\Lambda)$.  Of course, elements of $L^2(\Omega_\Lambda)$ are actually equivalence classes of functions.  If $g\in L^2(\Omega_\Lambda)$ contains an element of $C(\R^d)$, then we identify $g$ with its continuous representative and write $g\in L^2(\Omega_\Lambda)\cap C(\R^d)$.  
As will be the case in our applications, if $g\in L^2(\Omega_\Lambda)$ is such that $\sum_{v\in \Lambda^*}|\hat{g}_v|<\infty$, then the right-hand side of \eqref{FExp} converges uniformly and unconditionally to $g$ and so $g\in L^2(\Omega_\Lambda)\cap C(\R^d)$ and \eqref{FExp}   holds pointwise for every $x\in \R^d$. 

\begin{definition}
We say that  $g\in L^2(\Omega_\Lambda)$ is {\em conditionally positive semi-definite (CPSD)} if the Fourier coefficients
$\hat{g}_v\ge 0$ for all $v\in\Lambda^*\setminus\{0\}$ and $\sum_{v\in\Lambda^*}\hat{g}_v<\infty$; and we say that a CPSD $g$ is {\em positive semi-definite (PSD)} if $\hat{g}_0\ge 0$.\footnote{If $g$ is PSD in the above sense, then for any configuration $\omega_n=(x_1,\ldots,x_n)$ the matrix $G=(g(x_i-x_j))$ is positive semi-definite in the sense that $v^TGv\ge 0$ for any $v$ whose components sum to 0.  Conversely, Bochner's Theorem shows that any $g$ with this property is PSD in our sense.}    
\end{definition}
     Note that the product of two PSD functions in $L^2(\Omega_\Lambda)$ is PSD.  

\subsection{Lattice symmetry, symmetrized basis functions, and polynomial structure}
\label{latticesymmetry}
Let $\Lambda$ be a lattice in $\R^d$, $f:[0,\infty)\to[0,\infty]$ have $d$-rapid decay, and  $\sigma\in G_\Lambda$. Since  $\sigma^{-1}\in G_\Lambda$ and $\sigma$ is an isometry, we have  
$$
F_{f,\Lambda}(\sigma x) =\sum_{v\in \sigma^{-1}\Lambda} f(\lvert \sigma x + \sigma v\rvert^2)
 =\sum_{v\in \Lambda} f(\lvert x + v\rvert^2) 
 =F_{f,\Lambda}(x).
$$  Then
$F_{f,\Lambda}$ is also $\Lambda$-periodic and we obtain: 
\begin{proposition}
\label{G2invariance}
Suppose $f:[0,\infty)\to[0,\infty]$ has $d$-rapid decay and  $\Lambda$ is a lattice in $\R^d$. Then for all $\sigma\in G_\Lambda$, $v\in\Lambda$ and $x\in \R^2$, we have $F_{f,\Lambda}(\sigma x+v)=F_{f,\Lambda}(x)$ showing that $F_{f,\Lambda}$ is $S_\Lambda$-invariant.   
\end{proposition}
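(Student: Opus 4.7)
The statement really has two distinct claims bundled together: the $\Lambda$-translation invariance $F_{f,\Lambda}(x+v)=F_{f,\Lambda}(x)$ for $v\in\Lambda$, and the combined $S_\Lambda$-invariance involving both a rotation/reflection $\sigma\in G_\Lambda$ and a translation by $v\in\Lambda$. The displayed computation immediately above the statement already handles the $G_\Lambda$-equivariance $F_{f,\Lambda}(\sigma x)=F_{f,\Lambda}(x)$ via the substitution $v\mapsto \sigma^{-1}v$, using that $\sigma^{-1}\Lambda=\Lambda$ and that $\sigma$ is an isometry so $|\sigma x+\sigma v|=|x+v|$. So the plan is to supply the missing translation step and then glue the two pieces together.

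First I would verify $\Lambda$-periodicity directly from the definition \eqref{Ffdef}: for $v\in\Lambda$,
\[
F_{f,\Lambda}(x+v)=\sum_{w\in\Lambda}f(|x+v+w|^2)=\sum_{w'\in\Lambda}f(|x+w'|^2)=F_{f,\Lambda}(x),
\]
the middle equality being the reindexing $w'=v+w$, which ranges over $\Lambda$ as $w$ does because $\Lambda$ is a group. The $d$-rapid decay hypothesis on $f$ ensures that the series defining $F_{f,\Lambda}$ converges absolutely and unconditionally (since the number of lattice points in a ball of radius $R$ grows like $R^d$ while $f(|x+v|^2)=O(|v|^{-s})$ for $|v|$ large with $s>d$), so the reindexing is legitimate. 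The same absolute convergence justifies the reindexing in the $G_\Lambda$-invariance computation.

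Next I would combine the two invariances. For arbitrary $\sigma\in G_\Lambda$, $v\in\Lambda$, and $x\in\R^d$,
\[
F_{f,\Lambda}(\sigma x+v)=F_{f,\Lambda}(\sigma x)=F_{f,\Lambda}(x),
\]
applying the $\Lambda$-periodicity first and then the $G_\Lambda$-invariance from the displayed computation. Finally, to conclude $S_\Lambda$-invariance it suffices to observe that every element of $S_\Lambda$ is, by definition of $G_\Lambda=S_\Lambda/\Lambda$, of the form $x\mapsto \sigma x+v$ for some $\sigma\in G_\Lambda$ and $v\in\Lambda$: an isometry $T$ of $\R^d$ fixing $\Lambda$ setwise can be written $T(x)=\sigma x+T(0)$ with $\sigma\in O(d)$, and $T(0)\in\Lambda$ since $0\in\Lambda$, so $\sigma=T(\cdot)-T(0)$ fixes $\Lambda$ and lies in $G_\Lambda$.

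There is no real obstacle here; the only point requiring any care is to invoke $d$-rapid decay to justify both the convergence of $F_{f,\Lambda}$ as a genuine sum (so that pointwise identities make sense) and the reindexing of the lattice sum under a bijection of $\Lambda$. Once that is in place the proof is essentially the two-line chain displayed above.
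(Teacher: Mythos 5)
Your proof is correct and follows essentially the same route as the paper's: the paper's argument is exactly the displayed reindexing computation showing $F_{f,\Lambda}(\sigma x)=F_{f,\Lambda}(x)$, together with the (asserted) $\Lambda$-periodicity, and you have simply filled in the short verification of $\Lambda$-periodicity, the justification that $d$-rapid decay makes the reindexing legitimate, and the decomposition of a general element of $S_\Lambda$ as $x\mapsto\sigma x+v$.
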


We next recall that   $g \in L^2(\Omega_\Lambda)$ is $\sigma$-invariant for $\sigma\in G_\Lambda$ if and only if the Fourier coefficients of $g$ are $\sigma$-invariant, as described in the next proposition. 

\begin{proposition}
\label{groupCfs}
Suppose  $g \in L^2(\Omega_\Lambda)$ and $\sigma\in G_\Lambda$.  Then $g(\sigma x)=g(x)$ for a.e. $x\in\R^d$
  if and only if $\hat{g}_{\sigma v}=\hat{g}_{v}$ for all $v\in \Lambda^*$.
\end{proposition}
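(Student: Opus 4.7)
The plan is to relate the Fourier coefficients of $g\circ\sigma$ to those of $g$ via a change of variables, and then invoke uniqueness of Fourier coefficients in $L^2(\Omega_\Lambda)$. The key observation up front is that since $\sigma\in G_\Lambda\subset O(d)$ satisfies $\sigma\Lambda=\Lambda$, the inner-product characterization of the dual lattice gives $\sigma\Lambda^*=\Lambda^*$ (this is the content of the remark that $G_{\Lambda^*}=G_\Lambda$). Hence as $v$ ranges over $\Lambda^*$, so does $\sigma v$, and the statement $\hat{g}_{\sigma v}=\hat{g}_v$ for all $v\in\Lambda^*$ is a well-posed symmetry condition on the coefficients.

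The main step is to compute, for each fixed $v\in\Lambda^*$,
\[
\widehat{g\circ\sigma}_{v}=\frac{1}{|\Lambda|}\int_{\Omega_\Lambda}g(\sigma x)\,e^{-2\pi i v\cdot x}\,dx.
\]
I would substitute $y=\sigma x$. Since $\sigma$ is orthogonal the Jacobian is $1$, and since $\sigma\Lambda=\Lambda$ the image $\sigma\Omega_\Lambda$ is again a measurable fundamental domain for $\R^d/\Lambda$. The integrand $y\mapsto g(y)\,e^{-2\pi i v\cdot \sigma^{-1}y}$ is $\Lambda$-periodic (the exponential factor is periodic because $v\cdot\sigma^{-1}w=(\sigma v)\cdot w\in\Z$ for $w\in\Lambda$, using $\sigma^{-1}=\sigma^T$ and $\sigma v\in\Lambda^*$), so the integral over $\sigma\Omega_\Lambda$ equals the integral over $\Omega_\Lambda$. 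Using the same identity $v\cdot\sigma^{-1}y=(\sigma v)\cdot y$ inside the exponential, one reads off
\[
\widehat{g\circ\sigma}_{v}=\frac{1}{|\Lambda|}\int_{\Omega_\Lambda}g(y)\,e^{-2\pi i(\sigma v)\cdot y}\,dy=\hat{g}_{\sigma v}.
\]

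With this identity in hand, both directions are immediate. If $g\circ\sigma=g$ a.e., then $\widehat{g\circ\sigma}_v=\hat{g}_v$, so $\hat{g}_{\sigma v}=\hat{g}_v$ for every $v\in\Lambda^*$. Conversely, if $\hat{g}_{\sigma v}=\hat{g}_v$ for all $v\in\Lambda^*$, then $g\circ\sigma$ and $g$ have identical Fourier coefficients, and by the uniqueness of $L^2(\Omega_\Lambda)$ Fourier expansions (equivalently, completeness of $\{e^{2\pi i v\cdot x}\}_{v\in\Lambda^*}$) we conclude $g(\sigma x)=g(x)$ for almost every $x$.

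There is no serious obstacle here; the only point that deserves care is the change-of-variables step, where one must justify replacing $\sigma\Omega_\Lambda$ by $\Omega_\Lambda$ in the integral via $\Lambda$-periodicity of the integrand, and the companion identity $v\cdot\sigma^{-1}y=(\sigma v)\cdot y$ using $\sigma\in O(d)$. Everything else is standard Hilbert-space orthogonality.
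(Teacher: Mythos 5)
Your proof is correct and takes essentially the same approach as the paper: both establish the key identity $\widehat{(g\circ\sigma)}_v=\hat{g}_{\sigma v}$ using $\sigma\Lambda^*=\Lambda^*$ together with the orthogonality identity $v\cdot\sigma^{-1}y=(\sigma v)\cdot y$, and then conclude by uniqueness of the Fourier expansion. The only cosmetic difference is that you derive the identity by a change of variables in the coefficient integral, whereas the paper reindexes the Fourier series of $g(\sigma x)$ directly; these are equivalent manipulations.
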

\begin{proof}
Since $\sigma^{-1}\in G_{\Lambda^*}=G_\Lambda$, we have
$$g(\sigma x)=\sum_{v\in \Lambda^*} \hat{g}_{v} e^{2\pi i v\cdot (\sigma x)}=\sum_{v\in \sigma^{-1}\Lambda^*} \hat{g}_v e^{2\pi i (\sigma v)\cdot (\sigma x)}=\sum_{v\in \Lambda^*} \hat{g}_{\sigma v} e^{2\pi i v\cdot x}.$$ The proposition then follows from uniqueness properties of the Fourier expansion.
\end{proof}

Let  $\Gamma$ be a subgroup of  $ G_\Lambda$.  
For $v\in \Lambda^*$,
let $\CC_v^\Gamma$ be the $\Lambda$-periodic function defined by 
\begin{equation}\label{CCdef}
 \CC_v^{\Gamma}(x) :=
    \frac{1}{|\Gamma|}
    \sum_{\sigma\in \Gamma}
    e^{2\pi i (\sigma v)\cdot x}=
    \frac{1}{|\Gamma(v)|}
    \sum_{v'\in \Gamma(v)}
    e^{2\pi i   v'\cdot x}, 
    \quad x\in \R^d.
\end{equation}
where $\Gamma(v)$ denotes the  orbit $\Gamma(v)=\{\sigma v \mid \sigma\in \Gamma\}.$    We write $\CC_v$ for $\CC_v^\Gamma$ when $\Gamma$ is unambiguous.    
If $g\in L^2(\Omega_\Lambda)$ and $g$
is $G_\Lambda$-invariant (i.e., if $g(\sigma \cdot)=g$ for all $\sigma\in G_\Lambda$), then we may rewrite \eqref{FExp} as
\begin{equation}
    \label{FExpG}
g(x)=\sum_{v\in \Lambda^*/\Gamma} |\Gamma(v)| \,\hat{g}_v \,\CC_v^\Gamma(x).
\end{equation}



\bigskip

  We next consider the case of a {\em rectangular lattice} by which we mean a lattice of the form $\Lambda_R=(a_1\Z)\times \cdots (a_d\Z)$ with $a_1,\ldots,a_d>0$ (also referred to as an \textit{orthorhombic lattice}).  The symmetry group of a rectangular lattice in $\R^d$
  contains the subgroup $H$ of order $2^d$ generated by the coordinate reflections \begin{equation}
  \label{Rjdef}    
  R_j(x_1,\dots,x_j,\ldots,x_d)=(x_1,\dots,-x_j,\ldots,x_d),\qquad j=1,2,\ldots, d.
  \end{equation}
  Let   $v\in \Lambda^*=(1/a_1)\Z\times \cdots (1/a_d)\Z$, and note that $v=(k_1/a_1,k_2/a_2,\dots,k_d/a_d)$ for some $k_1,\dots,k_d \in \Z$.  A straightforward induction on $d$ gives  
        \begin{equation}
    \CC_v^{H}(x)=  \prod_{i=1}^{d} \cos(2\pi k_i x_i /a_i)
   = \prod_{i=1}^{d} T_{\vert k_i \vert}(\cos(2\pi x_i /a_i)).
  \end{equation}
Recall   the $\ell$th Chebyshev polynomial of the first kind defined by $\cos(\ell \theta)=T_\ell(\cos \theta)$ for $\ell=0,1,2,\ldots$.  We then have  the following proposition.
\begin{proposition}\label{rectvars}
Let $\Lambda_R=(a_1\Z)\times \cdots (a_d\Z)$ with $a_1,\ldots,a_d>0$.  If $v\in \Lambda_R^*$, then
$v=(k_1/a_1,k_2/a_2,\dots,k_d/a_d)$ for some $k_1,\dots,k_d \in \Z$ and
    \begin{equation}    \CC_v^{H}(x)
 = \prod_{i=1}^{d} T_{\vert k_i \vert}(t_i),
  \end{equation}
where $t_i=\cos(2\pi x_i /a_i)\in [-1,1]$ for $i=1,2,\ldots,d$. 
\end{proposition}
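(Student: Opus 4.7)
The plan is to unpack the definition of $\CC_v^H$ directly, using the fact that the group $H$ acts independently on each coordinate, so the exponential sum over the orbit of $v$ factors as a product of one-dimensional sums, which are cosines, which then become Chebyshev polynomials.

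First, I would check the elementary lattice statement: if $V = \operatorname{diag}(a_1,\dots,a_d)$ generates $\Lambda_R$, then $V^{-T} = \operatorname{diag}(1/a_1,\dots,1/a_d)$ generates $\Lambda_R^*$, so every $v\in\Lambda_R^*$ has the asserted form $v=(k_1/a_1,\dots,k_d/a_d)$ with integer $k_i$. Next, I would describe the orbit: since each generator $R_j$ of $H$ negates only the $j$-th coordinate and the $R_j$ commute, the group $H$ is isomorphic to $(\Z/2\Z)^d$, and its action on $v$ produces the $2^d$ (possibly not distinct) vectors $(\pm k_1/a_1,\dots,\pm k_d/a_d)$. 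Using the formulation
\[
\CC_v^H(x) = \frac{1}{|H|}\sum_{\sigma\in H} e^{2\pi i (\sigma v)\cdot x}
\]
from \eqref{CCdef} (which averages over $H$ itself, not the orbit, and therefore handles multiplicities correctly when some $k_i$ vanish), the sum factors across coordinates:
\[
\CC_v^H(x)
= \prod_{i=1}^d \frac{1}{2}\bigl( e^{2\pi i k_i x_i /a_i} + e^{-2\pi i k_i x_i /a_i}\bigr)
= \prod_{i=1}^d \cos(2\pi k_i x_i/a_i).
\]
This is the content of the induction on $d$ alluded to in the preceding paragraph; the base case $d=1$ is just Euler's formula, and the inductive step uses that $H$ on $\R^d$ is the product of the corresponding group on $\R^{d-1}$ with $\{\pm 1\}$ acting on the last coordinate, so the exponential sum separates.

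Finally, I would invoke the defining relation $\cos(\ell\theta)=T_\ell(\cos\theta)$ for nonnegative integers $\ell$, together with the parity observation $\cos(k_i\theta)=\cos(|k_i|\theta)$, to conclude
\[
\cos(2\pi k_i x_i/a_i) = T_{|k_i|}\bigl(\cos(2\pi x_i/a_i)\bigr) = T_{|k_i|}(t_i),
\]
and multiply across $i=1,\dots,d$. Since $t_i=\cos(2\pi x_i/a_i)$ obviously lies in $[-1,1]$, the claim follows. There is no real obstacle here; the only mild point to be careful about is the distinction between averaging over $H$ and averaging over the orbit $H(v)$, which matters when some coordinates $k_i$ are zero (so that $H(v)$ has fewer than $2^d$ elements) but is handled automatically by working with the $H$-average form of $\CC_v^H$.
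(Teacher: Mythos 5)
Your proof is correct and follows exactly the route the paper indicates: factor the $H$-average coordinatewise (the paper's ``straightforward induction on $d$''), apply Euler's formula to get cosines, and then the defining identity of the Chebyshev polynomials. Your remark about averaging over $H$ versus over the orbit $H(v)$ when some $k_i=0$ is a correct and worthwhile observation that the paper leaves implicit.
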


We next deduce a polynomial structure for $\CC_v^{\Lambda}$  for lattices $\Lambda$ that are invariant under the  coordinate reflections $R_j$; i.e., such that $H\subseteq G_\Lambda$.

\begin{proposition}
\label{polystruct}
Let $\Lambda\subseteq \R^d$ be a lattice such that $H\subseteq G_\Lambda$.  Then $\Lambda$ contains a rectangular lattice $\Lambda_R= (a_1\Z)\times \cdots \times (a_d\Z)$ and  
the function $\CC^{G_\Lambda}_{v}(x)$ is a polynomial in the  variables $t_j=\cos(2\pi x_j/a_j)$ for $j=1,2,\ldots,d$ and any $v\in \Lambda^*$. 
\end{proposition}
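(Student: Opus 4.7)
The plan is to split the proof into two parts: first exhibit the rectangular sublattice $\Lambda_R$, then use an $H$-orbit decomposition to reduce the polynomial structure of $\CC_v^{G_\Lambda}$ to Proposition~\ref{rectvars}.

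For the first part, I would exploit the coordinate reflections directly. For any $v=(v_1,\dots,v_d)\in\Lambda$ and any $R_j\in H\subseteq G_\Lambda$, the difference $v-R_jv=2v_j e_j$ must lie in $\Lambda$. Since $\Lambda$ has full rank, for each $j$ I can choose some lattice vector whose $j$-th coordinate is nonzero, placing a nonzero multiple of $e_j$ into $\Lambda$. Then $\Lambda\cap\R e_j$ is a nontrivial discrete subgroup of $\R e_j$, hence of the form $a_j\Z e_j$ for some $a_j>0$. Setting $\Lambda_R:=(a_1\Z)\times\cdots\times(a_d\Z)$ gives $\Lambda_R\subseteq\Lambda$ as required.

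For the second part, I would use the fact that $\Lambda_R\subseteq\Lambda$ implies the dual containment $\Lambda^*\subseteq\Lambda_R^*$, so every $v\in\Lambda^*$ has the form $(k_1/a_1,\dots,k_d/a_d)$ with $k_j\in\Z$. Since $G_{\Lambda^*}=G_\Lambda$, the group $G_\Lambda$ permutes $\Lambda^*$, and so the orbit $G_\Lambda(v)$ lies entirely in $\Lambda_R^*$. Because $H$ is a subgroup of $G_\Lambda$, the orbit $G_\Lambda(v)$ decomposes as a disjoint union of $H$-orbits; choosing representatives $v_1,\dots,v_r$ I can write
\[
\CC_v^{G_\Lambda}(x)=\frac{1}{|G_\Lambda(v)|}\sum_{i=1}^{r}\sum_{w\in H(v_i)}e^{2\pi i w\cdot x}=\frac{1}{|G_\Lambda(v)|}\sum_{i=1}^{r}|H(v_i)|\,\CC_{v_i}^H(x).
\]

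By Proposition~\ref{rectvars}, each $\CC_{v_i}^H(x)$ equals $\prod_{j=1}^{d}T_{|k_j^{(i)}|}(t_j)$ and is therefore a polynomial in $t_1,\dots,t_d$. Since a finite nonnegative combination of polynomials in these variables is again such a polynomial, the claim follows. There is no substantial obstacle here: the construction of $\Lambda_R$ and the orbit decomposition are elementary, and the dual containment plus the identity $G_{\Lambda^*}=G_\Lambda$ (already recorded in the preliminaries) are exactly what is needed to ensure the $H$-orbit reduction lands in the right setting to invoke Proposition~\ref{rectvars}.
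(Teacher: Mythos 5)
Your proof is correct and follows essentially the same route as the paper: both construct the rectangular sublattice $\Lambda_R$ by applying a coordinate reflection $R_j$ to produce $a_je_j\in\Lambda$, and both reduce $\CC_v^{G_\Lambda}$ to a nonnegative combination of the functions $\CC_w^H$ with $w\in\Lambda^*\subseteq\Lambda_R^*$, then invoke Proposition~\ref{rectvars}. The only cosmetic difference is that you decompose the orbit $G_\Lambda(v)$ into $H$-orbits, whereas the paper decomposes $G_\Lambda$ into right $H$-cosets; these yield the same conclusion.
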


\begin{proof}
We first show that $\Lambda$ must contain some rectangular  sublattice (i.e., of the form $\Lambda_R=(a_1\Z)\times \cdots \times(a_d\Z)$).  Since $\Lambda$ is full-rank, for each   $j=1,2,\ldots,d$, there is some $w^j\in\Lambda$ such that $a_j:=2w^j\cdot e^j\neq 0$ where $e^j$ denotes the  $j$-th  coordinate unit vector.   Then $a_je^j=w^j-R_jw^j\in \Lambda$,
and so the rectangular lattice $(a_1\Z)\times \cdots \times(a_d\Z)$ is a sublattice of $\Lambda$. 

    Let $v\in \Lambda^*$. Since $\Lambda_R \subseteq \Lambda$, $\Lambda^*\subseteq \Lambda_R^*$, so $v\in \Lambda_R^*$. Let $C=\{\sigma_1,\dots,\sigma_{[G_\lambda:H]}\}$ be a set of right coset representatives of $H$ in $G_\Lambda$, so that $\vert C\vert \vert H\vert=\vert G\vert$.
    Then we have 
   \begin{equation}
       \begin{split}
        \CC^{G_\Lambda}_{v}&=\frac{1}{\vert G_\Lambda \vert}\sum_{g\in G_\Lambda}e^{2\pi i gv\cdot x}\\
        &= \frac{1}{\vert C\vert } \sum_{\sigma \in C} \frac{1}{\vert H \vert}\sum_{h\in H} e^{2\pi i h\sigma v\cdot x } \\
        &= \frac{1}{\vert C\vert } \sum_{\sigma \in C} \CC^H_{\sigma v}.
    \end{split}
    \end{equation} 
 Proposition~\ref{rectvars} implies  $\CC^H_{\sigma v}$ is polynomial in the variables $t_j=\cos(2\pi x_j/a_j)$ and thus  so is $\CC^{G_\Lambda}_{v}$.     
\end{proof}
With $\Lambda$ and $\Lambda_R$ as in Proposition \ref{polystruct}, we consider the change of variables 
\begin{equation}
\label{littlettransformation}
t_i := \cos(2\pi x_i/a_i), \qquad i=1,...,d. 
\end{equation}
We then let $T_{a_1,...,a_d}: \R^d \rightarrow \R^d$ be defined by 
\begin{equation}
\label{Ttransformation}
T_{a_1,\dots,a_d}(x_1,...,x_d):=(t_1,\dots,t_d).
\end{equation}

For any $\Lambda_R$-periodic function $h$ with $H$-symmetry, $\tilde{h}$ will refer to the function defined on $[-1,1]^d$ by 
\[
\tilde{h}(t)=h\left(\frac{a_1 \arccos t_1}{2\pi},\dots,\frac{a_d\arccos t_2}{2\pi}\right),
\]
which ensures $\tilde{h}(t)=h(x)$.  We say that $\tilde{h}$ is (C)PSD   if $h$ is (C)PSD.

It follows by Proposition \ref{polystruct} that the maps \begin{equation}\label{tildeP}P^{\Phi}_v:=\tilde{\CC}^{G_\Phi}_{v}, \qquad v\in \Phi^*,\end{equation}  are polynomials in the variables $t_1,\dots, t_d$.   It then follows that the collection of polynomials $\{P_v^{\Phi} \mid v\in \Phi^*/G_\Phi\}$ is orthogonal with respect to the measure $(1-t_1^2)^{-1/2}\cdots (1-t_d^2)^{-1/2}dt_1\, \cdots\, dt_d$ on $[-1,1]^d$. Furthermore, $\tilde{h}$ is CPSD if and only if its expansion in terms of these polynomials has coefficients that are non-negative and summable.  

We shall also write $P_v$ when the choice of $\Phi$ is clear. Similarly,
the $T_{a_1,\dots,a_d}$ image of any subset $D\subseteq \R^d$ will be denoted $\tilde{D}$. In any case where we do so, the choice of rectangular lattice (and hence the choice of $a_i$'s will be clear).\\



\subsection{Linear Programming Bounds for Periodic Energy}
\label{lpboundssection}

If $g\in L^2(\Omega_\Lambda)$ is CPSD and $\omega_n$ is an arbitrary $n$-point configuration in $\R^d$, then the following fundamental lower bound holds: 
\begin{equation}
    \begin{split}
E_{g}(\omega_n)&=\sum_{x\neq y \in \omega_n} g(x-y) = -ng(0)+\sum_{x,y\in \omega_n} g(x-y)\\
&=-ng(0)+\sum_{v\in \Lambda^*}  \hat{g}_v \sum_{x,y\in \omega_n}  e^{2\pi i v\cdot x}e^{-2\pi i v\cdot y} \\
&=-ng(0)+\sum_{v\in \Lambda^*} \hat{g}_v \left\lvert \sum_{x\in \omega_n} e^{2\pi i v\cdot x} \right\rvert^2 
\\
&\geq n^2\hat{g}_0-ng(0) \label{LPg}.
\end{split}
\end{equation}
For $v\in \R^d$  we refer to 
$$M_v(\omega_n):= \sum_{x\in \omega_n} e^{2\pi i v\cdot x}, $$ as the {\em $v$-moment of $\omega_n$}.  Note that equality holds in \eqref{LPg} if and only if 
\begin{equation}\label{LPgEq}
    \hat{g}_v  M_v(\omega_n) = 0, \qquad v\in \Lambda^*\setminus\{0\}.
\end{equation}
The next proposition   follows immediately from \eqref{LPg} and the condition \eqref{LPgEq} for equality in \eqref{LPg}.  The calculations in \eqref{LPg} are similar to the proof of the linear programming bounds for energy found in \cite[Proposition 9.3]{Cohn_Kumar_2007}  and is closely related to Delsarte-Yudin energy bounds for spherical codes (cf. \cite[Chapters 5.5 and 10.4]{MEbook}).  

\begin{proposition}
\label{linearprogramming}
Let $F:\R^d\to [0,\infty]$ be $\Lambda$-periodic, and suppose $g\in L^2(\Omega_\Lambda)$ is CPSD  such that $g\le F$. 
 Then for any $n$-point configuration $\omega_n $, we  have 
\begin{equation}
\label{linprogbounds}
E_F(\omega_n)\geq E_g(\omega_n) \geq n^2\hat{g}_0-ng(0)
\end{equation}
with equality holding throughout \eqref{linprogbounds} if and only if the following two conditions hold:
\begin{enumerate}
\item[(a)] $g(x-y)=F(x-y)$ for all $x\neq y \in \omega_n$,
\item[(b)]   $\hat{g}_v M_v(\omega_n)=0$, for all $v\in \Lambda^*\setminus \{ 0\}$.
\end{enumerate}
If (a) and (b) hold, then $E_F(\omega_n)=\mathcal{E}_F(n)$. 
\end{proposition}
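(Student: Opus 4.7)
The chain of (in)equalities already displayed in \eqref{LPg} contains essentially the entire argument; what remains is to (i) track when each of the two inequalities in \eqref{linprogbounds} is an equality, and (ii) observe that the lower bound is independent of the configuration. I would split the write-up into three short parts corresponding to these tasks.

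First, for $E_F(\omega_n)\ge E_g(\omega_n)$: since $g\le F$ pointwise on $\R^d$ (interpreting $g(z)\le +\infty$ whenever $F(z)=+\infty$), summing this inequality over all ordered pairs $(i,j)$ with $i\neq j$ yields the bound, with equality iff $F(x_i-x_j)=g(x_i-x_j)$ for every such pair, i.e.\ condition~(a). Second, for $E_g(\omega_n)\ge n^2\hat{g}_0-n\,g(0)$: the CPSD hypothesis gives $\sum_{v\in\Lambda^*}|\hat{g}_v|<\infty$ (the $v\neq 0$ coefficients are nonnegative with summable total and $\hat{g}_0\in\R$), so the Fourier expansion of $g$ converges absolutely and uniformly; in particular $g$ may be identified with a continuous function, and the interchange of summations in \eqref{LPg} is justified, yielding
\[
E_g(\omega_n)=-n\,g(0)+\sum_{v\in\Lambda^*}\hat{g}_v\,|M_v(\omega_n)|^2.
\]
Separating the $v=0$ term (which equals $n^2\hat{g}_0$ since $|M_0(\omega_n)|^2=n^2$) and using $\hat{g}_v\ge 0$ for $v\neq 0$ gives the desired bound, with equality iff $\hat{g}_v\,|M_v(\omega_n)|^2=0$ for every nonzero $v$, equivalently $\hat{g}_v\,M_v(\omega_n)=0$, which is condition~(b).

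Third, for the final $F$-optimality claim: the quantity $n^2\hat{g}_0-n\,g(0)$ depends only on $g$, so the same two-step argument applies to any $n$-point configuration $\omega'_n$, giving $E_F(\omega'_n)\ge n^2\hat{g}_0-n\,g(0)$. If (a) and (b) hold for $\omega_n$, then its energy achieves this value, so $E_F(\omega_n)\le E_F(\omega'_n)$ for every $\omega'_n$, i.e.\ $E_F(\omega_n)=\mathcal{E}_F(n)$. I do not anticipate any real obstacle: the entire proposition is a direct repackaging of the displayed computation together with the standard fact that absolute summability of Fourier coefficients permits the summation interchange; the only bookkeeping point is to check that both inequalities become equalities simultaneously, which is immediate from the characterizations above.
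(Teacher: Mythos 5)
Your argument is correct and follows the paper's own route exactly: the paper proves the proposition by pointing to the computation \eqref{LPg} and the equality criterion \eqref{LPgEq}, which is precisely what you unpack, and the CPSD definition already supplies the absolute summability of the Fourier coefficients that you invoke to justify the interchange. The only thing the paper leaves implicit that you make explicit is the pointwise equality-tracking in the first inequality and the configuration-independence of the lower bound, both of which are routine and consistent with the paper.
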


\begin{remark}
\label{GLambdainvariance}
  If $F:\R^d\to [0,\infty]$ is $\Lambda$-periodic and $G_\Lambda$-invariant and 
$g\in L^2(\Omega_\Lambda)$ is CPSD such that $g\le F$, then the $S_\Lambda$-invariant
function 
\begin{equation*}
    g^{\rm sym}(x):=\frac{1}{|G_\Lambda|}\sum_{\sigma\in G_\Lambda}g((\sigma v)\cdot x), \qquad x\in\R^d 
\end{equation*}
is also CPSD and satisfies $g^{\rm sym}\le F$.  Thus, we may restrict our search for functions $g$ to use in Proposition~\ref{linearprogramming} to those of the form given in \eqref{FExpG} in which case we only need verify the condition that $g\le F$ on the fundamental domain of the action of $S_\Lambda$ on $\R^d$. In particular, when $\Lambda=A_2$, we have the representative set
\[
\Delta_{A_2}:=\{ (x_1,x_2)\mid 0\leq x_1\leq \frac12,0\leq x_2\leq x_1/\sqrt{3}\}
\]
and when $\Lambda=L$, we'll consider the representative set $[0,1/2]\times[0,\sqrt{3}/2]$.
\end{remark}

\subsection{Moments for certain lattice configurations}
We consider moments of configurations $\omega(\Phi,\Lambda):=\Lambda\cap \Omega_\Phi$ obtained by restricting     a lattice $\Lambda$ to the fundamental domain of a sublattice $\Phi$.\footnote{We are aware of similar lattice computations in discrete harmonic analysis (e.g., see \cite{Li_Sun_Xu_2012}), but the authors could not find a reference for this exact result and so include a proof.} 
\begin{theorem}
\label{generalmoments}
Suppose $\Phi$ is a sublattice of a lattice $\Lambda$ in $\R^d$. Let  $\kappa:=|\omega(\Phi,\Lambda) |$ denote the index of $\Phi$ in $\Lambda$.   Then for $v\in \Phi^*$, we have
\begin{equation}\label{MomentForm}
    M_v(\omega(\Phi,\Lambda))=\begin{cases} \kappa,&  v\in \Lambda^*,\\
    0,& \text{otherwise.}
    \end{cases}
\end{equation}
Furthermore, if $G_\Phi\subset G_\Lambda$, then for any $v\in \Phi^*$ and $\sigma \in G_\Phi$, we have $M_{\sigma v}(\omega(\Phi,\Lambda))=M_v(\omega(\Phi,\Lambda))$.
\end{theorem}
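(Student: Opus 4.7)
The plan is to recognize $\omega(\Phi,\Lambda)$ as a set of coset representatives of the finite abelian group $\Lambda/\Phi$ and then apply character orthogonality. For $v\in\Phi^{*}$, the condition $v\cdot w\in\Z$ for all $w\in\Phi$ means the map
\[
\chi_v\colon \Lambda/\Phi\to \mathbb{C},\qquad \chi_v(x+\Phi):=e^{2\pi i v\cdot x}
\]
is well-defined and is a group homomorphism, hence a character of $\Lambda/\Phi$. The character $\chi_v$ is trivial exactly when $v\cdot x\in\Z$ for every $x\in\Lambda$, i.e. exactly when $v\in\Lambda^{*}$. So first I would verify these two well-definedness points, and then invoke the standard orthogonality statement that the sum of a character of a finite abelian group equals the order of the group if the character is trivial and $0$ otherwise. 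Since $M_v(\omega(\Phi,\Lambda))=\sum_{x\in\omega(\Phi,\Lambda)}\chi_v(x+\Phi)$, this yields exactly the case split in \eqref{MomentForm}.

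For the invariance statement, suppose $G_\Phi\subseteq G_\Lambda$ and let $\sigma\in G_\Phi$. Then $\sigma$ is an orthogonal transformation preserving both $\Phi$ and $\Lambda$, so its inverse does too, and
\[
M_{\sigma v}(\omega(\Phi,\Lambda))=\sum_{x\in\omega(\Phi,\Lambda)}e^{2\pi i (\sigma v)\cdot x}=\sum_{x\in\omega(\Phi,\Lambda)}e^{2\pi i v\cdot(\sigma^{-1}x)}.
\]
Because $\sigma^{-1}\Lambda=\Lambda$ and $\sigma^{-1}\Phi=\Phi$, the set $\sigma^{-1}\omega(\Phi,\Lambda)$ is again a complete set of coset representatives for $\Lambda/\Phi$. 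The first part shows that $M_v(\omega(\Phi,\Lambda))$ depends only on the cosets $x+\Phi$ (since $v\in\Phi^{*}$ makes $\chi_v$ well-defined on the quotient), so replacing $\omega(\Phi,\Lambda)$ by any other set of representatives leaves the sum unchanged. This gives $M_{\sigma v}(\omega(\Phi,\Lambda))=M_v(\omega(\Phi,\Lambda))$.

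There isn't really a single hard step here: the whole statement is essentially character orthogonality on the finite quotient group $\Lambda/\Phi$ together with the observation that $\Phi^{*}/\Lambda^{*}$ indexes its Pontryagin dual. The only subtle point worth checking carefully is that $v\in\Phi^{*}$ is exactly the condition needed to pass $e^{2\pi i v\cdot x}$ to the quotient; everything else is bookkeeping. For the symmetry clause, the only thing to note explicitly is that $\sigma\in G_\Phi\subseteq G_\Lambda$ implies $\sigma$ is an isometry with $\sigma^{T}=\sigma^{-1}$ preserving both lattices, so the substitution $x\mapsto \sigma^{-1}x$ is a bijection of representative systems.
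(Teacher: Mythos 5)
Your proof is correct, and it takes a genuinely different route from the paper's. You treat $\omega(\Phi,\Lambda)$ as a transversal of the finite abelian group $\Lambda/\Phi$, observe that $v\in\Phi^*$ is exactly the condition making $\chi_v(x+\Phi):=e^{2\pi i v\cdot x}$ a well-defined character, and then invoke the orthogonality relation: the character sum equals $|\Lambda/\Phi|=\kappa$ when $\chi_v$ is trivial (i.e.\ $v\in\Lambda^*$) and $0$ otherwise. The paper instead proves this by hand: it puts the inclusion $\Phi\subseteq\Lambda$ into Smith Normal Form, which produces an explicit parametrization $\omega(\Phi,\Lambda)=\{\widetilde V j : j\in [0..\lambda_1]\times\cdots\times[0..\lambda_d]\}$, so that $M_v$ factors into a product of finite geometric sums, each of which is computed directly. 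Conceptually the two arguments are the same theorem seen from different altitudes — SNF is exactly the diagonalization that reduces the general character sum to the one-dimensional geometric series, so the paper is essentially re-deriving orthogonality in coordinates — but your version buys brevity and makes the role of Pontryagin duality on $\Lambda/\Phi$ transparent, while the paper's version is self-contained for a reader who does not want to invoke character theory. For the symmetry clause you also differ slightly: the paper simply notes that $\sigma\in G_\Phi\subseteq G_\Lambda$ preserves membership in $\Lambda^*$ and applies the just-proved formula \eqref{MomentForm}, whereas you argue by the change of variables $x\mapsto\sigma^{-1}x$ and the observation that $M_v$ depends only on cosets; both are valid, and yours is the more robust argument since it does not rely on the exact form of \eqref{MomentForm}. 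One small point worth making explicit if you write this up: you use $(\sigma v)\cdot x = v\cdot(\sigma^{-1}x)$, which needs $\sigma$ orthogonal, and you also need $\sigma v\in\Phi^*$ for $M_{\sigma v}$ to be the sum of a well-defined character — this holds because $G_\Phi=G_{\Phi^*}$, a fact the paper records in its lattice preliminaries.
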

\begin{proof}
Let  $\Lambda=V\Z^d$; i.e., $V$ is a generator for $\Lambda$.
 Since $\Phi$ is a sublattice of $\Lambda$, there is some integer $d\times d$ matrix $W$ such that $VW$ is a generator for $\Phi$. Then $W$  can be written in Smith Normal Form as $W=SDT$ where $S$ and $T$ are integer matrices with  determinant $\pm 1$ (equivalently, their inverses are also integer matrices) and $D$ is a diagonal matrix with positive integer diagonal entries $\lambda_1,\ldots, \lambda_d$.  It follows that $\widetilde V=VS$ is a generator for $\Lambda$ and $U=\widetilde V D$ is a generator for $\Phi$.  Choosing the fundamental domains $\Omega_\Lambda=\widetilde V[0,1)^d$ and $\Omega_\Phi=U[0,1)^d$  
 gives
 $$\omega(\Phi,\Lambda)=\{ \widetilde V j\mid j\in [0..\lambda_1]\times\cdots \times [0..\lambda_d]\}, $$ 
 where   $[0..p]:=\{0,1,2,\ldots,p-1\}$ positive integers $p$.
    Let $v\in\Phi^*$ so that $v=U^{-T}k={\widetilde V}^{-T}D^{-1}k$  for some $k=(k_1,k_2,\dots,k_d)\in \Z^d.$  Then $v\cdot ( \widetilde V j)=j\cdot (D^{-1}k)$ and so
 \begin{align*}
     M_v(\omega(\Phi,\Lambda))&=
     \sum_{j\in [0..\lambda_1]\times\cdots \times [0..\lambda_d]} e^{2\pi i j\cdot D^{-1}k}=
     \prod_{\ell=1}^d\left(\sum_{j_\ell=0}^{\lambda_\ell -1}e^{2\pi i \frac{j_\ell k_\ell}{\lambda_\ell}}\right)\\&=\begin{cases}
     \lambda_1\cdots\lambda_d, & k\in D\Z^d,\\
     0,& \text{otherwise},
 \end{cases}
 \end{align*}
 where we used the finite geometric sum formula in the last equality.  Noting that $\kappa=\lambda_1\cdots\lambda_d$ and that $v\in \Lambda^*$ if and only if $k\in D\Z^d$ establishes \eqref{MomentForm}.

 Finally, if   $\sigma\in G_\Phi$ and $G_\Phi\subset G_\Lambda$, then $\sigma v\in \Lambda^*$ if and only if $v\in \Lambda^*$ which completes the proof.
\end{proof}
We define the {\em moment index} of a configuration $\omega=\{x_1,\ldots,x_n\}\subset \R^d$  with respect to a lattice $\Phi\subset \R^d$ by 
\begin{equation}\label{Idef}
   \mathcal{I}_\Phi(\omega):=\{v\in\Phi^*\mid M_v(\omega)=0\}.
\end{equation}

 \begin{remark}
 Let $\Phi$ be a sublattice of a lattice $\Lambda\subset \R^d$  
and $m\in \N$.
 Since $(\frac{1}{m}\Lambda)^*=m\Lambda^*$,      Theorem~\ref{generalmoments} implies
\begin{equation}\label{MomentFormGEN}
  \mathcal{I}_\Phi(\omega(\Phi,\frac{1}{m}\Lambda))=\Phi^*\setminus (m\Lambda^*).
\end{equation}
 
 \end{remark}
 
\subsection{Lattice theta functions}

For $c>0$, the classical   Jacobi theta function of the third type, is defined by
\begin{equation}
\label{thetasmallaformula}
\theta(c;x):=\sum^{\infty}_{k=-\infty} e^{-\pi k^2 c}e^{2\pi i kx}, \qquad x\in \R.
\end{equation}

Via Poisson Summation on the integers, we have
\begin{equation}
\label{thetabigaformula}
\theta(c;x)=c^{-1/2} \sum^{\infty}_{k=-\infty} e^{-\frac{\pi(k+x)^2}{c}},
\end{equation}
and so, in terms of our earlier language for periodizing gaussians by lattices, 
\begin{equation}
F_{a, \mathbb{Z}}(x)= (a/\pi)^{1/2}\theta(\frac{\pi}{a};x).
\end{equation}

We'll also use 
\[
\tilde{\theta}(c;t):=\theta\left (c,\frac{\arccos t}{2\pi}\right), \quad t\in [-1,1].
\]
It follows from the symmetries of $\theta(c,x)$ that for all $x\in \R$,
\[
\tilde{\theta}(c;\cos 2\pi x)=\theta(c;x),
\]
and moreover, as shown below, $\tth$ is absolutely monotone  on $[-1,1]$. First, we recall the Jacobi triple product formula.
\begin{theorem}[Jacobi Triple Product Formula]
Let $z,q \in \mathbb{C}$ with $\lvert q \rvert<1$ and $z\neq 0$. Then
\[
\prod^{\infty}_{r=1}(1-q^{2r})(1+q^{2r-1}z^2)(1+q^{2r-1}z^{-2})=\sum^{\infty}_{k=-\infty} q^{k^2}z^{2k}.
\]
\end{theorem}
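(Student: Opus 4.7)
The plan is to fix $q$ with $|q|<1$ and view both sides as functions of $z\in\mathbb{C}\setminus\{0\}$. Set
\[
F(z):=\prod_{r=1}^{\infty}(1-q^{2r})(1+q^{2r-1}z^2)(1+q^{2r-1}z^{-2}).
\]
First I would check convergence: since $\sum_{r\ge 1}|q|^{2r-1}<\infty$, each of the three infinite products converges absolutely and uniformly on compact subsets of $\mathbb{C}\setminus\{0\}$, so $F$ is holomorphic there with a Laurent expansion $F(z)=\sum_{k\in\mathbb{Z}}a_k z^{2k}$. Only even powers of $z$ appear by inspection, and the manifest symmetry $F(z)=F(z^{-1})$ gives $a_k=a_{-k}$.

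The key step is a functional equation obtained by shifting indices in the infinite products. Writing $P_+(z):=\prod_r(1+q^{2r-1}z^2)$ and $P_-(z):=\prod_r(1+q^{2r-1}z^{-2})$, direct reindexing gives $P_+(qz)=P_+(z)/(1+qz^2)$ (the $r=1$ factor drops out) and $P_-(qz)=(1+q^{-1}z^{-2})P_-(z)$ (an extra factor appears at the bottom). Since $1+q^{-1}z^{-2}=q^{-1}z^{-2}(1+qz^2)$, the ratio collapses to
\[
F(qz)=q^{-1}z^{-2}F(z).
\]
Substituting the Laurent expansion into this functional equation and matching coefficients of $z^{2k}$ yields the recursion $a_{k+1}=q^{2k+1}a_k$, hence $a_k=q^{k^2}a_0$ for every $k\in\mathbb{Z}$. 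Therefore
\[
F(z)=a_0(q)\sum_{k\in\mathbb{Z}}q^{k^2}z^{2k},
\]
reducing the theorem to showing $a_0(q)=1$.

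The main obstacle is precisely this normalization: the functional equation pins down the coefficients only up to a single $z$-independent factor, and $a_0(q)$ is a priori a nontrivial function of $q$. My preferred route is a self-similarity argument. Specializing $z^2=q$ in the displayed identity and simplifying the left-hand product using $(1-q^{2r})(1+q^{2r})=1-q^{4r}$ and the shift $\prod_{r\ge 1}(1+q^{2r-2})=2\prod_{s\ge 1}(1+q^{2s})$ produces an identity relating $a_0(q)$ to the infinite product $\prod(1-q^{4r})$ and a theta-like sum in $q$. Repeating the argument after the substitution $q\mapsto q^2$ yields a relation of the form $a_0(q)=a_0(q^{m})$ for some integer $m\ge 2$. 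Iterating gives $a_0(q)=a_0(q^{m^n})$ for all $n\ge 1$, and since both the defining product and the theta series plainly tend to $1$ as $q\to 0$, we have $a_0(0)=1$; continuity of $a_0$ on $|q|<1$ then forces $a_0(q)\equiv 1$. An alternative path is via Euler's $q$-binomial theorem $\prod_{r\ge 0}(1+q^r y)=\sum_{n\ge 0}q^{\binom{n}{2}}y^n/\prod_{k=1}^{n}(1-q^k)$, from which the triple product falls out after a change of variables; but the self-similarity approach is the shortest conceptually, and I expect the bookkeeping in simplifying the specialized product to be the most error-prone part of writing the proof.
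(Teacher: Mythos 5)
The paper states the Jacobi triple product as a classical fact and gives no proof, so there is no in-paper argument to compare against; what follows assesses your proposal on its own terms. The functional-equation half is correct and standard: $F(qz)=q^{-1}z^{-2}F(z)$, the recursion $a_{k+1}=q^{2k+1}a_k$, and thus $a_k=q^{k^2}a_0(q)$, reducing everything to the claim $a_0(q)\equiv 1$.

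The gap is in your normalization step. Setting $z^2=q$ yields
\[
2\prod_{r\geq 1}(1-q^{4r})\prod_{r\geq 1}(1+q^{2r})=a_0(q)\sum_{k\in\mathbb{Z}}q^{k^2+k},
\]
but this is just the proportionality $F(z)=a_0(q)\sum_k q^{k^2}z^{2k}$ evaluated at one point, so it carries no information about $a_0$ beyond its definition, and applying $q\mapsto q^2$ gives a second such evaluation with a \emph{different} theta-sum $\sum_k q^{2(k^2+k)}$ on the right---there is nothing to equate. Closing the loop from here would require an independent product formula for $\sum_k q^{k^2+k}$ (Gauss's identity), which is itself a case of the triple product, so this route is circular as written. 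The classical self-similarity argument uses $z^2=i$ instead: the odd-$k$ terms of $\sum_k q^{k^2}i^k$ cancel in pairs, leaving $\sum_m(-1)^m q^{4m^2}$; comparing with the specialization $z^2=-1$ taken at $q\mapsto q^4$, which produces the same alternating sum, and checking (by splitting each $\prod(1-q^{2r})$ and $\prod(1-q^{4r})$ into even- and odd-index factors) that the two infinite products coincide, gives $a_0(q)=a_0(q^4)$. Iterating $a_0(q)=a_0(q^{4^n})$ and letting $q\to 0$ forces $a_0\equiv 1$. Your fallback via Euler's $q$-binomial theorem is a valid, non-circular alternative.
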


Applying the Jacobi triple product with $q=e^{-\pi c}$ and $z=e^{\pi i x}$, gives
\begin{equation}\label{tildethetaProd}
\tilde{\theta}(c;t)= \prod^{\infty}_{r=1}(1-e^{-2\pi rc}) (1+2e^{-2\pi rc}t+ e^{-2(2r-1)\pi c}).
\end{equation}
It's elementary to verify that  $\tilde{\theta}(c;\cdot)$ is entire, and that we may compute derivatives by applications of the product rule to \eqref{tildethetaProd}. Hence, we arrive at the following proposition:

\begin{proposition}
\label{absmonotone}
For any $c>0$, the function $\tilde{\theta}=\tilde{\theta}(c; \cdot):[-1,1]\rightarrow(0,\infty)$ is strictly absolutely monotone on $[-1,1]$ and its logarithmic derivative $\tilde{\theta}'/\tilde{\theta}$ is strictly completely monotone on $[-1,1]$.
\end{proposition}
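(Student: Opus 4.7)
The plan is to exploit the Jacobi triple product representation in \eqref{tildethetaProd} after a translation of variables. Set $\alpha_r := e^{-(2r-1)\pi c} \in (0,1)$, so that the $r$-th factor has the form $1+2\alpha_r t+\alpha_r^2$. The key algebraic identity
$$1+2\alpha_r t+\alpha_r^2 = (1-\alpha_r)^2 + 2\alpha_r(t+1)$$
shows that upon substituting $s := t+1 \in [0,2]$, each factor becomes a linear polynomial in $s$ with strictly positive coefficients $(1-\alpha_r)^2$ and $2\alpha_r$; the prefactor $\prod_{r \ge 1}(1-e^{-2\pi rc})$ is a positive constant.

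Since $\sum_r \alpha_r < \infty$ (geometric decay in $r$), the shifted product converges absolutely and uniformly on bounded $s$-intervals. Expanding, one obtains a power series $\tilde{\theta}(c;\,s-1) = \sum_{n \ge 0} c_n(c)\, s^n$ whose coefficients $c_n(c)$ are finite sums of products of strictly positive quantities, hence strictly positive. This is precisely the Taylor expansion of $\tilde{\theta}(c;\cdot)$ centered at $t=-1$, so it follows that $\frac{d^n}{dt^n}\tilde{\theta}(c;t) > 0$ for every $n\ge 0$ and every $t\in[-1,1]$, yielding strict absolute monotonicity.

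For the logarithmic derivative, I take logs in \eqref{tildethetaProd} and differentiate termwise (justified by the uniform convergence above) to obtain
$$\frac{\tilde{\theta}'(c;t)}{\tilde{\theta}(c;t)} = \sum_{r=1}^{\infty}\frac{2\alpha_r}{1+2\alpha_r t+\alpha_r^2}.$$
Each summand has the form $A/(B+Ct)$ with $A,B,C > 0$, and the denominator satisfies $1+2\alpha_r t+\alpha_r^2 \ge (1-\alpha_r)^2 > 0$ throughout $[-1,1]$. A direct computation gives
$$(-1)^n \frac{d^n}{dt^n}\frac{2\alpha_r}{1+2\alpha_r t+\alpha_r^2} = \frac{n!\,(2\alpha_r)^{n+1}}{(1+2\alpha_r t+\alpha_r^2)^{n+1}} > 0,$$
so each summand is strictly completely monotone on $[-1,1]$. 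The geometric decay of $\alpha_r$ makes the series of $n$-th derivatives converge uniformly, so the sign-alternating positivity passes to the infinite sum.

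I do not foresee a substantial obstacle. The entire argument hinges on the shift $s = t+1$, which converts the Jacobi factors into polynomials in $s$ with manifestly positive coefficients; after that the conclusions reduce to standard facts about absolutely convergent infinite products and termwise differentiation of series of monotone functions. The only technicality is the uniform-convergence bookkeeping for these operations, which is immediate from $\sum_r \alpha_r < \infty$ together with the uniform lower bound on the denominator.
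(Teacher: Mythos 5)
Your proof is correct and carries out exactly the argument the paper gestures at (the paper only remarks that the result ``is elementary to verify'' by differentiating the Jacobi triple product). The shift $s=t+1$, rewriting each factor as $(1-\alpha_r)^2+2\alpha_r s$, is the right way to make the positivity manifest, and the termwise differentiation of $\log\tilde{\theta}$ gives the complete monotonicity of the logarithmic derivative just as cleanly. Two small remarks: the coefficients $c_n(c)$ of the expansion around $t=-1$ are in fact \emph{infinite} (not finite) sums of products of positive quantities---they are elementary symmetric functions $e_n$ of the infinitely many ratios $2\alpha_r/(1-\alpha_r)^2$---but their positivity still holds since each is a convergent series of positive terms; and you implicitly (and correctly) use $\alpha_r=e^{-(2r-1)\pi c}$ for both the linear coefficient and the square root of the constant term in the $r$-th Jacobi factor, which actually fixes a small typo in the paper's displayed formula \eqref{tildethetaProd}, where the linear coefficient is written as $2e^{-2\pi r c}$ rather than $2e^{-(2r-1)\pi c}$.
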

\[
\]

If $\Lambda_R=(a_1\Z)\times \cdots \times(a_d\Z)$ is a rectangular lattice, then $F_{a,\Lambda_R}(x)$ is a tensor product of such functions: 
\begin{equation}
\begin{split}
  \label{recF}
  F_{a,\Phi}(x)&=\sum_{v\in\Phi}e^{-a|x+v|^2}
  =\sum_{k\in\Z^d}\prod_{i=1}^d e^{-aa_i^2(x_i/a_i +k_i)^2}\\&=\prod_{i=1}^d\left(\sum_{k_i\in\Z} e^{-aa_i^2(\frac{x_i}{a_i} +k_i)^2}\right)
  =\prod_{i=1}^dF_{aa_i^2,\Z}(x_i/a_i).
  \end{split}
\end{equation}
If $\Lambda$ contains a rectangular sublattice $\Lambda_R$, then we may write $F_\Lambda$ as a sum of such tensor products.

\begin{proposition}
\label{thetafnrectlattice}
Suppose $\Lambda$ is a lattice in $\R^d$ that contains a 
rectangular sublattice $\Lambda_R=(a_1\Z)\times \cdots\times (a_d\Z)$ and let $\omega(\Phi,\Lambda)=\Lambda\cap \Omega_{\Lambda_R}$.   Then
\begin{equation}\label{Frec}
    F_{a,\Lambda}(x)
    =\sum_{y\in\omega_{\Lambda_R}} F_{a,\Lambda_R}(x+y).
\end{equation}
\end{proposition}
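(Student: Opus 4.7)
The plan is to prove this identity by decomposing $\Lambda$ as a disjoint union of $\Lambda_R$-cosets and reorganizing the defining series for $F_{a,\Lambda}$. All sums will converge absolutely thanks to the Gaussian decay of $f_a(r^2)=e^{-ar^2}$, so rearrangement is justified throughout.

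First I would note that since $\Lambda_R$ is a sublattice of $\Lambda$, the set $\omega(\Lambda_R,\Lambda)=\Lambda\cap\Omega_{\Lambda_R}$ is, by construction, a complete set of coset representatives for $\Lambda/\Lambda_R$. Hence we have the disjoint union decomposition
\begin{equation*}
\Lambda \;=\; \bigsqcup_{y\in\omega(\Lambda_R,\Lambda)} (y+\Lambda_R).
\end{equation*}
(I read the statement's ``$\omega(\Phi,\Lambda)$'' as a typo for $\omega(\Lambda_R,\Lambda)$, which is the only interpretation consistent with the notation of \eqref{genConfig} and with the right-hand side of \eqref{Frec}.)

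Next I would substitute this decomposition into the definition \eqref{Ffdef} of $F_{a,\Lambda}$. For any fixed $x\in\R^d$,
\begin{align*}
F_{a,\Lambda}(x)
&= \sum_{v\in\Lambda} f_a(|x+v|^2)
= \sum_{y\in\omega(\Lambda_R,\Lambda)} \sum_{w\in\Lambda_R} f_a(|x+y+w|^2) \\
&= \sum_{y\in\omega(\Lambda_R,\Lambda)} F_{a,\Lambda_R}(x+y),
\end{align*}
where the rearrangement in the second equality is justified by absolute convergence (the Gaussian $f_a$ has $d$-rapid decay, so each inner sum, and the total, converge absolutely). This yields \eqref{Frec} directly.

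There is no real obstacle here: the content of the proposition is simply the coset-partition identity combined with the fact that Gaussian lattice sums converge unconditionally. The only subtlety worth stating explicitly is the absolute convergence that legitimizes swapping the single sum over $\Lambda$ for the double sum over $(\omega(\Lambda_R,\Lambda),\Lambda_R)$; this is where $d$-rapid decay of $f_a$ enters. The proposition's utility, rather than its proof, is what matters: once combined with the tensor-product formula \eqref{recF} for $F_{a,\Lambda_R}$, it expresses $F_{a,\Lambda}$ as a finite sum of tensor products of one-dimensional theta-type functions, which is the key input for the polynomial-interpolation framework developed earlier in the section.
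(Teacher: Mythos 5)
Your proof is correct and follows the same route as the paper: the paper's one-line argument is exactly the coset decomposition $\Lambda = \omega(\Lambda_R,\Lambda)+\Lambda_R$ that you spell out, and you rightly identify the $\Phi$ versus $\Lambda_R$ notational slip in the statement. Your added remark about absolute convergence of the Gaussian lattice sum justifying the rearrangement is a reasonable piece of bookkeeping that the paper leaves implicit.
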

\begin{proof}
The formula follows immediately from  $\Lambda=\omega(\Phi,\Lambda)+\Phi$.
\end{proof}
\subsection{Polynomial interpolation and linear programming bounds for lattice configurations}
Combining the previous results in this section, we obtain the following general polynomial interpolation framework for linear programming bounds.  For  convenience, we shall write 
\begin{equation}
 W_\Phi := \Phi^*/G_\Phi \text{ and }  \Delta_\Phi:= \R^d/S_\Phi,
\end{equation}
to denote some choice of the respective fundamental domains for a lattice $\Phi\subset \R^d$.
\begin{theorem}
\label{genlinpoly}
Let $\Phi \subset \R^d$ be such that $H\subseteq 
G_\Phi$ where $H$ is the 
coordinate symmetry group (see Sec. 
\ref{latticesymmetry}) and suppose $F:\R^d\to (-\infty,\infty]$ is $S_\Phi$ invariant. By Proposition 
\ref{polystruct},  $\Phi$ contains a rectangular sublattice 
\[
a_1 \Z \times \cdots\times a_d \Z, 
\]
which induces the change of variables $T:=T_{a_1,\ldots,a_d}$ defined in \eqref{Ttransformation} and associated polynomials $P^{\Phi}_{v}$ defined in \eqref{tildeP}. 
Suppose $(c_v)_{v\in W_\Phi}$ is such that (a) 
$c_v\ge 0$, for all nonzero $  v\in W_\Phi$ , (b) $\sum_{v\in W_\Phi}c_v<\infty$, and 
(c)  the continuous function
\[
\tg:= c_0+\sum_{v\in W_\Phi} c_v P^{G_\Phi}_{v}. 
\]
satisfies $\tg\leq \tf$ on $\tilde{\Delta}_\Phi$.
 
Then for any $n$-point configuration $\omega_n=\{x_1,\ldots,x_n\}\subset \R^d$, we have \begin{equation}
    \label{TildeLB}
E_{F}(\omega_n) \geq 
E_{g}(\omega_n)=n^2 c_0- n \tg(1,\ldots, 1),
\end{equation}
where   equality holds   if and only if 
\begin{enumerate}
    \item $\tg(t)=\tf(t)$ for all $t\in T(\{x_i-x_j\mid i\neq j \in\{1,\ldots,n\}\})$ and 
    \item $c_v M_{\sigma v}(\omega_n)=0$ for all $v\in W_\Phi$ and $\sigma \in G_{\Phi}$.
\end{enumerate}  
\end{theorem}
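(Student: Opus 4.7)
The plan is to reduce the statement to Proposition~\ref{linearprogramming} by lifting the polynomial inequality $\tg \le \tf$ back to an inequality between $\Phi$-periodic functions on $\R^d$. Define
\[
g(x) := c_0 + \sum_{v\in W_\Phi\setminus\{0\}} c_v\,\CC^{G_\Phi}_v(x), \qquad x\in\R^d,
\]
so that, by \eqref{tildeP} and the definition $P_v^{\Phi}=\tilde{\CC}^{G_\Phi}_v$, one has $g(x)=\tg(Tx)$ with $T=T_{a_1,\ldots,a_d}$; in particular $g(0)=\tg(1,\ldots,1)$ since $T(0)=(1,\ldots,1)$ and $\CC^{G_\Phi}_v(0)=1$. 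The Fourier expansion of $g$ over $\Phi^*$ has coefficients $\hat g_0=c_0$ and $\hat g_{v'}=c_v/|G_\Phi(v)|$ whenever $v'$ lies in the orbit $G_\Phi(v)$ for $v\in W_\Phi\setminus\{0\}$. Hypotheses (a) and (b) then guarantee that $g$ is CPSD in the sense of Section~\ref{lattbasics}, and by construction $g$ is $G_\Phi$-invariant and $\Phi$-periodic, hence $S_\Phi$-invariant.

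Next I would verify $g\le F$ pointwise on $\R^d$. Since both functions are $S_\Phi$-invariant, it is enough to check the inequality on $\Delta_\Phi$. For $x\in\Delta_\Phi$ one has $Tx\in\tilde{\Delta}_\Phi$, and the identities $g(x)=\tg(Tx)$, $F(x)=\tf(Tx)$ combined with hypothesis (c) yield $g\le F$ on $\Delta_\Phi$, and hence everywhere by symmetry.

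With these facts established, Proposition~\ref{linearprogramming} applies and gives
\[
E_F(\omega_n)\ \ge\ E_g(\omega_n)\ \ge\ n^2\hat g_0 - n\,g(0)\ =\ n^2 c_0 - n\,\tg(1,\ldots,1),
\]
which is \eqref{TildeLB}. For the equality characterization I would translate the two conditions of Proposition~\ref{linearprogramming}. Condition (a), $g(x_i-x_j)=F(x_i-x_j)$ for $i\neq j$, transports under $T$ to $\tg(t)=\tf(t)$ on $T(\{x_i-x_j\mid i\neq j\})$, which is condition (1). Condition (b), $\hat g_{v'}M_{v'}(\omega_n)=0$ for all $v'\in\Phi^*\setminus\{0\}$, rewrites via the identification $\hat g_{\sigma v}=c_v/|G_\Phi(v)|$ and the partition of $\Phi^*\setminus\{0\}$ into orbits $G_\Phi(v)$, $v\in W_\Phi\setminus\{0\}$, as $c_v M_{\sigma v}(\omega_n)=0$ for every such $v$ and every $\sigma\in G_\Phi$, which is condition (2).

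No step here is genuinely difficult; the theorem is essentially a repackaging of Proposition~\ref{linearprogramming} in the polynomial variables $t_i=\cos(2\pi x_i/a_i)$ made available by Propositions~\ref{polystruct} and \ref{rectvars}. The single point that requires a touch of care is the orbit-averaged Fourier identification $\hat g_{\sigma v}=c_v/|G_\Phi(v)|$, since this is what matches condition (b) of Proposition~\ref{linearprogramming} with condition (2) of the present theorem and what ensures that the non-negativity of the $c_v$ translates into $g$ being CPSD.
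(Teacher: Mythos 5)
Your proof is correct and is exactly the argument the paper has in mind: lift $\tg$ to the $\Phi$-periodic, $G_\Phi$-invariant function $g=\tg\circ T$, check from (a) and (b) that $g$ is CPSD and from (c) together with $S_\Phi$-invariance that $g\le F$ everywhere, then invoke Proposition~\ref{linearprogramming} and transport its equality conditions through $T$ and the orbit decomposition of $\Phi^*$. The paper gives no written proof for this theorem (it is presented as a direct consequence of Propositions~\ref{rectvars}, \ref{polystruct}, and \ref{linearprogramming}), so your write-up simply supplies the intermediate bookkeeping; the only cosmetic point worth noting is that condition (2) in the statement should read $v\in W_\Phi\setminus\{0\}$, as your derivation from Proposition~\ref{linearprogramming}(b) correctly yields, since including $v=0$ would force $c_0=0$.
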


We now consider sufficient conditions   for the $F$-optimality  of configurations of the 
form 
$\omega(\Phi, \frac{1}{m}\Lambda)$,
where $[\Lambda:\Phi]=\kappa$.
For such a configuration (when the choices of $\Lambda, \Phi,$ and $\Delta_\Phi$ are clear), we define
\begin{equation}
\label{taudefn}
    \tau_{\kappa m^d}:=(\frac{1}{m}\Lambda) \cap \Delta_\Phi.
    \end{equation}

\begin{corollary}
\label{mukd}
Suppose $\Phi$,  $T:=T_{a_1,\ldots,a_d}$, $\tg$, and $F$ are as in Theorem \ref{genlinpoly} and   that $\Phi\subseteq \Lambda$, and $G_\Phi \subseteq G_\Lambda$ for some lattice $\Lambda\subset \R^d$. 
Then the configuration 
$\omega(\Phi,\frac{1}{m}\Lambda)$   defined in Prop. \ref{generalmoments} is $F$-optimal if
\begin{enumerate}
    \item $c_v=0$ for all   $v  \in \left(m \Lambda^*\right)\cap W_\Phi$, and 
    \item  $\tg(t) = \tf(t)$ for all $t  \in \tilde {\tau}_{\kappa m^d}\setminus \{\mathbf{1}\}$ where 
     $\mathbf{1}=(1,1,\ldots,1)\in\R^d$.
\end{enumerate}
\end{corollary}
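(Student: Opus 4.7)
The plan is to apply Theorem~\ref{genlinpoly} to $\omega_n:=\omega(\Phi,\tfrac{1}{m}\Lambda)$ with $n=\kappa m^d$ and verify its two equality conditions; $F$-optimality then follows immediately from that theorem.

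For the interpolation condition, fix $x_i\neq x_j\in\omega_n$ and set $y:=x_i-x_j\in\tfrac{1}{m}\Lambda$. Since distinct elements of $\omega_n$ represent distinct cosets of $\Phi$ in $\tfrac{1}{m}\Lambda$, $y\notin\Phi$. The continuous $\Phi$-periodic function $g:=c_0+\sum_{v}c_v\CC_v^{G_\Phi}$ corresponding to $\tg$ is $G_\Phi$-invariant (hence $S_\Phi$-invariant), and $F$ is $S_\Phi$-invariant by hypothesis, so we may pick $\sigma\in G_\Phi$ and $\phi\in\Phi$ with $y':=\sigma y+\phi\in\Delta_\Phi$, $g(y)=g(y')$, and $F(y)=F(y')$. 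The inclusions $G_\Phi\subseteq G_\Lambda$ and $\Phi\subseteq\Lambda$ ensure $y'\in\tfrac{1}{m}\Lambda$, so $y'\in\tau_{\kappa m^d}$. Since $\sigma$ preserves $\Phi$, $y'\in\Phi$ iff $y\in\Phi$, which fails; because $\Lambda_R\subseteq\Phi$, this forces $y'\notin\Lambda_R$, equivalently $T(y')\neq\mathbf{1}$. Thus $T(y')\in\tilde\tau_{\kappa m^d}\setminus\{\mathbf{1}\}$, and hypothesis~2 of the corollary gives $\tg(T(y))=\tg(T(y'))=\tf(T(y'))=\tf(T(y))$.

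For the moment condition, apply Theorem~\ref{generalmoments} with $\tfrac{1}{m}\Lambda$ as the ambient lattice and $\Phi$ as the sublattice: since $(\tfrac{1}{m}\Lambda)^*=m\Lambda^*$, we obtain $M_v(\omega_n)\neq 0$ iff $v\in m\Lambda^*$. The set $m\Lambda^*$ is $G_\Lambda$-invariant and $G_\Phi\subseteq G_\Lambda$, so for any $v\in W_\Phi$ and $\sigma\in G_\Phi$, $\sigma v\in m\Lambda^*$ iff $v\in m\Lambda^*$. Consequently $M_{\sigma v}(\omega_n)\neq 0$ forces $v\in m\Lambda^*\cap W_\Phi$, and hypothesis~1 then yields $c_v=0$; hence $c_v M_{\sigma v}(\omega_n)=0$ for every $v\in W_\Phi$ and $\sigma\in G_\Phi$. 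With both equality conditions in Theorem~\ref{genlinpoly} verified, equality is attained in \eqref{TildeLB}, giving $E_F(\omega_n)=\mathcal{E}_F(n)$ and the $F$-optimality of $\omega_n$. The only step that is more than bookkeeping is the observation that $\Lambda_R\subseteq\Phi$ prevents the nonzero differences from landing on the excluded point $\mathbf{1}$ after the change of variables---precisely why the interpolation hypothesis in~2 may omit $\mathbf{1}$.
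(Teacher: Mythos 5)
Your argument is correct and is essentially the intended derivation: the paper states Corollary~\ref{mukd} without a written proof, treating it as an immediate consequence of Theorem~\ref{genlinpoly} together with Theorem~\ref{generalmoments}, and your proposal supplies exactly the chain of verifications that this requires. In particular, you correctly identify the small but real subtlety that justifies excluding $\mathbf{1}$ from the interpolation node set --- since distinct points of $\omega(\Phi,\tfrac{1}{m}\Lambda)$ represent distinct $\Phi$-cosets, a nonzero difference $y$ never lies in $\Phi\supseteq\Lambda_R$, so its $S_\Phi$-representative $y'$ satisfies $T(y')\neq\mathbf{1}$ --- and you correctly use $G_\Phi\subseteq G_\Lambda$ both to push $y'$ into $\tau_{\kappa m^d}$ and to transport the nonvanishing-moment set $m\Lambda^*$ across $G_\Phi$.
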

If such a $\tg$ exists, we refer to it as a `magic' interpolant. 

\subsection{Example: Universal optimality of $\Z$}
\label{Zunivopt}
In this section we review an alternate proof of the universal optimality of $\Z$ that proceeds through the periodic approach.  The main tool used here is the observation in  Proposition~\ref{absmonotone} that the functions  $\tilde{\theta}(c;t)$ are absolutely monotone.   This proof is essentially from \cite{Su_2015}, but H. Cohn and A. Kumar were also aware of this approach \cite{CKprivate}. The proof we give that equally spaced points are universally optimal on the unit interval is equivalent to that of \cite{Cohn_Kumar_2007} showing that the roots of unity are universally optimal on the unit circle. For simplicity, we consider the case of an even number of roots; the odd case follows in a similar manner.  

Let  $\omega^\Z_{2m}=\{j/(2m)\mid j=0,1,\ldots,2m-1\}=\omega(\Z,\Z/(2m))$ 
and $t=\cos(2\pi x)$.  Then 
\begin{itemize}
    \item 
$P_k^{\Z}(t)=T_k(t)$    for $k=0,1,2,\ldots$  
\item $\mathcal{I}(\omega^\Z_{2m})=\Z\setminus (2m\Z)$
\item We have $\tilde {\tau}_{m}=\{\cos(2\pi j/(2m))\mid j=0,1,\ldots, m\}=\{t_{j,m}\mid j=0,1,\ldots, m\}$ where $t_{j,m}:=\cos(\pi (p-j)/m)$. 
\end{itemize}

Recall that the Chebyshev polynomials of the second kind are defined by the relation $$U_k(\cos \theta)\sin \theta=\sin((k+1)\theta), \qquad k=0,1,2,\ldots$$ and form the family of monic orthogonal polynomials with respect to the measure $(1-t^2)dt$ on $[-1,1]$.
These polynomials can be related to  Chebyshev polynomials of the first kind through the relations
$$U_k(t)=\begin{cases}
    2\sum_{j=0}^{\ell}T_{2j+1}& k=2\ell+1\\
    1+2\sum_{j=1}^{\ell}T_{2j}& k=2\ell,
\end{cases}$$
showing that  $U_k$  is PSD for   $k=0, 1,2,\ldots$.   

Observe that the points  $-1<  t_{1,m}<\cdots<t_{m-1,m}<1$  are also  the roots  of $U_{m}$.  It then follows using the Christoffel-Darboux formula that the partial products  $\prod_{\ell=0}^{j}(t-t_\ell)$ have expansions in   $U_0,U_1,\ldots,U_j$ with positive coefficients for  $j=1,\ldots,m-2$ (see \cite[Prop 3.2]{Cohn_Kumar_2007} or \cite[Thm A.5.9]{MEbook}). Likewise, $\prod_{\ell=1}^{m-1}(t-t_\ell)$ is simply $U_{m-1}$ and $(t+1)=(t-t_0)$, both of which are positive definite. Hence, each such partial product is PSD as is any product of such partial products; in particular, with $T=\{t_0,t_0,t_1,t_1,\ldots,t_{m-1},t_{m-1}\}$, the partial products $p_j(T;t)$ defined in \eqref{parprod} are PSD for $j\le 2m$.  

By Proposition~\ref{absmonotone}, the function $\tf_{a,\Z}$ is absolutely monotone on $[-1,1]$ and, since the divided differences of an absolutely monotone function are non-negative, it follows that the interpolant $H_T(\tf_{a,\Z})(t)$ defined in \eqref{HTf} is PSD. Moreover, $H_T(\tf_{a,\Z})(t)$ has degree at most $2m-1$, and the error formula \eqref{ErrForm} shows that $H_T(\tf_{a,\Z})\le \tf_{a,\Z}$ on [-1,1].   
\section{The Linear Programming Framework for the families $\omega_{m^2},\omega_{2m^2},\omega_{3m^2},$ and $\omega_{6m^2}$}
\label{sec3}
We explicitly apply Corollary \ref{mukd} to the four families of periodic problems described in the introduction to obtain bivariate polynomial interpolation problems whose solutions would verify the $A_2$-universal optimality of $\omega_{m^2}$ and $\omega_{3m^2}$, and the $L$-universal optimality of $\omega_{2m^2}$ and $\omega_{6m^2}$. 
Observe that our four families of point configurations are of the form $\mu_{\kappa m^2}$ with the following choices of lattices $ \Phi \subseteq \Lambda$:
\begin{enumerate}
    \item $\omega^*_{m^2}$: $\Phi=A_2, \Lambda=A_2$
    \item $\omega^*_{2m^2}$: $\Phi=L, \Lambda=A_2$
    \item $\omega^*_{3m^2}$: $\Phi=A_2, \Lambda=\frac{1}{\sqrt{3}}A_2^{\pi/6}$
    \item $\omega^*_{6m^2}$: $\Phi=L, \Lambda=\frac{1}{\sqrt{3}}A_2^{\pi/6}$.
\end{enumerate}
It is straightforward to check that in all cases, $\Phi$ and $\Lambda$ satisfy the conditions of Theorem \ref{genlinpoly}. 
Since both choices of $\Phi$, $A_2$ and $L$, contain $L$ as a rectangular sublattice, we will work with the following change of variables to induce our polynomial structure, as described in Proposition \ref{polystruct}:  
\begin{align}
(t_1,t_2):=\left(\cos(2\pi x_1),\cos \left(\frac{2\pi x_2}{\sqrt{3}}\right)\right).
\end{align}
We will use $T$ to denote
the change of variables $T(x)=(t_1(x_1),t_2(x_2))$.
\begin{figure}[H]
    \centering
    \includegraphics[width=10cm]{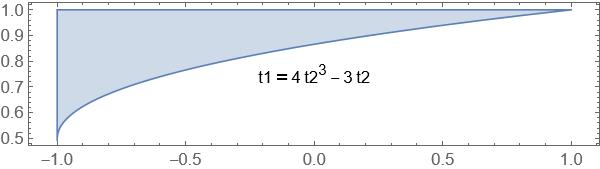}
    \caption{The region $\td$, pictured above, is our region of interpolation for the families $\omega^*_{m^2}$ and $\omega^*_{3m^2}$ (see sec. 3.4).} 
\end{figure}

Importantly, the maps $F_{a,\Phi}$ are also well-behaved under this change of variables, as seen through decomposing $F_{a,\Phi}$ into $\theta$ functions as described in Proposition \ref{thetafnrectlattice}.

When $\Phi=L$, we obtain 
\begin{align}\label{FaL}
F_{a,L}(x)=\frac{\pi}{\sqrt{3}a}\theta(\frac{\pi}{a};x_1)\theta(\frac{\pi}{3a};\frac{x_2}{ \sqrt{3}}).
\end{align}
As a result,
\begin{align*}
\tf_{a,L}(t_1,t_2)&:=F(\frac{ \arccos(t_1)}{2\pi},\frac{ \sqrt{3}\arccos(t_2)}{2\pi})\\
&=
\frac{\pi}{\sqrt{3}a }\theta(\frac{\pi}{a };\frac{\arccos(t_1)}{2\pi})\theta(\frac{\pi}{3a };\frac{\arccos(t_2)}{2\pi})\\
&=
\frac{\pi}{\sqrt{3}a }
\tth(\frac{\pi}{a };t_1)\tth(\frac{\pi}{3a };t_2)
\end{align*}
for $t_1,t_2 \in [-1,1]$.
Thus, for fixed $t_1\in [-1,1]$, $\tf_{a,L}$ is strictly absolutely monotone as a function of $t_2$ and vice versa. We will use the absolute monotonicity in $t_1$ and $t_2$ repeatedly, and the simplicity of this formula for $\tf$ is one of the main motivations for considering the sublattice $L$.

On the other hand, when $\Phi= A_2$, we arrive at the following formula, 
which also appears in \cite{Baernstein_1997} and \cite{Su_2015}:
\begin{equation}\label{FaA2}
F(x):=F_{a,A_2}(x)=\frac{\pi}{\sqrt{3}a}\left(
\theta(\frac{\pi}{a};x_1) 
\theta(\frac{\pi}{3 a};\frac{x_2}{\sqrt{3}})+ 
\theta(\frac{\pi}{3 a};\frac{x_2}{\sqrt{3}}+\frac{1}{2})
\theta(\frac{\pi}{a};x_1+\frac{1}{2})\right),
\end{equation}
and
\begin{equation}
\begin{split}
\tilde F(t)&:=F(\frac{\arccos t_1}{2\pi},\frac{ \sqrt{3}\arccos t_2}{2\pi})\\
&=\frac{\pi}{\sqrt{3}a}( \tilde{\theta}(\frac{\pi}{ a};t_1)  \tilde{\theta}(\frac{\pi}{ 3a};t_2 ) +\tilde{\theta}(\frac{\pi}{a};-t_1)\tilde{\theta}(\frac{\pi}{ 3a};-t_2)
 ).
\end{split} 
\end{equation}

The next corollary follows immediately from the absolute monotonicity of $\tth$ (see Proposition \ref{absmonotone}). 
\begin{corollary}
\label{evenpartials}
For any nonnegative integers $l_1$ and $l_2$ whose sum $l_1+l_2$ is even, we have on all $[-1,1]^2$ that
\[
\frac{\partial^{l_1+l_2}\tf}{\partial^{l_1} t_1 \partial^{l_2} t_2 }>0.
\]
\end{corollary}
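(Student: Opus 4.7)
The approach is to differentiate the explicit formula for $\tf$ given in \eqref{FaA2} term-by-term and then appeal to the strict absolute monotonicity of $\tth(c;\cdot)$ established in Proposition~\ref{absmonotone}. The structure of $\tf$ as a sum of two separable products in $t_1,t_2$ makes this direct: no multivariate machinery is required, only the chain rule and a parity tracking of signs.

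First, I would observe that since $\tf$ is a sum of two products of the form $\tth(c_1;\pm t_1)\tth(c_2;\pm t_2)$ (with $c_1=\pi/a$ and $c_2=\pi/(3a)$), applying $\partial^{l_1+l_2}/(\partial t_1^{l_1}\partial t_2^{l_2})$ gives
\[
\frac{\partial^{l_1+l_2}\tf}{\partial t_1^{l_1}\partial t_2^{l_2}}
= \frac{\pi}{\sqrt{3}a}\Bigl[\tth^{(l_1)}(c_1;t_1)\,\tth^{(l_2)}(c_2;t_2)+(-1)^{l_1+l_2}\tth^{(l_1)}(c_1;-t_1)\,\tth^{(l_2)}(c_2;-t_2)\Bigr],
\]
where the factor $(-1)^{l_1+l_2}$ in the second summand comes from the chain rule applied to the inner $-t_i$ arguments.

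Next, I would note that if $t_1,t_2\in[-1,1]$, then $-t_1,-t_2\in[-1,1]$ as well, so Proposition~\ref{absmonotone} yields $\tth^{(l_1)}(c_1;\pm t_1)>0$ and $\tth^{(l_2)}(c_2;\pm t_2)>0$ for every choice of nonnegative integers $l_1,l_2$. Under the hypothesis that $l_1+l_2$ is even, the sign factor $(-1)^{l_1+l_2}$ equals $+1$, so the bracketed expression is a sum of two strictly positive quantities. Hence the partial derivative is strictly positive on all of $[-1,1]^2$.

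There is no substantive obstacle here; the only care needed is keeping track of the sign contributed by differentiating $\tth(c;-t_i)$ versus $\tth(c;t_i)$, and noting why the even-parity hypothesis is precisely what makes the two separable summands reinforce rather than cancel. (In the odd-parity case the two terms have opposite signs and one generally gets an expression that vanishes along the antidiagonal $t_1=-t_1, t_2=-t_2$, which is consistent with the corollary being stated only for even $l_1+l_2$.)
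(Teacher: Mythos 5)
Your proof is correct and makes explicit exactly the argument the paper sketches: differentiating the separable sum formula \eqref{FaA2} term by term, tracking the $(-1)^{l_1+l_2}$ sign from the chain rule on the $-t_i$ arguments, and invoking the strict absolute monotonicity of $\tth$ from Proposition~\ref{absmonotone} to conclude both summands are strictly positive when $l_1+l_2$ is even. (The parenthetical remark about the odd case is slightly misphrased --- the set ``$t_1=-t_1$, $t_2=-t_2$'' is the origin, not the antidiagonal --- but this does not affect the proof itself.)
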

Finally, we'll use the following lemma from \cite{Baernstein_1997} (also see \cite{Su_2015}). 
\begin{lemma}
\label{pospartials}
On all of $[-1,1]\times[\frac12,1]$, we have the inequalities
\begin{align}
\frac{\partial \tf}{\partial t_1}
> 0, 
\frac{\partial \tf}{\partial t_2}
\geq 0 
\end{align}
where the equality holds if and only if $t_1=-1$, $t_2=\frac12$.  In particular, these inequalities hold on all $\td$. 
\end{lemma}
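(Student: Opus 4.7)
The plan has three stages: reduce the two inequalities to their values at a single corner, identify that corner with an $A_2$-centroid whose 3-fold rotational symmetry rigidifies the gradient and Hessian of $F$, and translate these $x$-space facts back to $t$-space via the chain rule.

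First, differentiating \eqref{FaA2} gives explicit formulas for $\partial\tf/\partial t_1$ and $\partial\tf/\partial t_2$ as differences of products of $\tth$ (or its derivative in the second argument) evaluated at $\pm t_i$. Applying Corollary~\ref{evenpartials} with $(l_1,l_2)\in\{(2,0),(1,1),(0,2)\}$ shows that all three second partials of $\tf$ are strictly positive on $[-1,1]^2$, so both $\partial\tf/\partial t_1$ and $\partial\tf/\partial t_2$ are strictly increasing in each of $t_1$ and $t_2$. Their minima over $[-1,1]\times[\tfrac12,1]$ are therefore attained uniquely at the corner $(-1,\tfrac12)$, so the lemma will reduce to
\[
\frac{\partial\tf}{\partial t_1}(-1,\tfrac12)>0 \qquad \text{and} \qquad \frac{\partial\tf}{\partial t_2}(-1,\tfrac12)=0.
\]

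The corner $(-1,\tfrac12)$ corresponds to $x_0=(\tfrac12,\tfrac{\sqrt{3}}{6})$, the centroid of the triangle $\{(0,0),(1,0),(\tfrac12,\tfrac{\sqrt{3}}{2})\}\subset A_2$. I would verify that rotation by $2\pi/3$ about $x_0$ permutes these three lattice vertices cyclically and differs by the translation $(1,0)\in A_2$ from the standard lattice rotation about the origin; hence $F$ is invariant under this rotation. This yields $\nabla F(x_0)=0$, and since $\operatorname{Hess}F(x_0)$ must be a symmetric $2\times 2$ matrix commuting with a nontrivial planar rotation, it must equal $\lambda I$ for some scalar $\lambda$. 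The chain-rule identity $\partial F/\partial x_2=-\tfrac{2\pi}{\sqrt{3}}\sin(\tfrac{2\pi x_2}{\sqrt{3}})\,\partial\tf/\partial t_2$, evaluated at $x_0$ (where $\sin(\pi/3)\ne 0$), then immediately gives $\partial\tf/\partial t_2(-1,\tfrac12)=0$.

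The remaining inequality will follow by computing $\lambda$ two ways. Expanding $\partial^2 F/\partial x_2^2$ by the chain rule at $x_0$ and using the vanishing just established collapses the expression to $\lambda=\pi^2\,\partial^2\tf/\partial t_2^2(-1,\tfrac12)$, which is strictly positive by Corollary~\ref{evenpartials}. Separately, at $x_1=\tfrac12$ (where $\sin\pi=0$ and $\cos\pi=-1$) the identity
\[
\frac{\partial^2 F}{\partial x_1^2}=-4\pi^2\cos(2\pi x_1)\,\frac{\partial\tf}{\partial t_1}+4\pi^2\sin^2(2\pi x_1)\,\frac{\partial^2\tf}{\partial t_1^2}
\]
collapses to $\partial^2 F/\partial x_1^2(x_0)=4\pi^2\,\partial\tf/\partial t_1(-1,\tfrac12)$; equating with $\lambda>0$ delivers the required strict positivity. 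The main obstacle I anticipate is keeping the chain-rule formulas straight through the degenerate corner where $\sin(2\pi x_1)$ vanishes, since it is precisely this degeneracy that makes the collapse possible and converts positivity of $\lambda$ (gained from the $x_2$-direction) into positivity of $\partial\tf/\partial t_1$. All remaining positivity is handed to us by Corollary~\ref{evenpartials} together with the centroid symmetry, so no fresh theta-function estimate will be required.
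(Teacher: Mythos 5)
Your proposal is correct, and the reduction step---using Corollary~\ref{evenpartials} to show that both $\partial\tf/\partial t_1$ and $\partial\tf/\partial t_2$ are increasing in each variable and hence attain their minimum over $[-1,1]\times[\tfrac12,1]$ at the corner $(-1,\tfrac12)$---is exactly the paper's reduction. Where you depart from the paper is in the corner check: the paper simply cites Baernstein and Su for the verification at $(-1,\tfrac12)$, whereas you supply a self-contained argument from the threefold rotational symmetry of $F_{a,A_2}$ about the deep hole $x_0=(\tfrac12,\tfrac{\sqrt3}{6})$. Your symmetry argument is sound: rotation by $2\pi/3$ about $x_0$ differs from the lattice rotation about the origin by the translation $(1,0)\in A_2$, so $F$ is invariant under it, forcing $\nabla F(x_0)=0$ and $\operatorname{Hess}F(x_0)=\lambda I$; the chain rule at the degenerate point $x_1=\tfrac12$ then collapses to give $\partial\tf/\partial t_2(-1,\tfrac12)=0$ and $4\pi^2\,\partial\tf/\partial t_1(-1,\tfrac12)=\lambda=\pi^2\,\partial^2\tf/\partial t_2^2(-1,\tfrac12)>0$. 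This buys the reader a proof internal to the paper's framework at the cost of a short Hessian/symmetry computation, and it makes transparent \emph{why} the equality case is exactly the centroid---it is forced by the lattice symmetry rather than by a separate theta-function estimate.
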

\begin{proof}
Since even partial deriviatives of $\tf$ are positive and every point $(t_1,t_2)\in \td$ satisfies $t_1\geq -1$ and $t_2\geq \frac12$, it suffices to verify the inequalities 
\[
\frac{\partial \tf}{\partial t_1}(t_1,t_2)> 0 , \frac{\partial \tf}{\partial t_2}(t_1,t_2)\geq 0
\]
at $(-1,\frac1 2)$. See \cite{Baernstein_1997} or \cite{Su_2015}.  
\end{proof}
As observed in \cite{Su_2015} (also see \cite[Chapter 10]{MEbook} 
and \cite{Faulhuber_2023}), this Lemma \ref{pospartials} suffices to proves the $A_2$-universal optimality of the 2 and 3-point configurations discussed in the introduction (see section \ref{interp3msq} for more detail). 


We will make the following choices of fundamental domains 
$\R^2/S_{\Phi}$ and $\Phi^*/ G_{\Phi}$. 
When $\Phi=L$, we take as a choice for  $\Phi^*/ G_{L}$ the set 
\[
W_{L}:=\left\{\begin{bmatrix}
    k_1\\
    k_2/\sqrt{3}
\end{bmatrix} \mid k_1,k_2\in \Z \text{ and }k_1,k_2\geq 0\right\}
\]
and $[0,1/2]\times[0,\sqrt{3}/2]$ for $\R^2/S_{L}$. Likewise for $A_2$, we take the sets 
\[
W_{A_2}:= \left\{
\begin{bmatrix}
    k_1\\
    k_2/\sqrt{3}
\end{bmatrix}\in L^*\mid 
 0\le k_2\le  k_1 \text{ and } k_1\equiv k_2  \pmod 2 \, \right\}
\]
and $\Delta_{A_2}$ (see Sec. \ref{lpboundssection}) for $\Phi^*/ G_{A_2}$ and $\R^2/S_{A_2}$, respectively. 

Finally, the following characterizations of the dual lattices $\Lambda$ will be useful for determining which degree polynomials are available to us for interpolation. First,
$L^*=\{[k_1,k_2/\sqrt{3}]^T \mid k_1,k_2\in \Z\}$. Then $A_2^*=\{ v\in L^*\mid v\cdot e_1 \equiv v\cdot \sqrt{3}e_2 \pmod 2\}$, and $\left(\frac{1}{\sqrt{3}}A_2^{\pi/6}\right)^*=\{ v\in A_2^*\mid v\cdot \sqrt{3}e_2 \equiv 0 \pmod 3\}$. 
Thus, using Theorem \ref{generalmoments}, the (non-redundant) sets of  all $v$ for which $c_v$ may be non-zero in the construction of an interpolant $\tg_a$ are expressed by the index sets
\small
\begin{align}
\mathcal{I}_{m^2}:=\mathcal{I}_{A_2}(\omega_{m^2}^*)\cap W_{A_2}& =  W_{A_2}/(mA_2^*)\\
&=\{[k_1,k_2/\sqrt{3}] \mid k_1,k_2 \geq 0, k_1 \equiv k_2 \bmod 2, [k_1,k_2]\neq m[j_1,j_2]\}\\
\mathcal{I}_{2m^2}:= \mathcal{I}_{L}(\omega_{2m^2}^*)\cap W_{L}&=W_{L}/(mA_2^*)\\
&=
\{[k_1,k_2/\sqrt{3}] \mid k_1,k_2 \geq 0, [k_1,k_2]\neq m[j_1,j_2] \text{ for some } j_1 \equiv j_2 (\bmod 2)\}
\\
\mathcal{I}_{3m^2}
:=\mathcal{I}_{A_2}(\omega_{3m^2}^*)\cap W_{A_2}&=  W_{A_2}/(mA_2^*)
\\
&=\{[k_1,k_2/\sqrt{3}] \mid k_1,k_2 \geq 0, k_1 \equiv k_2 \bmod 2, [k_1,k_2]\neq m[j_1,j_2] \text{ for some } j_2\equiv 0\bmod 3\}\\
\mathcal{I}_{6m^2}:=\mathcal{I}_{L}(\omega_{6m^2}^*)\cap W_{L}&=W_{L}/(mA_2^*)\\
&=
\{[k_1,k_2/\sqrt{3}] \mid k_1,k_2 \geq 0, [k_1,k_2]\neq m[j_1,j_2] \text{ for some } j_1 \equiv j_2 (\bmod 2), j_2 \equiv 0 \bmod 3\}.
\end{align}
\normalsize
\subsection{The Polynomials $P^{L}_{v}$ and  $P^{A_2}_{v}$}
\label{G2polynomials}
When $\Phi=L$, we have already shown that the functions are $P^{L}_v$ are tensors of Chebyshev Polynomials 
\[
P^{L}_v= T_{k_1}(t_1) T_{k_2}(t_2)
\]
where $v=[k_1,k_2/\sqrt{3}]^T$, $k_1,k_2\geq 0$ is an arbitrary element of $W_L$. 
(see Proposition \ref{rectvars}).\\
What can be said in the case when $\Phi=A_2$? These polynomials have been studied extensively (see \cite{Sun_2007}, \cite{Li_Sun_Xu_2012}, and references therein). Of particular importance to  $P^{A_2}_v$ are the polynomials $P_{v'}$ and $P_{v''}$, where $v':=[1,1/\sqrt{3}]^T$ is the shortest non-zero vector in $W_{A_2}$ and $v'':=[2,0]^T$ is the next shortest vector. 
We have
\begin{align}
P_{v'}&=\frac13 (-1+2t_2(t_1+t_2)),\\
P_{v''}&=\frac13 (-1+2 t_1 (t_1-3 t_2+4 t_2^3)).
\end{align} 
Perhaps surprisingly, every other $P_{v}$ can be expressed as a bivariate polynomial in $P_{v'}$ and $P_{v''}$, i.e. for any $v\in A_2^*$, there exist coefficients $c_{i,j}$ (with only finitely many nonzero) such that 
\[
P_{v}=\sum_{i,j\geq 0}c_{i,j} (-1+2t_2(t_1+t_2))^i (-1+2 t_1 (t_1-3 t_2+4 t_2^3))^j
\]
Note that since $P_{v'}$ and $P_{v''}$ contain only monomials of even total degree, the same is true of arbitrary $P_{v}$.
To further understand these bivariate polynomials, we set $\alpha=P_{v'} $, $\beta=P_{v''}$ and introduce a notion of degree,  first given in \cite{Patera_2010}, on polynomials of the form $\alpha^{k_0} \beta^{k_1}, i,j\geq 0$.
\begin{definition}
The $A_2$-degree of $\alpha^{k_0}\beta^{k_1}$ is $2k_0+3k_1$.
\end{definition}
If $v\in W_{A_2}$, then $v=k_0 v' + k_1 v''$ for some unique $k_0,k_1 \geq 0$, and so we can likewise introduce the notion of the $A_2$ degree of $v\in W_{A_2}$ as $2k_0+3k_1$. We will denote the degree function as $\mathcal{D}$ for both polynomials and elements of $W_{A_2}$. 
Now we can introduce an ordering on $\{\alpha^{k_0} \beta^{k_1}: k_0,k_1\geq 0\}$ by $A_2$-degree and break ties via the power of $\alpha$. Then the leading term (by $A_2$-degree) of $P_v$ is $\alpha^{k_0}\beta^{k_1}$. Certainly, this is true for our first polynomials, $P_{0}=1$, $P_{v'}=\alpha$, and $P_{v''}=\beta$, and then an examination of the recursion generating the polynomials shows that the claim holds inductively  (cf. \cite{Li_Sun_Xu_2012}).

\subsection{Interpolation Nodes}
Our final step to applying Corollary \ref{mukd} is to calculate the nodes $\tau_{\kappa m^2}$ for each family.
Straight from the definition,
\begin{align}
\tau_{\kappa m^2}&=\omega_{\kappa m^2}\cap \Delta_{A_2}, \hspace{1cm}\kappa=1,3\\
\tau_{\kappa m^2}&=\omega_{\kappa m^2}\cap ([0,1/2]\times[0,\sqrt{3}/2]),\hspace{1cm}\kappa=2,6
\end{align}
and so under the $T$ change of variables, we obtain
\begin{align}
\tilde{\tau}_{m^2}&=\left\{ \left(\cos(\frac{\pi k_1}{m}),\cos(\frac{\pi k_2}{m})\right)\mid 0\leq 3k_2\leq k_1\leq m, k_1\equiv k_2 (\bmod 2) \right\}\\
\tilde{\tau}_{2m^2}&=\left\{ \left(\cos(\frac{\pi k_1}{m}),\cos(\frac{\pi k_2}{m})\right)\mid 0\leq k_1,k_2\leq m, k_1\equiv k_2 (\bmod 2) \right\}\\
\tilde{\tau}_{3m^2}&=\left\{ \left(\cos(\frac{\pi k_1}{m}),\cos(\frac{\pi k_2}{3m})\right)\mid 0\leq k_2\leq k_1\leq m, k_1\equiv k_2 (\bmod 2) \right\}\\
\tilde{\tau}_{6m^2}&=\left \{ \left(\cos(\frac{\pi k_1}{m}),\cos(\frac{\pi k_2}{3m})\right)\mid 0\leq 3k_1,k_2\leq 3m, k_1\equiv k_2 (\bmod 2) \right\}.
\end{align}

\subsection{Interpolation Problem for $\omega_{m^2}$} 
With all the machinery now set up, we address the family $\omega_{m^2}^*$ and its base case,  $\omega_{4}^*$.  Recall $v':=[1, 1/\sqrt{3}]$ is the shortest vector in $W_{A_2}$ and   
$P_{v'}=\frac13 (-1+2t_2(t_1+t_2))$. In Section~\ref{section4}, we prove the $A_2$-universal optimality of $\omega_4^*$ by constructing for each $a>0$  a polynomial of the form
$g_a(t_1,t_2):=c_0+c_1 P_{v'}(t_1,t_2)$
with $c_1\ge 0$ such that $g_a\le F_{a,A_2}$ on $\td$.

\begin{figure}[H]
    \centering
    \includegraphics[width=10cm]{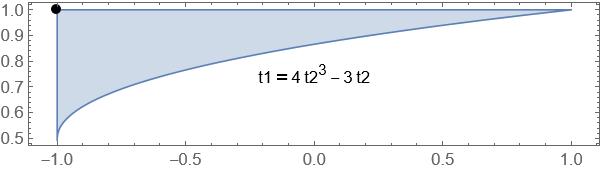}
    \caption{$\tg_a$ must stay below $\tf_a$ on $\td$ with equality at the corner point $(-1,1)$.}
    \label{4ptinterpolationpicture}
\end{figure}

For general $m$, recalling  the background on $G_2$ polynomials in Sec. \ref{G2polynomials}, we note that 
\[
\{ v\in W_{A_2}\mid  \mathcal{D}(v) < 2m\}\subset \mathcal{I}_{m^2}.
\]
The containment holds because if $v=k_0 v'+k_1v''$ and $\mathcal{D}(v) < 2m$, then $k_0< m$, and so $v\not \in mA_2^*$ (i.e. $v \in \mathcal{I}_{m^2}$).
For the $m=2$ base case already discussed, 
our interpolant $\tg_a$ satisfies 
\[
\tg_a \in 
\text{span}\{ P_{v}: v\in W_{A_2}, \mathcal{D}(v)< 2m \}.
\]
\subsection{Interpolation Problem for $\omega^*_{2m^2}$}
Now to the case of $\omega^*_{2m^2}$ with base case $\omega^*_{2}$.
\begin{figure}[H]
    \centering
    \includegraphics[height=6cm]{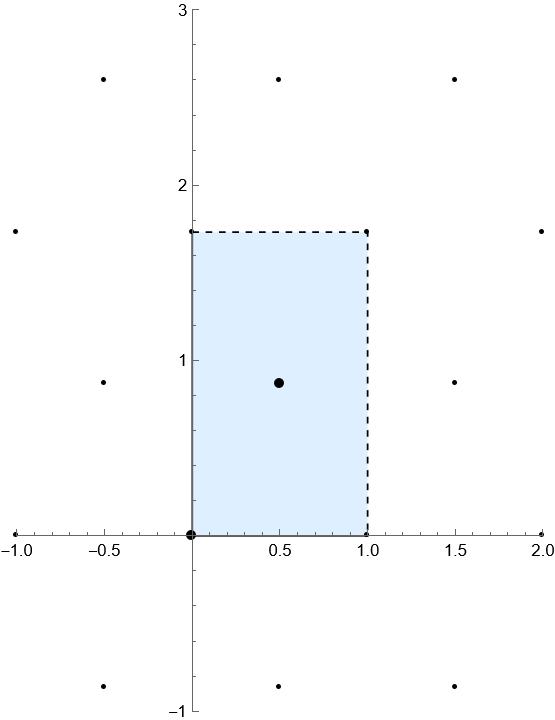}
    \caption{$\omega^*_{2}$, pictured above is $L$-universally optimal, and analogous results hold for any rectangular lattice.}
\end{figure}

The universal optimality of $\omega^*_{2}$  follows from 
\[
F_{a,L}(x)=\frac{\pi}{\sqrt{3}a}
\theta(\frac{\pi}{a};x_1) 
\theta(\frac{\pi}{3 a};\frac{x_2}{\sqrt{3}}),
\]
which takes its minimum at $(1/2,\sqrt{3}/2)$ for all $a$, as $\theta(c,x)$ takes its minimum at $x=1/2$ for all $c>0$ (see Proposition \ref{absmonotone}). 
Since the $F_{a,L}$ energy of a two-point configuration is determined only by the difference of the two points in the configuration, the universal optimality of $\omega^*_2$ immediately follows, and the same argument can be used to show for any rectangular lattice (cf. \cite{Faulhuber_2023}) that a point at the origin and a point at the centroid of a rectangular fundamental domain yield a 2-point universally optimal configuration. 

For the general case, 
\[
\{ [k_1,k_2/\sqrt{3}]^T: 0\leq k_1,k_2 \text{ and } k_1+k_2< 2m\} \subseteq\mathcal{I}_{2m^2},
\]
and then
\[
\text{span}\{P_{v} \mid v=
[k_1,k_2/\sqrt{3}]^T: 0\leq k_1,k_2 \text{ and } 
k_1+k_2< 2m\}= \mathcal{P}_{2m-1}(t_1,t_2),
\]
where
$\mathcal{P}_n(t_1,t_2)$ is the set of bivariate polynomials of total degree at most $n$. 
For the first non-trivial case, $\omega^*_{8}$, 
we have numerical evidence that for each $a>0$, an interpolant $\tg_a$ exists in $P_{3}$ and satisfies the conditions of Corollary \ref{mukd}. 

\begin{figure}[H]
    \centering
    \begin{minipage}{0.45\textwidth}
        \centering
        \includegraphics[width=0.9\textwidth]{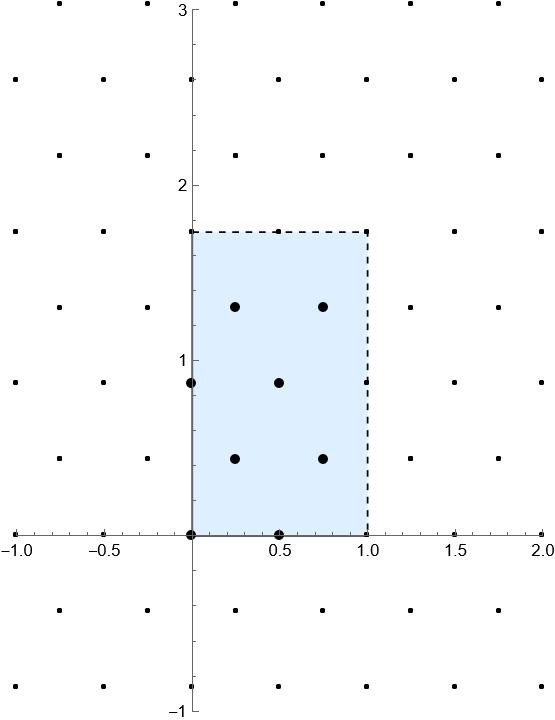} 
        \caption{The 8 larger points comprise $\omega^*_8$}
    \end{minipage}\hfill
    \begin{minipage}{0.45\textwidth}
        \centering
        \includegraphics[width=0.9\textwidth]{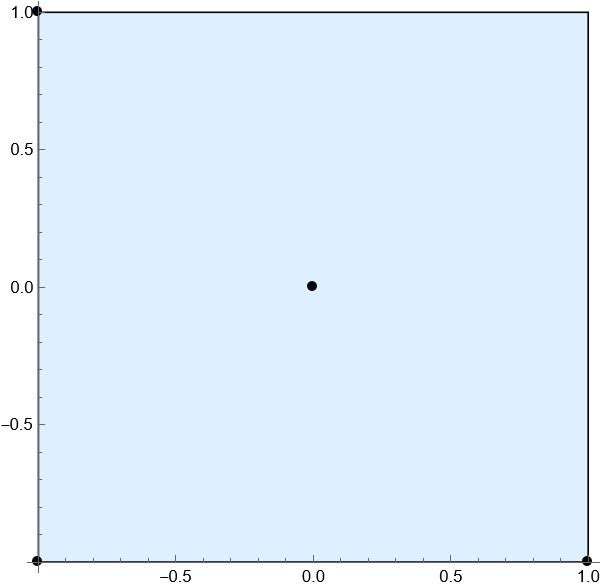} 
        \caption{After applying the linear programming framework, it suffices to find for each $a>0$ an interpolant $\tg_a\in \mathcal{C}_{8}$ such that $\tg_a \leq \tf_a$ on $[-1,1]$ with equality at the 4 points shown.}
    \end{minipage}
\end{figure}

\subsection{Interpolation Problems for $\omega^*_{3m^2}$}
\label{interp3msq}
Now to the case of $\omega^*_{3m^2}$ with base case $\omega^*_{3}$.
\begin{figure}[H]
    \centering
    \begin{minipage}{0.45\textwidth}
        \centering
        \includegraphics[width=0.9\textwidth]{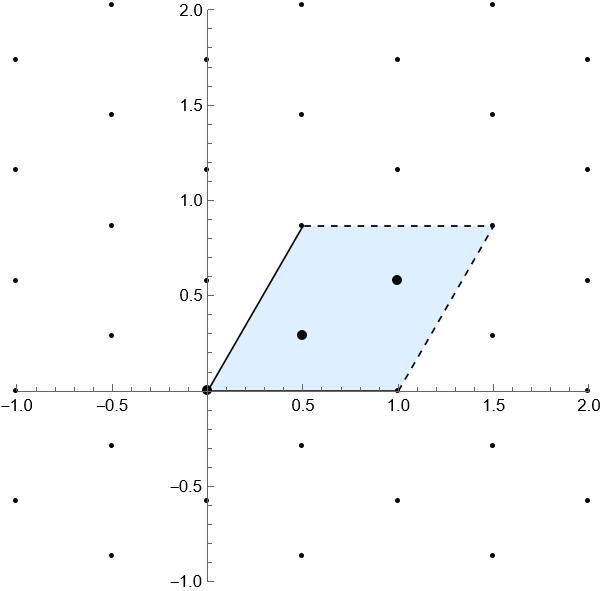} 
        \caption{The 3 larger points comprise $\omega^*_3$}
    \end{minipage}\hfill
    \begin{minipage}{0.45\textwidth}
        \centering
        \includegraphics[width=0.9\textwidth]{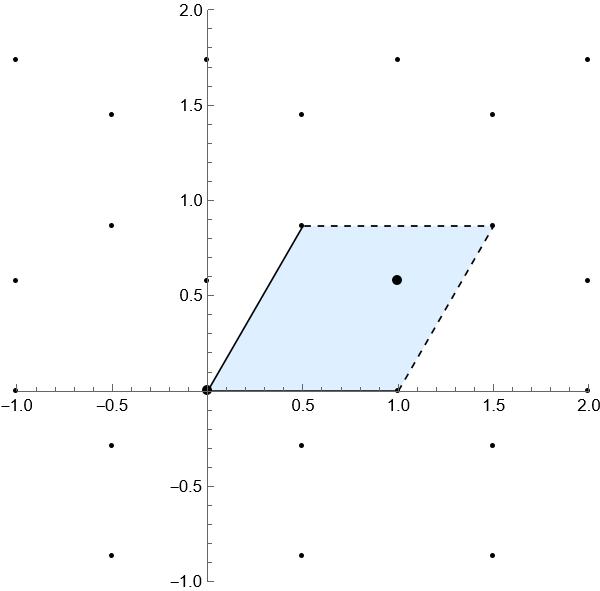}
        \caption{Almost the exact same analysis needed for the $A_2$-universal optimality of $\omega^*_3$ yields $A_2$-universal optimality of the two point honeycomb configuration} 
    \end{minipage}
\end{figure}
The universal optimality of $\omega^*_{3}$ (cf. \cite{Su_2015} and \cite{Faulhuber_2023}) follows from Lemma \ref{pospartials}, which is used to show 
a global minimum of $F_{a,A_2}$ occurs at $(1/2,\sqrt{3}/6)$ for all $a>0$. This point, $(1/2,\sqrt{3}/6)$, is also the only difference $x-y$ (up to $S_{A_2}$ action) for $x,y\in \omega^*_3$. Thus for an arbitrary 3-point configuration $\omega_3$, we have
\[
E_F(\omega_3)\geq 6 F(1/2,\sqrt{3}/6)=E_F(\omega^*_3)
\]
and so $\omega^*_3$ is $A_2$-universally optimal. This same line of argument is also used in \cite{Su_2015} to show that the 2-point honeycomb configuration pictured above are $A_2$-universally optimal.

More generally, we suggest invoking $A_2$-degree as in the $m^2$ case to find a nice subset of $\mathcal{I}_{3m^2}$. We have the containment 
\[
\{ v\in W_{A_2}\mid  \mathcal{D}(v) < 3m\}\subset \mathcal{I}_{3m^2}.
\]
The containment holds because if $v=k_0 v'+k_1v''$ and $\mathcal{D}(v) < 3m$, then either $v\not \in mA_2^*$ or $v=m v'\not \in (\frac{1}{\sqrt{3}}A_2^{\pi/6})^*$.

\subsection{Interpolation Problems for $\omega^*_{6m^2}$} \label{6ptInProb}
It remains to consider the family $\omega_6^*$ and its base case $\omega_{6}^*$, whose universal optimality involves our most complex application of the linear programming bounds.
In section \ref{section6}, we prove the $L$-universal optimality of $\omega^*_6$ by constructing for each $a>0$ an interpolant of the 
form 
\[
g_a(t_1,t_2)
=
b_{0,0}+b_{1,0}t_1 +b_{0,1}t_2+ b_{1,1}t_1t_2+b_{0,2}t^{2}_2
\]
where  $b_{i,j}\geq 0$ for $(i,j)\neq 0$. In that section, we will explain in greater detail why such a $\tg_a$ satisfies the conditions of Corollary \ref{mukd}.

\begin{figure}[H]
    \centering
    \includegraphics[height=6cm]{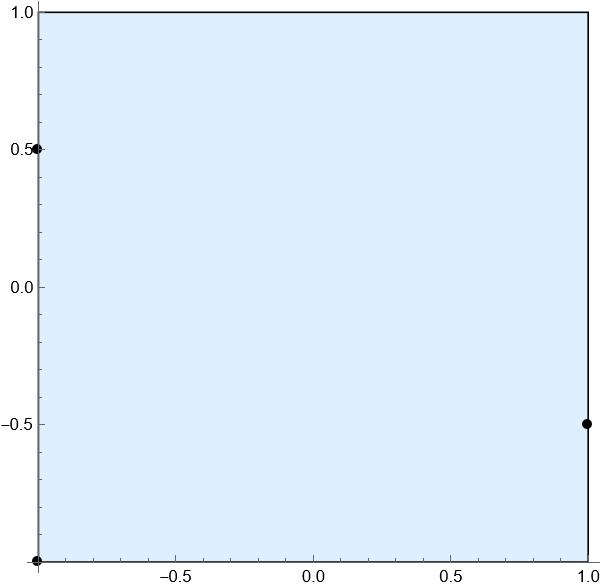}
    \caption{$\tg_a$ must stay below $\tf_a$ on $[-1,1]^2$ with equality at the three points shown}
\end{figure}
Finally, for arbitrary $m$, we propose a few nice subsets of $\mathcal{I}_{6m^2}$. First, we have the set
\[
\{
[k_1,k_2/\sqrt{3}]^T: 0\leq k_1 <2m, 0\leq k_2 < 3m
\}
\subseteq 
\mathcal{I}_{6m^2},
\]
and 
\[
\text{span}\{P_{v} \mid v=
[k_1,k_2/\sqrt{3}]^T: 0\leq k_1< 2m, 0\leq k_2<3m \}= \mathcal{P}_{2m-1}(t_1)\times \mathcal{P}_{3m-1}(t_2).
\]
Working with such a tensor space of polynomials is natural due to the tensor product nature of 
\[
\tf_{a,L}(x)=\frac{\pi}{\sqrt{3}a}
\tth(\frac{\pi}{a};t_1) 
\tth(\frac{\pi}{3 a};t_2).
\]
Notably, our interpolant, $\tg_a$, for $\omega^*_6$ satisfies $\tg_a \in \mathcal{P}_{1}(t_1)\times \mathcal{P}_{2}(t_2)$. 
\section{$A_2$-universal optimality of $\omega^*_{4}$}
\label{section4}
To prove $\omega^*_{4}$ is $A_2$-universally optimal, it remains to show
for each $a>0$ that there are $c_0,c_1\in \R$ with $c_1\geq 0$ such that the resulting interpolant $\tg_a(t_1,t_2):=c_0+c_1 P_{v'}=c_0+c_1/3(-1+t_2(t_1+t_2))$ satisfies $\tg_a\leq \tf_a$ on $\td$ with equality at $(-1,1)$  or, equivalently,   finding such an interpolant of the form 
\begin{equation}\label{ga4}\
\tg_a(t_1,t_2):= \tf_a(-1,1)+b_1 t_2(t_1+t_2)
\end{equation}
for $b_1\ge 0$. 
\\
Our formulas for $\tg_a$ are defined piecewise\footnote{We suspect that $b_1$ need not be defined piecewise. In fact the choice $b_1= \frac{\partial \tf}{\partial t_1 }(-1,1)$ numerically appears to lead to $\tg_a\leq \tf_a$ for all $a>0$. But our most simple proofs come from this piecewise definition of $b_1$.} in $a$. We set 

\begin{equation}
\label{coeffchoice4pt}
b_1=
\begin{cases}
    2\frac{\partial \tf}{\partial t_1 }(-1,1/2) & \text{if } 0< a\leq 21\\
    \frac{\partial \tf}{\partial t_2 }(-1,1) & \text{if } a> 21.
\end{cases}
\end{equation}

Due to the different expansions used for $\theta$ (see \eqref{thetasmallaformula} and \eqref{thetabigaformula}), we also find it convenient to rescale $\tf$ by a factor of $\sqrt{3}\pi/a$ for small $a$ case.  Defining
\begin{equation}\label{f1f2Def}
\begin{split}
    \tfo(t_1)&:=\begin{cases}
    \tth(\frac{\pi}{a };t_1),& 0<a\le\pi^2\\
    \sqrt{\frac{\pi}{a}}\tth(\frac{\pi}{a };t_1)& a>\pi^2,
    \end{cases}\\
   \tft(t_2)&:=\begin{cases}
    \tth(\frac{\pi}{3a };t_2),& 0<a\le\pi^2\\
    \sqrt{\frac{\pi}{3a}}\tth(\frac{\pi}{3a };t_2)& a>\pi^2.
    \end{cases}     
\end{split}
    \end{equation} 
 
With this rescaling convention, it follows from   \eqref{FaA2} that 
\begin{equation}\label{Ff1f2}
\tf(t_1,t_2) =\tfo(t_1)\tft(t_2)+\tfo(-t_1)\tft(-t_2). 
\end{equation}

\subsection{Constructing magic $\tg_a$}
For all $a>0$, we will establish
\begin{lemma}\label{3rdorderpartial}
For all points $(t_1,t_2)\in \td$,
$\displaystyle\frac{\partial^3 \tf}{\partial t_1 \partial t_{2}^2 }(t_1,t_2) > 0.
$
\end{lemma}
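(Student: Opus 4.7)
The strategy is to reduce the inequality to a single corner evaluation via monotonicity, then verify positivity at that corner by a theta-series expansion.

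From \eqref{Ff1f2}, the third partial factors as
\[
H(t_1,t_2) := \frac{\partial^3 \tf}{\partial t_1\,\partial t_2^2}(t_1,t_2)
 = \tfo'(t_1)\tft''(t_2) - \tfo'(-t_1)\tft''(-t_2).
\]
Both $\partial H/\partial t_1 = \partial^4 \tf/\partial t_1^2\,\partial t_2^2$ and $\partial H/\partial t_2 = \partial^4 \tf/\partial t_1\,\partial t_2^3$ are mixed partials of $\tf$ of even total order, so Corollary \ref{evenpartials} shows they are strictly positive on $[-1,1]^2$. Hence $H$ is strictly increasing in each argument on $[-1,1]^2$. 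Since $\td\subseteq [-1,1]\times[1/2,1]$ and the corner $(-1,1/2)$ lies in $\td$ (it corresponds to the deep hole $(1/2,\sqrt{3}/6)$ of $A_2$), the claim reduces to showing $H(-1,1/2) > 0$.

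The corner value $H(-1,1/2)=\tfo'(-1)\tft''(1/2)-\tfo'(1)\tft''(-1/2)$ is a difference of two positive numbers that agree to leading order. To extract its sign, expand each factor using the Chebyshev series $\tth(c;t) = 1 + 2\sum_{k\ge 1} e^{-\pi k^2 c}T_k(t)$, together with $T_k'(\pm 1) = (\pm 1)^{k+1} k^2$ and the explicit values of $T_k''(\pm 1/2)$ (computable from $T_k(\cos\theta)=\cos(k\theta)$). Setting $q := e^{-\pi^2/(3a)}$, the theta parameters of $\tfo$ and $\tft$ in \eqref{f1f2Def} satisfy the crucial relation $q_1 = q^3$ with $q_1 := e^{-\pi^2/a}$, so all four of $\tfo'(\pm 1)$ and $\tft''(\pm 1/2)$ become power series in the single variable $q$. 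Writing $H(-1,1/2) = \tfrac{1}{2}[(A+C)(B-D) + (A-C)(B+D)]$ with $A=\tfo'(-1)$, $C=\tfo'(1)$, $B=\tft''(1/2)$, $D=\tft''(-1/2)$, a parity check shows that only same-parity combinations of Chebyshev harmonics contribute (odd$\times$odd for the first product, even$\times$even for the second), yielding
\[
H(-1,1/2) \;=\; 96\,q^{12} - 128\,q^{16} - 416\,q^{28} + 864\,q^{36} + O(q^{48}).
\]
The leading coefficient is positive, settling the claim for all sufficiently small $a$.

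For larger $a$---where $q\to 1$ and the above series converges slowly---I would switch to the Poisson-summed (modular) form \eqref{thetabigaformula} of $\tth$, matching the piecewise rescaling in \eqref{f1f2Def}, and run the same analysis in the dual parameter, again obtaining a series with positive leading coefficient. The main obstacle is the intermediate range of $a$: $H(-1,1/2)$ is of order $q^{12}$ while its constituent terms $\tfo'(\pm 1)\tft''(\pm 1/2)$ are of order $q^7$, so the inequality is exponentially tight and neither expansion is dominated by its leading terms across all $a$. Resolving this cleanly will require explicit tail bounds for both expansions, together with a finite verification near a carefully chosen crossover value of $a$ that splits the argument into small-$a$ and large-$a$ regimes.
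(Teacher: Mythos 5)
Your reduction is identical to the paper's: by Corollary~\ref{evenpartials}, $\partial H/\partial t_1=\partial^4\tf/(\partial t_1^2\partial t_2^2)$ and $\partial H/\partial t_2=\partial^4\tf/(\partial t_1\partial t_2^3)$ are even-order mixed partials and hence strictly positive, so $H$ is increasing in each argument and the claim reduces to $H(-1,1/2)>0$. Your corner expansion in $q=e^{-d/3}$ (with $d=\pi^2/a$) is also correct: the $q^7$ terms cancel by parity, the leading term is $96q^{12}$, and the coefficients $-128,-416,+864$ at $q^{16},q^{28},q^{36}$ check out. This is consistent with the paper's small-$a$ lower bound, which is the truncated $e^{-4d}\bigl[96 - 130 e^{-4d/3}-440 e^{-16d/3}\bigr] = 96q^{12}-130q^{16}-440q^{28}$, obtained by folding in explicit tail estimates.

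The gap is that you stop at "this will require explicit tail bounds and a finite verification near a crossover value of $a$." That is precisely what must be done, and it is where the real work sits. The paper's appendix executes exactly the plan you outline: for $a<\pi^2$ it uses the three-term truncation of the $q$-series together with the tail bounds of Section~\ref{smallaAppendix} and then closes with the elementary monotonicity argument of Lemma~\ref{smallaexponentialhelper} (checking positivity only at $d=1$); for $a\ge 9.6$ it switches to the Poisson-summed form, bounds $\tth'(\tfrac{\pi}{a};\pm 1)$ and $\tth''(\tfrac{\pi}{3a};\pm\tfrac12)$ via \eqref{thetaderivpm1} and \eqref{theta2ndderiv}, and reduces the inequality to the positivity of the explicit convex quadratic $18+3a^2+2a(-18+\sqrt3\,\pi)$, verified at $a=9.6$. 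The overlap $[9.6,\pi^2]$ handles the crossover. So there is nothing wrong with your route --- it is the paper's route --- but as written it is a sketch, not a proof: the tail bounds, the choice of truncation order, and the crossover verification all still have to be supplied, and they are not automatic because, as you note, the inequality is exponentially tight relative to the individual terms.
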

\begin{proof} 
Since even partial derivatives of $\tf$ are positive, it suffices to check the inequality at the minimal $t_1$ and $t_2$ values, when $t_1=-1$ and $t_2=\frac12$. This check is handled in the appendix with large $a$ and small $a$ cases handled separately.
\end{proof}
Likewise, we have 
\begin{lemma}
\label{vertbdry}
Let $\tilde{h}$ be of the form 
$\tf(-1,1)+c_1 t_2(t_1+ t_2)$ such that 
$\tilde{h}(-1,1/2)< \tf(-1,1/2) $ and 
$\frac{\partial \tf-\tilde{h}}{\partial t_2}(-1,1)\leq 0$. 
Then for all $t_2\in [1/2,1]$, $\tf(-1,t_2)\geq \tilde{h}(-1,t_2)$ with equality only when $t_2=1$.
\end{lemma}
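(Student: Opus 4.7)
The plan is to set $\psi(t_2) := \tf(-1,t_2) - \tilde{h}(-1, t_2)$ on $[1/2,1]$ and show $\psi \ge 0$ with equality only at $t_2 = 1$. The two hypotheses translate into the boundary data $\psi(1) = 0$, $\psi(1/2) > 0$, and $\psi'(1) = \frac{\partial \tf}{\partial t_2}(-1,1) - c_1 \le 0$.

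Rather than work with $\psi$ directly, I would pass to its derivative
\begin{equation*}
D(t_2) := \frac{\partial \tf}{\partial t_2}(-1, t_2) - c_1(2t_2 - 1) = \psi'(t_2),
\end{equation*}
whose boundary values are especially clean: $D(1/2) = \frac{\partial \tf}{\partial t_2}(-1, 1/2) = 0$ by the equality case of Lemma \ref{pospartials}, and $D(1) \le 0$ by hypothesis~(2). The central claim is then $D \le 0$ on $[1/2, 1]$. Granting this, integration from $t_2$ to $1$ yields
\begin{equation*}
\tf(-1, 1) - \tf(-1, t_2) = \int_{t_2}^1 \frac{\partial \tf}{\partial t_2}(-1, s)\,ds \le \int_{t_2}^1 c_1(2s-1)\,ds = c_1 t_2(1-t_2),
\end{equation*}
which rearranges to $\tf(-1, t_2) \ge \tilde{h}(-1, t_2)$. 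Hypothesis~(1) moreover ensures $\int_{1/2}^1 D(s)\,ds = (\tf(-1,1) - \tf(-1,1/2)) - c_1/4 < 0$, so $D$ cannot be identically zero; since $D$ is real analytic, this yields the strict inequality $\psi(t_2) = -\int_{t_2}^1 D(s)\,ds > 0$ for $t_2 \in [1/2, 1)$.

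To verify $D \le 0$, my plan is to show $D$ is convex on $[1/2, 1]$: together with $D(1/2) = 0$ and $D(1) \le 0$, convexity forces $D$ to lie below the non-positive chord joining these endpoints, hence $D \le 0$ throughout. Convexity of $D$ reduces to
\begin{equation*}
D''(t_2) = \frac{\partial^3 \tf}{\partial t_2^3}(-1, t_2) = \tfo(-1)\,\tft'''(t_2) - \tfo(1)\,\tft'''(-t_2) \ge 0
\end{equation*}
on $[1/2, 1]$, using the decomposition \eqref{Ff1f2}. Since $\tfo(-1) < \tfo(1)$ while $\tft'''(t_2) > \tft'''(-t_2)$ (by absolute monotonicity of $\tft$, Proposition \ref{absmonotone}), this is precisely the statement that the ratio $\tft'''(t_2)/\tft'''(-t_2)$ dominates $\tfo(1)/\tfo(-1)$ on the relevant range.

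This last comparison is the main obstacle. I expect to analyze it using the product formula \eqref{tildethetaProd} together with the complete monotonicity of the logarithmic derivative $\tft'/\tft$ from Proposition~\ref{absmonotone}. Because the structure of $\tf$ (and the convenient expansion used for $\tth$) differs substantially between the small-$a$ and large-$a$ regimes, I would handle these separately, mirroring the piecewise definition of $b_1$ in \eqref{coeffchoice4pt}. If the direct convexity route is obstructed in some regime, a fallback is to instead show $D$ is monotone via the bound $\frac{\partial^2 \tf}{\partial t_2^2}(-1, \cdot) \le 2c_1$ (which the piecewise choice of $b_1$ may already guarantee in each case); as a last resort, a Rolle-type contradiction applied to the zero structure of $\psi$ would extract a contradiction from the sign properties of $\psi'''$.
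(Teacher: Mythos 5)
Your plan has a genuine gap: the central claim that $D := \psi'$ is non-positive on all of $[1/2,1]$ does not follow from the hypotheses and is in fact not something the paper tries to establish. From the two hypotheses you only get $\psi(1/2)>0$, $\psi'(1/2)=0$, $\psi(1)=0$, $\psi'(1)\le 0$, and from Corollary~\ref{evenpartials} you get $\psi^{(4)}>0$. But nothing here prevents $\psi''(1/2)=\frac{\partial^2 \tf}{\partial t_2^2}(-1,1/2)-2c_1$ from being strictly positive, in which case $\psi'$ starts at $0$ at $t_2=1/2$ and \emph{increases}, so $D>0$ on a right-neighborhood of $1/2$. Indeed the paper's own argument explicitly provides for this: it locates the minimal and maximal points $t_2'\le t_2''$ at which $\psi''\le 0$, and on $[1/2,t_2')$ one has $\psi''>0$ and hence $\psi'>0$. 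One can even write down abstract $C^4$ functions satisfying all four boundary conditions and $\psi^{(4)}>0$ for which $\psi'$ is positive on part of the interval; the hypotheses simply do not force monotonicity of $\psi$.

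Your proposed route to $D\le 0$ — convexity of $D$, i.e.\ $\psi'''\ge 0$ — is not only unproved (you flag it as the main obstacle) but cannot hold in general: if $\psi'''\ge 0$ then $\psi''$ is increasing, and since $\psi''\le 0$ at some point (which is correctly forced by the boundary data), this would give $\psi''(1/2)\le 0$, which is the extra condition on $c_1$ that the lemma does not assume. Concretely, $\psi'''(t_2)=\tfo(-1)\tft'''(t_2)-\tfo(1)\tft'''(-t_2)$, and for large $a$ one has $\tfo(1)/\tfo(-1)\sim\frac{1}{2}e^{a/4}$, so the sign of $\psi'''$ near $t_2=1/2$ is by no means favorable. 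The fallbacks you mention (showing $\psi''\le 0$ everywhere, or a Rolle-type argument) are either yet-stronger unproved hypotheses or too vague to assess.

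What the paper actually does is work with $\psi''$, not $\psi'$. Using only $\psi^{(4)}>0$ (so $\psi''$ is strictly convex), the sublevel set $\{\psi''\le 0\}$ is an interval $[t_2',t_2'']$. On $[t_2'',1]$, $\psi$ is convex and one bounds it from below by its tangent line at $1$, which is $\psi'(1)(t_2-1)\ge 0$. On $[1/2,t_2']$, $\psi$ is convex and one uses the (horizontal) tangent at $1/2$, giving $\psi\ge\psi(1/2)>0$. On $[t_2',t_2'']$, $\psi$ is concave, and the secant line between the already-verified endpoint values bounds it below by something nonnegative. This tangent/secant scheme avoids ever having to control the sign of $\psi'$. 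If you want to repair your approach, you would need to abandon the $D\le 0$ claim and instead adopt a second-derivative argument of roughly this shape.
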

\begin{proof}
We abuse notation here and use $\tf,\tilde{h}$ to refer to the one variable functions in $t_2$ obtained by fixing $t_1=-1$.
By assumption on the form of $\tilde{h}$, Lemma \ref{pospartials}, and the two assumed inequalities, we have $\tf(1/2)\geq \tilde{h}(1/2)$, $\tf'(1/2)=\tilde{h}'(1/2)=0$, $\tf(1)= \tilde{h}(1)$, and $\tf'(1)\leq \tilde{h}'(1)$. It follows that there are exists some point in $[1/2,1]$ at which $\tf''\leq \tilde{h}''$. Let $t_2'\leq t_2''$ be such that $t_2'$ and $t_2''$ respectively are the minimal and maximal points in $[1/2,1]$ at which $\tf''\leq \tilde{h}''$. For $t_2\geq t_2''$ we have $\tf''\geq \tilde{h}''$ with equality only at $t_2''$ since $(\tf-\tilde{h}){''}$ is strictly convex (recall $\tf^{(4)}>0$). Thus, we get $\tf\geq \tilde{h}$ by bounding $\tf-\tilde{h}$ below with a tangent line of $\tf-\tilde{h}$ at $1$ and equality holds only if $t_2=1$. Similarly, for $t_2\leq t_2'$, we get the desired inequality with tangent approximation from $\frac12$. For $t_2 \in[ t_2',t_2'']$, we note that $\tf''=\tilde{h}''$ at the endpoints of the interval. Again using the strict convexity of $(\tf-\tilde{h}){''}$, we obtain $(\tf-\tilde{h})''\leq 0$ for the whole interval. Thus, $\tf(t_2)\geq \tilde{h}(t_2)$ by bounding the difference below with its secant line (since we've already established $\tf \geq \tilde{h}$ at the endpoints $t_2',t_2''$), and equality can only hold at $t_2''$ if $t_2''=1$. 
\end{proof}
\subsubsection{Small $a$}
Let $0<a\leq 21$. We will refer to $\tg_a$ as simply $\tg$.
We'll prove the following lemma\footnote{The reason we don't use this approach for all $a>9.6$ is Lemma \ref{inequalitystring} fails at roughly $a=22$. Namely, the terms $\frac{\partial \tf}{\partial t_2}(-1,1),
4(\tf(-1,1)-\tf(-1,1/2))$ both have lead exponential terms on the order of $e^{-a/4}$, while $\frac{\partial \tf}{\partial t_1 }(-1,1/2)$ is on the order of $e^{-a/3}$.} in the appendix:
\begin{lemma}
\label{inequalitystring}
For $0<a\leq 21$, we have \[
\max \left\{ \frac{\partial \tf}{\partial t_2}(-1,1),
\hspace{.25cm}
4(\tf(-1,1)-\tf(-1,1/2))\right\}
\leq 
2\frac{\partial \tf}{\partial t_1 }(-1,1/2) 
\leq \frac{\partial^2 (\tf-\tg)}{\partial t_1 \partial t_2}(-1,1/2).
\]
\end{lemma}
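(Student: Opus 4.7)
The plan is to verify each of the three inequalities in the chain by direct series computation. Substituting $b_1 = 2\partial_{t_1}\tf(-1,1/2)$ into $\tg = \tf(-1,1) + b_1 t_2(t_1+t_2)$ from \eqref{ga4}, the identity $\partial_{t_1}\partial_{t_2}\tg = b_1$ reduces the rightmost inequality to $4\partial_{t_1}\tf(-1,1/2) \leq \partial_{t_1}\partial_{t_2}\tf(-1,1/2)$. Via the tensor decomposition \eqref{Ff1f2}, each of the four quantities in the chain becomes an explicit combination of $\tfo, \tft, \tfo', \tft'$ evaluated at $\pm 1$ and $\pm 1/2$, so the lemma reduces to three scalar inequalities in the single parameter $a$.

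I would then expand each such factor via the Fourier/Chebyshev series $\tth(c;t) = 1 + 2\sum_{k=1}^\infty e^{-\pi k^2 c}T_k(t)$ (with $c = \pi/a$ or $c = \pi/(3a)$), using the standard values $T_k(\pm 1) = (\pm 1)^k$, $T_k'(\pm 1) = (\pm 1)^{k+1}k^2$, $T_k(1/2) = \cos(k\pi/3)$, $T_k(-1/2) = \cos(2k\pi/3)$, to obtain explicit convergent series in $q_1 := e^{-\pi/a}$ and $q_2 := e^{-\pi/(3a)}$. Each inequality becomes a scalar inequality along the one-parameter curve $q_2 = q_1^{1/3}$ with $a \in (0,21]$, which can be verified by truncating after a few terms, bounding tails geometrically via $e^{-\pi k^2/a} \leq e^{-\pi k^2/21}$, and checking the resulting polynomial inequality on a few subintervals of $(0,21]$ on which elementary monotonicity suffices. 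For the middle inequality, the naive shortcut $\tf(-1,1)-\tf(-1,1/2) = \int_{1/2}^1 \partial_{t_2}\tf(-1,s)\,ds \leq \tfrac12\partial_{t_2}\tf(-1,1)$ (valid by convexity of $\tf(-1,\cdot)$, since $\partial_{t_2}^2\tf > 0$ by Corollary \ref{evenpartials}) combined with the first inequality loses a factor of $2$, so a direct series argument is required.

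The main obstacle is sharpness near the cutoff $a = 21$. At $a = 21$ we have $q_1 \approx 0.86$, so the Fourier series converges slowly and many terms must be retained. More seriously, as the footnote already highlights, the leading large-$a$ asymptotics of $2\partial_{t_1}\tf(-1,1/2)$ and of the two left-hand quantities have different decay rates (of orders $e^{-a/3}$ and $e^{-a/4}$ respectively) and cross over just past $a = 21$, so tight control of subleading contributions is unavoidable. A natural remedy is to invoke the dual lattice-sum representation \eqref{thetabigaformula} on the upper part of the range, effectively splitting $(0,21]$ at or near the self-dual point $a = \pi$ and using whichever representation converges faster on each piece; once one has a truncation with a provable tail bound on each subinterval, the remaining polynomial verification is elementary.
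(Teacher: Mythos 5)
Your proposal matches the paper's proof strategy: the authors likewise use the tensor decomposition \eqref{Ff1f2} to reduce the lemma to scalar inequalities in $a$, expand $\tth$ via \eqref{thetasmallaformula} for $0<a\le\pi^2$ and via \eqref{thetabigaformula} for $\pi^2<a\le 21$, bound the truncation tails geometrically, and close with elementary monotonicity/convexity arguments in the parameter. The only cosmetic differences are the choice of split point (the paper uses $a=\pi^2$, taking $d=\pi^2/a>1$ on the lower range, rather than a split near $a=\pi$) and that the final scalar checks are discharged via a one-sided monotonicity lemma in $d$ rather than by interval subdivision; your observations about the lost factor of two in the naive convexity shortcut and the $e^{-a/4}$ versus $e^{-a/3}$ crossover near $a=21$ being the real obstruction both agree with the paper (the latter is exactly the content of the footnote).
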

We handle the proof piecewise, splitting into 2 cases, $0<a\leq \pi^2$ and $\pi^2<a\leq21$ depending on which formulas we use for $\tfo$ and $\tft$. 
These 3 inequalities\footnote{Though in the small $a$ case, we have set $b_1=2\frac{\partial \tf}{\partial t_1 }(-1,1/2)$ for simplicity, in fact, we could set $b_1$ to be any element of the (non-empty) interval
$\left[\max\left\{ \frac{\partial \tf}{\partial t_2}(-1,1),
4(\tf(-1,1)-\tf(-1,1/2))\right\},2\frac{\partial \tf}{\partial t_1 }(-1,1/2)\right]$ and the exact same proof would work.}
suffice to show $\tf \leq \tg$.
We certainly have $b_1 > 0$ since $b_1=2\frac{\partial \tf}{\partial t_1 }(-1,1/2)> \frac{\partial \tf}{\partial t_2}(-1,1)>0$ where the first inequality holds by assumption and the next by Lemma \ref{pospartials}. Next, we have 
\[
(\tf-\tg)(-1,1/2)=\tf(-1,1/2)-\tf(-1,1)+\frac14 b_1> 0
\]
and likewise
\[
\frac{\partial (\tf-\tg)}{\partial t_2}(-1,1)=\frac{\partial \tf}{\partial t_2}(-1,1)-b_1 < 0. 
\]

Applying Lemma \ref{vertbdry} and the previous two inequalities to $\tg$, we obtain $\tf\geq \tg$ for all points $(-1,t_2)$ with $t_2\in [1/2,1]$ with equality only at $(-1,1)$, and since $\tf-\tg$ is convex in $t_1$, it remains to show that $\frac{\partial( \tf-\tg)}{\partial t_1}\geq 0$ for all points of the form $(-1,t_2)$, $t_2\in[1/2,1]$ (recall the picture of $\td$, Figure \ref{4ptinterpolationpicture}). 
By Lemma \ref{3rdorderpartial}, $\frac{\partial( \tf-\tg)}{\partial t_1}$ is convex in $t_2$ in $\td$, so we just need to show 
\[
\frac{\partial (\tf-\tg)}{\partial t_1 }(-1,1/2)\geq 0,
    \frac{\partial^2 (\tf-\tg)}{\partial t_1 \partial t_2}(-1,1/2)\geq 0.
\]
But these follow directly from our assumptions on $b_1$. 
Indeed,
\begin{align}
    \frac{\partial (\tf-\tg)}{\partial t_1 }(-1,1/2)&=\frac{\partial \tf}{\partial t_1 }(-1,1/2)- \frac{b_1}{2} = 0\\
    \frac{\partial^2 (\tf-\tg)}{\partial t_1 \partial t_2}(-1,1/2)&=\frac{\partial^2 \tf}{\partial t_1 \partial t_2}(-1,1/2)-b_1 > 0.
\end{align}

%
\subsubsection{Large $a$}
\label{sec4ptlargea}
Throughout, we assume $a>21$ and refer to $\tg_a$ as $\tg$. 
\begin{figure}
    \centering
    \includegraphics[width=10cm]{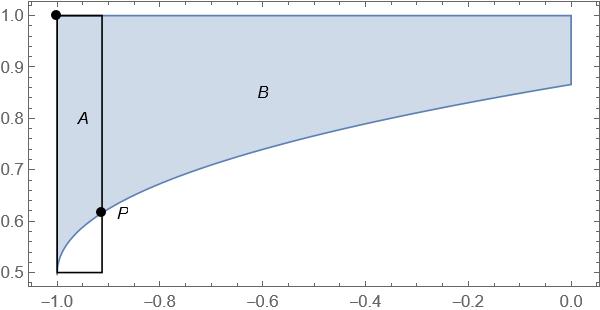}
    \label{4ptproofstrat}
    \caption{The figure depicts our strategy for showing $\tf\geq \tg$ in the large $a$ case. We show $\tf \leq \tg$ on the rectangular region A (which includes points outside of $\td$) in Lemma \ref{4ptlinearization}. The remaining points of $\td$, in region B, are handled by Lemma \ref{largea4ptt1growth}.} 
\end{figure}
We begin by showing that $\tf \ge \tg$ on two segments of the boundary or $\td$. 
\begin{lemma}
\label{4ptboundarysections}
    We have $\tf \geq \tg$ on the set $\{(-1,t_2):t_2\in [1/2,1] \} \cup \{(t_1,1):t_1\in [-1,1] \}$ with equality only at $(-1,1)$.
\end{lemma}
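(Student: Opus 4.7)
The configuration $\tg(t_1,t_2) = \tf(-1,1) + b_1 t_2(t_1+t_2)$ with $b_1 = \tf_{t_2}(-1,1)$ restricts on each boundary segment to a simple univariate function. On the horizontal segment $t_2 = 1$, $\tg$ is affine in $t_1$, namely $\tf(-1,1) + b_1(t_1+1)$; on the vertical segment $t_1 = -1$, $\tg$ is quadratic in $t_2$, namely $\tf(-1,1) + b_1 t_2(t_2-1)$. In both cases $\tg$ meets $\tf$ at $(-1,1)$ by construction, so the content of the lemma is to rule out any additional coincidences on the two segments.

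On the horizontal segment the plan is to use that $\tf(\cdot,1)$ is convex in $t_1$ by Corollary \ref{evenpartials}. Since $\tg(\cdot,1)$ is affine, the difference $\tf(\cdot,1)-\tg(\cdot,1)$ is convex and vanishes at $t_1=-1$; it will be strictly positive on $(-1,1]$ precisely when its derivative at $t_1=-1$ is nonnegative, which is the inequality $\tf_{t_1}(-1,1) \geq b_1$.

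On the vertical segment I would invoke Lemma \ref{vertbdry} applied to $\tilde h(-1,t_2) := \tg(-1,t_2)$. The derivative condition $(\tf-\tg)_{t_2}(-1,1)\le 0$ holds automatically with equality by the choice of $b_1$, so only the value condition $\tg(-1,1/2) < \tf(-1,1/2)$ remains; this unpacks to
\begin{equation*}
    b_1 > 4\bigl(\tf(-1,1) - \tf(-1,1/2)\bigr).
\end{equation*}

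Thus the lemma reduces to two scalar inequalities valid for $a>21$: $\tf_{t_1}(-1,1) \geq \tf_{t_2}(-1,1)$ and $\tf_{t_2}(-1,1) > 4(\tf(-1,1) - \tf(-1,1/2))$. Both should be established by expanding $\tfo$ and $\tft$ via the Poisson-summed forms in \eqref{f1f2Def}: keeping the dominant terms one finds $\tf_{t_1}(-1,1) \sim \tfrac{a^2}{2\pi^2}e^{-a/4}$, $\tf_{t_2}(-1,1)\sim \tfrac{3a}{\pi^2}e^{-a/4}$, and $4(\tf(-1,1) - \tf(-1,1/2)) \sim 8e^{-a/4} - 12e^{-a/3}$. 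The first inequality is then comfortable for $a>21$ since it is essentially equivalent to $a\gtrsim 6$. The second is the delicate step, because the leading coefficient $3a/\pi^2$ does not exceed $8$ until $a\approx 26$, so one must retain the negative correction $-12e^{-a/3}$ and carefully bound the remaining tails of the Poisson series to extend the estimate down to $a=21$. I expect this term-by-term bookkeeping to be the main work of the lemma, playing a role for the large-$a$ regime analogous to that of Lemma \ref{inequalitystring} in the small-$a$ regime.
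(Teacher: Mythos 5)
Your proposal matches the paper's proof: both reduce the lemma to the scalar inequalities $\frac{\partial \tf}{\partial t_1}(-1,1) > \frac{\partial \tf}{\partial t_2}(-1,1)$ and $(\tf-\tg)(-1,1/2)>0$, then invoke Lemma~\ref{vertbdry} for the vertical segment and convexity of $\tf-\tg$ in $t_1$ for the horizontal one, and your leading asymptotics (including the need to retain the $-12e^{-a/3}$ correction to push the second estimate below $a\approx 26$) agree with the appendix computation. The only thing left to supply is the actual $\theta$-tail bookkeeping, which is exactly what the paper's appendix does.
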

\begin{proof}
For the segment $\{(-1,t_2):t_2\in [1/2,1] \}$, we prove in the appendix that for $a>21$, 
\begin{equation}
\label{derivcomp}
\frac{\partial \tf}{\partial t_1}(-1,1)> \frac{\partial \tf}{\partial t_2}(-1,1).
\end{equation}
It also holds  for $0<a<21$ as an immediate consequence of Lemma~\ref{inequalitystring}. We next show for $a>21$ that $(\tf-\tg)(-1,1/2)>0$, and so using the definition 
\[
b_1=\frac{\partial \tf}{\partial t_2}(-1,1),
\]
for this range of $a$, 
we may apply Lemma \ref{vertbdry} to obtain $\tf\geq \tg$ on $\{(-1,t_2):t_2\in [1/2,1] \}$ with equality only at $(-1,1)$. Now for the other segment, we simply apply \eqref{derivcomp}, our definition of $b_1$, and the convexity of $\tf-\tg$ in $t_1$. 
\end{proof}

Next, we show that $\tf(t_1,t_2) \geq \tg(t_1,t_2)$ in $\td$ if $t_1 \leq \cos(2\pi \frac{\sqrt{3}}{4})$ with equality only at $(-1,1)$, and in fact we'll show the stronger claim that $\tf\geq \tg$ on all of $A:=[-1,\cos(2\pi \frac{\sqrt{3}}{4})]\times[1/2,1]$ (see Figure \ref{4ptproofstrat}) with equality only at $(-1,1)$.\\
Let $H_{\tf}$ denote the Hessian matrix of $\tf$.  It follows from the strict complete monotonicity of the  log derivative of $\tth$ that  $\tilde{f}_i''\tilde{f}_i < (\tilde{f}_i')^2$ for $i\in\{1,2\}$ (see Proposition \ref{absmonotone}), and hence we have
\begin{align*}
\det(H_{\tf}&(t_1,t_2)) 
=(\tfo''(t_1) \tfo(t_1))(\tft''(t_2) \tft(t_2))-\tfo'(t_1)^2\tft'(t_2)^2
+
\\
&(\tfo''(-t_1) \tfo(-t_1))(\tft''(-t_2) \tft(-t_2))-\tfo'(-t_1)^2\tft'(-t_2)^2< 0
\end{align*}
for $t_1,t_2\in[-1,1].$ 

To establish that $\tf\ge \tg$ on a rectangle $R\subset   [-1,1]^2$ with upper left corner point $(c,d)$ (we subdivide the rectangle $A$ into three such rectangles in the proof of Lemma~\ref{4ptlinearization}), we introduce the following auxiliary function 
\begin{equation}\label{linearizedg}
\tg_{c,d}(t_1,t_2):=\tg(t_1, t_2)-b_1 t_1t_2+b_1(ct_2+dt_1-cd),
\end{equation}
and observe that $t_1t_2\leq ct_2+dt_1-cd$ for $t_1\geq c$, $t_2\leq d$,  
  and so
\begin{equation}\tg(t_1,t_2) \leq \tg_{c,d}(t_1,t_2) \quad t_1\geq c,\, t_2\le d
\end{equation} with equality if and only if $t_1=c$ or $t_2=d$. We further observe that 
\[
\det(H_{\tf-\tg_{c,d}})= \det(H_{\tf})-2b_1 \tfo''(t_1)\tft(t_2)< \det(H_{\tf})<0.
\]
Hence,  to verify $\tf\geq \tg$ on the  rectangle $R$, it suffices to show $\tf\geq \tg_{c,d}$ on the boundary of the region by the second derivative test. For the two sides of the rectangle where $t_1=c$ and $t_2=d$, we will have already established $\tf\geq \tg$, and since $\tg=\tg_{c,d}$ on those sides, we immediately obtain $\tf\geq \tg_{c,d}$ there.

On the other two sides, we reduce the $a\geq21$ case to just $a=21$ in the following way. In each case, using truncated series approximations of $\theta$ and $b_1$ developed in the appendix, we find an upper bound on $\tg_{c,d}^*\ge \tg_{c,d}$ with the key feature  that $e^{a/4}\tg_{c,d}^*$ is linear in $a$. Meanwhile, as a lower bound for $\tf$, we truncate the expansions for $f_1$ and $f_2$ from \eqref{thetabigaformula} to obtain
\begin{equation}
    \label{FtLB4pt}
\tf_T(t_1,t_2):=(e^{-ax^2}+e^{-a(x-1)^2})e^{-3a u^2}+ e^{-a((\frac12-x)^2+3(\frac12-u)^2)}<\tf(t_1,t_2), \qquad  t_1,t_2\in[-1,1],
\end{equation}
where
\begin{equation}\label{tuDef}
x=\frac{\arccos(t_1)}{2\pi}, \hspace{.5 cm} u=\frac{\arccos(t_2)}{2\pi}.
\end{equation}
It is straightforward to verify that   $e^{a/4}\tf_T(t_1,t_2)$ is convex in $a$ for any fixed $(t_1,t_2)$ and so   
 the difference $e^{a/4}(\tf_T- \tg_{c,d}^*)$ is also (pointwise) convex in $a$.  Thus, to establish $\tf_T\geq \tg_{c,d}^*$ at some point $(t_1,t_2)$ for all $a\geq 21$ it suffices to show 
\begin{equation}
    \label{partialaCond21}\begin{split}
(\tf_T-\tg_{c,d}^*)(t_1,t_2)\bigg|_{a=21}&\geq 0 \\ \frac{\partial \left[e^{a/4}(\tf_T- \tg_{c,d}^*)(t_1,t_2)\right]}{\partial a}\bigg|_{a=21}&\geq 0.
 \end{split}
\end{equation}
In short, to establish
$\tf\ge \tg$ on a rectangle $R$ with upper left vertex $(c,d)$ for which we have already established this inequality on the left and upper edges, 
it suffices to establish the inequalities \eqref{partialaCond21} for $(t_1,t_2)$ on the two bottom and right line segments bounding $R$. Moreover, since the above method actually establishes $\tf\geq \tg^*_{c,d}$, we have the strict inequality $\tf>\tg$ on the whole rectangle except for possibly points where $t_1=c$ or $t_2=d$.  We summarize our discussion in the following lemma which will be helpful in the 6-point case.

\begin{lemma}
\label{linearizationlemma}
Let $R:[c,c']\times [d',d]\subseteq [-1,1]^2$ be a rectangle with upper left corner point $(c,d)$, and further suppose that there exist functions $\tg, \tg_{c,d}, \tg_{c,d}^*, \tf_T,$ and $\tf$ of the variables $(a,t_1,t_2)\in (0,\infty)\times [0,1]^2$ with continuous 2nd order partial derivatives which satisfy for all $a\geq a'$:
\begin{enumerate}
    \item $\tg \leq \tg_{c,d}\leq \tg_{c,d}^*$ on $R$ with $\tg =\tg_{c,d}$ if and only if $t_1=c$ or $t_2=d$  \label{as1}
    \item $\tf_T\leq \tf$ on $R$ \label{as2}
    \item $\det{H_{\tf-\tg_{c,d}^*}}<0$ on $R$ \label{as3}
    \item For some $m_1$, $e^{a/m_1}(\tf-\tg_{c,d})$ is pointwise convex in the parameter $a$ \label{as4}
\end{enumerate}
If there is some $a'>0$ such that the inequalities
\begin{equation}
    \label{partialaCond}\begin{split}
(\tf_T-\tg_{c,d}^*)(t_1,t_2)\bigg|_{a=a'}&\geq 0 \\ \frac{\partial \left[e^{a/m_1}(\tf_T- \tg_{c,d}^*)(t_1,t_2)\right]}{\partial a}\bigg|_{a=a'}&\geq 0
 \end{split}
\end{equation}
hold on $\partial R$, then $\tf > \tg$ on $R$ for all $a\geq a'$. 
Further, if for all $a\geq a'$, $\tf \geq \tg$ on some 
$
R'\subseteq \{ (t_1,t_2)\in \partial R: t_1=c \text{ or } t_2=d  \} 
$
and the inequalities \eqref{partialaCond} hold on $\partial R \backslash R'$, then $\tf \geq \tg$ on $R$ for all $a\geq a'$, again with equality only possible if $t_1=c$ or $t_2=d$ and $(t_1,t_2)\in R'$.   
\end{lemma}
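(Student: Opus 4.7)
The plan is to establish $\tf \geq \tg_{c,d}^*$ throughout $R$, whence hypothesis (\ref{as1}) yields the desired $\tf \geq \tg$ on $R$. The argument splits naturally into a boundary verification (propagating the two inequalities at $a=a'$ to all $a\geq a'$ via convexity in $a$) followed by a propagation from $\partial R$ into the interior of $R$ (via the saddle-Hessian condition (\ref{as3})).

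First I would dispense with the boundary. Fix any $(t_1,t_2)\in\partial R$ and consider the single-variable function
\[
\varphi(a):=e^{a/m_1}(\tf_T-\tg_{c,d}^*)(t_1,t_2).
\]
Hypothesis (\ref{as4}), combined with the construction discussed before the lemma, whereby $e^{a/m_1}\tg_{c,d}^*$ is linear in $a$, renders $\varphi$ convex on $[a',\infty)$. The two inequalities in \eqref{partialaCond} assert $\varphi(a')\geq 0$ and $\varphi'(a')\geq 0$; since a convex function with nonnegative value and nonnegative derivative at the left endpoint is nondecreasing and hence nonnegative on $[a',\infty)$, we conclude $\varphi(a)\geq 0$ for every $a\geq a'$. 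Dividing by the positive exponential and invoking $\tf_T\leq \tf$ from (\ref{as2}) gives $\tf\geq\tg_{c,d}^*$ on $\partial R$ for every $a\geq a'$.

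Next I would push this inequality into the interior. Fix $a\geq a'$ and set $h:=\tf-\tg_{c,d}^*$, viewed as a continuous function of $(t_1,t_2)$ on the compact set $R$. Hypothesis (\ref{as3}) states $\det H_h<0$ throughout $R$, so $H_h$ is indefinite everywhere and in particular never positive semidefinite. By the second derivative test, $h$ can have no interior local minimum on $R$; continuity on $R$ therefore forces
\[
\min_R h \;=\; \min_{\partial R} h \;\geq\; 0,
\]
the last inequality by the boundary step. Hence $\tf\geq\tg_{c,d}^*$ on all of $R$; combined with $\tg\leq\tg_{c,d}^*$ from (\ref{as1}), this yields $\tf\geq\tg$. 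On $R\setminus(\{t_1=c\}\cup\{t_2=d\})$ the first inequality of (\ref{as1}) is strict, so $\tg<\tg_{c,d}\leq\tg_{c,d}^*\leq\tf$ gives the claimed $\tf>\tg$.

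For the ``further'' clause the same two-step scheme applies, with the boundary input assembled from two sources: on $\partial R\setminus R'$ the inequality $\tf\geq\tg_{c,d}^*$ follows from Step 1 as before, while on $R'\subseteq\{t_1=c\}\cup\{t_2=d\}$ the formula \eqref{linearizedg} forces $\tg_{c,d}=\tg$ (the correction term $b_1(t_1t_2-ct_2-dt_1+cd)$ vanishes on these lines), and the specific construction of $\tg_{c,d}^*$ in the applications makes it agree with $\tg_{c,d}$ there as well, so the directly hypothesized $\tf\geq\tg$ on $R'$ lifts to $\tf\geq\tg_{c,d}^*$ on $R'$. Step 2 then applies verbatim. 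I expect the main subtle point to be this bookkeeping on $R'$ — verifying that the upper bound $\tg_{c,d}^*$ reduces to $\tg_{c,d}$ on the distinguished boundary segments — since once the boundary inequality $\tf\geq\tg_{c,d}^*$ holds on all of $\partial R$, the saddle-Hessian condition (\ref{as3}) delivers the interior essentially for free by precluding interior minima.
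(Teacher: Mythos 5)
Your boundary step (Step 1) and the first-clause Hessian step are sound, but the ``Further'' clause has a genuine gap, and unfortunately that clause is the one actually invoked in Lemmas~\ref{4ptlinearization}, \ref{linearization1}, and \ref{linearization2}. You correctly note that $\tg_{c,d}=\tg$ on $R'\subseteq\{t_1=c\}\cup\{t_2=d\}$, but then assert that ``the specific construction of $\tg_{c,d}^*$ in the applications makes it agree with $\tg_{c,d}$ there as well.'' This is false: $\tg_{c,d}^*$ is built from $\tg_{c,d}$ by replacing exactly-evaluated theta quantities with strictly one-sided numerical bounds (for instance in \eqref{gfinal}, $\tf(-1,1)$ is replaced by the strictly larger $2(1+\epsilon)^3e^{-a/4}$), so $\tg_{c,d}^*>\tg_{c,d}$ \emph{even on the lines $t_1=c$ and $t_2=d$}. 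Concretely, at the interpolation node $(-1,1)$ in the four-point application one has $\tf=\tg=\tg_{c,d}<\tg_{c,d}^*$, so the inequality $\tf\ge\tg_{c,d}^*$ you need on $R'$ simply does not hold. Your interior propagation therefore lacks the boundary input it requires, and the conclusion does not follow.

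The repair — which is what the paragraph preceding the lemma actually does — is to run the interior/Hessian propagation with $\tg_{c,d}$, not $\tg_{c,d}^*$. The function $\tg_{c,d}^*$ has exactly one job: it makes the $a$-convexity argument executable on $\partial R\setminus R'$, yielding $\tf\ge\tf_T\ge\tg_{c,d}^*\ge\tg_{c,d}$ there for all $a\ge a'$. On $R'$ you instead use the hypothesized $\tf\ge\tg$ together with $\tg=\tg_{c,d}$. Combined, this gives $\tf\ge\tg_{c,d}$ on \emph{all} of $\partial R$. Since $\det H_{\tf-\tg_{c,d}}<0$ (the determinant computed in the text preceding the lemma; note the lemma as printed writes $\tg_{c,d}^*$ here but the pre-lemma derivation is for $\tg_{c,d}$, and in the applications both are negative), the second-derivative test rules out interior local minima of $\tf-\tg_{c,d}$, so $\tf\ge\tg_{c,d}$ on $R$, whence $\tf\ge\tg_{c,d}\ge\tg$, with strict separation $\tg_{c,d}>\tg$ off the lines $t_1=c$, $t_2=d$. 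You should also double-check your Step-1 convexity claim: hypothesis~(4) as printed concerns $e^{a/m_1}(\tf-\tg_{c,d})$, whereas what you actually need (and what the applications verify directly) is convexity of $e^{a/m_1}\tf_T$ in $a$ together with linearity of $e^{a/m_1}\tg_{c,d}^*$ in $a$; these two facts, not hypothesis~(4) literally, are what make $\varphi(a)=e^{a/m_1}(\tf_T-\tg_{c,d}^*)$ convex.
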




\begin{lemma}
\label{4ptlinearization}
    The inequality  $\tf\geq \tg$ holds on $A=[-1,\cos(2\pi \frac{\sqrt{3}}{4})]\times [\frac12,1]$ with equality only at $(-1,1)$.
\end{lemma}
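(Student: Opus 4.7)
The plan is to apply Lemma~\ref{linearizationlemma} after subdividing $A$ into three rectangles stacked so that, for each, the inequality $\tf\ge \tg$ along the top and left edges is already known from previous steps.

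First, Lemma~\ref{4ptboundarysections} supplies $\tf\ge \tg$ on the entire top edge $\{(t_1,1):t_1\in[-1,\cos(2\pi\sqrt{3}/4)]\}$ and on the entire left edge $\{(-1,t_2):t_2\in[1/2,1]\}$, with equality only at $(-1,1)$. Choose two intermediate values $-1<c_1<c_2<\cos(2\pi\sqrt{3}/4)$ and $1/2<d<1$ and partition
\[
A = R_1\cup R_2\cup R_3,
\]
where $R_1=[-1,c_2]\times[d,1]$ (a wide rectangle sitting on top), $R_2=[-1,c_1]\times[1/2,d]$ (a tall narrow rectangle in the lower left), and $R_3=[c_1,\cos(2\pi\sqrt{3}/4)]\times[1/2,d]$ (the remaining rectangle, where however we would also include the strip $[c_2,\cos(2\pi\sqrt{3}/4)]\times[d,1]$ as part of one of the three). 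The exact partition is adjustable; what matters is that each rectangle's upper-left corner lies on a segment where $\tf\ge \tg$ is already known, and the upper and left edges of each subsequent rectangle lie in the union of the boundary from Lemma~\ref{4ptboundarysections} and previously handled rectangles.

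For each sub-rectangle $R$ with upper-left corner $(c,d)$, define $\tg_{c,d}$ as in \eqref{linearizedg}, so that $\tg\le \tg_{c,d}$ on $R$ with equality exactly when $t_1=c$ or $t_2=d$. The Hessian computation already carried out in the paragraph preceding the lemma shows $\det H_{\tf-\tg_{c,d}}<0$ on $R$, verifying hypothesis \ref{as3} of Lemma~\ref{linearizationlemma}. Hypothesis \ref{as4} (convexity of $e^{a/4}(\tf-\tg_{c,d})$ in $a$) follows from the truncated-series representation \eqref{FtLB4pt}: each summand $e^{a/4}e^{-a(\cdots)^2}$ is a convex exponential in $a$, and $e^{a/4}\tg_{c,d}^*$ will be arranged to be linear in $a$.

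The main obstacle is verifying the one-variable inequalities \eqref{partialaCond} at $a=21$ on the bottom and right edges of each rectangle. The idea is: on each such edge, pick a truncation $\tf_T$ of the Jacobi theta series \eqref{FtLB4pt} keeping enough terms to dominate $\tg_{c,d}^*$, and pick a truncation $\tg_{c,d}^*\ge \tg_{c,d}$ of $\tg$ (using the series bounds on $b_1=\partial\tf/\partial t_2(-1,1)$ from the appendix) so that $e^{a/4}\tg_{c,d}^*$ is linear in $a$. With these two explicit expressions, both inequalities in \eqref{partialaCond} reduce to showing that certain one-variable functions of $t_1$ (or $t_2$) are nonnegative on a bounded interval; these are handled by explicit estimation, using that the omitted tails are exponentially small at $a=21$. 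The choice of the cut points $c_1,c_2,d$ is driven by making each of these one-variable estimates cleanly verifiable; in particular $c_2$ needs to be far enough from $\cos(2\pi\sqrt{3}/4)$ so that the exponential $e^{-3au^2}$ term dominates, and $d$ is chosen near but below $1$ so that the top rectangle $R_1$ has small vertical extent.

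Applying Lemma~\ref{linearizationlemma} to each sub-rectangle in order yields $\tf>\tg$ on each $R_i$, except possibly at points $(t_1,t_2)$ with $t_1=c$ or $t_2=d$ where that point already lies on a previously-handled segment on which strict inequality may only fail at $(-1,1)$. Combining, $\tf\ge \tg$ on $A$ with equality only at $(-1,1)$, as claimed.
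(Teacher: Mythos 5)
You have the right mechanism: Lemma~\ref{4ptboundarysections} for the top and left edges, a partition of $A$ into rectangles, the linearization $\tg_{c,d}$ from \eqref{linearizedg} whose excess over $\tg$ vanishes on the rectangle's left and top edges, the negative-Hessian determinant, and the reduction via convexity of $e^{a/4}(\tf_T-\tg_{c,d}^*)$ in $a$ to finitely many one-variable checks at $a=21$. That is precisely the route the paper takes, and since both the paper and you defer the finite collection of boundary inequalities to explicit (ultimately machine-assisted) estimates, the skeletons agree.

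The one place where your proposal genuinely diverges, and where I think there is a gap, is the choice of partition. The paper takes three \emph{horizontal strips} $R_k=[-1,\cos(2\pi\sqrt{3}/4)]\times[d_{k-1},d_k]$ with $d_0=1/2$, $d_1=3/5$, $d_2=7/10$, $d_3=1$, so every rectangle has $c=-1$ and hence shares its left edge with the segment already covered by Lemma~\ref{4ptboundarysections}. Your decomposition introduces a rectangle $R_3=[c_1,\cos(2\pi\sqrt{3}/4)]\times[1/2,d]$ with upper-left corner $(c_1,d)$. The linearization error on a rectangle with corner $(c,d)$ is $\tg_{c,d}-\tg = b_1(c-t_1)(t_2-d)$, which on $R_3$ is bounded by $b_1\,(\cos(2\pi\sqrt{3}/4)-c_1)(d-1/2)$. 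Your stated heuristics — take $c_1$ close to $-1$ so that $R_2$ is narrow, and take $d$ close to $1$ so that $R_1$ is thin — simultaneously make \emph{both} factors in this product large on $R_3$, so there is no reason to expect the required inequalities \eqref{partialaCond} to hold there. (Note also that the paper does the opposite of your $d$-heuristic: its top strip $[7/10,1]$ is the thickest one, because near $(-1,1)$ both sides agree to first order and there is slack to spare.) In short: the approach is sound, but the specific subdivision you sketch is not one you can expect to verify, and the "adjustable" escape hatch hides the real work of choosing cut points for which the boundary estimates actually close. Anchoring every rectangle at $c=-1$ and slicing only in $t_2$, as the paper does, sidesteps this tension cleanly.

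Two small additional points. First, hypothesis (3) of Lemma~\ref{linearizationlemma} concerns $\det H_{\tf-\tg_{c,d}^*}$, not $\det H_{\tf-\tg_{c,d}}$; the computation transfers because $\tg_{c,d}^*$ is built to have the same second-order structure ($\partial_{t_1}^2=\partial_{t_1}\partial_{t_2}=0$, $\partial_{t_2}^2=2b_1^l\geq 0$), but you should say so rather than cite the $\tg_{c,d}$ computation as if it were identical. Second, your description of the partition never actually produces three honest rectangles covering $A$ — the strip $[c_2,\cos(2\pi\sqrt{3}/4)]\times[d,1]$ must be assigned somewhere, and bolting it onto $R_3$ breaks rectangularity, which Lemma~\ref{linearizationlemma} requires.
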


\begin{proof}
We partition $[-1,\cos(2\pi \frac{\sqrt{3}}{4})]\times [\frac12,1]$ into three subrectangles $R_k:=[-1,\cos(2\pi \frac{\sqrt{3}}{4})]\times [d_{k-1},d_k]$, $k=1,2,3$ where $d_0=1/2$, $d_1=3/5$, $d_2=7/10$, and $d_3=1$ and aim to verify the inequality 
$\tf \geq \tg_{-1,d_k}$ on each $R_k$  using Lemma~\ref{linearizationlemma}, with $\tg_{-1,d_k}$ as in \eqref{linearizedg}, $\tf_T$ as in \eqref{FtLB4pt}, $m_1=4$, and $a'=21$. The specific formulas for each $g_{-1,d_k}^*$ are given in the appendix section \ref{largea4Pt}.
We begin by verifying inequalities \eqref{partialaCond} for $g_{-1,-1}^*$ on the line segments of $R_1$ with $t_1=\cos(2\pi\sqrt{3}/4)$ or $t_2=7/10$, which combined with Lemma~\ref{linearizationlemma} implies $\tf \geq \tg $ on $R_1$. Now having established $\tf\geq \tg$ on the top side of $R_2$, we only need establish inequalities \eqref{partialaCond} for $g_{-1,-1}^*$ on the line segments of $R_2$ where $t_1=\cos(2\pi\sqrt{3}/4)$ or $t_2=3/5$ to get $\tf\ge \tg$ on all $R_2$. In the same fashion, showing inequalities \eqref{partialaCond} on the sides of $R_3$ where $t_1=\cos(2\pi\sqrt{3}/4)$ or $t_2=3/5$ completes the proof by yielding $\tf\geq \tg$ on $R_3$. 
The verification of these inequalities is carried out in the appendix by reducing them to inequalities of the form $h_2(t)-h_1(t)> 0$ on an  interval $(\alpha,\beta)$ where $h_1$ and $h_2$ are increasing functions and  choose $\delta:=(\beta-\alpha)/n$  with sufficiently large  that   we may rigorously verify the inequalities
\begin{equation}\label{linecheck}
   h_1(\alpha+k\delta)<h_2(\alpha+(k-1)\delta),\qquad k=1,2,\ldots, n,
   \end{equation}     thereby reducing our check to a finite number of point evaluations.  
\end{proof}
Finally, we show that $\tf-\tg$ increases in $t_1$ for every point in $\td$ with $t_1\geq \cos(2\pi \frac{\sqrt{3}}{4})$, thus completing the proof for the $a>21$ case and yielding $\tf\geq \tg$ on all $\td$ with equality only at $(-1,1)$.
\begin{lemma}
\label{largea4ptt1growth}
For all $a\geq 21$ and every $p=(t_1,t_2)\in \td$ with $t_1\geq  \cos(2\pi \frac{\sqrt{3}}{4})$, $\frac{\partial( \tf-\tg)}{\partial t_1}\bigg|_p\geq 0$. 
\end{lemma}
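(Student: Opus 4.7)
The plan is to reduce the two-dimensional inequality on $\td \cap \{t_1 \geq c\}$, with $c := \cos(2\pi\sqrt{3}/4)$, to a one-dimensional check on the vertical segment $\{t_1 = c\}$, and then to verify the resulting one-variable inequality for $a \geq 21$ via the truncated-series method used in Section~\ref{sec4ptlargea}.

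Set $\phi(t_1,t_2) := \partial(\tf - \tg)/\partial t_1 = \partial \tf/\partial t_1 - b_1 t_2$, using that $\tg(t_1,t_2) = \tf(-1,1) + b_1 t_2(t_1+t_2)$, so $\partial \tg/\partial t_1 = b_1 t_2$. By Corollary~\ref{evenpartials} we have $\partial^2 \tf/\partial t_1^2 > 0$, hence $\phi$ is strictly increasing in $t_1$; thus for any fixed $t_2$ the minimum of $\phi(\cdot,t_2)$ over $t_1 \in [c, T_3(t_2)]$ is attained at $t_1 = c$. A brief inspection, using that $(t_1,t_2) \in \td$ iff $t_1 \leq T_3(t_2)$, shows that $(c,t_2) \in \td$ exactly when $t_2 \in I := [\cos(\pi\sqrt{3}/6), 1]$, so it suffices to prove $\psi(t_2) := \phi(c,t_2) \geq 0$ for $t_2 \in I$.

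Lemma~\ref{3rdorderpartial} gives $\psi''(t_2) = \partial^3\tf/(\partial t_1 \partial t_2^2)(c,t_2) > 0$, so $\psi$ is strictly convex on $I$. At the right endpoint, $\psi(1) = \partial\tf/\partial t_1(c,1) - b_1 \geq \partial\tf/\partial t_1(-1,1) - b_1 > 0$, using the $t_1$-monotonicity of $\partial\tf/\partial t_1$ together with~\eqref{derivcomp}. To control $\psi$ throughout $I$, I would attempt to verify $\psi'(\cos(\pi\sqrt{3}/6)) \geq 0$: since $\psi'$ is nondecreasing by strict convexity, this would force $\psi' \geq 0$ throughout $I$, so $\psi$ is nondecreasing and $\min_I \psi = \psi(\cos(\pi\sqrt{3}/6))$, leaving a single value to check. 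Should this one-sided monotonicity fail for some $a$, the fallback is to locate the unique interior critical point $t_2^\ast$ of $\psi$ by a convex line search and verify $\psi(t_2^\ast) \geq 0$ directly.

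The verification of these remaining pointwise inequalities for all $a \geq 21$ follows the template of Lemma~\ref{4ptlinearization}: expand each factor of $\tth$ appearing in $\tf$ via~\eqref{thetabigaformula}, retain only the leading lattice terms in a lower bound $\tf_T$ for $\tf$ (and an analogous upper bound for $b_1$, which is itself a $\theta$-expansion), and bound the resulting tails geometrically. One then exploits that $e^{a/4}$ times each such difference is convex in $a$, so that nonnegativity for all $a \geq 21$ follows once the value and the $a$-derivative at $a = 21$ are nonnegative; each such test reduces to the finite grid check~\eqref{linecheck}. The main obstacle I expect is that $(c, \cos(\pi\sqrt{3}/6))$ lies at a corner of $\td$ where several lattice contributions to $\tf$, $\partial\tf/\partial t_1$, and $\partial^2\tf/(\partial t_1 \partial t_2)$ live at comparable exponential scale, so more terms must be retained in the truncation and the tail bookkeeping is correspondingly more delicate than in Lemma~\ref{4ptlinearization}.
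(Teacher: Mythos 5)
Your reduction is exactly the paper's: convexity of $\tf-\tg$ in $t_1$ brings you to the segment $t_1=\cos(2\pi\sqrt{3}/4)$, and Lemma~\ref{3rdorderpartial} (convexity of $\partial(\tf-\tg)/\partial t_1$ in $t_2$) reduces this to checking the value and the $t_2$-derivative of $\partial(\tf-\tg)/\partial t_1$ at the single corner $P=(\cos(2\pi\sqrt{3}/4),\cos(2\pi\sqrt{3}/12))$, which the paper then verifies for $a\ge 21$ with truncated $\theta$-series bounds. The $\psi(1)>0$ aside and the "fallback" interior line-search are unnecessary, but the core argument matches the paper's.
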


\begin{proof}
Because of the convexity of the difference in $t_1$ and Lemma~\ref{3rdorderpartial}, it suffices to show that at $P=( \cos(2\pi \frac{\sqrt{3}}{4}), \cos(2\pi \frac{\sqrt{3}}{12}))$,
\begin{equation}
\frac{\partial (\tf-\tg)}{\partial t_1}\bigg|_{P}\geq 0, \hspace{1 cm} \frac{\partial^2 (\tf-\tg)}{\partial t_1 \partial t_2}\bigg|_{P}\geq 0
\end{equation}
which is handled in the appendix. 
\end{proof}

\section{$L$-universal optimality of $\omega^*_6$}
\label{section6}

We consider the $m=1$ case of the interpolation problem from Section~\ref{6ptInProb}.
In this case we have interpolation conditions at the nodes $\tilde{\tau}_{6}=\{(-1,-1),(1,-\frac12),(-1,\frac12) \}$.  Using the same rescaling convention as in the previous section, we have
$$\tf(t_1,t_2)=\tfo(t_1)\tft(t_2),$$ where $\tfo,\tft$ are as in \eqref{f1f2Def}. For $m=1$, we 
  may choose an interpolant $\tg=\tg_a\in \mathcal{P}_1(t_1)\times \mathcal{P}_2(t_2)$; i.e $\tg$ of the form
 $$
\tg(t_1,t_2)=\sum_{i=0}^1\sum_{j=0}^2b_{i,j}t_1^it_2^j.
$$  We   require that $\tg$ and $\tf$ agree at the three points in $\tilde{\tau}_{6}$ and remark that the condition $\tg \leq \tf$ further requires  
 $\partial\tg/\partial t_2=\partial\tf/\partial t_2$ at the points $(-1,1/2)$ and $(1,-1/2)$ giving a total of 5 linearly independent conditions on $\mathcal{P}_1(t_1)\times \mathcal{P}_2(t_2)$.  

\begin{figure}[H]
    \centering
    \includegraphics[height=6cm]{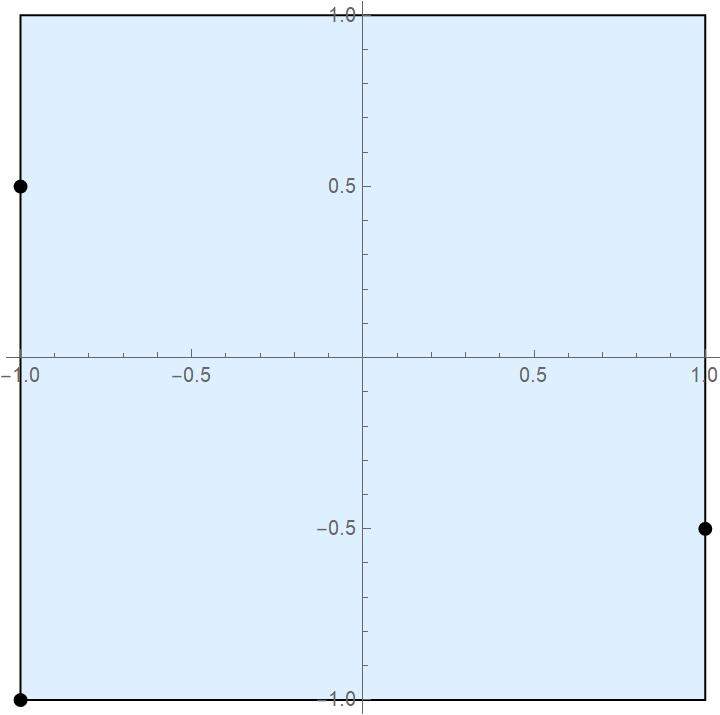}
    \caption{$\tg$ has 5 necessary equality interpolation conditions in order to provide a sharp bound.}
    \label{fig:my_label}
\end{figure}

Noting that $q(t_1,t_2)=(1+t_1)(t_2+1/2)^2$ vanishes on $\tilde{\tau}_{6}$ and that $\frac{\partial q}{\partial t_2}$ vanishes  on $\{(-1,1/2),(1,-1/2)\}$ shows that $\tg$ can be written as
\begin{equation}\label{tgH}
\begin{split}
  \tg(t_1,t_2)&=\frac{(1-t_1)}{2}\tfo(-1) H_{\{-1,\frac{1}{2},\frac{1}{2}\}}(\tft)(t_2)+ \frac{(1+t_1)}{2}\tfo(1) H_{\{-\frac{1}{2},-\frac{1}{2}\}}(\tft)(t_2) + c q(t_1,t_2),    
\end{split}
\end{equation}
where $H_T(f)$ is the Hermite interpolant to $f$ on the node set $T$ which can be expressed in terms of divided differences (see Appendix~\ref{DivDiffSec}).   In particular, 
 $$
 H_{\{-1,\frac{1}{2},\frac{1}{2}\}}(\tft)(t_2)=
 \tft(-1)+\tft[-1,\frac{1}{2}](t_2+1)+\tft[-1,\frac{1}{2},\frac{1}{2}](t_2+1)(t_2-\frac{1}{2}),
 $$
and $H_{\{-\frac{1}{2},-\frac{1}{2}\}}(\tft)(t_2)=\tft(-\frac{1}{2})+\tft'(-\frac{1}{2})(t_2+\frac{1}{2})^2$.
Since $T_1(t)=t$ and $T_2(t)=2t^2-1$, it easily follows that $\tg$ is CPSD if and only if $b_{i,j}\ge 0$ for $(i,j)\neq0$. From \eqref{tgH}, it follows that
$b_{1,2}=-\frac{1}{2}\tfo(-1)\tft[-1,\frac{1}{2},\frac{1}{2}]+c$.  Observing that $q\ge 0$ on $[-1,1]^2$, we choose $c=\frac{1}{2}\tfo(-1)\tft[-1,\frac{1}{2},\frac{1}{2}]$   as small as possible in which case $b_{1,2}=0$.

In addition,  the following derivative equality,   
\begin{equation}
\label{derivative equality}
\tfo(-1)\tft'(1/2)=\tfo(1)\tft'(-1/2).
\end{equation}
proved in \cite{Baernstein_1997} (also see  \cite{Su_2015}) implies that $b_{1,1}=b_{0,2}$. Hence we may   express $\tg$ in the form 
\begin{equation}
\tg(t_1,t_2) = 
    a_{0,0}+a_{1,0}t_1 +a_{0,1}t_2+ a_{0,2}(t_1t_2+t^{2}_2+1/4),
\end{equation}
where $a_{0,0}=b_{0,0}-b_{0,2}/4$ and $a_{i,j}=b_{i,j}$ otherwise. 

From \eqref{tgH}, we then  compute
\begin{equation}
 \label{omega6aForm}
\begin{split}
a_{0,0}&=\frac{\tfo(1)\tft(-1/2)+\tfo(-1)\tft(1/2)}{2} \\
a_{0,1}&=\tfo(-1)\tft'(-1/2)\\
a_{1,0}&=\frac{\tfo(1)\tft(-1/2)-\tfo(-1)\tft(1/2)}{2}+\frac{a_{0,1}}{2}\\
a_{0,2}&=\tfo(-1)\tft[-1,\frac{1}{2},\frac{1}{2}]=\frac49(\tfo(-1)\tft(-1)+a_{0,1}+a_{1,0}-a_{0,0}).
\end{split}   
\end{equation}

The strict absolute monotonicity and positivity of $\tft$ and   $\tfo$ show that the coefficients  $a_{0,0}, a_{0,1}$, and $a_{0,2}$ in \eqref{omega6aForm} are positive.

The next lemma which will be used to prove $a_{1,0}>0$  as well as being a first step in establishing  that $\tg\le \tf$ on $[-1,1]^2$.

 \begin{lemma}
\label{leftboundary}
$\tf(-1,t_2)\geq \tg(-1,t_2)$ for all $ t_2\in[-1, 1]$ with equality only if $t_2\in \{-1,1/2\}$.
\end{lemma}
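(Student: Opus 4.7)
The plan is to reduce the two-variable inequality to a one-variable Hermite interpolation error bound and then apply the strict absolute monotonicity of $\tft$ from Proposition \ref{absmonotone}.

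First I would specialize the formula \eqref{tgH} at $t_1=-1$. Since $(1+t_1)/2$ vanishes there, the middle term drops out, and since $q(t_1,t_2)=(1+t_1)(t_2+1/2)^2$ also vanishes along $t_1=-1$, the correction term $c\,q$ disappears as well. What remains is
\[
\tg(-1,t_2)=\tfo(-1)\,H_{\{-1,\frac{1}{2},\frac{1}{2}\}}(\tft)(t_2),
\]
that is, $\tfo(-1)$ times the Hermite interpolant of $\tft$ matching value at $-1$ and both value and derivative at $\frac{1}{2}$. Since $\tf(-1,t_2)=\tfo(-1)\tft(t_2)$ and $\tfo(-1)>0$, the desired inequality is equivalent to
\[
\tft(t_2)\ \geq\ H_{\{-1,\frac{1}{2},\frac{1}{2}\}}(\tft)(t_2),\qquad t_2\in[-1,1],
\]
with equality only at $t_2\in\{-1,1/2\}$.

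Next I would invoke the standard error formula for Hermite interpolation. Because $\tft$ is entire (the infinite product in \eqref{tildethetaProd} defines an entire function of $t$), the Hermite–Genocchi representation of the divided difference yields
\[
\tft(t_2)-H_{\{-1,\frac{1}{2},\frac{1}{2}\}}(\tft)(t_2)\ =\ \tft[-1,\tfrac{1}{2},\tfrac{1}{2},t_2]\,(t_2+1)\bigl(t_2-\tfrac{1}{2}\bigr)^2,
\]
and the divided difference equals an integral of $\tft'''$ over a 3-simplex whose vertices lie in the convex hull of $\{-1,1/2,t_2\}\subset[-1,1]$ (equivalently, $\tfrac{1}{6}\tft'''(\xi)$ for some $\xi\in[-1,1]$). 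By Proposition \ref{absmonotone}, $\tft$ is strictly absolutely monotone on $[-1,1]$, so $\tft'''>0$ throughout $[-1,1]$ and hence the divided difference is strictly positive. The node polynomial $(t_2+1)(t_2-1/2)^2$ is non-negative on $[-1,1]$ and vanishes only at $t_2=-1$ and $t_2=1/2$. Multiplying through by $\tfo(-1)>0$ yields the claimed inequality, with equality precisely at $t_2\in\{-1,1/2\}$.

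There is no real obstacle here: the argument is a direct one-variable Hermite error estimate, and all positivity comes for free from the strict absolute monotonicity already established. The only item worth flagging in the write-up is the justification that $\tg(-1,\cdot)$ is exactly the degree-two Hermite interpolant of $\tfo(-1)\tft$ on $\{-1,1/2,1/2\}$, which is immediate from the derivation of $\tg$ via \eqref{tgH} and the interpolation/derivative-matching conditions imposed on $\tg$ at the nodes of $\tilde{\tau}_6$.
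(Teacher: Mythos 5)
Your proposal is correct and follows essentially the same route as the paper: identify $\tg(-1,\cdot)$ with $\tfo(-1)$ times the Hermite interpolant of $\tft$ on $\{-1,\tfrac12,\tfrac12\}$, and conclude via the error formula \eqref{ErrForm} together with the strict absolute monotonicity of $\tft$ (so $\tft'''>0$) and the sign of the node polynomial $(t_2+1)(t_2-\tfrac12)^2$. The paper states this in one line; you have simply unpacked the same argument.
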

\begin{proof}
    The result follows from the error formula \eqref{ErrForm} applied to the strictly absolute monotone function 
$\tf(-1,t_2)=\tfo(-1)\tft(t_2)$   for $t_2$ on [-1,1].
\end{proof}
It remains to show that $a_{1,0}>0$. 

\begin{proposition}
\label{nonnegcoeff}
The coefficients $a_{0,0},a_{0,1},a_{1,0}$, and $a_{0,2}$ are positive.  Hence, $\tg$ is CPSD. 
\end{proposition}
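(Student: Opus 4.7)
The plan is to dispose of the three coefficients $a_{0,0}$, $a_{0,1}$, $a_{0,2}$ directly from Proposition~\ref{absmonotone}, and then derive the positivity of $a_{1,0}$ by evaluating on the horizontal line $t_2=-1/2$, combining the interpolation condition at the node $(1,-1/2)\in\tilde\tau_6$ with the strict inequality supplied by Lemma~\ref{leftboundary}.

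For the three ``easy'' coefficients, $a_{0,0}=(\tfo(1)\tft(-1/2)+\tfo(-1)\tft(1/2))/2$ is a sum of products of strictly positive quantities since $\tfo$, $\tft$ are strictly positive on $[-1,1]$. Similarly $a_{0,1}=\tfo(-1)\tft'(-1/2)>0$, because $\tft'>0$ on $[-1,1]$ by strict absolute monotonicity. For $a_{0,2}=\tfo(-1)\tft[-1,\tfrac12,\tfrac12]$, the confluent divided difference can be written as $(2/3)(\tft'(1/2)-\tft[-1,1/2])=(2/3)(\tft'(1/2)-\tft'(\xi))$ for some $\xi\in(-1,1/2)$ by the mean value theorem, which is strictly positive since $\tft'$ is strictly increasing.

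The core of the argument is positivity of $a_{1,0}$. The crucial observation is that $\tg(t_1,-1/2)$ is \emph{affine} in $t_1$, a direct consequence of the earlier design choice $b_{1,2}=0$. Substituting $t_2=-1/2$ into the expression for $\tg$ yields
\[
\tg(t_1,-1/2)=\bigl(a_{0,0}-\tfrac{a_{0,1}}{2}+\tfrac{a_{0,2}}{2}\bigr)+\bigl(a_{1,0}-\tfrac{a_{0,2}}{2}\bigr)t_1,
\]
so $\tg(1,-1/2)-\tg(-1,-1/2)=2a_{1,0}-a_{0,2}$, which rearranges to $2a_{1,0}=\bigl[\tg(1,-1/2)-\tg(-1,-1/2)\bigr]+a_{0,2}$. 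The interpolation condition at $(1,-1/2)\in\tilde\tau_6$ gives $\tg(1,-1/2)=\tfo(1)\tft(-1/2)$, while Lemma~\ref{leftboundary} at $t_2=-1/2$ (which lies in $[-1,1]\setminus\{-1,1/2\}$) yields the strict inequality $\tg(-1,-1/2)<\tfo(-1)\tft(-1/2)$. Combining these,
\[
2a_{1,0}>(\tfo(1)-\tfo(-1))\tft(-1/2)+a_{0,2}>0,
\]
since $\tfo(1)>\tfo(-1)$ by strict absolute monotonicity of $\tfo$, $\tft(-1/2)>0$, and $a_{0,2}>0$ from the previous step.

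The main obstacle is finding the correct evaluation line: the choice $t_2=-1/2$ is forced because $(1,-1/2)$ is the unique node of $\tilde\tau_6$ with $t_1=1$, and the mechanism works only because $\tg(t_1,-1/2)$ happens to be affine in $t_1$. Once all four coefficients are positive, the CPSD conclusion follows immediately: in the Chebyshev basis $b_{0,1}=a_{0,1}$, $b_{1,0}=a_{1,0}$, $b_{1,1}=a_{0,2}$, $b_{0,2}=a_{0,2}$, and $b_{1,2}=0$ are all non-negative.
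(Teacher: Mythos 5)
Your proof is correct and follows essentially the same line as the paper's. Both establish $a_{1,0}>0$ by restricting to the slice $t_2=-1/2$ and combining the interpolation condition $\tg(1,-1/2)=\tf(1,-1/2)$ with the strict inequality $\tg(-1,-1/2)<\tf(-1,-1/2)$ from Lemma~\ref{leftboundary}. The paper phrases the final step through convexity of $(\tf-\tg)(\cdot,-1/2)$ in $t_1$, comparing $\partial\tg/\partial t_1$ and $\partial\tf/\partial t_1$ at $t_1=-1$, whereas you compare the values at $t_1=\pm 1$ directly, using only that $\tg(\cdot,-1/2)$ is affine; both arrive at the same conclusion $a_{1,0}-\tfrac{1}{2}a_{0,2}>0$, which together with $a_{0,2}>0$ gives the result. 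Your route is marginally more elementary in that it avoids invoking convexity, and your divided-difference verification of $a_{0,2}>0$ usefully spells out a step the paper asserts in a single line.
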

\begin{proof}
By Lemma~\ref{leftboundary}, $\tg(-1,-1/2)< \tf(-1,-1/2)$. Moreover, by definition, $\tg(1,-1/2)=\tf(1,-1/2)$. Since $(\tf-\tg)(t_1,-1/2)$ is convex in $t_1$, we must have 
\begin{equation}
a_{1,0}-\frac12 a_{0,2}=\frac{\partial \tg}{\partial t_1}(-1,-1/2)\geq \frac{\partial \tf}{\partial t_1}(-1,-1/2)>0 \label{gt1derivinequality@-1/2}.
\end{equation}
So $a_{1,0}-\frac12a_{0,2}>0$ which implies $a_{1,0}>0$ since   $a_{0,2}> 0$. 
\end{proof}

As in the proof of universal optimality of $\omega_4^*$ the most technical part of our proof is 
to verify   
    $\tg\le\tf$. In the remainder of Section~\ref{section6}, we reduce the proof of this inequality to a number of technical computations and estimates that are carried out in
the Appendices~\ref{smallaAppendix} and \ref{largeaAppendix}.

\subsection{  $\tf\geq \tg$ on $[-1,1]\times([-1,-1/2] \cup[1/2,1])$}
The following lemma, proved in the appendix, establishes  several  necessary inequality conditions  for $\tg \leq \tf$.  We next show the inequality holds on $[-1,1]\times([-1,-1/2] \cup[1/2,1])$.
 
\begin{lemma} \label{necConds}The following derivative conditions hold
    \begin{align}
    \label{necescondit1}
\frac{\partial (\tf-\tg)}{\partial t_1}(-1,-1)> 0,\\
    \label{necescondit2}
    \frac{\partial (\tf-\tg)}{\partial t_1}(-1,1/2) > 0,\\
    \label{necescondit3}
     \frac{\partial (\tf-\tg)}{\partial t_1}(1,-1/2)< 0.
\end{align}
\end{lemma}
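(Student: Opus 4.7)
Each inequality evaluates $\partial_{t_1}(\tf-\tg)$ at one of the interpolation nodes, so the first step is to make them explicit. Since $\tg$ is linear in $t_1$ for fixed $t_2$, $\partial_{t_1}\tg(t_1,t_2) = a_{1,0}+a_{0,2}t_2$, while $\partial_{t_1}\tf(t_1,t_2)=\tfo'(t_1)\tft(t_2)$. Substituting the formulas \eqref{omega6aForm} for $a_{1,0}$ and $a_{0,2}$, expanding
\[
\tft[-1,\tfrac{1}{2},\tfrac{1}{2}]=\tfrac{2}{3}\tft'(\tfrac{1}{2})-\tfrac{4}{9}(\tft(\tfrac{1}{2})-\tft(-1)),
\]
and applying the derivative equality \eqref{derivative equality} reduces each of the three inequalities to a concrete relation among the values of $\tfo$ and $\tft$ (and their first derivatives) at $\pm 1$ and $\pm 1/2$ only.

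Following the template of the 4-point proof of Section~\ref{section4}, the plan is to split the parameter $a>0$ into a small-$a$ regime and a large-$a$ regime as in the definition \eqref{f1f2Def}, and treat each separately. In the small-$a$ regime, use the Chebyshev expansion $\tth(\pi/a;t)=1+2\sum_{k\ge 1}e^{-\pi^2 k^2/a}T_k(t)$, whose tail is geometrically controlled by $e^{-\pi^2/a}$, and retain only a few explicit terms. In the large-$a$ regime, use the Poisson-transformed form $\sqrt{\pi/a}\,\tth(\pi/a;t)=\sum_{k\in\Z}e^{-a(k+\arccos(t)/2\pi)^2}$, under which $\tfo(\pm 1)$, $\tft(\pm 1)$, $\tft(\pm 1/2)$, $\tft'(\pm 1/2)$ each have a clean leading exponential order. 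The first derivatives $\tfo'(\pm 1)$, where the Poisson form is $0/0$, are instead estimated directly from the Chebyshev series, combined with the Jacobi modular relation $\theta(c)=c^{-1/2}\theta(1/c)$ to extract their leading behaviour for small $c=\pi/a$.

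In each regime, I would substitute these expansions into the reduced inequality, isolate the leading exponential, check that it has the desired sign, and control the omitted tails by explicit geometric bounds. Where the two regimes meet, verify the inequalities at the transition value $a=a_0$ by rigorous interval arithmetic combined with monotonicity in $a$ of suitably rescaled quantities, exactly in the spirit of the finite grid reduction \eqref{linecheck} used in the $4$-point proof. Conditions 1 and 2 are relatively robust: in Condition 1 the left-hand side is positive of exponentially smaller order than a right-hand side that is eventually negative, and in Condition 2 the left-hand side dominates by a polynomial-in-$a$ factor.

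The hard part will be Condition 3. After substitution, both sides scale the same way in $a$ for large $a$; the strict inequality comes down to a small numerical margin (ultimately reflecting the elementary inequality $1/(2\pi^2)<1/(3\pi\sqrt{3})$), and preserving this margin uniformly across all $a>0$ forces a careful accounting of the next-to-leading corrections in $a_{1,0}-\tfrac{1}{2}a_{0,2}$ together with the subleading tails in $\tfo'(1)$ and $\tft(-1/2)$. This is the analogue, in the present $6$-point setting, of the delicate quantitative comparison carried out in Lemma~\ref{inequalitystring} for the $4$-point case.
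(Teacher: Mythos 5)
Your proposal follows essentially the same route as the paper: reduce each inequality to a relation among values of $\tfo,\tft$ and their first derivatives at $\pm1,\pm\tfrac12$ via the coefficient formulas \eqref{omega6aForm}, split into small-$a$ (Chebyshev expansion, Appendix~C) and large-$a$ (Poisson-transformed expansion, Appendix~D) regimes as in \eqref{f1f2Def}, and verify the resulting inequalities using explicit tail bounds plus a finite number of rigorous checks. You also correctly pinpoint \eqref{necescondit3} as the delicate case, with the leading-order margin controlled by $2\pi^{2}>3\sqrt{3}\,\pi$, which is exactly the constant comparison underlying the paper's large-$a$ estimate.
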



For fixed $t_2$, $\tf(t_1,t_2)-\tg(t_1,t_2)$ is strictly convex  on $[-1,1]$ as a function of  $ t_1 $ since $\tg(t_1,t_2)$ is linear  in $t_1$ and $\tfo$ is strictly absolutely monotone.  The next proposition is an immediate consequence of this observation. 
 
\begin{lemma}
\label{horizslices1/2}
Let $t_2\in[-1,1]$.  If either condition
\begin{enumerate}
    \item[(a)] $\frac{\partial (\tf-\tg)}{\partial t_{1}}(-1,t_2)\geq0$ and $\tf(-1,t_2)\geq \tg(-1,t_2)$  or
    \item[(b)] $\frac{\partial (\tf-\tg)}{\partial t_{1}}(1,t_2)\leq0$
and $\tf(1,t_2)\geq\tg(1,t_2)$
\end{enumerate} holds, then
\begin{equation}\label{horsliceIneq}
\tf(t_1,t_2)\geq \tg(t_1,t_2),\qquad t_1\in [-1,1].
\end{equation}
 
If condition (a) holds, then we have strict inequality   in \eqref{horsliceIneq} for $t_1\neq -1$. If condition (b) holds, then we have strict inequality   in \eqref{horsliceIneq} for $t_1\neq 1$.
\end{lemma}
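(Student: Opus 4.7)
The plan is to exploit the strict convexity of the difference $h(t_1) := \tf(t_1,t_2) - \tg(t_1,t_2)$ in the variable $t_1$ (with $t_2$ fixed) that the paper has just pointed out. Since $\tg$ is linear in $t_1$ (the coefficient of $t_1^2$ in $\tg$ is zero), and since $\tfo$ is strictly absolutely monotone so that $\tfo''>0$ on $(-1,1)$, we have $h''(t_1) = \tfo''(t_1)\tft(t_2) > 0$ for $t_1 \in (-1,1)$ (using that $\tft > 0$). Thus $h$ is strictly convex on $[-1,1]$, and in particular $h'$ is strictly increasing on $[-1,1]$.

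From here the two cases are immediate, and I would write them in parallel. In case (a), the hypotheses read $h(-1) \ge 0$ and $h'(-1) \ge 0$. Strict monotonicity of $h'$ gives $h'(t_1) > h'(-1) \ge 0$ for $t_1 \in (-1,1]$, so $h$ is strictly increasing on $[-1,1]$. Therefore $h(t_1) > h(-1) \ge 0$ for $t_1 \in (-1,1]$, which yields the inequality \eqref{horsliceIneq} with strict inequality off $t_1=-1$. In case (b), the hypotheses read $h(1) \ge 0$ and $h'(1) \le 0$. Strict monotonicity of $h'$ gives $h'(t_1) < h'(1) \le 0$ for $t_1 \in [-1,1)$, so $h$ is strictly decreasing on $[-1,1]$, whence $h(t_1) > h(1) \ge 0$ for $t_1 \in [-1,1)$, again giving \eqref{horsliceIneq} with strict inequality off $t_1=1$.

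I do not anticipate any real obstacle here: the entire content of the lemma is the elementary fact that a strictly convex function on a closed interval that is nonnegative at an endpoint and has the appropriate one-sided derivative sign at that endpoint must be nonnegative on the whole interval. The only small care required is to justify that $\tfo'' > 0$ uniformly on $(-1,1)$ (guaranteed by Proposition \ref{absmonotone}) and that $\tft(t_2) > 0$ for all $t_2 \in [-1,1]$ (also from Proposition \ref{absmonotone}), both of which have already been established in the paper. Thus the proof should be only a few lines.
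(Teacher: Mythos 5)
Your proof is correct and follows exactly the argument the paper indicates: strict convexity of $h(t_1):=\tf(t_1,t_2)-\tg(t_1,t_2)$ in $t_1$ (since $\tg$ is affine in $t_1$ and $\tfo''\tft>0$ by Proposition \ref{absmonotone}), combined with the endpoint sign conditions on $h$ and $h'$. The paper leaves this as an "immediate consequence" of the convexity observation, and your write-up simply fills in those few lines.
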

 
We use the above lemmas to obtain:  
\begin{lemma}
\label{t2>1/2}   
We have $\tf\geq \tg$ on  $ [-1,1]\times [1/2,1]$  with equality only at $(-1,1/2)$.
\end{lemma}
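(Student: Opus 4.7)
\emph{Proof plan.} The plan is to apply Lemma~\ref{horizslices1/2} with condition~(a) for each fixed $t_2 \in [1/2, 1]$, which reduces the two-dimensional inequality to two one-dimensional conditions at $t_1 = -1$: namely (i) $\tf(-1, t_2) \geq \tg(-1, t_2)$ and (ii) $h(t_2) := \partial_{t_1}(\tf - \tg)(-1, t_2) \geq 0$. Part~(i) follows directly from Lemma~\ref{leftboundary}, which additionally gives equality only at $t_2 \in \{-1, 1/2\}$, so within $[1/2, 1]$ only at $t_2 = 1/2$. All the work will be in part~(ii).

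For part~(ii), I would use the product form $\tf(t_1, t_2) = \tfo(t_1)\tft(t_2)$ and the fact that $\tg$ is affine in $t_1$ to obtain
\[
h(t_2) = \tfo'(-1)\,\tft(t_2) - a_{1,0} - a_{0,2}\, t_2.
\]
Since $\tft$ is strictly absolutely monotone (Proposition~\ref{absmonotone}), $\tft'' > 0$, so $h$ is strictly convex on $[-1, 1]$. The already-established inequality \eqref{necescondit2} gives $h(1/2) > 0$. If I can further show $h'(1/2) \geq 0$, then the convexity of $h$ will force $h'(t_2) \geq h'(1/2) \geq 0$ throughout $[1/2, 1]$, so $h$ will be non-decreasing there and $h(t_2) \geq h(1/2) > 0$, which is precisely part~(ii).

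The main obstacle will therefore be the endpoint derivative inequality $h'(1/2) \geq 0$, equivalently
\[
\tfo'(-1)\,\tft'(1/2) \geq a_{0,2} = \tfo(-1)\,\tft[-1, \tfrac{1}{2}, \tfrac{1}{2}] = \tfrac{2}{3}\tfo(-1)\tft'(1/2) - \tfrac{4}{9}\tfo(-1)\bigl(\tft(1/2) - \tft(-1)\bigr).
\]
I expect this to be an explicit inequality in the Jacobi theta function values $\tth(\pi/a;\cdot)$ and $\tth(\pi/(3a);\cdot)$ and their derivatives at $\pm 1, \pm 1/2$, verified by a case split in the parameter $a$ in the style of Section~\ref{section4}, using the two series representations \eqref{thetasmallaformula}--\eqref{thetabigaformula} together with the Jacobi triple product identity \eqref{tildethetaProd}. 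The detailed estimates would be relegated to the appendix.

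Finally, for the equality statement: once (i) and (ii) are verified, the last clause of Lemma~\ref{horizslices1/2} gives strict inequality $\tf(t_1, t_2) > \tg(t_1, t_2)$ whenever $t_1 \neq -1$; while for $t_1 = -1$ Lemma~\ref{leftboundary} already confines equality in $[1/2, 1]$ to $t_2 = 1/2$. Hence equality on $[-1, 1] \times [1/2, 1]$ is attained exactly at the interpolation node $(-1, 1/2)$.
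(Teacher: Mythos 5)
Your framework is exactly the paper's: reduce via Lemma~\ref{horizslices1/2}(a) to the one-variable conditions at $t_1 = -1$, dispatch the value condition with Lemma~\ref{leftboundary}, and exploit the strict convexity of $h(t_2) = \tfo'(-1)\tft(t_2) - a_{1,0} - a_{0,2}t_2$. The divergence is in how the positivity of $h$ on $[1/2,1]$ is nailed down. You propose to establish the endpoint-derivative inequality $h'(1/2) = \tfo'(-1)\tft'(1/2) - a_{0,2} \geq 0$ directly via new theta-function estimates, which would indeed close the argument, but at the cost of a fresh technical appendix calculation. The paper instead notices that no new estimate is needed: \eqref{necescondit3} says $\partial_{t_1}(\tf-\tg)(1,-1/2) < 0$, and since $\partial_{t_1}(\tf-\tg)$ is strictly increasing in $t_1$ (as $\tfo'' > 0$), this forces $h(-1/2) < 0$. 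Then strict convexity of $h$, together with $h(-1/2) < 0 < h(1/2)$ (the latter from \eqref{necescondit2}), gives $h > 0$ on $[1/2,1]$ by a chord comparison across $-1/2 < 1/2 < t_2$. In fact the sign $h'(1/2) > 0$ you want is itself a free consequence of $h(-1/2) < 0 < h(1/2)$ plus convexity, so your proposed estimate is logically sound but redundant; the paper's route saves you from having to verify a new inequality over both small-$a$ and large-$a$ regimes. I would recommend replacing your $h'(1/2)$ step with the paper's use of \eqref{necescondit3} and the $t_1$-monotonicity.
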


\begin{proof}
We first note that  $\frac{\partial(\tf-\tg)}{\partial t_1}(t_1,t_2)=\tfo'(t_1)\tft(t_2)-a_{1,0}-a_{0,2}t_2$ is (a) strictly increasing in $t_1$ for fixed $t_2$ and (b) strictly convex in $t_2$ for fixed $t_1$. Let $h(t_2):=\frac{\partial(\tf-\tg)}{\partial t_1}(-1,t_2)$. The inequality \eqref{necescondit3} together with (a) implies  
$h(-1/2)< 0$.  Hence, the strict convexity of $h$ together with $h(1/2)>0$ (from \eqref{necescondit2}) implies $h(t_2)=\frac{\partial(\tf-\tg)}{\partial t_1}(-1,t_2) >0$ for $t_2\in[1/2,1].$ Combining this fact with  Lemma~\ref{leftboundary}, we may invoke Lemma~\ref{horizslices1/2} part (a) to complete the proof. 
\end{proof}

Next, we establish that $\tf\geq \tg$ on the right-hand boundary $t_1=1$.  
\begin{lemma}
\label{rightboundary}
We have $\tf(1,t_2)\geq \tg(1,t_2)$ for all $t_2 \in [-1,1]$ with equality only at $t_2=-1/2$.
\end{lemma}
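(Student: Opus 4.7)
The plan is to exploit the structure of $\tg(1,t_2)$ as a Hermite interpolant plus a quadratic correction, reducing the lemma to a scalar inequality involving divided differences of $\tft$ that can be attacked using absolute monotonicity and the derivative equality \eqref{derivative equality}.

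First, I would extract a clean expression for the error on $t_1=1$. Specializing \eqref{tgH} with $c=\tfrac{1}{2}\tfo(-1)\tft[-1,\tfrac12,\tfrac12]$, and using $q(1,t_2)=2(t_2+\tfrac12)^2$, we have
\[
\tg(1,t_2)=\tfo(1)H_{\{-1/2,-1/2\}}(\tft)(t_2)+\tfo(-1)\tft[-1,\tfrac12,\tfrac12]\,(t_2+\tfrac12)^2.
\]
The standard error representation $\tft(t_2)=H_{\{-1/2,-1/2\}}(\tft)(t_2)+\tft[-1/2,-1/2,t_2]\,(t_2+\tfrac12)^2$ then gives
\[
\tf(1,t_2)-\tg(1,t_2)=(t_2+\tfrac12)^2\,D(t_2),\qquad D(t_2):=\tfo(1)\tft[-\tfrac12,-\tfrac12,t_2]-\tfo(-1)\tft[-1,\tfrac12,\tfrac12].
\]
Thus the lemma reduces to showing $D(t_2)\geq 0$ on $[-1,1]$, with strict inequality yielding the strict inequality claimed.

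Next, I would observe that $D$ is strictly increasing in $t_2$. Indeed, by the Hermite--Genocchi formula,
\[
\tft[-\tfrac12,-\tfrac12,t_2]=\int_0^1(1-s)\,\tft''\!\left(-\tfrac12+s(t_2+\tfrac12)\right)ds,
\]
and differentiating under the integral with the fact that $\tft'''>0$ (Proposition~\ref{absmonotone}) shows monotonicity. Consequently, it suffices to verify the single scalar inequality
\[
\tfo(1)\,\tft[-\tfrac12,-\tfrac12,-1]\;\geq\;\tfo(-1)\,\tft[-1,\tfrac12,\tfrac12]. \tag{$*$}
\]

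To verify $(*)$ I would apply the derivative equality \eqref{derivative equality}, $\tfo(-1)\tft'(\tfrac12)=\tfo(1)\tft'(-\tfrac12)$, to rewrite $(*)$ as the $\tfo$-free form
\[
\tft'(\tfrac12)\,\tft[-1,-\tfrac12,-\tfrac12]\;\geq\;\tft'(-\tfrac12)\,\tft[-1,\tfrac12,\tfrac12],
\]
and then express both divided differences as weighted integrals of $\tft''$, using
\[
\tft[-1,-\tfrac12,-\tfrac12]=4\int_{-1}^{-1/2}(1+u)\tft''(u)\,du,\qquad \tft[-1,\tfrac12,\tfrac12]=\tfrac{4}{9}\int_{-1}^{1/2}(1+u)\tft''(u)\,du,
\]
together with $\tft'(\tfrac12)-\tft'(-\tfrac12)=\int_{-1/2}^{1/2}\tft''(u)\,du$. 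This produces an inequality depending only on $\tft=\tth(\pi/(3a);\cdot)$ which we can verify for every $a>0$.

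The main obstacle is this last verification, which I expect to require the same small-$a$/large-$a$ case analysis and truncated theta-series estimates used in Section~\ref{section4}: for small $a$ one truncates \eqref{thetasmallaformula}, for large $a$ one truncates the Poisson-dual \eqref{thetabigaformula}, and in each regime the exponential error from the tails is controlled and compared on a grid of the form \eqref{linecheck}, with explicit computations deferred to the appendix. (Alternatively, one could try to avoid case analysis by establishing a pointwise convexity statement $\tfo(1)\tft''(t_2)\geq 2\tfo(-1)\tft[-1,\tfrac12,\tfrac12]$ on $[-1,1]$, but since $\tft''$ is much smaller near $-1$ than near $1/2$, this stronger condition is unlikely to hold for all $a$, so the monotonicity-of-divided-differences route is necessary.)
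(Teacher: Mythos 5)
Your decomposition is correct and, structurally, a cleaner route than the paper's. You correctly specialize \eqref{tgH} at $t_1=1$, apply the Hermite error representation to obtain $\tf(1,t_2)-\tg(1,t_2)=(t_2+\tfrac12)^2 D(t_2)$ with $D(t_2):=\tfo(1)\tft[-\tfrac12,-\tfrac12,t_2]-\tfo(-1)\tft[-1,\tfrac12,\tfrac12]$, and correctly argue via Hermite--Genocchi and $\tft'''>0$ that $D$ is strictly increasing. The paper instead argues by contradiction: assuming a zero $p\neq-1/2$ of $\tf(1,\cdot)-\tg(1,\cdot)$, it treats $\tg(1,\cdot)$ as the Hermite interpolant of $\tf(1,\cdot)$ at $\{p,-\tfrac12,-\tfrac12\}$, applies \eqref{ErrForm} and $\tft^{(3)}>0$, and derives the contradiction $\tf(1,-1)<\tg(1,-1)$. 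Both arguments rest on the same positivity and the same input, but your version avoids the case split on whether $p<-1/2$ or $p>-1/2$ and makes the error sign explicit at every $t_2$.

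Two corrections. First, you should insist on $D(-1)>0$ strictly, not merely $D(-1)\ge 0$: otherwise equality would occur at $t_2=-1$ as well, contradicting the ``only at $t_2=-1/2$'' conclusion. Second, and more importantly, your inequality $(*)$ does not require a fresh round of small-$a$/large-$a$ estimates. Since $\tf(1,-1)-\tg(1,-1)=\tfrac14 D(-1)$, the inequality $D(-1)>0$ is exactly $\tf(1,-1)>\tg(1,-1)$, and this is already available: $(\tf-\tg)(-1,-1)=0$ because $(-1,-1)$ is an interpolation node (Lemma~\ref{leftboundary}), $\frac{\partial(\tf-\tg)}{\partial t_1}(-1,-1)>0$ is inequality \eqref{necescondit1} of Lemma~\ref{necConds}, and $\tf-\tg$ is strictly convex in $t_1$ (the $\tg$ part is affine in $t_1$, and $\tfo$ is strictly absolutely monotone); Lemma~\ref{horizslices1/2}(a) then gives $(\tf-\tg)(1,-1)>0$. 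These are precisely the ingredients the paper's own proof invokes, so you can close your argument by citing them rather than redoing the $\theta$-function casework.
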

\begin{proof}
Suppose by way of contradiction that there exists $t_2'\in [-1,1]$ such that $t_2'\neq -\frac12$ and $\tf(1,t_2') \leq \tg(1,t_2')$. Then there must be some point $-1/2\neq p\in[-1,1]$ such that $\tfo(1)\tft(p) = \tg(1,p)$. Indeed, either $t_2'$ is such a point, or $\tf(1,t_2') < \tg(1,t_2')$. We have from Lemmas \ref{necConds}, \ref{horizslices1/2}, and \ref{t2>1/2} that $\tf(1,\pm 1)> \tg(1,\pm 1)$, which yield two cases for $t_2'$. If $t_2'<-1/2$, then 
there exists $p \in (-1,t_2')$ such that $\tf(1,p)= \tg(1,p)$ by the intermediate value theorem. If $t_2'>-1/2$, instead apply the intermediate value theorem on the interval $[t_2',1]$ to see that $p\in[1/2,1]$.

Then $\tg(1,t_2)$ is the unique quadratic polynomial that interpolates  the  function $\tf(1,t_2)$ at $T=\{p,-1/2,-1/2\}$. Then the error formula   \eqref{ErrForm} gives 
\[
\tf(1,t_2)-\tg(1,t_2)=\tfo(1)\tft^{(3)}(\xi) (t_2-p)(t_2+1/2)^2
\]
for some $\xi\in[-1,1]$.
The positivity of $\tft^{(3)}$ then implies the contradiction
$\tf(1,-1)=\tfo(1)\tft(-1)<\tg(1,-1)$ completing the proof.   
\end{proof}

\begin{lemma}\label{t2<-1/2}
We have $\tf \geq \tg$ on $[-1,1]\times [-1,-1/2]$ with equality only at  $(-1,-1)$ and $(1,-1/2)$. 
\end{lemma}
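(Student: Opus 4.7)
\textbf{Plan for Lemma~\ref{t2<-1/2}.} The plan is to follow the two-step strategy of the $\omega_4^*$ proof in Section~\ref{section4}: first establish $\tf \geq \tg$ on the boundary of $R := [-1,1] \times [-1,-1/2]$, and then propagate this into the interior using a Hessian-determinant argument in the spirit of Lemma~\ref{linearizationlemma}.

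For the boundary, the vertical edges follow immediately: Lemma~\ref{leftboundary} gives $\tf(-1,t_2) \geq \tg(-1,t_2)$ with equality on $[-1,-1/2]$ only at $t_2 = -1$, while Lemma~\ref{rightboundary} gives $\tf(1,t_2) \geq \tg(1,t_2)$ with equality on $[-1,-1/2]$ only at $t_2 = -1/2$. On the top edge $t_2 = -1/2$, the function $\phi(t_1) := (\tf-\tg)(t_1,-1/2)$ is strictly convex in $t_1$ (since $(\tf-\tg)_{t_1 t_1} = \tfo''(t_1)\tft(-1/2) > 0$), vanishes at $t_1 = 1$ by the interpolation condition, and satisfies $\phi'(1) = k(-1/2) < 0$ by Lemma~\ref{necConds}; the supporting tangent at $t_1 = 1$ yields $\phi(t_1) \geq k(-1/2)(t_1-1) \geq 0$ on $[-1,1]$, strictly so except at $t_1 = 1$. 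A symmetric argument on the bottom edge $t_2 = -1$, using $(\tf-\tg)(-1,-1) = 0$ and $h(-1) > 0$ from Lemma~\ref{necConds}, yields $(\tf-\tg)(t_1,-1) \geq h(-1)(t_1+1) \geq 0$ with equality only at $t_1 = -1$.

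To pass from the boundary into the interior, I would mimic Lemma~\ref{linearizationlemma}. On any subrectangle $R' = [c,c'] \times [d',d] \subseteq R$ with upper-left corner $(c,d)$, replace $\tg$ by the separated upper bound
\[
\tg_{c,d}(t_1,t_2) := \tg(t_1,t_2) - a_{0,2}(t_1 - c)(t_2 - d).
\]
Since $t_1 - c \geq 0$ and $t_2 - d \leq 0$ on $R'$, the correction is non-negative, so $\tg_{c,d} \geq \tg$ on $R'$ with equality exactly on $\{t_1 = c\} \cup \{t_2 = d\}$. Because $\tg_{c,d}$ has no mixed $t_1 t_2$ term, a direct computation gives
\[
\det H_{\tf - \tg_{c,d}} \;=\; \det H_{\tf} \;-\; 2 a_{0,2}\, \tfo''(t_1)\tft(t_2) \;<\; 0 \quad \text{on } R',
\]
using $\det H_{\tf} < 0$ (a consequence of the log-concavity of $\tfo$ and $\tft$, which follows from Proposition~\ref{absmonotone}) and the positivity of $a_{0,2}$, $\tfo''$, and $\tft$. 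Combined with $(\tf-\tg_{c,d})_{t_1 t_1} > 0$, this rules out interior local minima of $\tf - \tg_{c,d}$ on $R'$, so the minimum of $\tf - \tg_{c,d}$ on $R'$ is attained on $\partial R'$.

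One then partitions $R$ into such subrectangles and verifies $\tf \geq \tg_{c_i,d_i}$ along the edges of each $R'_i$. Edges lying in $\partial R$ that coincide with $\{t_1 = c_i\}$ or $\{t_2 = d_i\}$ inherit the inequality $\tf \geq \tg = \tg_{c_i,d_i}$ from the boundary step, while the remaining edges require separate verification. The main obstacle, analogous to Section~\ref{sec4ptlargea}, is the verification of these internal-edge inequalities -- especially in the large-$a$ regime and near the corner $(1,-1/2)$, where $\tg_{c_i,d_i}$ overestimates $\tg$ by a linear quantity while $\tf - \tg$ only vanishes quadratically. As in the 4-point proof, these verifications should reduce, via truncated series lower bounds for $\tf$ together with the convexity of $e^{a/m_1}(\tf_T - \tg^*_{c,d})$ in $a$ for appropriate $m_1$, to a finite set of point evaluations of the type carried out in~\eqref{linecheck}. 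The strict inequalities obtained on each $R'_i$ (away from the coincident edges $\{t_1 = c_i, t_2 = d_i\}$) ensure that equality in $\tf \geq \tg$ on $R$ holds only at the two boundary corners $(-1,-1)$ and $(1,-1/2)$.
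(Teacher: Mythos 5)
Your boundary analysis agrees with the paper: Lemmas~\ref{leftboundary}, \ref{rightboundary}, \ref{necConds}, and the convexity-in-$t_1$ reasoning (Lemma~\ref{horizslices1/2}) establish $\tf\ge\tg$ on all four edges of $[-1,1]\times[-1,-1/2]$ with equality only at $(-1,-1)$ and $(1,-1/2)$. It is the interior step where you diverge from the paper and where the plan breaks down.

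The paper does \emph{not} use a Hessian/linearization argument for this rectangle. Instead it parametrizes line segments $\ell(s)=p+s(q-p)$ from boundary points $p$ (with $p_1<1$, $p_2<-1/2$) to the corner $q=(1,-1/2)$. Because $q-p$ has strictly positive components, $\tf\circ\ell$ is strictly absolutely monotone on $[0,1]$, while $\tg\circ\ell$ is a polynomial of degree at most~$2$. Combining the boundary positivity near $s=0$, the derivative conditions at $q$ near $s=1$, and the Hermite error formula~\eqref{ErrForm}, a sign count over at most three interpolation nodes yields a contradiction if $\tf\circ\ell-\tg\circ\ell$ were negative (or zero) at any interior $s$. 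This covers every interior point at once, works uniformly for all $a>0$, and requires no appendix estimates, no subdivision, and no case split in $a$.

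Your proposed linearization approach, by contrast, has a genuine gap at the corner $(1,-1/2)$. Any subrectangle $R'=[c,c']\times[d',d]$ with $c'=1$ and $d=-1/2$ contains that corner on its right edge $\{t_1=1\}$. On that edge $\tg_{c,d}-\tg=-a_{0,2}(1-c)(t_2+1/2)>0$ decays only linearly as $t_2\to-1/2$, while Lemma~\ref{rightboundary} combined with the error formula shows $\tf-\tg$ at $t_1=1$ vanishes like $(t_2+1/2)^2$. Hence $\tf-\tg_{c,d}<0$ on a neighborhood of $(1,-1/2)$ on that edge for any fixed $c<1$, so the boundary hypothesis of the Lemma~\ref{linearizationlemma} scheme cannot be satisfied, and no finite partition into such subrectangles can repair it. (Notice the paper's own use of linearization for the 6-point critical region is confined to subregions $A$ and $D$, which stay away from $(1,-1/2)$; the corner region $C$ is handled by the level-curve argument of Lemma~\ref{gradientcomp}.) You flagged this corner issue as an ``obstacle,'' but asserting that it ``should reduce to a finite set of point evaluations'' does not resolve it: near the corner the target inequality $\tf\geq\tg_{c,d}^*$ is simply false, so there is nothing to verify. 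A working proof along your lines would need an entirely different mechanism near $(1,-1/2)$; the paper's one-dimensional line-segment argument supplies exactly that, and does so so efficiently that the partition and Hessian machinery become unnecessary for this rectangle.
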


\begin{proof}
From Lemma~\ref{leftboundary}, we have $\tf\geq \tg$ for $t_1=-1$. By Lemmas \ref{necConds} and \ref{horizslices1/2}, we have the same inequality when $t_2=-1/2$ or $-1$. Finally, by Lemma~\ref{rightboundary}, we have the inequality for $t_1=1$. All of these inequalities are strict 
except for at $(-1,-1)$ and $(1,-1/2)$.

Let     $p=(p_1,p_2)$ be an arbitrary point on the boundary of $[-1,1]\times [-1,-1/2]$ such that $p_1<1$ and $p_2<-1/2$, let $q=(1,-1/2)$,  and 
let $l(s):=p+s(q-p)$, $0\le s\le 1$, parametrize the line segment from $p$  to $q$. Since $u_l:=q-p$ has positive components, it follows that $\tf^l:=\tf\circ l$ is strictly absolutely monotone on $[0,1]$. Also, let $\tg^l:=\tg\circ l$ and note that $\tg^l$ is a polynomial of degree at most 2.   

We claim that   $(\tf^l-\tg^l)(\epsilon)>0$ for all  sufficiently small $\epsilon>0$. Indeed, if $p\neq (-1,-1)$, then 
$(\tf^l-\tg^l)(0)>0$ and the result follows by continuity. If $p=(-1,-1)$, then $\nabla (\tf-\tg)(-1,-1) \cdot u_l> 0$ at   (-1,-1) by Lemmas~\ref{necConds} and \ref{leftboundary} which shows the result in this case.
Similarly, the necessary derivative inequality and equality conditions at $(1,-1/2)$ imply
$\nabla (\tf-\tg)(-1,1/2) \cdot u_l< 0$. Together with the fact that $(\tf^l-\tg^l)(1)=0$, we get
$(\tf^l-\tg^l)(1-\epsilon)>0$ for all $\epsilon$ sufficiently small.

Now supposing for a contradiction that $(\tf^l-\tg^l)(r')<0$ for some $r'\in (0,1)$. By the intermediate value theorem there are points $0<r_1<r'<r_2<1$ such that 
$(\tf^l-\tg^l)(r_1)=(\tf^l-\tg^l)(r_2)=0$. Then  $\tg^l$ is a polynomial of degree at most 2 which interpolates $\tf^l$ for $T=\{r_1,r_2,1\}$  and leads to a contradiction using the error formula \eqref{ErrForm}. Since any point in $(-1,1)\times(-1,-1/2)$  must lie on such a line segment, we conclude that $\tf\geq \tg$ on $(-1,1)\times(-1,-1/2)$. Now to see the inequality must be strict, if  $(\tf^l-\tg^l)(r')=0$ for some $r'\in (0,1)$ $\tg^l$ is a polynomial of degree at most 2 which interpolates $\tf^l$ for $T=\{r',r',1\}$, and again we obtain a contradiction with the error formula.  
\end{proof}
 
Thus, we have proved $\tf \geq \tg$ on $[-1,1]^2$ whenever $t_2\geq 1/2$ or $t_2 \leq -1/2$. Our proof of the inequality for the critical region $-1/2 \leq t_2\leq 1/2$  is more delicate and requires different approaches for $a$ small and $a$ large. 

\subsection{The critical region for small $a$ ($a<\pi^2$)}
For $a<\pi^2$, we take a linear approximation approach.
Let $$L_{\pm 1}(t_1,t_2):=(\tf-\tg)(\pm 1,t_2)+ (t_1\mp 1)\frac{\partial(\tf -\tg)}{\partial t_1}(\pm 1,t_2)$$ denote the tangent approximation of $\tf-\tg$ for fixed $t_2$ about $t_1=\pm 1$.    
Since  $(\tf-\tg)(t_1,t_2)$ is strictly convex in $t_1$ for fixed $t_2$,
we have 
\small
\begin{equation}
    \label{linearAppLB}
    (\tf-\tg)(t_1,t_2)\ge \max \{L_{-1}(t_1,t_2),L_{-1}(t_1,t_2)\}\ge
    \min \{L_{-1}(-1,t_2),L_{-1}(0,t_2),L_{1}(0,t_2),L_{1}(1,t_2)\},
\end{equation}
\normalsize
where the second inequality uses that $L_{\pm 1}(t_1,t_2)$ is a linear polynomial in $t_1$ for fixed $t_2$.
Note the first inequality in \eqref{linearAppLB} is strict if $-1<t_1<1$.

Now $L_{-1}(-1,t_2)=(\tf-\tg)(-1,t_2)\ge 0$ by Lemma~\ref{leftboundary} and 
$L_{1}(1,t_2)=(\tf-\tg)(1,t_2)\ge 0$ by Lemma~\ref{rightboundary}. 
In fact, we shall next prove that $L_{-1}(0,t_2)\ge L_{1}(0,t_2)$ so that the minimum on the right-hand side of \eqref{linearAppLB} is non-negative if $L_{1}(0,t_2)$ is non-negative.

\begin{lemma}\label{philemma}
   If  $t_2\in(-1,1)$, then $L_{-1}(0,t_2)> L_{1}(0,t_2)$.  
\end{lemma}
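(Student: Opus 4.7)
The plan is to directly compute $L_{-1}(0,t_2) - L_1(0,t_2)$ using the explicit tensor form $\tf(t_1,t_2)=\tfo(t_1)\tft(t_2)$ together with the fact that $\tg$ is linear in $t_1$. The key observation is that all terms coming from $\tg$ will cancel, reducing the inequality to a one-variable statement about $\tfo$ alone.

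More specifically, writing $\tg(t_1,t_2) = a_{0,0}+a_{1,0}t_1+a_{0,1}t_2+a_{0,2}(t_1t_2+t_2^2+1/4)$, so that $\partial_{t_1}\tg(\pm 1,t_2)=a_{1,0}+a_{0,2}t_2$, a direct expansion of $L_{-1}(0,t_2)$ and $L_1(0,t_2)$ shows the terms depending on $a_{0,0},a_{0,1},a_{1,0},a_{0,2}$ are identical on both sides. After cancellation, one finds
\begin{equation*}
L_{-1}(0,t_2) - L_1(0,t_2) \;=\; \tft(t_2)\,\bigl[\bigl(\tfo'(-1)+\tfo'(1)\bigr) - \bigl(\tfo(1)-\tfo(-1)\bigr)\bigr].
\end{equation*}
Since $\tft$ is strictly positive on $[-1,1]$ (by Proposition~\ref{absmonotone}, $\tfo$ and $\tft$ are strictly positive absolutely monotone), it suffices to prove $\tfo'(-1)+\tfo'(1)>\tfo(1)-\tfo(-1)$.

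This last one-variable inequality is exactly the strict trapezoidal overestimate for a strictly convex integrand. By the fundamental theorem of calculus, $\tfo(1)-\tfo(-1)=\int_{-1}^{1}\tfo'(t)\,dt$, and Proposition~\ref{absmonotone} implies $\tfo$ is strictly absolutely monotone, so in particular $\tfo'''>0$ on $[-1,1]$, meaning $\tfo'$ is strictly convex there. For a strictly convex function $\tfo'$ on $[-1,1]$, the chord bound gives $\tfo'(t) < \tfrac{1-t}{2}\tfo'(-1) + \tfrac{1+t}{2}\tfo'(1)$ for $t\in(-1,1)$, and integrating yields $\int_{-1}^{1}\tfo'(t)\,dt<\tfo'(-1)+\tfo'(1)$.

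I do not expect any serious obstacle here; the proof is essentially a two-line calculation followed by the standard trapezoidal-rule inequality for a strictly convex function. The only item needing a brief justification is the strict convexity of $\tfo'$, which follows immediately from the strict absolute monotonicity statement already established in Proposition~\ref{absmonotone}. The restriction $t_2\in(-1,1)$ in the statement is not essential for the strict inequality (since $\tft>0$ on the closed interval), but it matches the range where the lemma is applied in the critical-region argument.
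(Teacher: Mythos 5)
Your proof is correct, and it starts from the same observation as the paper (that the $\tg$ contributions to $L_{-1}(0,t_2)$ and $L_1(0,t_2)$ both reduce to $\tg(0,t_2)$ because $\tg$ is affine in $t_1$), but finishes by a genuinely different route. The paper writes both $L_{\pm 1}(0,t_2)$ as $\tf(0,t_2)-\tg(0,t_2)-\tfrac{1}{2}\tfo''(\chi_\pm)\tft(t_2)$ via the Lagrange remainder with intermediate points $-1<\chi_-<0<\chi_+<1$, then compares the remainders using the strict monotonicity of $\tfo''$. You instead subtract directly to get $L_{-1}(0,t_2)-L_1(0,t_2)=\tft(t_2)\bigl[\tfo'(-1)+\tfo'(1)-(\tfo(1)-\tfo(-1))\bigr]$ and apply the strict trapezoidal overestimate to the integral representation $\tfo(1)-\tfo(-1)=\int_{-1}^1\tfo'$. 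Both arguments hinge on $\tfo'''>0$ from Proposition~\ref{absmonotone}; the paper uses it as strict monotonicity of $\tfo''$, while you use it as strict convexity of $\tfo'$. Your version is arguably more elementary in that it avoids introducing the mean-value points $\chi_\pm$, at the small cost of having to verify the cancellation of the $\tg$ terms by hand rather than quoting the paper's intermediate formula.
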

\begin{proof}
Since $\tg$ is affine in $t_1$, we have
\begin{equation}\label{Lpm2}
    L_{\pm 1}(0,t_2):= \tf(\pm 1,t_2)  \mp \frac{\partial \tf }{\partial t_1}(\pm 1,t_2)-\tg(0,t_2).
    \end{equation}
Then, the error formula \eqref{ErrForm} applied to $\tf(\cdot,t_2)$ (or the Lagrange remainder formula) gives
$$\tf(0,t_2)=\tf(\pm 1,t_2)  \mp \frac{\partial \tf }{\partial t_1}(\pm 1,t_2)+\frac{1}{2}\tfo''(\chi_\pm)\tft(t_2),$$
where $-1<\chi_{-}<0<\chi_{+}<1$ which with \eqref{Lpm2} and the absolute monotonicity of $\tfo$ implies
$$L_{-1}(0,t_2)=\tf(0,t_2)-\frac{1}{2}\tfo''(\chi_{-})\tft(t_2)> \tf(0,t_2)-\frac{1}{2}\tfo''(\chi_{+})\tft(t_2)=L_{1}(0,t_2).$$
\end{proof}

Hence, if $$\phi(t_2):=L_{1}(0,t_2)\ge 0,$$ then \eqref{Lpm2} and Lemma~\ref{linearAppLB} show $(\tf-\tg)(t_1,t_2)>0$ for all $-1<t_1<1.$
So it suffices to show 
$\phi\ge 0$ on $[-1/2,1/2]$ to prove that $\tg\le \tf$ on the critical region. We can express $\phi(t_2)$ as 
\[
\phi(t_2)=(\tfo(1)-\tfo'(1))\tft(t_2)-a_{0,0}-a_{0,1}t_2-a_{0,2}(t_{2}^2+1/4).
\]
Using our technical bounds on $\tth$, we show the following lemma in the appendix:
\begin{lemma}
\label{phithirdderiv}
For $a<\pi^2$, 
$\tfo(1)-\tfo'(1)\geq 0$.
\end{lemma}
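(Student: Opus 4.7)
The plan is to reduce the claim to a logarithmic-derivative estimate that collapses to a geometric series. For $0 < a \le \pi^2$ the paper sets $\tfo(t) = \tth(\pi/a;t)$ in \eqref{f1f2Def}, so writing $c = \pi/a$ and $q := e^{-\pi c}$, the inequality $\tfo(1) - \tfo'(1) \ge 0$ is, since $\tth(c;1) > 0$, equivalent to $\tth'(c;1)/\tth(c;1) \le 1$, where primes denote $\partial/\partial t$.

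The key step is to take the logarithmic derivative in $t$ of the Jacobi triple product \eqref{tildethetaProd} and specialize to $t = 1$, at which each quadratic factor $1 + 2q^{2r-1}t + q^{2(2r-1)}$ collapses to the perfect square $(1+q^{2r-1})^2$. This yields the absolutely convergent representation
\[
\frac{\tth'(c;1)}{\tth(c;1)} \;=\; \sum_{r=1}^\infty \frac{2 q^{2r-1}}{(1 + q^{2r-1})^2}.
\]
Bounding each denominator below by $1$ and summing a geometric series then gives the clean estimate $2q/(1-q^2)$, so it suffices to verify $2q \le 1 - q^2$, i.e.\ $q \le \sqrt{2} - 1$. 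This is exactly where the hypothesis $a < \pi^2$ enters: it forces $c = \pi/a > 1/\pi$, hence $q = e^{-\pi c} < e^{-1} \approx 0.368 < \sqrt{2} - 1 \approx 0.414$, closing the argument.

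There is no substantive obstacle here; the whole proof reduces to a one-line geometric-series estimate once the triple product is in hand, and the numerical margin $e^{-1}$ vs.\ $\sqrt{2}-1$ is comfortable rather than tight. The only background fact worth flagging is that the logarithmic derivative of the infinite product may be computed term by term in $t$, which is immediate from the entirety in $t$ of $\tth(c;\cdot)$ noted in the paper just after \eqref{tildethetaProd} together with the absolute/uniform convergence of the product on compact $t$-sets for $q < 1$.
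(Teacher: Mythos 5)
Your proof is correct, and it takes a genuinely different route from the paper's. The paper establishes the (stronger) inequality $\tfo(1)-2\tfo'(1)\ge 0$ by substituting the truncated Fourier-series bounds $\tfol(1)$, $\tfou'(1)$ from the small-$a$ theta estimates and then verifying a finite expression in $d=\pi^2/a>1$ via Lemma~\ref{smallaexponentialhelper}; this is in line with the numerical-truncation machinery used throughout Appendix~\ref{smallaAppendix}. You instead take the logarithmic $t$-derivative of the triple-product representation, specialize to $t=1$ where each quadratic factor becomes $(1+q^{2r-1})^2$, and bound the resulting series $\sum_r 2q^{2r-1}/(1+q^{2r-1})^2$ crudely by the geometric series $2q/(1-q^2)$; the hypothesis $a<\pi^2$ then reduces to the clean numerical comparison $e^{-1}<\sqrt{2}-1$. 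The computation of $\tth'/\tth$ term-by-term is justified exactly as you say by the absolute/uniform convergence of the product together with the entirety in $t$ already noted after \eqref{tildethetaProd} (where, incidentally, the displayed exponent in the linear term has a typo: it should read $e^{-(2r-1)\pi c}$, as you implicitly and correctly use). Each approach has a trade-off: the paper's truncation bounds actually deliver the stronger constant $1/2$ which feeds directly into the later estimate of $A$ in Lemma~\ref{ABCLemma} (your geometric-series majorant $2q/(1-q^2)$ is too coarse to get below $1/2$ at $q=e^{-1}$, where the true value is roughly $0.498$), whereas your argument is shorter, self-contained, and transparently tied to the product structure of $\tth$ — a nice alternative when only the stated inequality is needed.
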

Thus, $\phi^{(3)}(t_2) \geq 0$, and so its 2nd degree Taylor polynomial at $t_2=-1/2$ yields the following lower bound for $t_2\geq -1/2$:
\[
\phi(t_2)\geq A+B(t_2+1/2)+\frac{C}{2}(t_2+1/2)^2
\]
where $A=\phi(-1/2)$, $B=\phi'(-1/2)$, and $C=\phi''(-1/2)$.
In the appendix (see Section \ref{proofoflinapproxbound}), we prove
\begin{lemma}\label{ABCLemma}
For $a<\pi^2$, $A,C>0$. If $B<0$, then $B^2-2AC<0$. 
\end{lemma}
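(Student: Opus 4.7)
The plan is to reduce each of the three assertions to concrete computations involving the Chebyshev series of $\tth$, using the fact that for $a<\pi^2$ we have the expansion $\tth(c;t)=1+2\sum_{k\ge 1}e^{-\pi k^2 c}T_k(t)$ from \eqref{thetasmallaformula} with $c=\pi/a$ or $c=\pi/(3a)$, so that every relevant quantity is a rapidly convergent series in the small parameters $q_1:=e^{-\pi^2/a}$ and $q_2:=e^{-\pi^2/(3a)}$.

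First I would dispatch $A>0$ without computation. Since $(1,-1/2)\in\tilde\tau_6$ is an interpolation node we have $(\tf-\tg)(1,-1/2)=0$, and so from $L_1(0,t_2)=(\tf-\tg)(1,t_2)-\partial_{t_1}(\tf-\tg)(1,t_2)$ we obtain
\[
A=\phi(-1/2)=L_1(0,-1/2)=-\frac{\partial(\tf-\tg)}{\partial t_1}(1,-1/2),
\]
which is strictly positive by \eqref{necescondit3} in Lemma~\ref{necConds}.

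Next I would handle $C>0$. Differentiating twice gives
\[
C=(\tfo(1)-\tfo'(1))\,\tft''(-1/2)-2a_{0,2}.
\]
By Lemma~\ref{phithirdderiv}, $\tfo(1)-\tfo'(1)\ge 0$, and in fact inspecting its series shows $\tfo(1)-\tfo'(1)=1-6q_1^4+O(q_1^9)$, which for $a<\pi^2$ is bounded below by a constant close to $1$. Meanwhile the series $\tft''(-1/2)=8q_2^4+O(q_2^9)$ and the divided-difference formula $a_{0,2}=\tfo(-1)\tft[-1,1/2,1/2]$ both admit explicit upper and lower bounds in terms of $q_2$; termwise comparison (using the rapid tail decay from the technical appendix) yields $(\tfo(1)-\tfo'(1))\tft''(-1/2)>2a_{0,2}$ throughout $a\in(0,\pi^2)$.

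The hard part is the discriminant estimate $B^2<2AC$ when $B<0$. Using $a_{0,1}=\tfo(-1)\tft'(-1/2)$, I would write
\[
B=\tft'(-1/2)\bigl[(\tfo(1)-\tfo'(1))-\tfo(-1)\bigr]+a_{0,2},
\]
and observe that the bracket equals $2q_1-8q_1^4+O(q_1^9)$, so $B$ is typically very small (dominated by terms of size $q_2+q_1$), while $A$ and $C$ are bounded away from zero for $a<\pi^2$ as in the previous step. The plan is to extract explicit two-sided bounds on $A$, $|B|$, and $C$ in terms of the first few powers of $q_1$ and $q_2$ from the $\tth$-series estimates of the appendix, and then verify by elementary algebra that $B^2\le 2AC$ uniformly in $a\in(0,\pi^2)$. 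The main obstacle is getting the bounds tight enough: one needs to be careful about the sign of $B$ (since the bracketed expression, $a_{0,2}$, and $\tft'(-1/2)$ pull in different directions) and ensure that the leading-order quadratic inequality is not ruined by tail terms of competing order; this is precisely the kind of termwise comparison the appendix is designed to support.
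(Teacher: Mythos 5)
Your reduction of $A>0$ is a genuinely nicer argument than the paper's. Since $(1,-1/2)\in\tilde{\tau}_6$ is an interpolation node, $(\tf-\tg)(1,-1/2)=0$, so indeed $A=\phi(-1/2)=L_1(0,-1/2)=-\frac{\partial(\tf-\tg)}{\partial t_1}(1,-1/2)>0$ by \eqref{necescondit3}; the paper instead builds an explicit lower bound $A_l$ from truncated $\tth$-series and checks its positivity via Lemma~\ref{smallaexponentialhelper}, so you have saved real work there. The $C>0$ plan is essentially what the paper does (though the paper actually strengthens Lemma~\ref{phithirdderiv} to $\tfo(1)-2\tfo'(1)\ge 0$ in passing).

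The discriminant step has a concrete gap. Your rewrite of $B$ takes $a_{0,1}=\tfo(-1)\tft'(-1/2)$ from \eqref{omega6aForm}, but that line contains a typographical error: the interpolation condition $\frac{\partial\tg}{\partial t_2}(1,-1/2)=\frac{\partial\tf}{\partial t_2}(1,-1/2)$ forces $a_{0,1}=\tfo(1)\tft'(-1/2)$, equivalently $\tfo(-1)\tft'(1/2)$ by \eqref{derivative equality}, and the paper's own subsequent manipulation $B=a_{0,2}-\tfo'(1)\tft'(-1/2)$ and the proof of Lemma~\ref{abounds} use exactly this. With the correct $a_{0,1}$, the $\tfo(1)\tft'(-1/2)$-term cancels and the bracket you should be expanding is $-\tfo'(1)=-2e^{-d}+O(e^{-4d})$, which is negative, not the positive $2e^{-d}-8e^{-4d}+O(e^{-9d})$ you report. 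This matters: since $e^{-d}=(e^{-d/3})^3$, the terms $a_{0,2}\sim 4e^{-4d/3}$ and $\tfo'(1)\tft'(-1/2)\sim 4e^{-d}e^{-d/3}=4e^{-4d/3}$ cancel at leading order, leaving $B\sim -\tfrac{65}{4}e^{-13d/3}$, small and negative, while your version predicts $B\sim 8e^{-4d/3}$, positive, which would make the hypothesis $B<0$ vacuous. It is not vacuous.

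Relatedly, the claim that ``$A$ and $C$ are bounded away from zero for $a<\pi^2$'' is false: as $a\to 0^+$ (so $d\to\infty$), $A\sim 2e^{-3d}$ and $C\sim 16e^{-7d/3}$ both vanish. The inequality $B^2<2AC$ is a genuine competition of decay rates ($B^2\sim e^{-26d/3}$ against $2AC\sim 64e^{-16d/3}$), and the paper's proof reflects that: it forms explicit lower bounds $A_l,B_l,C_l$, rewrites the target as $B_l/C_l-2A_l/B_l>0$, verifies it at $d=1$, and then proves $B_l/C_l$ is increasing and $2A_l/B_l$ decreasing in $d$ using Lemma~\ref{smallaexponentialhelper}. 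Your ``leading order plus small tails'' plan needs to be replaced with something of that monotonicity-in-$d$ structure, and with the corrected $a_{0,1}$, to close the argument.
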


It follows from Lemma~\ref{ABCLemma} that
$A+B(t_2+1/2)+\frac{C}{2}(t_2+1/2)^2>0$ for $t_2\ge -1/2$ completing the proof that $\tg\le\tf$ in the case $a<\pi^2$, and moreover, showing that $\tf=\tg$ only at our interpolation points $(-1,1),(-1,1/2),(1,-1/2)$.

\subsection{The critical region for $a\geq 9.6$}
\begin{figure}[H]
    \centering
    \includegraphics[width=10cm]{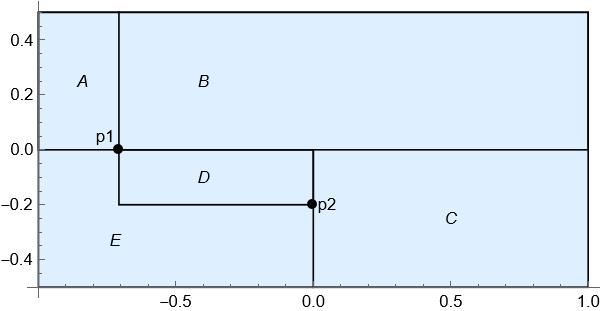}
    \caption{The figure depicts our proof strategy for showing $\tf \geq \tg$ on the critical region when $a$ is large. The points $p_1$ and $p_2$ are located at $(-\sqrt{2}/2,0),(0,-1/5)$, respectively.}
    \label{proofstrategy6}
\end{figure}
To complete the proof of universal optimality of $\omega^*_6$, it remains to show that $\tf \geq \tg$ on the critical region when $a>\pi^2$. In fact, we will show the inequality is strict on the interior of the critical region. We split the region into several subregions as in Figure \ref{proofstrategy6}.
The inequality $\tf \geq \tg$ for Subregions A,B,C,D, and E from Figure  \ref{proofstrategy6} is handled in Lemmas \ref{linearization1}, \ref{t1deriv}, \ref{gradientcomp}, \ref{linearization2}, and \ref{logderivprop}, respectively.

To prove $\tf\geq \tg$ on the regions $A$ and $D$, we apply Lemma~\ref{linearizationlemma}.   Here we use
\[
\tg_{c,d}(t_1,t_2):=\tg(t_1,t_2)+a_{0,2}(-t_1t_2 +ct_2+dt_1-cd)
\]
for $c,d\in [-1,1]$.
Approximating the coefficients $a_{i,j}$ for $a\geq a':=9.6$,
we obtain $\tg_{c,d}^*$ such that $\tg_{c,d}^*\ge\tg_{c,d}$ on the relevant subrectangle and $e^{a/3}\tg_{c,d}^*$ is linear in $a$. See Section~\ref{proofsoflinearization} for the  construction of $\tg_{c,d}^*$ in the different subrectangles. 
As a lower bound for $\tf$, we use
\begin{equation}
\label{fapprox}
\tf_T:=(e^{-ax^2}+e^{-a(x-1)^2})e^{-3a u^2}
\end{equation}
where  $x$ and $u$ are given in \eqref{tuDef}.
Analogously to the 4-point case, it is straightforward to verify that these choices of $\tf_T$, $\tg_{c,d}$, and $\tg_{c,d}^*$ satisfy   conditions 1--4 of Lemma \ref{linearizationlemma} with $m_1=3$ and $a'=9.6$. 

\begin{lemma}
\label{linearization1}
We have $\tf \geq \tg$ on $A=[-1,-\sqrt{2}/2]\times[0,1/2]$ with equality only at $(-1,1/2)$.
\end{lemma}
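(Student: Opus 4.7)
The plan is to apply Lemma~\ref{linearizationlemma} directly to the rectangle $A = [-1, -\sqrt{2}/2] \times [0, 1/2]$, whose upper-left corner is $(c,d) = (-1, 1/2)$. The equality point $(-1, 1/2)$ sits in the corner, so the ``already established'' boundary $R'$ should consist of the left edge $\{-1\} \times [0, 1/2]$ and the top edge $[-1, -\sqrt{2}/2] \times \{1/2\}$, on which $\tf \geq \tg$ holds by Lemmas~\ref{leftboundary} and \ref{t2>1/2}, respectively, with equality only at $(-1, 1/2)$. The auxiliary function is
\[
\tg_{-1,1/2}(t_1, t_2) = \tg(t_1, t_2) + a_{0,2}(t_1 + 1)\left(\tfrac{1}{2} - t_2\right),
\]
and I would construct $\tg_{-1,1/2}^*$ in the appendix by truncating the series for $\tth$ in the formulas \eqref{omega6aForm} for the coefficients $a_{i,j}$, with error terms absorbed to make $e^{a/3} \tg_{-1,1/2}^*$ linear in $a$, as is done for the 4-point case.

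Hypothesis (1) of Lemma~\ref{linearizationlemma} follows from $a_{0,2} > 0$ (Proposition~\ref{nonnegcoeff}) and the pointwise positivity of $(t_1+1)(\tfrac12 - t_2)$ on $A$, with equality iff $t_1 = -1$ or $t_2 = 1/2$. Hypothesis (2) is immediate from \eqref{fapprox}. For (3), we computed in Section~\ref{section4} that $\det H_{\tf} < 0$ on $[-1,1]^2$, and since $\tg_{-1,1/2}^* - \tg_{-1,1/2}$ is affine and $\tg_{-1,1/2} - \tg = a_{0,2}(t_1+1)(1/2-t_2)$ contributes $-2 a_{0,2} \tfo''(t_1)\tft(t_2) < 0$ to the determinant of the Hessian, we get $\det H_{\tf - \tg_{-1,1/2}^*} < 0$. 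Hypothesis (4) with $m_1 = 3$ holds because $e^{a/3}\tf$ and $e^{a/3}\tg$ are pointwise convex in $a$, inherited from the termwise convexity of the Gaussian series defining $F_{a,A_2}$.

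It then remains only to check the two inequalities in \eqref{partialaCond} at $a' = 9.6$ on $\partial A \setminus R'$, namely the bottom edge $[-1, -\sqrt{2}/2] \times \{0\}$ and the right edge $\{-\sqrt{2}/2\} \times [0, 1/2]$. This I would reduce to a finite set of numerical checks using the sampling strategy \eqref{linecheck}: after expressing each inequality as $h_2(t) > h_1(t)$ with $h_1, h_2$ monotone (which holds because $\tg_{-1,1/2}^*$ is a low-degree polynomial with known signs of coefficients and $\tf_T$ has monotone partials on the relevant range by Corollary~\ref{evenpartials}), we partition the interval into $n$ subintervals of length $\delta$ and verify $h_1(\alpha + k\delta) < h_2(\alpha + (k-1)\delta)$ for $k = 1, \ldots, n$.

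The main obstacle is the right edge $\{-\sqrt{2}/2\} \times [0, 1/2]$: its upper endpoint $(-\sqrt{2}/2, 1/2)$ is close enough to the equality point $(-1, 1/2)$ that $\tf - \tg$ is small there, so the approximation $\tg_{-1,1/2}^*$ must be tight enough and the sampling $n$ large enough to still leave a positive margin at $a = 9.6$. This is precisely the reason the critical region was subdivided into five subregions $A$–$E$ rather than handled in one rectangle.
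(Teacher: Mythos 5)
Your overall strategy — invoking Lemma~\ref{linearizationlemma} with $\tg_{c,d}(t_1,t_2)=\tg(t_1,t_2)+a_{0,2}(-t_1t_2+ct_2+dt_1-cd)$ and the reduced Gaussian truncation \eqref{fapprox} — is the same as the paper's, and your verification of hypotheses (1)–(4) is essentially correct. The gap is in applying the lemma to all of $A=[-1,-\sqrt{2}/2]\times[0,1/2]$ in a single step with corner $(c,d)=(-1,1/2)$. The linearization slack $\tg_{-1,1/2}-\tg=a_{0,2}(t_1+1)\bigl(\tfrac12-t_2\bigr)$ is largest at the far corner $(-\sqrt{2}/2,0)$, and there it exceeds the available margin. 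Using the coefficient estimates of Lemma~\ref{abounds} at $a=9.6$, one finds $e^{a/3}(\tf_T-\tg)(-\sqrt{2}/2,0)\approx 0.17$, while $e^{a/3}\,a_{0,2}\bigl(1-\tfrac{\sqrt{2}}{2}\bigr)\cdot\tfrac12\approx 0.22$, so $\tf_T-\tg_{-1,1/2}<0$ at that corner — and this is before replacing the $a_{i,j}$ by the looser bounds defining $\tg^*_{-1,1/2}$. Consequently the inequalities \eqref{partialaCond} cannot hold on the bottom and right edges of $A$, and no refinement of the sampling in \eqref{linecheck} can rescue them: the obstruction is in the choice of $\tg_{c,d}$, not in the numerical verification. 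The paper avoids this precisely by subdividing $A$ into the horizontal strips $[-1,-\sqrt{2}/2]\times[1/4,1/2]$ (corner $(-1,1/2)$) and then $[-1,-\sqrt{2}/2]\times[0,1/4]$ (corner $(-1,1/4)$, whose top edge is inherited from the first strip). This halves the worst-case slack to about $0.11$, bringing $\tf_T-\tg^*_{-1,1/4}$ back above zero at $(-\sqrt{2}/2,0)$. So the missing idea in your argument is the chaining of Lemma~\ref{linearizationlemma} across subrectangles to keep $(t_1-c)(d-t_2)$ small — your own closing remark identifies this as the general principle but does not realize that it must be applied \emph{inside} $A$, not merely to partition the critical region into $A$ through $E$.
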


\begin{proof}
First, we show the inequality for $[-1,-\sqrt{2}/2]\times[1/4,1/2]$. Since we already have $\tf\ge \tg$ when  $t_2=1/2$ or $t_1=-1$, it suffices by Lemma \ref{linearizationlemma} to show inequalities \eqref{partialaCond} on the 2 segments when $t_2=1/4$ or $t_1=-\sqrt{2}/2$, which we handle in the appendix Section  \ref{proofsoflinearization}. Now having $\tf\geq \tg$ on the segment of $[-1,-\sqrt{2}/2]\times[0,1/4]$ when $t_2=1/4$, we again show inequalities  \eqref{partialaCond} with $\tg_{-1,1/4}$ on the segments when $t_2=0$ or $t_1=-\sqrt{2}/2$ to complete the proof.



\end{proof}

\begin{lemma}
\label{t1deriv}
 We have $\tf > \tg$ on $B=[-\sqrt{2}/2,1]\times [0,1/2]$. 
\end{lemma}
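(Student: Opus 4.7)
The plan is to exploit the strict convexity of $\tf-\tg$ in $t_1$ (for fixed $t_2$) to reduce the problem on $B$ to a one-variable analysis along the left edge $t_1=-\sqrt{2}/2$, using the previous lemma to supply the needed value inequality and an additional derivative estimate to supply the needed derivative inequality. Note first that, since $\tg$ is affine in $t_1$ and $\tfo$ is strictly absolutely monotone, $\partial^2(\tf-\tg)/\partial t_1^2 = \tfo''(t_1)\tft(t_2)>0$ on $[-1,1]^2$, so $(\tf-\tg)(\cdot,t_2)$ is strictly convex in $t_1$ and $\partial(\tf-\tg)/\partial t_1(\cdot,t_2)$ is strictly increasing in $t_1$.

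The key estimate I would aim to prove is
\[
\frac{\partial(\tf-\tg)}{\partial t_1}\Bigl(-\tfrac{\sqrt{2}}{2},\,t_2\Bigr)\ \geq\ 0 \qquad \text{for all } t_2\in[0,1/2].
\]
Using $\partial\tf/\partial t_1=\tfo'(t_1)\tft(t_2)$ and $\partial\tg/\partial t_1=a_{1,0}+a_{0,2}t_2$, this reduces to showing that
\[
\psi(t_2)\ :=\ \tfo'\bigl(-\tfrac{\sqrt{2}}{2}\bigr)\,\tft(t_2)-a_{1,0}-a_{0,2}\,t_2\ \geq\ 0 \qquad \text{on } [0,1/2].
\]
Here $\psi''(t_2)=\tfo'(-\sqrt{2}/2)\,\tft''(t_2)>0$ by absolute monotonicity of $\tfo$ and $\tft$, so $\psi$ is strictly convex. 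Hence it suffices to bound $\psi$ below by a tangent line at some judiciously chosen $t_2^\ast\in[0,1/2]$ and verify this tangent line is nonnegative on $[0,1/2]$; concretely, checking either $\psi(0)\geq 0$ with $\psi'(0)\geq 0$, or $\psi(1/2)\geq 0$ with $\psi'(1/2)\leq 0$, suffices (and in the remaining case the minimum is interior, in which case one uses the truncated series approximations of $\tfo,\tft$ developed in the Appendix for $a\geq 9.6$, analogous to Lemmas \ref{phithirdderiv} and \ref{ABCLemma}, together with the formulas \eqref{omega6aForm} for $a_{1,0}$ and $a_{0,2}$).

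Granting the display above, the proof on $B$ concludes as follows. By the strict convexity of $\tf-\tg$ in $t_1$, $\partial(\tf-\tg)/\partial t_1(t_1,t_2)\geq \partial(\tf-\tg)/\partial t_1(-\sqrt{2}/2,t_2)\geq 0$ for every $(t_1,t_2)\in B$, so $(\tf-\tg)(\cdot,t_2)$ is nondecreasing on $[-\sqrt{2}/2,1]$. By Lemma \ref{linearization1}, the only equality point of $\tf=\tg$ on the rectangle $A$ is $(-1,1/2)$, so in particular $(\tf-\tg)(-\sqrt{2}/2,t_2)>0$ for every $t_2\in[0,1/2]$. Combining these two facts gives $(\tf-\tg)(t_1,t_2)\geq (\tf-\tg)(-\sqrt{2}/2,t_2)>0$ throughout $B$, which is the desired strict inequality.

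The main obstacle is the estimate $\psi\geq 0$ on $[0,1/2]$ in the middle paragraph: it is a single-variable inequality but must be verified uniformly for all $a\geq 9.6$, and the approximations of $\tfo'(-\sqrt{2}/2)$, $\tft(t_2)$, $a_{1,0}$, and $a_{0,2}$ must be taken carefully enough that the leading exponential order matches (analogous to the matching needed in Lemma \ref{inequalitystring}). Beyond that, the only other ingredients are the strict convexity of $\tf-\tg$ in $t_1$ and the strict positivity on the left edge of $B$ inherited from Lemma \ref{linearization1}, both of which are immediate from the structure of $\tg$ and from the absolute monotonicity of $\tfo$ and $\tft$.
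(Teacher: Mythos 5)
Your overall structure matches the paper's: reduce to the left edge via strict convexity of $\tf-\tg$ in $t_1$, take the value inequality on $\{t_1 = -\sqrt{2}/2\}$ from Lemma~\ref{linearization1}, and then establish $\frac{\partial(\tf-\tg)}{\partial t_1}(-\sqrt{2}/2, t_2) \geq 0$ on $[0,1/2]$ using the convexity of $\psi(t_2) := \tfo'(-\sqrt{2}/2)\tft(t_2) - a_{1,0} - a_{0,2}t_2$.

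Where you diverge is in the final reduction of $\psi \geq 0$ on $[0,1/2]$. You propose checking a tangent line at a well-chosen point (either $\psi(0) \geq 0$ with $\psi'(0) \geq 0$, or $\psi(1/2) \geq 0$ with $\psi'(1/2) \leq 0$), and you leave the interior-minimum case as an additional computation to be filled in. The paper closes this more economically: the necessary condition $\frac{\partial(\tf-\tg)}{\partial t_1}(1,-1/2) < 0$ (Lemma~\ref{necConds}) together with the monotonicity in $t_1$ of $\frac{\partial(\tf-\tg)}{\partial t_1}$ gives $\psi(-1/2) < 0$ for free. Since $\psi$ is convex and $\psi(-1/2) < 0$, the secant slope from $-1/2$ to $0$ is positive whenever $\psi(0) \geq 0$, so convexity forces $\psi' > 0$ on $[0,1]$ and $\psi$ is increasing there; hence the single check $\psi(-\sqrt{2}/2,0) \geq 0$ (Appendix~\ref{proofoft1deriv}) settles the whole range, with no case split and no need to control $\psi'$. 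Your route is sound in principle but would require additional estimates to rule out an interior minimum; adopting the paper's use of $\psi(-1/2) < 0$ eliminates that extra work.
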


\begin{proof}
By the convexity of $\tf-\tg$ in $t_1$, it suffices to show:
\begin{enumerate}
    \item $\tf(-\sqrt{2}/2,t_2)\geq \tg(-\sqrt{2}/2,t_2)$ for all $t_2\in [0,1/2]$
    \item $\frac{\partial (\tf-\tg)}{\partial t_1}(-\sqrt{2}/2,t_2)\geq 0$ for $t_2\in[0,1/2]$.
\end{enumerate}
The first of these follows from Lemma~\ref{linearization1}. To prove the second, it actually suffices to just show that $\frac{\partial (\tf-\tg)}{\partial t_1}(-\sqrt{2}/2,0)\geq 0$, which is handled in the appendix section \ref{proofoft1deriv}. This sufficiency follows from the same reasoning as Lemma \ref{t2>1/2} and holds because $\frac{\partial (\tf-\tg)}{\partial t_1}\geq 0$ is convex in $t_2$ and satisfies $\frac{\partial (\tf-\tg)}{\partial t_1}(-\sqrt{2}/2,-1/2)< 0$ (due to the necessary condition $\frac{\partial (\tf-\tg)}{\partial t_1}(1,-1/2)< 0$). 
\end{proof}

\begin{lemma}
\label{gradientcomp}
We have $\tf \geq \tg$ on $C=[0,1]\times [-1/2,0]$ with equality only at $(1,-1/2)$.
\end{lemma}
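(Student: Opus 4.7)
The plan is to reduce the two-dimensional inequality on $C=[0,1]\times[-1/2,0]$ to a one-dimensional derivative check along the right edge $t_1=1$, exploiting the strict convexity of $\tf-\tg$ in $t_1$ (which holds because $\tfo$ is strictly absolutely monotone while $\tg$ is linear in $t_1$). For each fixed $t_2\in[-1/2,0]$, Lemma~\ref{horizslices1/2}(b) guarantees that
$$(\tf-\tg)(1,t_2)\geq 0 \quad \text{and} \quad \frac{\partial(\tf-\tg)}{\partial t_1}(1,t_2)\leq 0$$
together imply $\tf\geq\tg$ on all of $[-1,1]\times\{t_2\}$, which covers the horizontal slice of $C$ at height $t_2$. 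The first condition follows immediately from Lemma~\ref{rightboundary} (with equality only at $t_2=-1/2$), so only the derivative condition remains.

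Set $h(t_2):=\frac{\partial(\tf-\tg)}{\partial t_1}(1,t_2)=\tfo'(1)\tft(t_2)-a_{1,0}-a_{0,2}t_2$. Since $\tft$ is strictly absolutely monotone (hence strictly convex) on $[-1,1]$ and the remaining terms are affine in $t_2$, $h$ is convex on $[-1/2,0]$. So $h\leq 0$ on $[-1/2,0]$ reduces to verifying $h(-1/2)\leq 0$ and $h(0)\leq 0$. The first is precisely \eqref{necescondit3} of Lemma~\ref{necConds}. Thus the entire lemma reduces to the single scalar inequality
$$\tfo'(1)\tft(0)\leq a_{1,0},$$
which, using the formula for $a_{1,0}$ from \eqref{omega6aForm}, is equivalent to
$$2\tfo'(1)\tft(0)\leq \tfo(1)\tft(-1/2)-\tfo(-1)\tft(1/2)+\tfo(-1)\tft'(-1/2).$$

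This scalar inequality in $a$ is the main obstacle and will have to be pushed through the same way other scalar inequalities are handled in the paper: truncate the series \eqref{thetabigaformula} for $\tth(\pi/a;\cdot)$ and $\tth(\pi/(3a);\cdot)$, bound the truncation error, and then reduce to a finite check at $a=9.6$ together with a monotonicity- or convexity-in-$a$ argument (of the type used in Lemma~\ref{linearizationlemma}) to propagate the inequality for all larger $a$. Heuristically this should be comfortable because $\tfo'(1)$ decays like $e^{-\pi^2/a}$ while $\tft'(-1/2)$ contains an order-$e^{-a/4}$ contribution from $a_{1,0}$, so the right side dominates for $a\geq 9.6$.

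Finally, for the equality statement: Lemma~\ref{rightboundary} gives $(\tf-\tg)(1,t_2)>0$ for all $t_2\in[-1/2,0]$ with $t_2\neq -1/2$, while Lemma~\ref{horizslices1/2}(b) delivers strict inequality for $t_1\neq 1$ whenever its hypotheses hold. Combining these, the only point of $C$ at which $\tf=\tg$ is $(1,-1/2)$, as asserted.
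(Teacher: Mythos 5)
Your reduction via Lemma~\ref{horizslices1/2}(b) and the convexity of $h(t_2)=\frac{\partial(\tf-\tg)}{\partial t_1}(1,t_2)=\tfo'(1)\tft(t_2)-a_{1,0}-a_{0,2}t_2$ is clean, but it fails at the endpoint check $h(0)\le 0$, i.e.\ $\tfo'(1)\tft(0)\le a_{1,0}$. That inequality is \emph{false} throughout the regime $a\ge 9.6$. In the Poisson-summed (large-$a$) representation one has $\tfo'(1)\sim a/(2\pi^2)$, which does not decay at all, and $\tft(0)\sim e^{-3a/16}$ (since $u=1/4$ at $t_2=0$), whereas $a_{1,0}\sim(-\tfrac12+\tfrac{a}{\sqrt3\pi})\,e^{-a/3}$ by Lemma~\ref{abounds}. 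Since $3/16<1/3$, the left side dominates the right by a factor of order $e^{7a/48}$; numerically, already at $a=9.6$ one finds $\tfo'(1)\tft(0)\approx 0.08$ versus $a_{1,0}\approx 0.05$. Your supporting heuristic ("$\tfo'(1)$ decays like $e^{-\pi^2/a}$'') is the $a\to 0$ asymptotic from the Fourier-series form of $\theta$; for $a>\pi^2$ this is the wrong regime and $\tfo'(1)$ in fact grows linearly in $a$.

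This is precisely why the critical strip $-1/2\le t_2\le 1/2$ is delicate: on $[1/2,1]$ the paper uses convexity of $h$ together with a sign on the \emph{left} endpoint (Lemma~\ref{t2>1/2}), and on $[-1,-1/2]$ it uses a secant/line-segment argument, but on the central strip $h$ changes sign along $t_1=1$ and neither part (a) nor part (b) of Lemma~\ref{horizslices1/2} holds throughout. The paper's own proof of Lemma~\ref{gradientcomp} avoids horizontal slices entirely: it shows $L_1(t_1,t_2)=\tfrac{\tft'\tfo}{\tfo'\tft}-\tfrac{\partial\tg/\partial t_2}{\partial\tg/\partial t_1}>0$ on $C$, which means the gradient of $\tg$ points more steeply than the level curves of $\tf$, so $\tf-\tg$ is minimized along the right ($t_1=1$) and bottom ($t_2=-1/2$) edges of $C$, where the inequality and the unique equality point $(1,-1/2)$ have already been established. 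You would need to switch to that kind of two-dimensional monotonicity argument (or the linearization approach used for the neighboring subregions) rather than a slice-by-slice one.
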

\begin{proof}
We claim that for this portion of the critical region, it suffices to show at each point that 
\[
L_1(t_1,t_2):=\frac{\tft'(t_2)\tfo(t_1)}{\tfo'(t_1)\tft(t_2)}-\frac{\frac{\partial \tg}{\partial t_2}(t_1,t_2)}{\frac{\partial \tg}{\partial t_1}(t_1,t_2)}> 0,
\]
since this would imply that $\tg$ increases along the level curves of $\tf$ as $t_1$ increases. 

Thus, $\tf-\tg$ is minimized along the right and bottom boundaries of the region, where we have already showed $\tf-\tg\geq 0$ in the previous section with equality only at $(1,-1/2)$. The inequality $L_1(t_1,t_2)>0$ for $(t_1,t_2) \in  [0,1]\times [-1/2,0]$ is proved in the appendix section \ref{proofofgradientcomp}. 
\end{proof}

\begin{lemma}
\label{linearization2}
We have $\tf > \tg$ on $D=[-\sqrt{2}/2, 0]\times [\frac15,0]$.
\end{lemma}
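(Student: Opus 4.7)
The plan is to apply Lemma~\ref{linearizationlemma} to $R = D$ with upper-left corner $(c, d) = (-\sqrt{2}/2, 0)$, following the template used for region $A$ in Lemma~\ref{linearization1}. The top edge $\{(t_1, 0) : t_1 \in [-\sqrt{2}/2, 0]\}$ lies inside the rectangle $B = [-\sqrt{2}/2, 1] \times [0, 1/2]$ of Lemma~\ref{t1deriv}, so on that edge the strict inequality $\tf > \tg$ is already known, and we may take $R'$ equal to this top edge. Lemma~\ref{linearizationlemma} then reduces the proof to verifying the two inequalities \eqref{partialaCond} (with $m_1 = 3$ and $a' = 9.6$) on the three remaining edges of $\partial D$: the left edge ($t_1 = -\sqrt{2}/2$), the bottom edge ($t_2 = -1/5$), and the right edge ($t_1 = 0$).

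To invoke the lemma, I construct an auxiliary polynomial $\tg_{c,d}^*$ dominating $\tg_{c,d}$ on $D$ with the property that $e^{a/3}\tg_{c,d}^*$ is affine in $a$, by exactly the same recipe used in the proofs of Lemma~\ref{linearization1} in Section~\ref{proofsoflinearization}: each of the coefficients $a_{0,0}, a_{0,1}, a_{1,0}, a_{0,2}$ appearing in \eqref{omega6aForm} is replaced by an upper bound obtained by truncating the $a > \pi^2$ expansion of $\tth$ from \eqref{f1f2Def} and bounding the tail. Combined with the lower bound $\tf_T$ of \eqref{fapprox}, whose rescaling $e^{a/3}\tf_T$ is convex in $a$, this yields hypotheses (\ref{as1})--(\ref{as4}) of Lemma~\ref{linearizationlemma} on $D$.

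Verifying \eqref{partialaCond} at $a' = 9.6$ on each of the three remaining edges reduces to a one-variable inequality of the form $h_2(t) - h_1(t) > 0$ on a closed interval $[\alpha, \beta]$ with $h_1, h_2$ monotone. As in the proof of Lemma~\ref{4ptlinearization}, I partition $[\alpha, \beta]$ into sub-intervals of length $\delta = (\beta - \alpha)/n$ and check
\begin{equation*}
    h_1(\alpha + k\delta) < h_2(\alpha + (k - 1)\delta), \qquad k = 1, 2, \ldots, n,
\end{equation*}
for $n$ sufficiently large, thereby reducing the proof to finitely many numerical point evaluations which are then deferred to the appendix.

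The main obstacle is computational rather than conceptual. Among the three edges to verify, the bottom edge $t_2 = -1/5$ is farthest from the known good boundary $t_2 = 0$, and precisely there the correction term $-a_{0,2}(t_1 - c)(t_2 - d)$ is largest, so the gap $\tf_T - \tg_{c,d}^*$ is tightest. If a single application of Lemma~\ref{linearizationlemma} turns out to be too tight to close at $a' = 9.6$ with a reasonable mesh, the fallback (modeled on the two-step splitting in Lemma~\ref{linearization1}) is to subdivide $D$ horizontally into $[-\sqrt{2}/2, 0] \times [-1/10, 0]$ and $[-\sqrt{2}/2, 0] \times [-1/5, -1/10]$ and apply the lemma successively, each time bootstrapping on the strict inequality established on the previous sub-rectangle's shared edge.
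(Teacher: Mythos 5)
Your proposal is correct and essentially matches the paper's argument: the paper does exactly the two-step horizontal subdivision you describe as the fallback, splitting $D$ into $[-\sqrt{2}/2,0]\times[-0.1,0]$ and $[-\sqrt{2}/2,0]\times[-0.2,-0.1]$ and applying Lemma~\ref{linearizationlemma} twice with $\tg^*_{-\sqrt{2}/2,0}$ and then $\tg^*_{-\sqrt{2}/2,-0.1}$, each time bootstrapping from the top edge. The only small imprecision is that $\tg^*_{c,d}$ is formed by replacing each $a_{i,j}$ with an upper \emph{or} lower bound according to the sign of the factor multiplying it on $D$ (not uniformly upper bounds), but that does not affect the strategy.
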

\begin{proof}
We first show inequalities \eqref{partialaCond} hold for $\tg_{-\sqrt{2}/2,0}$ on each line segment on the boundary of $[-\sqrt{2}/2, 0\times[-.1 , 0]$ except the $t_2=0$ segment (where we already have $\tf >\tg$). Then we repeat the process with $\tg_{-\sqrt{2}/2,-.1}$ on each segment of $[-\sqrt{2}/2,0]\times[-.2,-.1]$ except the $t_2=-.1$ segment. 
The precise calculations are carried out in the appendix section \ref{proofsoflinearization}.

\end{proof}

\begin{lemma}
\label{logderivprop}
We have $\tf > \tg$ on $E=[-1,0]\times [-.5,-.2] \cup [-1,-\sqrt{2}/2]\times[-.5,0]$.
\end{lemma}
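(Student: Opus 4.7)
My plan is to show $\tf > \tg$ on the L-shaped region $E$ via a level-curve gradient comparison in the spirit of Lemma \ref{gradientcomp}, reducing the interior to the boundary where previous lemmas apply.

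First, I would verify that $\partial E$ is already covered by earlier results: the left side $\{-1\}\times[-1/2,0]$ by Lemma \ref{leftboundary}; the bottom $[-1,0]\times\{-1/2\}$ by Lemma \ref{t2<-1/2}; the right edge of the strip $E_1$, namely $\{0\}\times[-1/2,-1/5]$, by Lemma \ref{gradientcomp}; the inner boundary $\{-\sqrt{2}/2\}\times[-1/5,0]$ together with $[-\sqrt{2}/2,0]\times\{-1/5\}$ by Lemma \ref{linearization2}; and the top of $E_2$, namely $[-1,-\sqrt{2}/2]\times\{0\}$, by Lemma \ref{linearization1}. The three equality points $(-1,-1),(-1,1/2),(1,-1/2)$ of $\tf-\tg$ all lie outside $E$, so $\tf > \tg$ is strict on $\partial E$.

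For the interior, I would introduce the level-curve comparison function
\[
L_2(t_1,t_2):=\frac{\tft'(t_2)\tfo(t_1)}{\tfo'(t_1)\tft(t_2)}-\frac{\partial_{t_2}\tg(t_1,t_2)}{\partial_{t_1}\tg(t_1,t_2)},
\]
which is well-defined on $E$ since $\partial_{t_1}\tg=a_{1,0}+a_{0,2}t_2\geq a_{1,0}-\frac{1}{2}a_{0,2}>0$ by \eqref{gt1derivinequality@-1/2}. If $L_2$ has a fixed sign throughout $E$, then $\tg$ is strictly monotone in $t_1$ along every level curve of $\tf$ in $E$, so $\tf-\tg$ is strictly monotone along those level curves; combined with strict positivity on $\partial E$, this forces $\tf > \tg$ in the interior.

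The main obstacle is signing $L_2$ across $E$. Unlike in Lemma \ref{gradientcomp}, where $\partial_{t_2}\tg$ was positive, in $E$ the quantity $\partial_{t_2}\tg=a_{0,1}+a_{0,2}(t_1+2t_2)$ can change sign because $t_1+2t_2\in[-2,0]$. Where $\partial_{t_2}\tg\leq 0$, the sign of $L_2$ is immediate from the strict positivity of $\tfo'/\tfo$ and $\tft'/\tft$ (Proposition \ref{absmonotone}). Where $\partial_{t_2}\tg>0$, the verification rests on the strict complete monotonicity of these logarithmic derivatives (hence the name \emph{logderivprop}), together with the explicit formulas \eqref{omega6aForm} for the $a_{i,j}$ and the identity \eqref{derivative equality}. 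I expect the argument to split into small-$a$ and large-$a$ subcases, with the borderline regime reduced to a finite interval check in the spirit of \eqref{linecheck}.
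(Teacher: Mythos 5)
Your plan transposes the level-curve comparison of Lemma~\ref{gradientcomp} to the region $E$, with the quantity you call $L_2$ being exactly the $L_1$ of that lemma. The premise ``if $L_2$ has a fixed sign throughout $E$'' is where it breaks: $L_1$ does not have a fixed sign on $E$. You correctly note that where $\partial_{t_2}\tg=a_{0,1}+a_{0,2}(t_1+2t_2)\le 0$ you get $L_1>0$ for free, and indeed at $(-1,-1/2)$ one has $\partial_{t_2}\tg<0$. But near the opposite corner the sign flips for $a$ large: at $(-1,0)$ the first term of $L_1$ is $\frac{\tfo(-1)}{\tfo'(-1)}\cdot\frac{\tft'(0)}{\tft(0)}\sim\frac{4\pi^{3/2}}{\sqrt{a}(a-2)}\cdot\frac{3a}{4\pi}=\frac{3\sqrt{\pi a}}{a-2}\to 0$, while the second term $\frac{a_{0,1}-a_{0,2}}{a_{1,0}}\to\frac{2}{3}$ by the asymptotics of Lemma~\ref{abounds}. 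So $L_1(-1,0)<0$ once $a$ is moderately large, whereas $L_1(-1,-1/2)>0$; $\tg$ is therefore not monotone in a single direction along the level curves of $\tf$ across $E$, and the boundary-reduction you describe does not go through.

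The paper's proof uses a genuinely different device that sidesteps this sign change: instead of level curves of $\tf$, it works along \emph{horizontal slices}, showing $\partial_{t_1}\log(\tf/\tg)<0$ wherever $\tg>0$. Because $\tf=\tfo(t_1)\tft(t_2)$ factors, the $\tft$ cancels in $\partial_{t_1}\log\tf=\tfo'/\tfo$, and the quantity to sign becomes $L_2:=\frac{\tfo}{\tfo'}-t_1-\frac{\tg(0,t_2)}{a_{1,0}+a_{0,2}t_2}$, a sum of a function of $t_1$ alone and a function of $t_2$ alone. The paper then shows $L_2$ is decreasing in each variable (via Proposition~\ref{absmonotone} for $t_1$ and a short argument for $t_2$), so positivity on $E$ reduces to checking the two extreme corners $(-\sqrt{2}/2,0)$ and $(0,-0.2)$. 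This separability is precisely the structural advantage your $L_1$ lacks. One further minor point: the lemma is only invoked for $a\ge 9.6$ (the $a<\pi^2$ regime was already handled by the linear approximation argument), so there is no small-$a$ subcase to split off here.
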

\begin{proof}
We extend the domain of the $\log$ function so that $\log(t)=\infty$ for $t\leq0$. Note that this definition  and the fact that $\tf>0$ on all of $[-1,1]^2$ imply $\log(\tf/\tg)>0$ on $E$ is equivalent to $\tf>\tg$ on $E$. Since we have already established that this inequality holds on $\partial E$, it suffices to show $\log(\tf/\tg)$ takes no finite local minima on $S$, which we'll do by showing that 

\begin{equation}
\label{logderiveq}
\frac{\partial }{\partial t_1}\log\left(\frac{\tf}{\tg}\right)<0
\end{equation}
on all of $E$ where $\tg>0$, or equivalently, that if $\tg(t_1,t_2)>0$, then  
\[
\frac{\tfo(t_1)}{\tfo'(t_1)}-\frac{\tg(t_1,t_2)}{\frac{\partial \tg(t_1,t_2)}{\partial t_1}}=\frac{\tfo(t_1)}{\tfo'(t_1)}-t_1-\frac{\tg(0,t_2)}{a_{1,0}+a_{0,2}t_2}>0
\]
since each of $\tfo,\tfo', \frac{\partial \tg}{\partial t_1}> 0$ on $E$ (see Equation \ref{gt1derivinequality@-1/2}).   
Notably, $\frac{\tfo(t_1)}{\tfo'(t_1)}-t_1$ is a function only in $t_1$, while $\frac{\tg(0,t_2)}{a_{1,0}+a_{0,2}t_2}$ depends only on $t_2$. Let 
\[
L_2(t_1,t_2):=\frac{\tfo(t_1)}{\tfo'(t_1)}-t_1-\frac{\tg(0,t_2)}{a_{1,0}+a_{0,2}t_2}.
\]
We will next establish that on all of $E$, $L_2$ is decreasing in $t_1$ and $t_2$. Thus to show $L_2> 0$ on all of $E$, we need only check that $L_2(-\sqrt{2}/2,0),L_2(0,-.2)> 0$, which is handled in the appendix section \ref{proofoflogderivprop}.
To see that $L_2$ is decreasing in both $t_1$ and $t_2$, observe by Proposition \ref{absmonotone}
 that
 \[
\frac{\partial L_2}{\partial t_1}= \left(\frac{\tfo}{\tfo'}\right)'-1=\frac{(\tfo')^2-\tfo''\tfo}{(\tfo')^2}-1=-\frac{\tfo''\tfo}{(\tfo')^2}-1<0.
\]
Similarly,\[
\frac{\partial L_2}{\partial t_2}=\frac{b_{0,0}a_{0,2}-a_{0,1}a_{1,0}-a_{0,2}t_2(2a_{1,0}+a_{0,2}t_2)}{(a_{1,0}+a_{0,2}t_2)^2}
\]
whose sign depends only on $N_2(t_2):=b_{0,0}a_{0,2}-a_{0,1}a_{1,0}-a_{0,2}t_2(2a_{1,0}+a_{0,2}t_2)$. Now 
\[
N_2'(t_2)=-2a_{0,2}(a_{1,0}+a_{0,2}t_2)=-2a_{0,2}\left(\frac{\partial \tg}{\partial t_1}(t_1,t_2)\right)<0
\]
for $t_2\geq -1/2$ (see Equation \ref{gt1derivinequality@-1/2}). So the negativity of $N_2(t_2)$ and (thus $\frac{\partial L_2}{\partial t_2}$) follows from checking $N_2(-1/2)<0$, which we handle using our coefficient bounds. 
\end{proof}



\appendix
\section*{Appendix}
\section{Divided differences and univariate  interpolation}\label{DivDiffSec}
We review some basic results concerning one-dimensional polynomial interpolation (e.g., see \cite[Section 5.6.2]{MEbook}).
Let $f\in C^m[a,b]$ for some $a,b$ be given along with some multiset
\[T=\{t_0,t_1,...,t_m\}\subseteq [a,b].
\]
 Then there exists a unique polynomial $H_T(f)(t)$ of degree at most $m$ (called a {\em Hermite interpolant of $f$}) such that for each $\alpha\in T$, we have $H^{(\ell)}_T(f)(\alpha)=f^{(\ell)}(\alpha)$ for $0\leq \ell < k_\alpha$ where $k_\alpha$ denotes the multiplicity of $\alpha$ in $T$.  
Let $f[t_0,...,t_m]$  denote the coefficient of $t^m$ in $H_T(f)(t)$. This coefficient is called the $\textit{m-th divided difference}$ of $f$ for $T$. 
Then $H_T(f)$, may be expressed as
\begin{equation}\label{HTf}
H_T(f)(t)=\sum_{k=0}^{m}f[t_0,t_1,...,t_k]p_k(T;t)
\end{equation} where the {\em partial products} $p_k$ are defined  by
\begin{equation}\label{parprod}
p_0(T;t):=1 \text{ and } p_j(T;t):=\prod_{i<j}(t-t_i), \hspace{.2cm} j=1,2,...,m.
\end{equation}
Then a generalization of the mean value theorem  implies that there is some $\xi \in [a,b]$ such that 
\begin{equation}\label{genMVT}
\frac{f^{(m)}(\xi)}{m!}=f[t_0,t_1,...,t_m].
\end{equation}

Putting these together, we arrive at the classical \textit{Hermite error formula}: 
\begin{equation}\label{ErrForm}
f(t)-H_T(f)(t)=f[t_0,t_1,...,t_m,t]\prod_{i=0}^{m}(t-t_i)=\frac{f^{(m+1)}(\xi)}{(m+1)!}\prod_{i=0}^{m}(t-t_i).    
\end{equation}
In the case that $f$ is absolutely monotone on $[a,b]$, such as with $\tfo$ and $\tft$, then the sign of $f(t)-H_T(f)(t)$   equals the sign of  $\prod^{m}_{i=0} (t-t_i)$.

\section{Equivalence of Different Notions of Universal Optimality}
\label{notionsofunivopt}
In this section, we prove that the definition of a lattice $\Lambda$ being universally optimal given in the introduction is equivalent to 
that given in \cite{CKMRV_2022}. We will use the language and definitions given after the statement of Theorem \ref{mainThm} in the introduction. 
We'll also need the following classical result\footnote{This result actually holds for a larger class of potentials that attain negative values.  However, here we only really need this for nonnegative potentials such as $f_a$.} from the statistical mechanics literature (cf. \cite{Fisher_1964} or \cite{Lewin_2022}).      
\begin{lemma}
\label{basicfisherlemma}
Let $f:[0,\infty)\rightarrow [0,\infty]$ be a lower semi-continuous map  of $d$-rapid decay and $\Omega \subset\R^d$ be a bounded, Jordan-measurable set.  Then for any $\rho>0$, $N_k\rightarrow \infty$ and $\ell_k\rightarrow \infty$ such that $\frac{N_k}{\ell_{k}^d Vol(\Omega)}\rightarrow \rho$, the limit
\[
\lim_{k\rightarrow \infty} \frac{\mathcal{E}_{f}(N_k, \ell_k \Omega)}{N_k}=C_{f,d,\rho}
\] 
exists and is independent of $\Omega$. 
\end{lemma}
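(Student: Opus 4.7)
The plan is to establish this by a classical Fisher-type thermodynamic limit argument in three stages: (i) prove that the limit exists and depends only on $\rho$ when $\Omega$ is a cube; (ii) use Jordan-measurability to extend the conclusion to general $\Omega$; (iii) identify the limiting constants across all $\Omega$. The fundamental tool throughout is the $d$-rapid decay hypothesis, which ensures that the pairwise interaction between two well-separated blocks of points contributes negligibly in the thermodynamic limit.

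First, I would fix $\Omega = Q := [0,1]^d$ and establish an approximate subadditivity relation. Partition the large cube $\ell Q$ into $M^d$ congruent subcubes of side $\ell/M$, and place $\lfloor N/M^d\rfloor$ points into each (with any remainder absorbed into a single subcube), chosen according to a near-optimal configuration for the slightly shrunken copy of side $\ell/M - \delta$, thereby leaving a buffer of width $\delta$ between blocks. Since $f \geq 0$ and $f(r^2) = O(r^{-s})$ for some $s > d$, the interaction between points in distinct subcubes is bounded by
\begin{equation*}
\sum_{i \neq j} \sum_{x \in \omega_i, y \in \omega_j} f(|x-y|^2) \leq C\, N^2 \int_\delta^\infty r^{d-1-s}\, dr = O\bigl(N^2 \delta^{d-s}\bigr),
\end{equation*}
which per particle is $O(N \delta^{d-s})$, a quantity we can drive to zero by choosing $\delta$ growing with $\ell$ (for example $\delta = \ell^\alpha$ with small $\alpha > 0$). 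Combined with the obvious lower bound obtained by discarding all cross terms (valid since $f \geq 0$), a Fekete-style argument along these subdivision scales yields existence of $\lim_k \mathcal{E}_f(N_k, \ell_k Q)/N_k =: C_{f,d,\rho}$. Continuity of $C_{f,d,\rho}$ in $\rho$, required so that the limit is insensitive to the specific choice of $N_k$ and $\ell_k$ subject only to $N_k/(\ell_k^d\mathrm{Vol}(Q)) \to \rho$, follows by perturbing the number of points and adjusting a small fraction of the configuration.

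Second, for a general Jordan-measurable $\Omega$, the definition of Jordan measurability provides, for each $\varepsilon>0$, disjoint-cube sandwiches $U_k^- \subseteq \ell_k\Omega \subseteq U_k^+$ whose volumes differ from $\ell_k^d\mathrm{Vol}(\Omega)$ by at most $\varepsilon \ell_k^d$. Distributing points optimally among the cubes of $U_k^-$ at density $\rho$, and again using buffers so cross terms between distinct cubes are negligible, yields $\limsup_k \mathcal{E}_f(N_k, \ell_k \Omega)/N_k \leq C_{f,d,\rho}$; sending $\varepsilon \to 0$ at the end. A matching lower bound comes from noting that any configuration in $\ell_k\Omega \subseteq U_k^+$ is a configuration in $U_k^+$ and can be split among the constituent cubes, so its energy is at least the sum of sub-energies. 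Both bounds produce the same constant $C_{f,d,\rho}$, establishing existence and independence of $\Omega$.

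The main technical obstacle is the control of cross terms when $f$ has a singularity at $0$ (hard-core potentials with $f(0) = \infty$). One must verify that the near-optimal configurations used to populate subcubes at density $\rho$ can actually be realized with finite energy; this amounts to showing that $C_{f,d,\rho} < \infty$ for all $\rho > 0$, which follows by exhibiting a spread-out lattice configuration whose energy is finite thanks to $d$-rapid decay, and then using the buffer to prevent any two points from distinct subcubes being near each other. A secondary subtlety is the coordination of the scales $M$, $\delta$, and $\ell_k$: we need $M \to \infty$ and $\delta \to \infty$, yet $\delta M/\ell_k \to 0$ so that the usable fraction of $\ell_k Q$ tends to $1$ and the corresponding density correction is negligible. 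With these scales properly ordered, the Fekete/Fisher subadditivity argument sketched above runs without further obstruction.
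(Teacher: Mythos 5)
The paper does not actually prove this lemma; it cites Fisher and Lewin for it and uses it as a black box. Your sketch follows the standard Fisher subadditivity/thermodynamic-limit route, which is the right general direction, but two substantive gaps remain.

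First, the displayed cross-term estimate is wrong by a factor of $N$, and the error is not harmless. You claim
\[
\sum_{i \neq j} \sum_{x \in \omega_i,\, y \in \omega_j} f(|x-y|^2) \le C N^2 \int_\delta^\infty r^{d-1-s}\,dr = O\bigl(N^2 \delta^{d-s}\bigr),
\]
so per particle the cross energy is $O(N\delta^{d-s})$. With $N \sim \ell^d$ and $\delta = \ell^\alpha$ this tends to zero only when $\alpha(s-d) > d$, i.e.\ $\alpha > d/(s-d)$, which exceeds $1$ whenever $d < s < 2d$. Thus your choice of ``small $\alpha$'' cannot work, and for $s$ close to $d$ no $\alpha < 1$ works at all. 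The correct estimate uses the bounded local density of the constructed block configuration: for any single point $x$, the energy contributed by all other points at distance $\ge \delta$ is bounded by $C\rho\int_\delta^\infty r^{d-1-s}\,dr = O(\rho\,\delta^{d-s})$, so the total cross energy is $O(N\rho\,\delta^{d-s})$ and per particle it is $O(\rho\,\delta^{d-s})$, independent of $N$. This tends to zero for any $\delta \to \infty$ with no constraint relating $\delta$ to $\ell$ beyond $\delta = o(\ell/M)$. Your version would also require distinguishing adjacent subcubes (separation $\ge \delta$, the buffer) from distant ones (separation $\gtrsim$ cube side), and summing the decaying contributions over grid distance; the single $N^2$ prefactor collapses all this incorrectly.

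Second, the lower bound for general Jordan-measurable $\Omega$ is asserted but not established. You observe that an optimal configuration in $\ell_k\Omega$ can be split among the cubes of $U_k^+$ and that the energy is $\ge \sum_i \mathcal{E}_f(n_i, Q_i)$, but this gives $\ge \sum_i n_i\, C_{f,d,\rho_i} - o(N)$ with $\rho_i = n_i/|Q_i|$ the a priori unknown local densities. To conclude $\ge N\,C_{f,d,\rho} - o(N)$ you need a Jensen-type step, which requires knowing that the energy per unit volume $\rho \mapsto \rho\, C_{f,d,\rho}$ is convex (a key structural ingredient of the Fisher/Ruelle theory, usually established alongside the existence of the limit by an interpolation argument at intermediate densities). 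Without some such monotonicity or convexity input, the possibility that the optimal configuration concentrates mass unevenly among cubes is not ruled out, and the ``matching lower bound'' does not follow. This is the most significant missing idea; the buffered-subcube upper bound and Fekete step are routine by comparison.
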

The following proposition shows the equivalence of the different notions of universal optimality: 
\begin{proposition}
\label{defnequivalences}
Let $\Lambda\subseteq \R^d$ be a lattice of some density $\rho>0$. Fix $f:[0,\infty)\rightarrow [0,\infty]$ as a lower semi-continuous map  of $d$-rapid decay. For an arbitrary sublattice $\Phi\subseteq \Lambda$, let $F_{\Phi}:= F_{f,\Phi}$. Then the following are equivalent:
\begin{enumerate}
    \item[(1)] As an infinite configuration of density $\rho$, $\Lambda$ is $f$-optimal. 
    \item[(2)] For every sublattice $\Phi\subseteq \Lambda$, the configuration $\Lambda \cap \Omega_{\Phi}$ is $F_{\Phi}$-optimal.
    \item[(3)] There is some sublattice $\Phi\subseteq \Lambda$ such that $\Lambda \cap \Omega_{m\Phi}$ is $F_{m\Phi}$ optimal for infinitely many $m\in \mathbb{N}$. 
\end{enumerate}
\end{proposition}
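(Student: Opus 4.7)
The plan is to prove the chain of implications $(1) \Rightarrow (2) \Rightarrow (3) \Rightarrow (1)$. The implication $(2) \Rightarrow (3)$ is immediate by restricting the quantifier over sublattices of $\Lambda$ to those of the form $m\Phi$ for a fixed sublattice $\Phi \subseteq \Lambda$ and all $m \in \N$.

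For $(1) \Rightarrow (2)$: fix a sublattice $\Phi \subseteq \Lambda$ with $N := [\Lambda : \Phi]$, and let $\omega_N \subset \Omega_\Phi$ be an arbitrary $N$-point configuration. Its $\Phi$-periodic extension $C := \omega_N + \Phi$ is an infinite configuration of density $N/|\Phi| = 1/|\Lambda| = \rho$, matching that of $\Lambda$. Hypothesis $(1)$ gives $E_f(\Lambda) \leq E_f^l(C) = E_f(C)$, since Proposition~\ref{periodictoaverage} guarantees the limit exists for any periodic configuration. Applying Proposition~\ref{periodictoaverage} to both $\Lambda$ and $C$ (both viewed as $N$-point $\Phi$-periodic configurations), the additive term $\sum_{0 \neq v \in \Phi} f(|v|^2)$ is common to the two sides, and we conclude $E_{F_\Phi}(\Lambda \cap \Omega_\Phi) \leq E_{F_\Phi}(\omega_N)$. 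This is the desired $F_\Phi$-optimality of $\Lambda \cap \Omega_\Phi$.

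For $(3) \Rightarrow (1)$: let $k := [\Lambda : \Phi]$, so $[\Lambda : m\Phi] = k m^d$, and let $C$ be an arbitrary infinite configuration of density $\rho$. We must show $E_f(\Lambda) \leq E_f^l(C)$. Applying Lemma~\ref{basicfisherlemma} with $\Omega = B(0,1)$ along a sequence $r_k \to \infty$ realizing $E_f^l(C)$, together with the trivial bound $E_f(C_{r_k}) \geq \mathcal{E}_f(|C_{r_k}|, B(0,r_k))$, yields $E_f^l(C) \geq C_{f,d,\rho}$. On the other hand, combining Proposition~\ref{periodictoaverage} with $(3)$ gives, for each $m \in S$,
\[
E_f(\Lambda) \;=\; \frac{\mathcal{E}_{F_{m\Phi}}(km^d)}{km^d} + \sum_{0 \neq v \in m\Phi} f(|v|^2).
\]
The second term is $o(1)$ as $m \to \infty$ by the $d$-rapid decay of $f$, so it suffices to prove that $\mathcal{E}_{F_{m\Phi}}(km^d)/(km^d) \to C_{f,d,\rho}$ as $m \to \infty$ through $S$. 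The lower bound is easy: the pointwise inequality $F_{m\Phi} \geq f$ gives $\mathcal{E}_{F_{m\Phi}}(km^d) \geq \mathcal{E}_f(km^d, \Omega_{m\Phi})$, and Lemma~\ref{basicfisherlemma} with $\Omega = \Omega_\Phi$, $\ell_k = m$, $N_k = km^d$ (so $N_k / (\ell_k^d\, {\rm Vol}(\Omega)) = k / |\Phi| = \rho$) yields $\mathcal{E}_f(km^d, \Omega_{m\Phi})/(km^d) \to C_{f,d,\rho}$.

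The matching upper bound is the main technical obstacle. My plan is to produce a near-optimal test configuration $\omega_m \subset \Omega_{m\Phi}$ consisting of $km^d$ points supported in a slightly shrunken subdomain, leaving a boundary buffer of width $R_m$ with $R_m \to \infty$ but $R_m / m \to 0$. Since the excised boundary region has volume $O(m^{d-1} R_m) = o(m^d)$, Lemma~\ref{basicfisherlemma} still applies to the shrunken subdomain, so we may choose $\omega_m$ with $E_f(\omega_m)/(km^d) \to C_{f,d,\rho}$. The excess is
\[
E_{F_{m\Phi}}(\omega_m) - E_f(\omega_m) = \sum_{\substack{x,y \in \omega_m \\ 0 \neq v \in m\Phi}} f(|x-y+v|^2) - km^d \sum_{0 \neq v \in m\Phi} f(|v|^2);
\]
the buffer forces $|x-y+v| \geq R_m \to \infty$ whenever $v \neq 0$, and the $d$-rapid decay of $f$ (which makes $f(|\cdot|^2)$ integrable on $\R^d$) lets us compare the cross sum to $k m^d \rho \int_{\R^d} f(|z|^2)\, dz$ restricted to $\{|z| \geq R_m\}$ via a Riemann-sum estimate, which is $o(km^d)$; the subtracted sum $\sum_{0 \neq v} f(|v|^2)$ is handled similarly. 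We conclude $\limsup_m \mathcal{E}_{F_{m\Phi}}(km^d)/(km^d) \leq C_{f,d,\rho}$. Putting the bounds together gives $E_f(\Lambda) = C_{f,d,\rho} \leq E_f^l(C)$, completing $(3) \Rightarrow (1)$.
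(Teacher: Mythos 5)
Your proof shares the same overall structure as the paper's: $(1)\Rightarrow(2)$ via Proposition~\ref{periodictoaverage}, $(2)\Rightarrow(3)$ trivially, and $(3)\Rightarrow(1)$ via Lemma~\ref{basicfisherlemma} plus a boundary-buffer argument to show $E_f(\Lambda)\le C_{f,d,\rho}$. The $(1)\Rightarrow(2)$ direction is correct and matches the paper.

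The gap is in the ``Riemann-sum estimate'' used to control the cross-interaction $\sum_{x,y\in\omega_m}\sum_{0\neq v\in m\Phi}f(|x-y+v|^2)$. That estimate implicitly assumes the test configuration $\omega_m$ is roughly equidistributed, but an $f$-optimal configuration for a general lower-semicontinuous $f$ of $d$-rapid decay can be highly clustered (e.g.\ if $f$ is bounded near $0$). Taking the inner sum for a fixed $x$ and comparing it to $\rho\int_{|z|\geq R_m}f(|z|^2)\,dz$ is not justified; the worst case is governed by the crude bound $\sum_{y,v}f(|x-y+v|^2)\lesssim |\omega_m|\cdot f(R_m^2)\sim km^d R_m^{-(d+\epsilon)}$, which is what the paper's \emph{weakly tempered} inequality formalizes (a configuration-independent estimate, not a Riemann sum). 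With this bound the cross-term is $O\bigl((km^d)^2 R_m^{-(d+\epsilon)}\bigr)$, so it is $o(km^d)$ only if $R_m\gg m^{d/(d+\epsilon)}$. Your stated constraints ``$R_m\to\infty$ and $R_m/m\to0$'' are therefore insufficient; e.g.\ $R_m=\log m$ satisfies both but the cross-term is not $o(km^d)$. The paper's choice $\alpha_k=N_k^{-\epsilon/(2(d+\epsilon))}$ (buffer width $\sim N_k^{1-\epsilon/(2(d+\epsilon))}$) is precisely tuned to beat the exponent $d/(d+\epsilon)$, and it also relies on the geometric claim $d(\theta_k,\theta_k+v)\geq m\alpha_k|v|$ so that the sum over $v$ converges after rescaling. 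To repair your argument, replace the Riemann-sum comparison with the weakly tempered bound, state and use the separation estimate, and fix $R_m$ to grow at a definite fractional power of $m$ strictly between $m^{d/(d+\epsilon)}$ and $m$.
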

\begin{proof}
First, we'll prove (1) implies (2). 
Let $\Lambda$   be a lattice of density $\rho$ satisfying condition (1). 
If $\Phi$ is of index $n$, then for an arbitrary $n$-point configuration $\omega_n$, we define $E_{\Phi}(\omega_n):= E_{F_{\Phi}}(\omega_n)$, and the $n$-point $\Phi$-periodic configuration $C_n= \omega_n + \Phi$. Note that $C_n$ has density $\rho$. By assumption $E_{f}(\Lambda)\leq E_{f}(C_n)$. Noting that $\Lambda$ is also an $n$-point $\Phi$-periodic configuration, we apply Proposition \ref{periodictoaverage} to obtain
\begin{equation*}
E_{F,\Phi}(\Lambda \cap \Omega_{\Phi})=N E_f(\Lambda)-N \sum_{0\neq v\in \Lambda}f(\lvert v \rvert^2)
\leq N E_f(C_n)-N \sum_{0\neq v\in \Lambda}f(\lvert v \rvert^2)
=E_{F,\Phi}(\omega_n).
\end{equation*}
Since $\omega_n$ was arbitrary, we conclude $\Lambda \cap \Omega_{\Phi}$ is $F_{\Phi}$-universally optimal as desired.\\

Clearly (2) implies (3) so it remains to show (3) implies (1). Assume $\Lambda$ is generated by some matrix $V_\Lambda$. Let $\Phi\subseteq \Lambda$ be  of index $\kappa$, generated by some matrix $V_\Phi$ and $\{N_k\}\rightarrow \infty$ be our increasing sequence of scalings for which $\Lambda$ yields an $N_k\Phi$-universally optimal configuration.
By Lemma \ref{basicfisherlemma},
certainly $C_{f,d,\rho}$ is a lower bound for  $E^{l}_f(C)$ for any $C$ of density $\rho$, simply by the definition of average energy. Thus, we just have to show $E_f(\Lambda)\leq C_{f,d,\rho}$. To do so, 
we note that $f$ satisfies the so-called \textit{weakly tempered inequality} (cf. \cite{Fisher_1964}), that is, there exist some $\epsilon, R_0, c>0, $ such that for any two $N_1,N_2$ point configurations $\mu_{N_1}=\{x_1,\dots,x_{N_1}\}, \mu_{N_2}=\{ x'_1,\dots, x'_{N_2} \}$, which are separated by distance at least $R\geq R_0$, we have 
\[
2\sum_{i=1}^{N_1}\sum_{j=1}^{N_2}f(\vert x_i-x'_j\vert^2) \leq \frac{N_1N_2c}{R^{d+\epsilon}} .
\]
In other words, the interaction energy between the two sets decays like $R^{d+\epsilon}$.
Now set 
\[
\alpha_k=\frac{1}{N_k^{\epsilon/(2(d+\epsilon))}},
\]
and define for each $k\geq 1$, the configuration
$\theta_k$ as a $\kappa N_k^d$- point configuration which is $f$-optimal on the set $(1-\alpha_k)\bar{\Omega_{N_k\Phi}}$, Also define
$\omega_{\kappa N_{k}^d}:=\Lambda \cap \Omega_{N_k\Phi}$.

We claim the following inequality string holds, which would suffice to prove our desired result:

\[
\label{ineqstringfisher}
E_f(\Lambda)=\lim_{N_k \rightarrow \infty}
\frac{E_{N_K\Phi} (\omega_{\kappa N_k^d})}{ \kappa N_k^d }
\leq
\lim_{k\rightarrow \infty} \frac{E_{ N_K \Phi} (\theta_k)}{\kappa N_k^d}
\leq 
\lim_{k\rightarrow \infty} \frac{E_f(\theta_k)}{\kappa N_k^d}
=C_{f,d,1}. 
\]

To obtain the first equality, we apply Proposition \ref{periodictoaverage} to the lattices $N_k \Phi$ and the configurations
$\omega_{\kappa N_{k}^d}$, yielding 

\begin{align}
E_f(\Lambda)&=\frac{1}{ \kappa N_k^d }\left(E_{N_K\Phi} (\omega_{\kappa N_k^d})+ \kappa N_{k}^d\sum_{0\neq v\in N_k\Phi} f(\vert v \vert^2)\right)\\
&=\frac{E_{N_K\Phi} (\omega_{\kappa N_k^d})}{ \kappa N_k^d }+ \sum_{0\neq v\in N_k\Phi} f(\vert v \vert^2).
\end{align}
Since 

\[
\lim_{N_k\rightarrow \infty} \sum_{0\neq v\in N_k\Phi} f(\vert v \vert^2)=0,
\] 
we have 
\[
E_f(\Lambda)=\lim_{N_k \rightarrow \infty}
\frac{E_{f,N_K\Phi} (\omega_{\kappa N_k^d})}{ \kappa N_k^d }
\]
as needed.
Our first inequality 
\[\lim_{N_k \rightarrow \infty}
\frac{E_{N_K\Phi} (\omega_{\kappa N_k^d})}{ \kappa N_k^d }
\leq
\lim_{k\rightarrow \infty} \frac{E_{ N_K \Phi} (\theta_k)}{\kappa N_k^d}
\]
follows immediately from our assumption of condition (2). 

To obtain our next inequality, 
\[\lim_{k\rightarrow \infty} \frac{E_{ N_K \Phi} (\theta_k)}{\kappa N_k^d}
\leq 
\lim_{k\rightarrow \infty} \frac{E_f(\theta_k)}{\kappa N_k^d},
\]
we first observe 
\[
E_{N_k \Phi} (\theta_k) = E_f(\theta_k) +\sum_{x\neq y \in \theta_k} \sum_{v\neq 0 \in N_k\Phi} f(x-y+v),
\]
so it suffices to show $\displaystyle{\sum_{x\neq y \in \theta_k} \sum_{v\neq 0 \in N_k \Phi} f(x-y+v)\in o(N_k^d)}$ as $N_k\rightarrow \infty$. 
We claim that there exists some $m>0$ such that for all $v\in N_k \Phi$,
\[
d(\theta_k, \theta_k+v) \geq m\alpha_k \lvert v\rvert,
\]
which is proved analogously to \cite[Theorem 8.4.1]{MEbook}. 



Returning to 
$\displaystyle{\sum_{x\neq y \in \theta_{k}} \sum_{v\neq 0 \in N_k \Phi} f(x-y+v)}$
for $k$ large enough, we can use the weakly tempered definition to obtain:  
\begin{align*}
\sum_{x\neq y \in \theta_k} \sum_{v\neq 0 \in N_k \Phi} f(\vert x-y+v\vert^2) 
&\leq 
\sum_{v\neq 0 \in N_k \Phi}
\sum_{x\in \theta_k}
\sum_{y\in \theta_k+v}  f(\vert x-y\rvert^2) \\ 
&\leq 
\sum_{v\neq 0 \in N_k \Phi} \frac{N_k^{2d} c}{d(\theta_k,\theta_k+v)^{d+\epsilon}}\\
&\leq \sum_{v\neq 0 \in N_k \Phi} 
\frac{N_k^{2d} c
N_k^{\epsilon/2}}{(m\lvert v \rvert)^{d+\epsilon}}\\
&=\frac{c N_k^{(d-\epsilon/2)}}{m^{d+\epsilon}}\sum_{v\neq 0 \in  \Phi} \frac{1}{\lvert v \rvert^{d+\epsilon}}
\end{align*}
and this last quantity is of order $o(N_k^d)$ since the sum converges and no term but $N_k^{d-\epsilon/2}$ depends on $k$.  We should note we treat $y\in \theta_k+v/ \{ x+v\}$  in the multiset sense, decreasing the cardinality of $x+v$ in $\theta_k+v$ by one.

The final equality $\displaystyle{\lim_{k\rightarrow \infty} \frac{E_f(\theta_k)}{\kappa N_k^d}
=C_{f,d,1}}$ is immediate from Lemma \ref{basicfisherlemma}, which we can apply by the definition of $\theta_k$ and the fact that $\alpha_k\rightarrow 0$. 
\end{proof}


\section{Technical Estimates and Computations for $a<\pi^2$}
\label{smallaAppendix}
\subsection{$\theta$ estimates}
Recall that for $a<\pi^2$, we define 
$\tilde{f}_1(t_1)=\tth(\frac{\pi}{a };t_1)$ and
$\tilde{f}_2(t_2)=\tth(\frac{\pi}{3a };t_2)$. 

For $0<a<\pi^2$, we use truncations of the formula 
\[
\theta(\frac{\pi}{a};x):=\sum^{\infty}_{k=-\infty} e^{-d k^2 }e^{2\pi i kx}=1+\sum_{k\geq 1} 2e^{-d k^2}\cos(2\pi kx).
\]
where $d:=\frac{\pi^2}{a}>1$, to obtain bounds on $\theta$. Thus, we will use
\begin{align}
\tfo(t_1,j)=f_1(x_1,j)&:=1+\sum_{k= 1}^{j} 2e^{-d k^2}\cos(2\pi k x_1)\\
\tft(t_2,j)=f_2(\frac{x_2}{\sqrt{3}},j)&:=1+\sum_{k= 1}^{j} 2e^{\frac{-d k^2}{3}}\cos(\frac{2\pi k x_2}{\sqrt{3}})
\end{align}

We first bound the tails of these series:
\begin{equation}
    \begin{split}
    \left\lvert\sum_{k\geq 3} 2e^{-d k^2}\cos(2\pi kx) \right\rvert &
    \leq2\sum_{k\geq 0} e^{-d (k+3)^2}
    \leq 2e^{-4d}e^{-5}\sum_{k\geq 0}e^{-(k^2+6k)}\\
    &\leq 2e^{-4d}e^{-5}\sum_{k\geq 0}e^{-(7k)}
    = e^{-4d}\frac{2}{e^5(1-e^{-7})}< \frac{e^{-4d}}{50}.
\end{split}
\end{equation}
Similarly, we have 
\begin{align}
    \left\lvert\sum_{k\geq 5} 2e^{-d/3 k^2}\cos(2\pi kx) \right\rvert &\leq 
    e^{-16d/3}\frac{2}{e^3(1-e^{-11/3})}<\frac{e^{-16d/3}}{5}.
\end{align}

Hence, for $t_1,t_2\in[-1,1]$, we have
\begin{equation}
 \label{6ptsmallthetavalbounds}   
\begin{split}
 \tfo(t_1,2) -\frac{e^{-4d}}{50} < \tfo(t_1)    &< \tfo(t_1,2) +\frac{e^{-4d}}{50},\\
\tft(t_2,4)- \frac{e^{-16d/3}}{5} < \tft(t_2) &< \tft(t_2,4)+ \frac{e^{-16d/3}}{5}.
\end{split}
\end{equation}
 
We also need bounds on the derivatives of $\tfo$ and $\tft$. With $x=\frac{\arccos(t)}{2\pi}$, then $\tilde{\theta}(c,t)=\theta(c,x)$, and so by the chain rule
\[
\tth'(t;\frac{\pi}{a})
=\left(-\sum_{k\geq 1} 2e^{-d k^2}(2\pi k)\sin (2\pi kx)\right)\frac{-1}{2\pi \sqrt{1-t^2}}=\frac{\sum_{k\geq 1} 2ke^{-d k^2}\sin (2\pi k x)}{\sin(2\pi x)}.
\]
For $t=\pm 1$ ($x=0,\frac12$), we   use L'Hopital's rule to obtain
\[
\tth'(1;\frac{\pi}{a})
=\sum_{k\geq 1} 2k^2e^{-d k^2}
\]
and  
\[
\tth'(-1;\frac{\pi}{a})=\sum_{k\geq 1} (-1)^{n+1}2k^2e^{-d k^2}.
\]
Then we again bound the tails by comparison with geometric series. For example, using that $d>1$ and for all $k\geq 0$,
$(k+3)^2 \leq 9e^{k}$, we obtain
\begin{align*}
\sum_{k\geq 3}2k^2 e^{-dk^2}
=e^{-9d}2\sum_{k\geq 0}(k+3)^2 e^{-d(k^2+6k)}
\leq e^{-4d}e^{-5} 2\sum_{k\geq 0}9e^{k}e^{-(k^2+6k)}\\
\leq e^{-4d}18e^{-5}\sum_{k\geq0}e^{-6k}
=\frac{18e^{-4d}}{e^{5}(1-e^{-6})}
< \frac{e^{-4d}}{8}. 
\end{align*}

In the $d/3$ case, since $(k+6)^2\leq 36e^{k/3}$, we analogously have
\begin{align*}
\sum_{k\geq 6}2k^2 e^{-d/3k^2}
=\sum_{k\geq 0}2(k+6)^2 e^{-d/3(k+6)^2}
&< 2e^{-25d/3}.
\end{align*}
When $t=\pm \frac12$, we have $\sin 2\pi x=\sqrt{3}/2$ and so 
\[
\left\lvert\frac{\sum_{k>j} 2ke^{-d k^2}\sin (2\pi k x)}{\sin(2\pi x)}\right\rvert\leq 
\frac{4}{\sqrt{3}}\sum_{k>j} \left\lvert ke^{-d k^2}\sin (2\pi k x)\right\rvert \leq \sum_{k>j} 4ke^{-d k^2}\leq  \sum_{k>j} 4k^2e^{-d k^2}
\]
and we can apply the previous bounds for $\sum_{k\geq 6}2k^2e^{-dk^2/3}$
Thus, we have the following bounds for $d>1$, $t_1=\pm 1$, and $t_2\in\{-1,-\frac12,\frac12,1\}$, where $\tfo'(t_1,j)$ and $\tft'(t_2,j)$ indicate the truncation of the sums involved in $\tfo'(t_1),\tft'(t_2)$ after $j$ terms:
\begin{equation}
\label{f1pf2p4ptlowerbounds}
\begin{split}
\tfo'(t_1,2)-\frac{e^{-4d}}{8}&< \tfo'(t_1)< \tfo'(t_1,2)+\frac{e^{-4d}}{8}\\
\tft'(t_2,5)-4e^{-25d/3}&< \tft'(t_2)< \tft'(t_2,5)+4e^{-25d/3}.
\end{split}    
\end{equation}

Finally, we need bounds for $\tft(\pm\frac12)$.
Again, using the chain rule, we obtain
\begin{align}
\tth''(d/3,-1/2)&=\left(\sum_{k\geq 2}2ke^{-\frac{d}{3}k^2}
\frac{\cot(2\pi x)\sin(2\pi kx)-k\cos(2\pi kx)}{\sin^2(2\pi x)}\right)
\bigg|_{x=1/3}\\
&=\sum_{k\geq 2}-8/3k e^{-\frac{d}{3}k^2}\left (k \cos(2\pi k/3)+\frac{1}{\sqrt{3}}\sin(2\pi k/3)\right).
\end{align}
Likewise, 
\[
\tth''(d/3,1/2)=\sum_{k\geq 2}-8/3k e^{-\frac{d}{3}k^2}\left (k \cos(2\pi k/3)-\frac{1}{\sqrt{3}}\sin(2\pi k/3)\right).
\]
Note 
\[
\left\lvert-8/3k e^{-\frac{d}{3}k^2}\left (k \cos(2\pi k/3)+\frac{1}{\sqrt{3}}\sin(2\pi k/3)\right) \right \rvert \leq \frac83 k(k+1)e^{-\frac{d}{3}k^2}, 
\]
so using the fact that $(k+6)(k+7)\leq 42e^{k/3}$ for $k\geq 0$ yields 
\begin{align*}
\left\lvert \sum_{k\geq 6} -8/3k e^{-\frac{d}{3}k^2}\left (k \cos(2\pi k/3)+\frac{1}{\sqrt{3}}\sin(2\pi k/3)\right)\right\rvert \leq \sum_{k\geq 6}\frac83 k(k+1)e^{-\frac{d}{3}k^2}\\
\leq e^{-25d/3}\frac83 e^{-11/3}\sum_{k\geq 0}(k+6)(k+7)e^{-\frac{d}{3}(k^2+12k)}
\leq e^{-25d/3}\frac83 e^{-11/3}\sum_{k\geq 0}42e^{-4k}
< 5e^{-25d/3}.
\end{align*}
Thus, we obtain our final bounds 
\begin{align}\label{f2pp4ptlowerbound}
\tft''(\pm \frac12,5)-5e^{-25d/3}<\tft''(\pm \frac12)< \tft''(\pm \frac12,5)+5e^{-25d/3}.
\end{align}
As a final remark, it is straightforward to check that  the leftmost lower bounds in \eqref{6ptsmallthetavalbounds}, \eqref{f1pf2p4ptlowerbounds}, and \eqref{f2pp4ptlowerbound} are positive for all $d>1$.
\subsection{$\tf \geq \tg$ for small $a$ and 4 points}
First, we prove that 
$\frac{\partial^3 \tf}{\partial t_1 \partial t_{2}^2 }(-1,1/2) \geq 0$ to complete the proof of Lemma \ref{3rdorderpartial} for $a<\pi^2$. We'll use the notation $\tfou(t_1), \tfol(t_1)$ to denote the upper and lower bounds given in the previous section and likewise for $\tft$. 
We  have 
\begin{align}
\frac{\partial^3 \tf}{\partial t_1 \partial t_{2}^2 }(-1,1/2) \nonumber
&\geq \tfol'(-1)\tftl''(1/2)-\tfou'(1)\tftu''(-1/2)\\
&=e^{-4d}\left[-440 e^{-16d/3} - 130 e^{-4d/3} + 96\right] > 0.
\end{align}
To prove this final inequality, and several others later in the section, we use the following elementary lemma that reduces to  verifying the inequality at $d=1$ which is easily checked in the case above. 
\begin{lemma}
\label{smallaexponentialhelper}
Let 
$h(d)= a_1 e^{c_1 d}+\dots+ a_n e^{c_n d} $, where 
the $c_i$'s are increasing and there is some $j$ such that $a_i\leq 0$ for $i<j$ and $a_i\geq 0$ for $i>j$. Then $h(d)> 0$ for all $d\geq 1$ if and only if $h(1)> 0$. 
\end{lemma}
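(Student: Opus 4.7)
\medskip

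\noindent\textbf{Proof plan for Lemma \ref{smallaexponentialhelper}.} The ``only if'' direction is immediate by specialization to $d=1$, so I would focus on the ``if'' direction. The plan is to reduce the question of sign of $h(d)$ on $[1,\infty)$ to the monotonicity of an auxiliary function obtained by pulling out the ``pivot'' exponential $e^{c_j d}$.

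\medskip

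\noindent Concretely, I would set
\[
\tilde{h}(d) \;:=\; e^{-c_j d}\,h(d) \;=\; \sum_{i=1}^{n} a_i\, e^{(c_i - c_j)d},
\]
so that $h(d)$ and $\tilde h(d)$ have the same sign for every $d$. Differentiating termwise gives
\[
\tilde{h}'(d) \;=\; \sum_{i=1}^{n} a_i (c_i - c_j)\, e^{(c_i - c_j)d}.
\]
The key observation is that each summand is nonnegative: for $i<j$ we have $a_i\le 0$ and $c_i - c_j < 0$ (since the $c_i$ are increasing), so $a_i(c_i-c_j)\ge 0$; for $i>j$ we have $a_i\ge 0$ and $c_i - c_j > 0$, again yielding $a_i(c_i-c_j)\ge 0$; and the $i=j$ term vanishes. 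Hence $\tilde{h}$ is nondecreasing on $\mathbb{R}$.

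\medskip

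\noindent To finish, assume $h(1)>0$. Then $\tilde{h}(1) = e^{-c_j}h(1) > 0$, and monotonicity gives $\tilde{h}(d) \ge \tilde{h}(1) > 0$ for all $d \geq 1$. Multiplying by the strictly positive factor $e^{c_j d}$ yields $h(d) > 0$ for all $d\ge 1$, as required. There is no serious obstacle here: the only subtlety is recognizing that the hypothesis on the sign pattern of the $a_i$'s is precisely what makes $\tilde{h}'$ a sum of nonnegative terms once one factors out the correct exponential, and that no assumption on $a_j$ itself is needed because the $i=j$ term drops out of $\tilde h'$.
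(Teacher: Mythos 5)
Your proof is correct and takes essentially the same approach as the paper: both factor out $e^{c_j d}$ and show the resulting function $\tilde h$ is nondecreasing on $[1,\infty)$, the paper by observing each summand $a_i e^{(c_i-c_j)d}$ is itself nondecreasing and you by computing $\tilde h'$ termwise; these are the same observation.
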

\begin{proof}
    Note $h(d)\geq 0$ if and only if $h(d) e^{-c_j d}\geq 0$, and we have  
    \[
h(d)e^{-c_j d}= a_1 e^{(c_1-c_j) d}+\dots+ a_{j-1} e^{(c_{j-1}-c_j) d}+
a_j+ \dots + a_n e^{(c_n-c_j) d}. 
    \]
    By our assumptions on the $a_{i}$'s and $c_{i}$'s, for $i<j$, $a_i$ and $c_i-c_j$ are both negative, so $a_i e^{(c_i-c_j)}$ is nondereasing. For $i> j$, both $a_i$ and $c_i-c_j$ are nonnegative, and so again $a_i e^{(c_i-c_j)}$ is nondecreasing. Thus, $h(d)e^{-c_j d}>0 $ is nondecreasing, which suffices for the desired result.  
\end{proof}

Next, we prove Lemma \ref{inequalitystring} for $a\le \pi^2$.
\begin{proof}
First, we'll show
\[ 
2\frac{\partial \tf}{\partial t_1 }(-1,1/2) 
< \frac{\partial^2 (\tf-\tg)}{\partial t_1 \partial t_2}(-1,1/2).
\]
Using the bounds from the previous section,
\begin{align}
2\frac{\partial \tf}{\partial t_1 }(-1,1/2) &\leq 2
\left [ \tfou'(-1)\tftu(1/2)-\tfol'(1)\tftl(-1/2)
\right],\\
\frac{\partial^2 \tf}{\partial t_1 \partial t_2 }(-1,1/2)&=\tfo'(-1)\tft'(1/2)+\tfo'(1)\tft'(-1/2)\\
    &\geq \tfol'(-1)'\tftl(1/2)+\tfol'(1)\tftl(-1/2),
\end{align}
from which we obtain 
\[
\frac{\partial^2 \tf}{\partial t_1 \partial t_2 }(-1,1/2)- 2\frac{\partial \tf}{\partial t_1 }(-1,1/2) \geq 
e^{-4d}
\left[ 
95/2 - 8/5 e^{-7d/3} - 191/2 e^{-4d/3} - e^{-d/3}/2
\right]
> 0 
\]

It remains to show
\[
4(\tf(-1,1)-\tf(-1,1/2)), \frac{\partial \tf}{\partial t_2}(-1,1)
<
2\frac{\partial \tf}{\partial t_1 }(-1,1/2). 
\]
Just as above, we obtain 
\begin{align*}
 2\frac{\partial \tf}{\partial t_1 }(-1,1/2)-4(\tf(-1,1)-\tf(-1,1/2)) \geq \\
 e^{-4d}\left[31/2 - 219/10 e^{-16d/3} - (16 e^{-3d})/25 - 8/5 e^{-7d/3} - 
 427/10 e^{-4d/3} - (4 e^{-d/3})/25
 \right]
> 0. 
\end{align*}
Similarly, 
\begin{align*}
2\frac{\partial \tf}{\partial t_1 }(-1,1/2)-\frac{\partial \tf}{\partial t_2}(-1,1)
\geq \\
e^{-4d}\left[
47/2 - 18 e^{-25 d/3} - 8 e^{-13 d/3} - (18 e^{-3d})/25 - 
 8/5 e^{-7d/3} - 127/2 e^{-4d/3} - (2 e^{-d/3})/25
 \right]> 0. 
\end{align*}


\end{proof}

\subsection{$\tf \geq \tg$ for small $a$ and 6 points}
\subsubsection{Satisfying Necessary Conditions}
Recall we aim to show 
\begin{align}
\frac{\partial (\tf-\tg)}{\partial t_1}(-1,-1)> 0\\
\frac{\partial (\tf-\tg)}{\partial t_1}(-1,1/2) > 0\\
\frac{\partial (\tf-\tg)}{\partial t_1}(1,-1/2)< 0.
\end{align}
\begin{proof}
First, we'll compute bounds for $a_{1,0},a_{0,2}$.
Recall
\begin{align}
2a_{1,0}&=\tfo(1)\tft(-1/2)-\tfo(-1)\tft(1/2)+\tfo(-1)\tft'(1/2)\\
\frac94 a_{0,2}&= \tfo(-1)(\tft(-1)-\tft(\frac12)+\frac32 \tft'(\frac12)).
\end{align}

Thus, using our bounds,
\begin{align}
\tfol(1)\tftl(-1/2)-\tfou(-1)\tftu(1/2)+\tfol(-1)\tftl'(1/2)
<
2a_{1,0}\\
2a_{1,0}
> \tfou(1)\tftu(-1/2)-\tfol(-1)\tftl(1/2)+\tfou(-1)\tftu'(1/2)\\
\tfol(-1)(\tftl(-1)-\tftu(1/2)+\frac32 \tftl'(1/2)) < 
\frac94 b_{0,2}
< \tfou(-1)(\tftu(-1)-\tftl(1/2)+\frac32 \tftu'(1/2)).
\end{align}
Call these upper and lower bounds $a^{u}_{i,j},a^{l}_{i,j}$ respectively. Now using those bounds, we compute
\begin{align}
\frac{\partial (\tf-\tg)}{\partial t_1}(-1,-1)&=
\tfo'(-1)\tft(-1) - (a_{1,0}-a_{0,2})\\
&>  \tfol'(-1)\tftl(-1)-a^{u}_{1,0}+a^{l}_{0,2}\\
&\geq e^{-3d}\left[-2 + 1123/100 e^{-4d/3} - (2429 e^{-d})/200 + 2 e^{2d/3}\right] > 0.
\end{align}
This final inequality is shown by checking that $-2 + 1123/100 e^{-4d/3} - (2429 e^{-d})/200 + 2 e^{2d/3}$ is positive with positive derivative at $d=1$ and then applying Lemma \ref{smallaexponentialhelper} to show its derivative is nonnegative for all $d>1$.   

The other conditions are similar. We must check the positivity of the lower bound 
\begin{align}
    \frac{\partial (\tf-\tg)}{\partial t_1}(-1,1/2) \geq
   -(1629/200) e^{-13d/3}-(2429 e^{-4d})/200-2 e^{-3d}+8 e^{-7d/3} > 0
 \end{align}
 by checking at $d=1$ and applying Lemma \ref{smallaexponentialhelper}. 
 Finally, the negativity of the upper bound 
\begin{align}
    \frac{\partial (\tf-\tg)}{\partial t_1}(1,-1/2)\leq 
    -e^{-3d}( 2 + (7397 e^{-7d/3})/1800 + 1621/200 e^{-4d/3} - (2429 e^{-d})/200) < 0
\end{align}
follows from checking positivity of $2 + (7397 e^{-7d/3})/1800 + 1621/200 e^{-4d/3} - (2429 e^{-d})/200$ and its derivative at $d=1$, and then applying Lemma \ref{smallaexponentialhelper} to its derivative, $e^{-d}(2429/200 - (51779 e^{-4d/3})/5400 - (1621 e^{-d/3})/150)$, as we did with the bound of $\frac{\partial (\tf-\tg)}{\partial t_1}(-1,-1)$. 
\end{proof}

\subsubsection{Computations for the Linear Approximation Bound}
\label{proofoflinapproxbound}
We have the expansions
\begin{align}
    A=\phi(-1/2)
    &=\left(\tfo(1)-\tfo'(1)\right)\tft(-1/2)-a_{0,0}+\frac{a_{0,1}}{2}-\frac{a_{0,2}}{2}\\
    &= \left(\frac12\tfo(1)-\tfo'(1)\right)\tft(-1/2)+\frac16\tfo(-1)\tft'(1/2)-\tfo(-1)\left(\frac29 \tft(-1)+\frac{5}{18}\tft(1/2)\right)\\
    B=\phi'(-1/2)&=(\tfo(1)-\tfo'(1))\tft'(-1/2)-a_{0,1}+a_{0,2}\\
    &=a_{0,2}-\tfo'(1)\tft'(-1/2)\\
    &=\frac49 \tfo(-1)\left(\tft(-1)-\tft(1/2)+\frac32 \tft'(1/2)\right)-\tfo'(1)\tft'(1/2)\\
    C=\phi''(-1/2)&=\left(\tfo(1)-\tfo'(1)\right)\tft''(-1/2)-2a_{0,2}\\
    &=\left(\tfo(1)-\tfo'(1)\right)\tft''(-1/2)-\frac89\tfo(-1)\left(\tft(-1)-\tft(1/2)+\frac32 \tft'(1/2)\right).
\end{align}

It remains to use these expansions to prove Lemmas \ref{phithirdderiv} and \ref{ABCLemma}.
\begin{proof}
To show Lemma \ref{phithirdderiv}, we prove the stronger statement  $\tfo(1)-2\tfo'(1)\geq 0$.
From our aforementioned bounds,
\begin{align}
\tfo(1)-2\tfo'(1)&\geq \tfol(1)-\tfou'(1) \geq 1 - (1427 e^{-4d})/100 - 2 e^{-d}> 0.
\end{align}

Onto Lemma \ref{ABCLemma}, where we must first show $A,C>0$. We have 
\begin{align}
2A&=\left(\tfo(1)-2\tfo'(1)\right)\tft(-1/2)+\frac13\tfo(-1)\tft'(1/2)-\tfo(-1)\left(\frac49 \tft(-1)+\frac{5}{9}\tft(1/2)\right)\\
&\geq \left(\tfol(1)-2\tfou'(1)\right)\tftl(-1/2)+\frac13\tfol(-1)\tftl'(1/2)-\tfou(-1)\left(\frac49 \tftu(-1)+\frac{5}{9}\tftu(1/2)\right)\\
&\geq 
2951/600 e^{-16d/3}+4879/600 e^{-13d/3}-(2429 e^{-4d})/200+2 e^{-3d} > 0.
\end{align}
The final quantity, when multiplied by $e^{4d}$ is convex in $d$, so we just check the value and derivative of this product at $d=1$. 
Similarly, we calculate
\begin{align}
    C&=\left(\tfo(1)-\tfo'(1)\right)\tft''(-1/2)-2b_{0,2}\\
    &\geq \left(\tfol(1)-\tfou'(1)\right)\tftl''(-1/2)-2a^{u}_{0,2}\\
    &\geq (3687 e^{-7d})/25 - 688/45 e^{-19d/3} - 9377/225 e^{-16d/3} - 
  24 e^{-3d} + 16 e^{-7d/3} > 0
\end{align}
by applying Lemma \ref{smallaexponentialhelper} to  $- 688/45 e^{-19d/3} - 9377/225 e^{-16d/3} - 
  24 e^{-3d} + 16 e^{-7d/3}$. 
Finally, with the additional assumption that $B<0$, we must show $A^2-BC<0$. To bound $B^2$ above, we bound $B$ below (since $B<0$). We have
\begin{align}
B&=b_{0,2}-\tfo'(1)\tft'(-1/2)\\
&\geq 14309/450 e^{-16d/3} - 65/4 e^{-13d/3}.
\end{align}
We have arrived at the following lower bounds for $A,B,C$, with the $A$ and $C$ bounds shown to be positive: 
\begin{align}
    A_l&:=2951/600 e^{-16 d/3}+ 4879/600 e^{-13 d/3} - 2429/200 e^{-4 d} + 
 2 e^{-3 d}\\
    B_l&:=14309/450e^{-16d/3} - 65/4e^{-13d/3}\\
    C_l&:= 3687/25e^{-7d} - 688/45e^{-19d/3} - 9377/225e^{-16d/3} - 
 24 e^{-3 d} + 16e^{-7d/3}.
\end{align}
We now
show $B_l^2-2A_lC_l<0$, which is equivalent to $B_l/C_l-2A_l/B_l>0$. To do so, we plug in $d=1$ and see the inequality holds there. Then, we claim $B_l/C_l$ is increasing in $d$, while $2A_l/B_l$ is decreasing in $d$. The sign of the derivative of $B_l/C_l=\frac{e^{13d/3}}{e^{13d/3}C_l}$ depends only on the sign of 
\[
e^{3d}(B_le^{13 d/3})' C_l - B_l (e^{13 d/3} C_l)' \geq -520 + (400652 e^{-d})/225 - (114472 e^{-d/3})/75 + 520 e^{2d/3}> 0
\]
where we obtain the final inequality by checking positivity of  $-520 + (400652 e^{-d})/225 - (114472 e^{-d/3})/75 + 520 e^{2d/3}$ and its derivative  at $d=1$, and applying Lemma \ref{smallaexponentialhelper} to its derivative. Likewise, the derivative of $2A_l/B_l$ depends only on the sign of  
\begin{align*}
e^{3d}(2e^{13d/3} (A_le^{13d/3})' B_l - A_l (e^{13d/3} Bl)') \leq\\
-(130/3) + (182785597 e^{-7d/3})/540000 - (
 34756561 e^{-2d})/67500 + (4626181 e^{-d})/21600 <0,
\end{align*}
and the final inequality depends on the same checks as with the previous case. All of these checks at $d=1$ and algebraic simplifications are verified in an accompanying Mathematica notebook. 
\end{proof}

\section{Technical Estimates and Computations for large $a$}
\label{largeaAppendix}
Throughout, assume that $a\geq 9.6$. We'll set $\epsilon=\frac{1}{1000}$ so that for all $a \geq  9.6$:
\begin{align}
    \epsilon > 2\sum_{n\geq 1} e^{-an} \text{ and }
    \epsilon > 5e^{-a}.
\end{align} Then set 
$\epsilon_2=\frac{1}{100}> 4(1+\epsilon)^2\sum_{n\geq 1} e^{-2(9.6)n/3}$. \\

In the large $a$ case, it is preferable to use the following formula for $\theta$ because of its rapid convergence:
\[
\theta(c;x) =c^{-1/2} \sum^{\infty}_{k=-\infty} e^{-\frac{\pi(k+x)^2}{c}}.
\]
Thus, we'll use the formulas
\begin{align}
f_1(x_1)= \sqrt{\frac{a}{\pi}}\theta(\frac{\pi}{a};x_1)=\sum^{\infty}_{k=-\infty} e^{-a(k+x_1)^2},\\
f_2(x_2)=\sqrt{\frac{3a}{\pi}}\theta(\frac{\pi}{3a},x_2)=\sum^{\infty}_{k=-\infty} e^{-3a(k+x_2)^2}.
\end{align}

\subsection{Basic Lemmas and Other Estimates}
We first establish a couple basic workhorse lemmas bounding $\theta$ and $\tth'$.

\begin{lemma}
\label{basicvalues}
For $x=\arccos(t)/(2\pi)\in[0,1/2]$,
\begin{align}
e^{-a x^2}+e^{-a(x-1)^2} &< \theta(\frac{\pi}{a};x)
< 
(1+\epsilon)e^{-ax^2}(1+e^{-a(1-2x)})\leq 2(1+\epsilon)e^{-ax^2}
\\
e^{-a x^2} &< \theta(\frac{\pi}{a};x)\leq e^{-a x^2}(1+ 2\sum_{n\geq 1}e^{-a (n^2-2nx)}).
\end{align}

\end{lemma}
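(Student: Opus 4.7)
The plan is to work directly with the series representation used throughout this appendix, namely $\theta(\tfrac{\pi}{a};x)=\sum_{k\in\Z}e^{-a(k+x)^2}$, and to isolate the two dominant summands coming from $k=0$ and $k=-1$ (which give the two smallest exponents $-ax^2$ and $-a(1-x)^2$ when $x\in[0,\tfrac12]$).

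The two lower bounds are immediate from the positivity of every summand: keeping only the $k=0$ term gives $e^{-ax^2}<\theta(\tfrac{\pi}{a};x)$, and keeping both $k=0$ and $k=-1$ yields $e^{-ax^2}+e^{-a(x-1)^2}<\theta(\tfrac{\pi}{a};x)$, with strictness coming from the positivity of the discarded terms. For the second upper bound, I would factor out $e^{-ax^2}$ and pair indices $\pm n$ for $n\geq 1$, writing the tail as $\sum_{n\geq 1}(e^{-a(n^2+2nx)}+e^{-a(n^2-2nx)})$; since $x\geq 0$, each pair is bounded by $2e^{-a(n^2-2nx)}$, giving exactly $\theta(\tfrac{\pi}{a};x)\leq e^{-ax^2}(1+2\sum_{n\geq 1}e^{-a(n^2-2nx)})$.

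The main work is the upper bound in the first chain, which reduces to showing
\[
\sum_{k\neq 0,\,-1}e^{-a(k+x)^2}\leq \tfrac{\epsilon}{2}\bigl(e^{-ax^2}+e^{-a(1-x)^2}\bigr).
\]
I would split the left-hand side into the tails $k\geq 1$ and $k\leq -2$. For $k\geq 1$, the estimate $(k+x)^2-x^2=k(k+2x)\geq k$ gives $\sum_{k\geq 1}e^{-a(k+x)^2}\leq e^{-ax^2}\sum_{n\geq 1}e^{-an}$. For $k\leq -2$, I would reindex via $j=-1-k\geq 1$ so that $(k+x)^2=(1+j-x)^2$; then $(1+j-x)^2-(1-x)^2=j(j+2-2x)\geq j$ on $x\in[0,\tfrac12]$, $j\geq 1$, giving the analogous bound anchored at $e^{-a(1-x)^2}$. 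Adding both tails and using the appendix's standing choice $\epsilon>2\sum_{n\geq 1}e^{-an}$ produces a factor $1+\tfrac{\epsilon}{2}<1+\epsilon$.

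To finish, the identity $e^{-ax^2}+e^{-a(1-x)^2}=e^{-ax^2}(1+e^{-a(1-2x)})$ puts the bound in the stated form, and the final inequality $\leq 2(1+\epsilon)e^{-ax^2}$ is just $e^{-a(1-2x)}\leq 1$ for $x\in[0,\tfrac12]$. No step is conceptually subtle; the only things to watch are the reindexing on the $k\leq -2$ tail and the quadratic inequality $(1+j-x)^2-(1-x)^2\geq j$, which holds precisely because $x\leq \tfrac12$.
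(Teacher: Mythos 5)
Your proof is correct and follows essentially the same route as the paper's: isolate the dominant $k=0$ and $k=-1$ summands, bound the remaining tails geometrically via $(k+x)^2-x^2\ge k$ and the standing choice $\epsilon>2\sum_{n\ge 1}e^{-an}$, and pair $\pm n$ for the second upper bound. The only (harmless) difference is bookkeeping: you isolate the two lead terms explicitly and obtain a factor $1+\epsilon/2$, while the paper bounds two full one-sided sums by $\sum_{n\ge 0}e^{-an}<1+\epsilon$ directly.
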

\begin{proof}
Recall $\theta(\frac{\pi}{a};x)=\sum_{n\in \mathbb{Z}}e^{-a(n+x)^2}$.
The lower bounds follow from simply truncating the series. To obtain the first upper bound, observe
\begin{align}
\sum_{n\in \mathbb{Z}}e^{-a(n+x)^2}
&= e^{-ax^2}\sum_{n\geq 0}e^{-a(n^2+2 n x)}+e^{-a(x-1)^2}\sum_{n\geq 0}e^{-a(n^2+2(1-x)n)}\\
&\leq (e^{-ax^2}+e^{-a(1-x)^2})\sum_{n\geq 0}e^{-a n}
< (1+\epsilon)(e^{-ax^2}+e^{-a(1-x)^2}).
\end{align}
The second upper bound follows in a similar fashion using the fact that for $n\geq 1$, the $n$th term is at least as large as the $-(n+1)$th term. 
\end{proof}

In the remainder of  this section we shall use the dependent variables as in \eqref{tuDef}:
\begin{equation}\label{tux1x2}
x=x_1=\frac{\arccos(t_1)}{2\pi}, \hspace{.5 cm} u=x_2/\sqrt{3}=\frac{\arccos(t_2)}{2\pi}.
\end{equation}

Lemma~\ref{basicvalues} implies that for $t_2\in [ -\frac12,1]$, we have:
\begin{equation}
\begin{split}
\label{basicvaluesapplication}
e^{-3a u^2}&< \tft(t_2)< (1+\epsilon)e^{-3a u^2}\\
1&< \tfo(1),\tft(1)< 1+\epsilon\\
2e^{-a/4}&< \tfo(-1)< 2(1+\epsilon)e^{-a/4}\\
2e^{-3a/4}&< \tft(-1)< 2(1+\epsilon)e^{-3a/4}.
\end{split}
\end{equation}
These particular bounds follow immediately  except for the first, where we use that if $t_2 \in [-1/2,1]$, then $u\in[0,1/3]$ and so 
\[
2\sum_{n\geq 1}e^{-3a (n^2-2nx)}\leq 2\sum_{n\geq 1}e^{-3a(n-2n(1/3))}=2\sum_{n\geq 1}e^{-a}< \epsilon.
\]

\begin{lemma}
\label{basicderivs}
For  $x=\frac{\arccos(t_1)}{2\pi}\in (0,1/2)$, 
\begin{align}
\frac{a e^{-a x^2}(x -(1-x)e^{-a(1-2x)})}{\pi \sin(2\pi x)}
\leq
\tth'(\frac{\pi}{a};t_1)
\leq
\frac{a x e^{-a x^2}}{\pi \sin(2\pi x)}.
\end{align}
\end{lemma}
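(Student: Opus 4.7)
The plan is to differentiate the Poisson-summation representation termwise and then reduce the lemma to two clean termwise comparisons of positive sums. Using the identity $\tth(\pi/a;t)=\theta(\pi/a;x)$ with $x=\arccos(t)/(2\pi)$, the chain rule $dx/dt=-1/(2\pi\sin 2\pi x)$, and termwise differentiation of the rapidly-convergent Poisson formula used throughout this appendix, one obtains
\[
\tth'\!\left(\tfrac{\pi}{a};t_1\right)=\frac{a}{\pi\sin(2\pi x)}\,S,\qquad S:=\sum_{k\in\mathbb{Z}}(k+x)\,e^{-a(k+x)^2}.
\]
Since $e^{-ax^2}e^{-a(1-2x)}=e^{-a(1-x)^2}$, the lemma is equivalent to the sandwich
$x e^{-ax^2}-(1-x)e^{-a(1-x)^2}\le S\le x e^{-ax^2}$.

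Next I would split the bi-infinite sum according to $k\ge 0$ and $k\le -1$ and reindex the negative block by $k\mapsto -j-1$, giving
\[
S=\sum_{k\ge 0}(k+x)e^{-a(k+x)^2}-\sum_{j\ge 0}(j+1-x)e^{-a(j+1-x)^2}.
\]
Peeling off the $k=0$ and $j=0$ head terms $xe^{-ax^2}$ and $(1-x)e^{-a(1-x)^2}$ converts the problem into two inequalities between positive tails. Writing
$R_+:=\sum_{k\ge 1}(k+x)e^{-a(k+x)^2}$ and $R_-:=\sum_{k\ge 1}(k+1-x)e^{-a(k+1-x)^2}$,
the lower bound amounts to $R_+\ge R_-$ and the upper bound to $R_+\le (1-x)e^{-a(1-x)^2}+R_-$.

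The only analytic input is the elementary observation that $g(y):=ye^{-ay^2}$ satisfies $g'(y)=(1-2ay^2)e^{-ay^2}$, hence $g$ is strictly decreasing on $[1/\sqrt{2a},\infty)$. For $a\ge 9.6$ one has $1/\sqrt{2a}\le 1/\sqrt{19.2}<1/2$, so for every integer $k\ge 1$ and every $x\in(0,1/2)$ the values $k-x$, $k+x$ and $k+1-x$ all lie in the decreasing region of $g$. The lower bound then follows by the termwise pairing of the $k$-th summands of $R_+$ and $R_-$: since $k+x<k+1-x$, monotonicity gives $g(k+x)>g(k+1-x)$. For the upper bound I instead pair the $k$-th term of $R_+$ with the $(k{-}1)$-st term of the extended sum $\sum_{j\ge 0}(j+1-x)e^{-a(j+1-x)^2}$; since $k-x<k+x$ (and $k-x\ge 1-x>1/\sqrt{2a}$), monotonicity gives $g(k+x)<g(k-x)$, and summing yields $R_+\le (1-x)e^{-a(1-x)^2}+R_-$.

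The one subtle point is the bookkeeping of the two different pairings (a direct pairing for the lower bound, a shifted pairing with the $j=0$ head term absorbing the boundary defect for the upper bound); once this is set up, everything reduces to the decreasing-ness of $g$ on $[1/\sqrt{2a},\infty)$, which is immediate from $a\ge 9.6$. No tail estimates or special-function identities are required.
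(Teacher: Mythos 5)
Your proof is correct and follows essentially the same route as the paper: differentiate the Poisson representation, collect terms into $s_n=(n+x)e^{-a(n+x)^2}$, and use the termwise pairings $s_n+s_{-n-1}\ge 0$ (your $g(k+x)\ge g(k+1-x)$) for the lower bound and $s_n+s_{-n}\le 0$ (your $g(k+x)\le g(k-x)$) for the upper bound, both resting on the eventual monotone decrease of $g(y)=ye^{-ay^2}$. The paper states these termwise inequalities without elaboration; you supply the short justification via $g'$, which is the only thing implicitly needed, so the two proofs are in substance identical.
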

\begin{proof}
Using $t_1=\cos(2\pi x)$, we have 
\[
\tth'(\frac{\pi}{a};t_1)=\frac{\sum_{n\in \mathbb{Z}}-2a(n+x)e^{-a(n+x)^2} }{-2\pi \sin(2\pi x)}=\frac{a\sum_{n\in \mathbb{Z}}(n+x)e^{-a(n+x)^2} }{\pi \sin(2\pi x)}.
\]
Let $s_n=(n+x)e^{-a(n+x)^2}$.
Now to obtain the lower bound, we verify that for $n\geq 1$, $s_n+s_{-n-1}\geq 0$. Thus, $s_0+s_{-1}$ yields a lower bound. Similarly, we check $s_n+s_{-n}\leq 0$, so $s_{0}$ yields an upper bound. It will be independently useful that $s_0\geq s_{-1}$ for $\frac14\leq x\leq \frac12$. Indeed, in this case, taking $v=\frac12 -x$,
\[
ae^{-ax^2}(x-(1-x)e^{-a(1-2x)}=
ae^{-ax^2}(1/2-v-(1/2+v)e^{-2av})
\]
and 
$(1/2-v-(1/2+v)e^{-2av})$ is concave in $v$ for all 
$a\geq 9.6$, $v\in[0,\frac14]$, and so it suffices to check the inequality for $v=0,\frac14$, which are both immediate. 
\end{proof}

As a consequence of Lemma \ref{basicderivs}, we obtain
for $a\geq 9.6$ and $t_2\in [-1/2,1/2]$ that  
\begin{equation}
\label{thetaderivin[-1/2,1/2]}
\tft'(t_2)>
\frac{3au e^{-3a u^2}(1-\epsilon)}{\pi \sin(2\pi u)}.
\end{equation}
Indeed, for such $t_2$, $u\in [1/6,1/3]$, so $u-1\geq -5u$ and $1-2u\geq \frac13$ which gives
\[
\tth'(\frac{\pi}{3a},t_2)\geq \frac{3a e^{-a u^2}(u-(1-u)e^{-3a(1-2u)})}{\pi \sin(2\pi u)}\geq  \frac{3au e^{-a u^2}(1-5e^{-a})}{\pi \sin(2\pi u)}> \frac{3au e^{-a u^2}(1-\epsilon)}{\pi \sin(2\pi u)}. 
\]

In particular, 
$\tft'(\frac12)> \frac{(1-\epsilon)a e^{-a/12}}{\sqrt{3 \pi}}$, and $\tft'( -\frac12)> \frac{2(1-\epsilon)a e^{-a/3}}{\sqrt{3 \pi}}$.

We also need to obtain bounds on $\tth'(\frac{\pi}{a};\pm1)$. For $a\geq 9.6$, 
\begin{equation}
\label{thetaderivpm1}
\begin{split}
 \frac{a}{2\pi^2}(a-2)e^{-a/4}
 &< \tth'(\frac{\pi}{a};-1)
 <
 \frac{a}{2\pi^2}(a-2+\epsilon)e^{-a/4}\\
\frac{(1-\epsilon_2)a}{2\pi^2}
 &< \tth'(\frac{\pi}{a};1)
 <
 \frac{a}{2\pi^2}.
\end{split}
\end{equation}
For $a\geq 21$,
\begin{equation}
\label{thetaderivpm1>21}
 \frac{(1-\epsilon)a}{2\pi^2}
 < \tth'(\frac{\pi}{a};1)
 <
 \frac{a}{2\pi^2}.
\end{equation}

  We first have 
 \[
 \tth'(\pi/a,-1)=\frac{a}{2\pi^2}\sum_{n\in \mathbb{Z}}\left[ 2a(n+1/2)^2-1\right]e^{-a(n+1/2)^2}.
 \]
 We get an easy lower bound by just taking the $n=0,-1$ terms.
 For an upper bound, we bound the tail:
 \begin{align}
2\sum_{n\geq1}\left[ 2a(n+1/2)^2-1\right]e^{-a(n+1/2)^2}
&\leq e^{-a/4}4\sum_{n\geq1}\left[ a(2n)^2\right]e^{-a(n^2+n)}\\
&\leq e^{-a/4}16\sum_{n\geq 1}an^2e^{-an^2}e^{-an}
\leq e^{-a/4}16/1000\sum_{n\geq 1}e^{-an}\leq \epsilon e^{-a/4},
\end{align}
where we have used that $an^2e^{-an^2}\leq \frac{1}{1000}$ since
$be^{-b}\leq \frac{1}{1000}$ for $b\geq 9.6$.
Thus, we obtain the bounds 
\[
 \frac{a}{2\pi^2}(a-2)e^{-a/4}
 \leq \tth'(\pi/a,-1)
 \leq
 \frac{a}{2\pi^2}(a-2+\epsilon)e^{-a/4}.
\]

Next, 
\[
 \tth'(\pi/a,1)=\frac{a}{2\pi^2}\sum_{n\in \mathbb{Z}}\left[ 1-2an^2\right]e^{-an^2}.
 \]
 By just using the $n=0$ term, we get an easy upper bound. Now bounding the tail, we have

 \begin{align}
      \left\lvert 2 \sum_{n\geq 1}\left[ 1-2an^2\right]e^{-an^2}\right\rvert
      &\leq
      4 \sum_{n\geq 1}an^2 e^{-an^2}
      \leq 4\sum_{n\geq 1} e^{-2an^2/3}
      \leq 4\sum_{n\geq 1} e^{-2an/3}
      \leq 4\sum_{n\geq 1} e^{-2(9.6)n/3}
      &< \epsilon_2
 \end{align}
 since $be^{-b}\leq e^{-2b/3}$ for $b\geq 9.6$  
Thus, we have 
\begin{align}
\frac{(1-\epsilon_2)a}{2\pi^2}
 \leq &\tth'(\frac{\pi}{a};1)
 \leq
 \frac{a}{2\pi^2}
\end{align}
and when $a\geq 21$, we obtain that the tail is at most $4\sum_{n\geq 1} e^{-14n}<\epsilon$ in the same manner, and so in this case, 
\[
\frac{(1-\epsilon)a}{2\pi^2}
 \leq \tth'(\frac{\pi}{a};1)
 \leq
 \frac{a}{2\pi^2},
\]
as desired.

We finally need bounds on $\tth''(\frac{\pi}{3a},\pm \frac{1]}{2})$.  First, we have
\[
\tth''(\frac{\pi}{3a},t_2)=\frac{3a}{\pi^2 \sin(2\pi u)^2}\sum_{n\in \mathbb{Z}}e^{-3a(n+u)^2} \left( -\frac12 + \pi\cot(2\pi u)(n+u)+3a(n+u)^2  \right)
\]
so that
\small
\begin{align*}
\tth''(\frac{\pi}{3a},\frac12)&=\frac{4a}{\pi^2 }\sum_{n\in \mathbb{Z}} 
e^{-3a(n+\frac16)^2} 
\left( -\frac12 + \frac{\pi}{\sqrt{3}}(n+\frac16)+3a(n+\frac16)^2  \right)\\
&\geq    \frac{4a}{\pi^2 }e^{-a/12} \left( -\frac12 + \frac{\pi}{6\sqrt{3}}+a/12  \right)\\
\tth''(\frac{\pi}{3a},-\frac12)&=\frac{4a}{\pi^2 }\sum_{n\in \mathbb{Z}} 
e^{-3a(n+\frac13)^2} 
\left( -\frac12 - \frac{\pi}{\sqrt{3}}(n+\frac13)+3a(n+\frac13)^2  \right)\\
&\leq 
\frac{4a}{\pi^2 }e^{-a/3} \left( -\frac12 - \frac{\pi}{3\sqrt{3}}+a/3  \right) +
\frac{4a}{\pi^2 }
e^{-a/3}\sum_{n\neq 0}
e^{-3a(n^2+2n)} \left(
\left| -\frac12\right|  +\left|\frac{\pi}{\sqrt{3}}(n+\frac13)\right|+\left|3a(n+\frac13)^2\right|  \right)\\
&\leq 
\frac{4a}{\pi^2 }e^{-a/3} \left( -\frac12 - \frac{\pi}{3\sqrt{3}}+a/3  \right) +
\frac{4a}{\pi^2 }
e^{-a/3}16\sum_{n\geq 1}
e^{-an^2} an^2\\
&\leq 
\frac{4a}{\pi^2 }e^{-a/3} \left( -\frac12 - \frac{\pi}{3\sqrt{3}}+a/3  \right) +
\frac{4a}{\pi^2 }
e^{-a/3}16\sum_{n\geq 1}
e^{\frac{-an^2}{2}}\\
&\leq 
\frac{4a}{\pi^2 }e^{-a/3} \left( -\frac12 - \frac{\pi}{3\sqrt{3}}+a/3  \right) +
\frac{4a}{\pi^2 }
e^{-a/3}
=\frac{4a}{\pi^2 }e^{-a/3} \left( \frac12 - \frac{\pi}{3\sqrt{3}}+a/3  \right),
\end{align*}
\normalsize
where for the first inequality we have just thrown away all the terms except for which $n=0$ (these are certainly all positive), and for the second string of inequalities, we have used that $be^{-b}\leq e^{-b/2}$ for $b\geq 9.6$ and then used a comparison with a geometric series to obtain $16\sum_{n\geq 1}
e^{\frac{-an^2}{2}}\leq 16\sum_{n\geq 1}
e^{\frac{-an}{2}}\leq 1$.

This leads to the bounds for $\tft''$:
\begin{equation}
\label{theta2ndderiv}
\begin{split}
\tft''(\frac12)
&\geq    \frac{4a}{\pi^2 }e^{-a/12} \left( -\frac12 + \frac{\pi}{6\sqrt{3}}+a/12  \right)
\\
\tft''(-\frac12) 
&\leq 
\frac{4a}{\pi^2 }e^{-a/3} \left( \frac12 - \frac{\pi}{3\sqrt{3}}+a/3  \right). 
\end{split}
\end{equation}

As in the case of $a\leq 9.6$, we will use $\tfou$ and $\tfol$ to denote the bounds for $\tfo$ produced in this section and likewise for $\tft$. 
\subsection{Intermediate $a$ and 4 points}

\subsubsection{Calculations for Lemma \ref{3rdorderpartial}}
First, we show Lemma \ref{3rdorderpartial} for $a\geq 9.6$
\begin{proof}
We need to show 
\[
\tth'(\frac{\pi}{a};-1)\tth''(\frac{\pi}{3a},\frac12)-\tth'(\frac{\pi}{a};1)\tth''(\frac{\pi}{3a},-\frac12)> 0.
\]
\end{proof}
Using the bounds of lemmas \ref{thetaderivpm1} and \ref{theta2ndderiv}, we have:
\begin{align}
    \tth'(\frac{\pi}{a};-1)\tth''(\frac{\pi}{3a},\frac12)-\tth'(\frac{\pi}{a};1)\tth''(\frac{\pi}{3a},-\frac12)\geq
    \\
    \frac{a(a-2)e^{-a/4}}{2\pi^2}
      \frac{4a}{\pi^2 }e^{-a/12} \left( -\frac12 + \frac{\pi}{6\sqrt{3}}+a/12  \right)
    -
    \frac{a}{2\pi^2}
    \frac{4a}{\pi^2 }e^{-a/3} \left( \frac12 - \frac{\pi}{3\sqrt{3}}+a/3  \right)\\
    =\frac{a^2 e^{-a/3} (18 + 3 a^2 + 2 a (-18 + \sqrt{3} \pi))}{18\pi^4}.
\end{align}
The inner expression is quadratic in $a$. It is straightforward to check that it's positive with positive slope at $a=9.6$ and convex.

\subsubsection{Proof of Lemma \ref{inequalitystring}}

Next, we'll prove Lemma \ref{inequalitystring} for $9.6<a\leq 21$.
\begin{proof}
To obtain
\[ 
2\frac{\partial \tf}{\partial t_1 }(-1,1/2) 
< \frac{\partial^2 (\tf-\tg)}{\partial t_1 \partial t_2}(-1,1/2),
\]
we first use the bounds from inequalities \eqref{thetaderivin[-1/2,1/2]}, \eqref{thetaderivpm1} and Lemma \ref{basicvalues} to arrive at the following two inequalities:
\begin{align}
    2\frac{\partial \tf}{\partial t_1 }(-1,1/2) &\leq 2\left[
    \frac{a}{2\pi^2}(a-2+\epsilon)e^{-a/4}(1+\epsilon)e^{-a/12}-(1-\epsilon_2) \frac{a}{2\pi^2}e^{-a/3}  \right]\\
    &=\frac{e^{-a/3}}{\pi^2}\left[ 
    (a-2+\epsilon)(1+\epsilon)-(1-\epsilon_2)
    \right],\\
    \frac{\partial^2 (\tf-\tg)}{\partial t_1 \partial t_2}(-1,1/2)&\geq 
     \frac{a^2 e^{-a/3}(1-\epsilon)(a-2\epsilon_2)}{2\pi^3 \sqrt{3}}.
\end{align}
Factoring out $\frac{ae^{-a/3}}{\pi^2}$ from each term, it suffices to show
\[
\frac{a(1-\epsilon)(a-2\epsilon_2)}{\sqrt{3}\pi}-\left( 2(a-2+\epsilon)-(1-\epsilon_2)\right)> 0
\]
and this is certainly true by just checking the  value and first derivative of this difference at $a=9.6$ are positive since it is quadratic in $a$ and convex. \\

Next we handle
\[
4(\tf(-1,1)-\tf(-1,1/2))
<
2\frac{\partial \tf}{\partial t_1 }(-1,1/2).
\]
We have from Lemma \ref{basicvalues} and inequality \eqref{thetaderivpm1} that
\begin{align}
    4(\tf(-1,1)-\tf(-1,1/2))& \leq 4(2(1+\epsilon)^2(1+e^{-a/2})e^{-a/4}-3e^{-a/3})\\
    2\frac{\partial \tf}{\partial t_1 }(-1,1/2)&\geq 2\left( \frac{a(a-2)e^{-a/4}}{2\pi^2}e^{-a/12}-\frac{a}{2\pi^2}(1+\epsilon)e^{-a/3}\right)\\
    &=\frac{a e^{-a/3}(a-3-\epsilon)}{\pi^2}.
\end{align}
Factoring out  $e^{-a/4}$, it suffices to show 
\[
 e^{-a/12}\left( 12+ \frac{a(a-3-\epsilon)}{\pi^2}\right)-8(1+\epsilon)^2(1+e^{-a/2})\geq 0
\]
which holds if 
 
\[
 e^{-a/12}\left( 12+ \frac{a(a-3-\epsilon)}{\pi^2}\right)-8(1+\epsilon)^2(1+e^{-9.6/2})> 0.
\]
We can check that on $[9.6,21]$, this final quantity is either increasing or concave, implying it doesn't have local minima, so it suffices to check the inequality at the endpoints $a=9.6,21$. \\

Finally, we need to show $\frac{\partial \tf}{\partial t_2}(-1,1)
<
2\frac{\partial \tf}{\partial t_1 }(-1,1/2)$. We already have a lower bound for
$2\frac{\partial \tf}{\partial t_1 }(-1,1/2)$, and from \eqref{thetaderivpm1} and Lemma \ref{basicvalues}, we compute
\begin{align}
\frac{\partial \tf}{\partial t_2}(-1,1) \leq \frac{3ae^{-a/4}}{2\pi^2}
     \left( 
  (2+2\epsilon)-(3a-2)e^{-a/2}\right).
\end{align}
Factoring $\frac{a e^{-a/4}}{2\pi^2}$, it suffices to show 
\[   
2(a-3-\epsilon)e^{-a/12}- 3(2+2\epsilon)+3(3a-2)e^{-a/2}> 0,
\]
and it's 
 straightforward to check $2(a-3-\epsilon)e^{-a/12}$ is concave for $a\leq 21$. Since also for
$a\in[9.6,21]$ and any constant $b$ satisfying $b\geq a$, we have
\[
3(3a-2)e^{-b/2}\leq 
3(3a-2)e^{-a/2},
\]
it then suffices to check 
\begin{align*}
    2(a-3-\epsilon)e^{-a/12}  -3(2+2\epsilon)+3(3a-2)e^{-21/2}&> 0 \hspace{1cm} a=21,11\\
    2(a-3-\epsilon)e^{-a/12}  -3(2+2\epsilon)+3(3a-2)e^{-11/2}&> 0 \hspace{1cm} a=9.6,
\end{align*}
which completes the proof.
\end{proof}

\subsection{Large $a$ and 4 points}\label{largea4Pt}
Now we assume $a\geq 21$, and first present the proof of Lemma \ref{4ptboundarysections}.
\begin{proof}
Using the bounds on $\tth$ given in \eqref{thetaderivpm1} and Lemma \ref{basicvalues}, we obtain
 \begin{align}
     0\leq \frac{ae^{-a/4}}{2\pi^2} \left(
     (a-2)-(2+2\epsilon)e^{-a/2}
     \right)
     &\leq \frac{\partial \tf}{\partial t_1}(-1,1)
     \leq 
     \frac{ae^{-a/4}}{2\pi^2}(a-2+\epsilon)(1+\epsilon)
     \\
     \label{b1bounds}
    0\leq \frac{3(2-4\epsilon)ae^{-a/4}}{2\pi^2}
     &\leq \frac{\partial \tf}{\partial t_2}(-1,1)
     \leq 
     \frac{3ae^{-a/4}}{\pi^2}
  (1+\epsilon).
 \end{align}
 Thus,
 \begin{align*}
    \frac{\partial \tf}{\partial t_1}(-1,1)-\frac{\partial \tf}{\partial t_2}(-1,1) &\geq
    \frac{ae^{-a/4}}{2\pi^2}
    \left(
    a-2- 3(2+2\epsilon)-e^{-a/2}(2+2\epsilon)\right)\\
    &=\frac{ae^{-a/4}}{2\pi^2}\left( a-8-6\epsilon- e^{-a/2}(2+2\epsilon)\right)
 \end{align*}
 which is easily seen to be positive for $a\geq 21$.

Next, using  Lemma \ref{basicvalues} and equation \eqref{thetaderivpm1}, 
\begin{equation}
\begin{split}
(\tf-\tg)(-1,1/2)&=\tf(-1,1/2)-\tf(-1,1)+\frac{b_1}{4}\\
&\geq 3e^{-a/3}- 2(1+\epsilon^2)e^{-a/4}-2(1+\epsilon)^2 e^{-3a/4}             +  \frac{3(2-4\epsilon)ae^{-a/4}}{2\pi^2}\\
&=e^{-a/4}\left[ 3e^{-a/12}-2(1+\epsilon)^2(1+e^{-a/2}) +
  \frac{3(2-4\epsilon)a}{2\pi^2}
\right]\\
&\geq e^{-a/4}\left[ 3e^{-a/12}-2(1+\epsilon)^3 +  \frac{3(2-4\epsilon)a}{2\pi^2}
\right]>0.
\end{split}
\end{equation}
The quantity in the brackets is convex in $a$, so it suffices to verify the positivity  of the value and derivative of this quantity at $a=21$ which are straightforward computations.  
\end{proof}

Now we present the remaining components of the proof of Lemma \ref{4ptlinearization}.

\begin{proof}
Recall we have 
\[
\tf_T:=(e^{-ax^2}+e^{-a(x-1)^2})e^{-3a u^2}+ e^{-a((\frac12-x)^2+3(\frac12-u)^2)}
\]
and note that for $c<t_1<0$, $0<t_2<d$, with $t_2+c<0$, we have the following upper bounds for $\tg$
\begin{align}
\tg(t_1,t_2)\leq \tg_{c,d}(t_1,t_2)&:= \nonumber\tf(-1,1) + b_1 t_2^2 +b_1(d t_1+c t_2-cd)\\
&\leq \tfou(-1)\tftu(1)+\tfou(1)\tftu(-1)+b_1^l t_2(t_2+c)+b_1^l(d t_1)-cd b_1^u\nonumber 
\\
&\leq 2(1+\epsilon)^3 e^{-a/4}+b_1^l t_2(t_2+c)+b_1^l(d t_1)-c d b_1^u=:\tg_{c,d}^*(t_1,t_2) \label{gfinal}
\end{align}
where
\[
\tfou(-1)\tftu(1)+\tfou(1)\tftu(-1)= 2e^{-a/4}(1+\epsilon)^2(1+e^{-a/2})\leq 2e^{-a/4}(1+\epsilon)^3,
\]
and $b_1^l$ and $b_1^u$ are the bounds on  
$\frac{\partial \tf}{\partial t_2}(-1,1)$ given in \eqref{b1bounds}.

We then show that inequalities \eqref{partialaCond} hold with the choices:
\begin{align}
\tg_{-1,1}^*  \text{ on the segments } \{(\cos(2\pi \sqrt{3}/4, t_2): t_2 \in [.7,1]\} \text{ and } \{(t_1,.7): t_1 \in (-1, \cos(2\pi \sqrt{3}/4)\} \label{examplepartialacond}\\
\tg_{-1,.7}^* \text{ on the segments } \{(\cos(2\pi \sqrt{3}/4, t_2): t_2 \in [.6,.7]\} \text{ and } \{(t_1,.6): t_1 \in (-1, \cos(2\pi \sqrt{3}/4)\} \\
\tg_{-1,.6}^* \text{ on the segments } \{(\cos(2\pi \sqrt{3}/4, t_2): t_2 \in [.5,.6]\} \text{ and } \{(t_1,.5): t_1 \in (-1, \cos(2\pi \sqrt{3}/4)\},
\end{align}
thus permitting the application of Lemma \ref{linearizationlemma}. 
Here, we'll handle the case of \eqref{examplepartialacond}, and leave the other (similar) cases to the Mathematica notebook \cite{Mathematica}. 
By definition, we have 
\[
\tg_{-1,1}^*(t_1,t_2) = 2(1+\epsilon)^3 e^{-a/4}+b_1^l t_2(t_2-1)+b_1^l t_1 + b_1^u.
\]
Similarly, we have 
\[
\frac{ \partial [e^{a/4} \tg_{-1,1}^*(t_1,t_2)]}{\partial a} =\frac{3(2-4\epsilon)}{2\pi^2} t_2(t_2-1)+\frac{3(2-4\epsilon)}{2\pi^2} t_1 + \frac{3}{\pi^2}  (1+\epsilon),
\]
and since $t_2\geq 1/2$ on $\td$, it is immediate from the formulas that $ \tg_{-1,1}^*$ and $\displaystyle{\frac{\partial \left[e^{a/4} \tg_{c,d}^*(t_1,t_2)\right]}{\partial a}}$ are increasing in $t_1$ and $t_2$ on the two line segments. 
\end{proof}

Likewise, we can decompose $\tf_T$ into
\begin{align*}
\tf_t(t_1,t_2)&= (e^{-ax^2}+e^{-a(x-1)^2})e^{-3a u^2}+ e^{-a[(\frac12-x)^2 -3(\frac12-u)^2]}\\
&=e^{-a(x^2+3u^2)}+  e^{-a\left[(\frac12-x)^2+3(\frac12-u)^2 \right]} + e^{-a\left[(x-1)^2+3u^2\right]}.
\end{align*}
Since $u$ decreases as $t_2$ increases 
on $[-1,1]$ with $u(1)=0$, $u(-1)=1/2$, 
we have 
 $(e^{-ax^2}+e^{-a(x-1)^2})e^{-3a u^2}$ increasing in $t_2$ 
 and  
 $e^{-a(\frac12-x)^2} e^{-3a(\frac12-u)^2}$ decreasing in $t_2$. 
By the same reasoning, 
$e^{-ax^2}e^{-3a u^2}$ increases in $t_1$, while $e^{-a(\frac12-x)^2} e^{-3a(\frac12-u)^2}$
and $e^{-a(x-1)^2}e^{-3a u^2}$ are decreasing in $t_1$. 
Finally, 
\begin{align*}
\frac{ \partial [e^{a/4} \tf_T(t_1,t_2)]}{\partial a}\bigg|_{a=21}&=
-(x^2+3u^2-1/4)e^{-21(x^2+3u^2-1/4)}
\\ 
&\hspace{.5cm}-\left[(\frac12-x)^2+3(\frac12-u)^2-1/4 \right] e^{-21\left[(\frac12-x)^2+3(\frac12-u)^2-1/4 \right]} 
\\
&\hspace{.5cm} -\left[(x-1)^2+3u^2-1/4\right] e^{-21\left[(x-1)^2+3u^2-1/4\right]}.
\end{align*}
To break this function into a difference of increasing functions, we need the following elementary lemma, which can be proved by just checking the derivative:
\begin{lemma}
\label{partialatechnical}
Let $n_1,n_2$ be constants, and consider the function 
$
\phi(\gamma):=(n_1+\gamma) e^{-21(n_2+\gamma)}
$
for $\gamma\in \R$. Then
$\phi$ is increasing as a function of $\gamma$ for $\gamma \leq (1-21 n_1)/21$.
\end{lemma}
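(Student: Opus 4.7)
The plan is to verify the monotonicity claim by a direct one-line differentiation, exploiting the fact that $\phi$ is the product of the affine factor $n_1+\gamma$ and the strictly positive exponential $e^{-21(n_2+\gamma)}$. Since one factor of $\phi'$ will be forced to be positive, the sign of $\phi'$ reduces to the sign of a linear polynomial in $\gamma$, and reading off the root of that linear polynomial will give the stated cutoff $(1-21n_1)/21$.

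Concretely, I would apply the product rule to obtain
\[
\phi'(\gamma) = e^{-21(n_2+\gamma)} - 21(n_1+\gamma)\,e^{-21(n_2+\gamma)} = e^{-21(n_2+\gamma)}\bigl[\,1-21(n_1+\gamma)\,\bigr].
\]
Because $e^{-21(n_2+\gamma)}>0$ for every $\gamma\in\mathbb{R}$, the inequality $\phi'(\gamma)\ge 0$ is equivalent to $1-21(n_1+\gamma)\ge 0$, i.e.\ $\gamma \le (1-21n_1)/21$. This yields the claimed range of monotonicity.

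There is no real obstacle here; the lemma is routine calculus and is included only as a convenient packaging device. Its purpose downstream is to let one write each of the three summands of $\partial_a\bigl[e^{a/4}\tf_T(t_1,t_2)\bigr]\big|_{a=21}$, after the substitutions \eqref{tux1x2}, in the form $\phi(\gamma)$ with appropriate constants $n_1,n_2$ and a parameter $\gamma$ built from $x$ and $u$, so that each summand can then be asserted to be monotone on the relevant slice of the boundary of the rectangle under consideration. Thus the only thing to be careful about in the application is to check that the $\gamma$-range arising from the line segment in question lies inside $\{\gamma\le(1-21n_1)/21\}$; the proof of the lemma itself is complete after the derivative computation above.
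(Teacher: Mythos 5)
Your proof is correct and matches the paper's own (one-line) justification, which simply says the lemma "can be proved by just checking the derivative." The product-rule computation, factoring out the positive exponential, and reading off the root of the linear factor is exactly the intended argument.
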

Now take 
$(x^2+3u^2-1/4-1/28)e^{-21(x^2+3u^2-1/4)}$ as a function of $3u^2$. Using the fact that $t_1<0$ and $t_2 \geq 1/2$ on the whole rectangle $A$ (so $1/4<x\leq 1/2$ and $0\leq 3u^2\leq 1/12$), we obtain 
\[
3u^2\leq 1/12=(1-21(1/4-1/4-1/28))/21\leq (1-21(x^2-1/4-1/28))/21,
\]
and so we may apply Lemma \ref{partialatechnical} to observe $(x^2+3u^2-1/4-1/28)e^{-21(x^2+3u^2-1/4)}$ is increasing as a function of $3u^2$. Since $3u^2$ is decreasing as a function of $t_2$, we finally apply the chain rule to see
$-(x^2+3u^2-1/4-1/28)e^{-21(x^2+3u^2-1/4)}$ is increasing as a function of $t_2$ on all of $A$. 
In the same way, we can check that it is increasing as a function of $t_1$, along with the following, analogous claims: 
\begin{itemize}
\item The quantity $-\left[(\frac12-x)^2+3(\frac12-u)^2-1/4 -4/7\right] e^{-21\left[(\frac12-x)^2+3(\frac12-u)^2-1/4 \right]}$ is decreasing in $t_1$ and $t_2$.
\item The quantity $-\left[(x-1)^2+3u^2-1/4-2/5\right] e^{-21\left[(x-1)^2+3u^2-1/4\right]}$
 is decreasing in $t_1$ and increasing in $t_2$.
\end{itemize}

In summary, we can take the decomposition
\begin{align*}
\frac{ \partial [e^{a/4} \tf_T(t_1,t_2)]}{\partial a}\bigg|_{a=21}&=
-(x^2+3u^2-1/4-1/28)e^{-21(x^2+3u^2-1/4)}-\frac{1}{28}e^{-21(x^2+3u^2-1/4)}
\\ 
&\hspace{.5cm}-\left[(\frac12-x)^2+3(\frac12-u)^2-1/4-4/7 \right] e^{-21\left[(\frac12-x)^2+3(\frac12-u)^2-1/4 \right]}\\
&\hspace{.5cm}-\frac47 e^{-21\left[(\frac12-x)^2+3(\frac12-u)^2-1/4 \right]} 
\\
&\hspace{.5cm} -\left[(x-1)^2+3u^2-1/4-2/5\right] e^{-21\left[(x-1)^2+3u^2-1/4\right]}-\frac25e^{-21\left[(x-1)^2+3u^2-1/4\right]}.
\end{align*}
where each term is either increasing or decreasing in $t_1$ and $t_2$ on each of our line segments. \\

Finally, we present the proof of Lemma \ref{largea4ptt1growth}.
\begin{proof}
    Recall it remains to show that at $P=( \cos(2\pi \frac{\sqrt{3}}{4}), \cos(2\pi \frac{\sqrt{3}}{12})):=(t_1',t_2')$,
\[
\frac{\partial (\tf-\tg)}{\partial t_1}\bigg|_{P}\geq 0, \hspace{1 cm} \frac{\partial^2 \tf-\tg}{\partial t_1 \partial t_2}\bigg|_{P}\geq 0.
\]

For the first inequality, we first compute using Lemmas \ref{basicderivs} and \ref{basicvalues}, $x=\frac{\sqrt{3}}{4}$, and $u=\frac{\sqrt{3}}{12}$ that 
\begin{align}
\frac{\partial \tf}{\partial t_1}\bigg|_{P}&\geq \frac{a}{\pi \sin(2\pi x)}\left( e^{-ax^2}(x-(1-x)e^{-a(1-2x)}) e^{-3a u^2} -(\frac12-x)e^{-a(\frac12-x)^2}2(1+\epsilon)e^{-3a(1/2-u)^2} \right)\\
&= \frac{ae^{-a/4}}{\pi \sin(2\pi x)}\left((x-(1-x)e^{-a(1-2x)}) -(\frac12-x)2(1+\epsilon)e^{-a(1-x-3u)} \right)\\
&\geq  \frac{38ae^{-a/4}}{\pi 41}
\end{align}
using the fact that $x-(1-x)e^{-a(1-2x)}\geq \frac{39}{100}$,  $(\frac12-x)2(1+\epsilon)e^{-a(1-x-3u})\leq \frac{1}{100}$, and $\sin(2\pi x)\leq \frac{41}{100}$
 for $a\geq 21$.
On the other hand, 
since $\cos(2\pi u)\leq \frac{62}{100}$,
\begin{align}
\frac{\partial \tg}{\partial t_1}\bigg|_{P}&\leq\frac{62b_1}{100} \\
&\leq \frac{a e^{-a/4} 3 (1+\epsilon)62}{100\pi^2}.
\end{align}
Thus, 
\begin{align}
\frac{\partial (\tf-\tg)}{\partial t_1}\bigg|_{P}&\geq
\frac{ae^{-a/4}}{\pi}\left( \frac{38}{41}-\frac{3(1+\epsilon)62}{100\pi}\right)>0,
\end{align}
as the final inner quantity is positive. 

It remains to show $\frac{\partial^2 \tf-\tg}{\partial t_1 \partial t_2}\bigg|_{P}> 0$ in much the same fashion. 

Using Lemma \ref{basicderivs},
\begin{align}
\frac{\partial^2 \tf}{\partial t_1 \partial t_2}\bigg|_{P}&\geq \tfo'(\cos(2\pi \sqrt{3}/4))\tft'(\cos(2\pi \sqrt{3}/12))\\
&\geq \frac{39 ae^{-a x^2}}{41 \pi }\frac{3a e^{-3 a u^2}(u-(1-u)e^{-3a(1-2u)})}{\pi \sin(2\pi u)}\\
&\geq \frac{5\times3\times39 a^2e^{-a/4}}{4\times41 \pi^2 }(u-(1-u)e^{-3a(1-2u)})\\
&\geq \frac{14\times5\times3\times39 a^2e^{-a/4}}{100\times4\times41 \pi^2 }
\end{align}
since $\sin(2\pi u)\leq \frac{4}{5}$ and $u-(1-u)e^{-3a(1-2u)}\geq \frac{14}{100}$.
Thus, 
\begin{align}
\frac{\partial^2 \tf-\tg}{\partial t_1 \partial t_2}\bigg|_{P}&\geq
\frac{14\times5\times3\times39 a^2e^{-a/4}}{100\times4\times41 \pi^2 }- \frac{a e^{-a/4} 3 (1+\epsilon)}{ \pi^2}\\
&=\frac{a e^{-a/4}}{\pi^2}\left(\frac{14\times5\times3\times39a}{100\times4\times41}-3(1+\epsilon)\right).
\end{align}
The inner quantity is increasing in $a$ and so it suffices to check its positivity at $a=21$. 
\end{proof}

\subsection{Large $a$ and 6 points}
\subsubsection{Coefficient Bounds}
Again, we take $a\geq 9.6$. Our first task is using estimates on $\theta$ to bound the coefficients of $\tg$. 
We obtain
\begin{lemma}
\label{abounds}
\begin{align}
   0\leq \frac{2(1-\epsilon)a}{\sqrt{3}\pi} \leq & e^{a/3 }a_{0,1}\leq \frac{2(1+\epsilon)a}{\sqrt{3}\pi}\\
0\leq 1/2(-1-6\epsilon) +\frac{(1-\epsilon)a}{\sqrt{3}\pi}
\leq& e^{a/3 } a_{1,0}\leq 1/2(-1+3\epsilon)+\frac{(1+\epsilon)a}{\sqrt{3}\pi}  \\
0\leq
\frac32 \leq& e^{a/3 }a_{0,0}\leq \frac{3}{2}(1+3\epsilon)\\
\frac89\left(-(1+\epsilon)+\frac{\sqrt{3}a(1-\epsilon)}{2\pi}\right)
\leq& 
e^{a/3 }a_{0,2}
\leq \frac89 \left( \frac98 \epsilon_2-(1+\epsilon) +\frac{\sqrt{3}a(1+\epsilon)}{2\pi}\right)\\
0\leq \frac32-\frac29(1+\epsilon)+\frac{\sqrt{3}a(1-\epsilon)}{9\pi}
\leq &
e^{a/3 }b_{0,0}
\leq 
\frac32(1+3\epsilon)+\frac14 \epsilon_2 -\frac29(1+\epsilon) +\frac{\sqrt{3}a(1+\epsilon)}{9\pi}
\end{align}
\end{lemma}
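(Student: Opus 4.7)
The plan is to derive each bound in Lemma~\ref{abounds} by substituting the explicit formulas for the coefficients in \eqref{omega6aForm} into the $\theta$-estimates developed earlier in this appendix. The work is purely computational once the right auxiliary quantities are bounded.

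First I would assemble a table of the needed quantities at the nodes. From \eqref{basicvaluesapplication} I obtain matching two-sided bounds on $\tfo(\pm 1)$, $\tft(-1)$, $\tft(\pm 1/2)$ (all of the shape $Ce^{-a/k}(1+O(\epsilon))$); from Lemma~\ref{basicderivs} and \eqref{thetaderivin[-1/2,1/2]}, together with the explicit leading-term evaluations, I obtain two-sided bounds
\[
\tft'(-1/2) \in \left[\frac{2(1-\epsilon)a}{\sqrt{3}\pi}e^{-a/3},\ \frac{2a}{\sqrt{3}\pi}e^{-a/3}\right],\qquad
\tft'(1/2) \in \left[\frac{(1-\epsilon)a}{\sqrt{3}\pi}e^{-a/12},\ \frac{a}{\sqrt{3}\pi}e^{-a/12}\right],
\]
the crucial point being that at $u=1/3$ and $u=1/6$ the tail terms in the series for $\tft'$ are of order $e^{-2a}$ smaller than the leading term.

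Next I would handle each coefficient. For $e^{a/3}a_{0,0}$, both products $\tfo(1)\tft(-1/2)$ and $\tfo(-1)\tft(1/2)$ are asymptotically $e^{-a/3}$ with multipliers $1$ and $2$ respectively, giving a clean $3/2(1 \pm 3\epsilon)$ using $(1+\epsilon)^2\le 1+3\epsilon$. For $e^{a/3}a_{0,1}$, applying the derivative equality \eqref{derivative equality} to rewrite $\tfo(-1)\tft'(1/2)=\tfo(1)\tft'(-1/2)$ lets one plug in the above table directly and produce $\frac{2(1\pm\epsilon)a}{\sqrt{3}\pi}$. For $e^{a/3}a_{1,0}$, the bracket $\tfo(1)\tft(-1/2)-\tfo(-1)\tft(1/2)$ to leading order cancels to $-e^{-a/3}$, and adding $a_{0,1}/2$ produces the claimed $-1/2+\frac{a}{\sqrt{3}\pi}$ form; positivity at $a=9.6$ follows because $9.6/(\sqrt{3}\pi)\approx 1.76 > 1/2$.

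For $a_{0,2}$ and $b_{0,0}$ the direct substitution is too lossy, so I would first simplify algebraically. A short calculation using \eqref{omega6aForm} gives
\[
a_{0,1}+a_{1,0}-a_{0,0}=\tfrac{3}{2}\,\tfo(1)\tft'(-1/2)-\tfo(-1)\tft(1/2),
\]
so that $a_{0,2}=\frac{4}{9}\bigl(\tfo(-1)\tft(-1)+\tfrac{3}{2}\tfo(1)\tft'(-1/2)-\tfo(-1)\tft(1/2)\bigr)$. The small term $\tfo(-1)\tft(-1)=O(e^{-a})$ multiplied by $e^{a/3}$ is $O(e^{-2a/3})\le\epsilon_2$ for $a\geq 9.6$ (this is precisely the role played by $\epsilon_2$); the middle term produces $\frac{\sqrt{3}a(1\pm\epsilon)}{\pi}$; the last term yields $-2$ (up to multiplicative $\epsilon$). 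Combining with $b_{0,0}=a_{0,0}+a_{0,2}/4$ gives the bound for $b_{0,0}$. Non-negativity for $a_{0,2}$ and $b_{0,0}$ again reduces to a one-variable check at $a=9.6$ since the lower bound is linear in $a$ with positive slope.

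The main obstacle is the careful bookkeeping of $\epsilon$- and $\epsilon_2$-errors so that the advertised $(1+\epsilon)$ rather than $(1+\epsilon)^2$ or $(1+3\epsilon)$ appears in the $a_{0,2}$ lower bound. The way to absorb this is to use that the multiplicative error in $\tfo(-1)=2e^{-a/4}(1+O(e^{-2a}))$ and $\tft(1/2)=e^{-a/12}(1+O(e^{-2a}))$ is of size $O(e^{-2a})\ll\epsilon$ at $a\geq 9.6$, so their product is $2e^{-a/3}(1+O(e^{-2a}))$, which is safely enclosed in $2(1+\epsilon)e^{-a/3}$ without any squaring of $(1+\epsilon)$. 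All remaining inequalities then reduce to elementary checks at $a=9.6$, of the same sort carried out throughout this appendix.
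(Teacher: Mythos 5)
Your proposal is correct and follows essentially the same route as the paper: substitute the appendix's two-sided $\theta$-estimates (Lemma~\ref{basicvalues}, \eqref{thetaderivin[-1/2,1/2]}, \eqref{basicvaluesapplication}) into the explicit coefficient formulas, isolate the tiny $\tfo(-1)\tft(-1)$ term to introduce $\epsilon_2$, use $b_{0,0}=a_{0,0}+a_{0,2}/4$ at the end, and check positivity of the linear-in-$a$ lower bounds at $a=9.6$. Your invocation of the derivative equality \eqref{derivative equality} to move between $\tfo(-1)\tft'(1/2)$ and $\tfo(1)\tft'(-1/2)$ is the same algebraic identity the paper uses implicitly, and your attention to the fact that the multiplicative tails in $\tfo(-1)$ and $\tft(1/2)$ are $O(e^{-2a})$ rather than $O(\epsilon)$ is a legitimate refinement of the bookkeeping that the paper's proof elides.
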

\begin{proof}
We begin by using lemmas \ref{basicvalues} and \ref{thetaderivin[-1/2,1/2]} to multiply our bounds on $\tfo(1),\tft'(-\frac12)$ to obtain our bound on $a_{0,1}$.

Next, using this bound, combined with Lemma \ref{basicvalues} and our definition 
\[
a_{1,0}:=\frac{\tfo(1)\tft(-1/2)-\tfo(-1)\tft(1/2)}{2}+\frac{a_{0,1}}{2},
\] we obtain the bounds for $a_{1,0}$. We also use the fact that $e^2<e$ so that $(1+\epsilon)^2<1+3\epsilon$. 

The bounds for 
\[a_{0,0}:=\frac{\tfo(1)\tft(-1/2)+\tfo(-1)\tft(1/2)}{2}
\]
follow in the same manner. For $a_{0,2}$, we next compute that 
\[
0 \leq \frac49 \tfo(-1)\tft(-1)\leq\frac49 e^{-a}4(1+\epsilon)^2\leq e^{-a/3}\epsilon_2.
\]
Now using the definition,
\[
a_{0,2}=\frac49\tfo(-1)(\tft(-1)-\tft(1/2)+\frac32 \tft'(1/2))
\]
 we obtain the bounds for $a_{0,2}$.

Finally, the $b_{0,0}$ bound follows immediately from previous bounds and the definition $b_{0,0}:=a_{0,0}+a_{0,2}/4$.
\end{proof}
We use the notation $a_{i,j}^{u}$ $a_{i,j}^{l}$ for the upper and lower bounds respectively, and we note that the bounds are linear in $a$ with positive slope, up to a factor of $e^{-a/3}$, and so the nonnegativity follows from simply checking when $a=9.6$.

\subsubsection{Satisfying Necessary Conditions for large $a$}
Recall we aim to show 
\begin{align}
\frac{\partial (\tf-\tg)}{\partial t_1}(-1,-1)> 0\\
\frac{\partial (\tf-\tg)}{\partial t_1}(-1,1/2) > 0\\
\frac{\partial (\tf-\tg)}{\partial t_1}(1,-1/2)< 0.
\end{align}
\begin{proof}
For the first condition, using Lemma \ref{abounds} and the absolute monotonicity of $\tth$, we have:
\[
    e^{a/3}\left(\frac{\partial (\tf-\tg)}{\partial t_1}(-1,-1)\right)\geq 
     -e^{a/3}\left(\frac{\partial \tg}{\partial t_1}(-1,-1)\right)
    \geq e^{a/3}a_{0,2}^{l}-e^{a/3}a^{u}_{1,0}> 0,
\]
and this last inequality is easy to check since $e^{a/3}a_{0,2}^{l}-e^{a/3}a^{u}_{1,0}$ is linear in $a$. \\

Now incorporating lemmas \ref{basicderivs} and \ref{thetaderivpm1},
\begin{align}
e^{a/3}\left( \frac{\partial (\tf-\tg)}{\partial t_1}(-1,1/2)\right) &\geq
e^{a/3}\frac{a(a-2)}{2\pi^2}e^{-a/4}e^{-a/12}-(e^{a/3}a^{u}_{1,0}+\frac{e^{a/3}}{2} a^{u}_{0,2})\\
&=\frac{a(a-2)}{2\pi^2}-(e^{a/3}a^{u}_{1,0}+\frac{e^{a/3}}{2} a^{u}_{0,2})> 0,\\
\end{align}
because this last quantity is quadratic in $a$ and convex, so it suffices to check it is positive with positive slope at $a=9.6$,
Finally,
\begin{align}
e^{a/3}\left( \frac{\partial (\tf-\tg)}{\partial t_1}(1,-1/2)\right) &\leq
e^{a/3}\frac{a}{2\pi^2}e^{-a/3}(1+\epsilon)-(e^{a/3}a^{l}_{1,0}-\frac{e^{a/3}}{2} a^{u}_{0,2})\\
&=\frac{(1+\epsilon)a}{2\pi^2}-(e^{a/3}a^{l}_{1,0}-\frac{e^{a/3}}{2} a^{u}_{0,2})< 0\\
\end{align}
and this last quantity is linear in $a$, so again the final check is straightforward.
\end{proof}

\subsubsection{Bounds for proofs of Lemmas~\ref{linearization1} and \ref{linearization2}}
\label{proofsoflinearization}
It remains to show that 
$\tf_T\geq \tg_{c,d}^*$
for various $c,d$ and 
various line segments in the critical region $[-1,1]\times [-1/2,1/2]$, where $\tg_{c,d}^*$ is an upper bound for $\tg_{c,d}$ obtained by replacing $a_{i,j}$'s with upper and lower bounds from Lemma \ref{abounds}. \\

In particular, when $c<0$ and $0<d$,   we define 
\begin{align}
\tg_{c,d}^*(t_1,t_2) &:= b^{u}_{0,0}+a_{1,0}^{l} t_1 a_{0,1}^{u}t_2+a_{0,2}^{u} t_2^2+ a_{0,2}^{l}(d t_1 +c t_2 -c d),
\end{align}
and then $\tg_{c,d}(t_1,t_2)\le \tg_{c,d}^*(t_1,t_2)$ if $c<t_1<0$ and $0<t_2<d$. 
For $c<0$ and $d\leq 0$, we define
\begin{align}
\tg_{c,d}^*(t_1,t_2) &:= b^{u}_{0,0}+a_{1,0}^{l} t_1 a_{0,1}^{l}t_2+a_{0,2}^{u} t_2^2+ a_{0,2}^{u}(d t_1 +c t_2 -c d),
\end{align}
which gives $\tg_{c,d}(t_1,t_2)\le \tg_{c,d}^*(t_1,t_2)$ if $c<t_1<0$ and $-1\le t_2<d\le 0$. 
To complete the proof of Lemmas~\ref{linearization1} and \ref{linearization2} we show inequalities \eqref{partialaCond}  hold with
\begin{enumerate}
    \item $\tg_{-1,1/2}^*$ on the segments $\{(-\sqrt{2}/2,t_2)\mid t_2\in[\frac14,\frac12]\} \cup \{(t_1,\frac14)\mid t_1\in[-1,-\sqrt{2}/2]\}$
    \item $\tg_{-1,1/4}^*$ on the segments  $\{(-\sqrt{2}/2,t_2)\mid t_2\in[0,\frac14]\} \cup \{(t_1,0)\mid t_1\in[-1,-\sqrt{2}/2]\}$
    \item $\tg_{-\sqrt{2}/2,0}^*$ on the segments  $\{(-\sqrt{2}/2,t_2)\mid t_2\in[-.1,0]\} \cup
    \{(0,t_2)\mid t_2\in[-.1,0]\}
    \cup
    \{(t_1,-.1) \mid t_1\in[-\sqrt{2}/2,0]\}$
    \item $\tg_{-\sqrt{2}/2,-.1}^*$ on the segments  $\{(-\sqrt{2}/2,t_2)\mid t_2\in[-.2,-.1]\} \cup
    \{(0,t_2)\mid t_2\in[-.2,-.1]\}
    \cup
    \{(t_1,-.2)\mid t_1\in[-\sqrt{2}/2,0]\}$,
\end{enumerate}
in \cite{Mathematica} with the same procedure as used in Section \ref{largea4Pt}.

\subsubsection{Computations for proof of Lemma \ref{t1deriv}}
\label{proofoft1deriv}
Recall it remains to show $\frac{\partial (\tf-\tg)}{\partial t_1}(-\sqrt{2}/2,0)\geq 0$. By Lemma \ref{basicderivs}, \eqref{thetaderivin[-1/2,1/2]}, \ref{basicvalues} and our coefficient estimates, we have
\begin{align}
    \frac{\partial (\tf-\tg)}{\partial t_1}(-\sqrt{2}/2,0)&= \tfo'(-\sqrt{2}/2)\tft(0)-a_{1,0}\\
    &\geq 
    \frac{(ae^{-a(3/8)^2}(3/8-5/8e^{-a/4}))}{\pi (1/\sqrt{2})}e^{-3a/16}-e^{-a/3}(-1/2+\frac32\epsilon+\frac{(1+\epsilon)a}{\sqrt{3\pi}})\\
    &=e^{-a/3}\left[\frac{\sqrt{2}a e^{a/192}(3-5e^{-a/4})}{8\pi}-(-1/2+\frac32 \epsilon+\frac{(1+\epsilon)a}{\sqrt{3\pi}})\right].
\end{align}
We claim that 
$$\frac{\sqrt{2}ae^{a/192}(3-5e^{-a/4})}{8\pi}-(-1/2+\frac32 \epsilon+\frac{(1+\epsilon)a}{\sqrt{3\pi}})$$
is positive for $a=9.6$ and increasing in $a$ for $a\geq 9.6$. The first of these conditions is a simple check. For the latter,
\begin{align}
\frac{d}{da}&\left[    \frac{\sqrt{2}ae^{a/192}(3-5e^{-a/4})}{8\pi}-(-1/2+2\epsilon+\frac{(1+\epsilon)a}{\sqrt{3\pi}})\right]
\\
&=\frac{ e^{-47 a/
  192} (-960 + 235 a + 576 e^{a/4} + 3 a e^{a/4})}{768\sqrt{2} \pi}-\frac{(1+\epsilon)}{\sqrt{3}\pi}\\
  &> \frac{ e^{-47 a/
  192} (576 e^{a/4} + 3 a e^{a/4})}{768\sqrt{2} \pi}-\frac{(1+\epsilon)}{\sqrt{3}\pi}\\
  &=\frac{ (576 e^{a/192} + 3 a e^{a/192})}{768\sqrt{2} \pi}-\frac{(1+\epsilon)}{\sqrt{3}\pi} >0
\end{align}
where this last quantity is greater than $0$ because it's true for $a=9.6$ and clearly increasing in $a$.

\subsubsection{Positivity of $L_1$ for Proposition \ref{gradientcomp}
\label{proofofgradientcomp}}
Recall to prove Lemma \ref{gradientcomp}, it suffices to show that 
\[
L_1(t_1,t_2):=\frac{\tft'\tfo}{\tfo'\tft}-\frac{\frac{\partial \tg}{\partial t_2}}{\frac{\partial \tg}{\partial t_1}}> 0 
\]
on $[0,1]\times [-1/2,0]$. 
We first bound $\frac{\tfo\tft'}{\tfo'\tft}$below. 
Using the Lemma \ref{basicvalues}, $\tfo\geq e^{-ax^2}$. Since $\lvert t_2\rvert \leq  \frac12$, by \eqref{basicvaluesapplication} it follows that
\begin{align}
    \tft < e^{-3a u^2}(1+\epsilon). 
\end{align}
With Lemma \ref{basicderivs}, $\tfo'\leq \frac{a x e^{-ax^2}}{\pi\sin(2\pi x)}$. Finally, with $u\in[1/4,1/3]$, we have
by \eqref{thetaderivin[-1/2,1/2]},
\begin{align}
\tft' \geq 
\frac{3a u e^{-3au^2}(1-\epsilon)}{\pi \sin(2\pi u)}.
\end{align}
Combining these bounds, we obtain:
\begin{align}
    \frac{\tfo\tft'}{\tfo'\tft}\geq \frac{e^{-3a x^2}3a e^{-3au^2}u (1-\epsilon)}{\pi \sin(2\pi u)}\cdot \frac{\pi\sin(2\pi x)}{a x e^{-ax^2}e^{-3a u^2}(1+\epsilon)}=\frac{3u(1-\epsilon)\sin(2\pi x)}{(1+\epsilon)x\sin(2\pi u)}.
\end{align}
Next, a couple of observations:
\begin{lemma}
\label{sinfacts}
Recall $x=\frac{\arccos(t_1)}{2\pi}$ and $u=\frac{\arccos(t_2)}{2\pi}$. The function $t_1\rightarrow \frac{2\pi\sqrt{1-t_1^2}}{\arccos(t_1)}=\frac{\sin(2\pi x)}{x}$ is concave for $t_1\in (-1,1)$. Also, $t_2 \rightarrow \frac{u}{\sin(2\pi u)}$ is decreasing for $t_2 \in(-1,1)$. 
\end{lemma}
\begin{proof}
Let $\phi(t)=\frac{\sqrt{1-t^2}}{\arccos{t}}$. 
Then
\[
\phi''(t)=-\frac{(-2 + 2 t^2 + t \sqrt{1 - t^2}\arccos(t) + \arccos(t)^2)}{((1 - t^2)^{
  3/2}\arccos(t)^3)}.
\]
Since the denominator is positive, it suffices to show positivity of the numerator for $t\in[-1,1]$. Equivalently, letting $y=\arccos[t]$, $y\in[0,\pi]$, we'll show positivity of 
\[
N_1(y):=-2 \sin(y)^2 + \cos(y)\sin(y)y + y^2.
\]
Now $N_1(0)=0$, $N_1'(0)=0$
and $N_1''(y)=4 \sin(y) (-y \cos(y) + \sin(y))\geq0$ since 
$1\geq y\cot(y)$ for $y\in(0,\pi]$ as $y\leq \tan(y)$ for $y\in(0,\pi/2)$ and $y\cot(y)\leq 0$ for $y\in (\pi/2,\pi)$. Now for the second part of the lemma, it suffices to show $\frac{u}{\sin(2\pi u)}$ is increasing in $u$ for $u\in [0,1/2]$.
We have
\[
\left(\frac{u}{\sin( u)}\right)'=(1 - u \cot( u))\csc( u)\geq 0 
\]
since $1\geq y \cot( y)$ for $y\in (0,\pi)$ as shown above.
\end{proof}
Returning to the proof, we have 
\begin{align}
L_1&\geq \frac{3u(1-\epsilon)\sin(2\pi x)}{(1+\epsilon)x\sin(2\pi u)}-\frac{\frac{\partial \tg}{\partial t_2}}{\frac{\partial \tg}{\partial t_1}}\\
&=\frac{3u(1-\epsilon)\sin(2\pi x)}{(1+\epsilon)x\sin(2\pi u)}-\frac{a_{0,1}+a_{0,2}(t_1+2t_2)}{a_{1,0}+a_{0,2}t_2}\\ 
\label{L1bound}
&=\frac{3u(1-\epsilon)\sin(2\pi x)}{(1+\epsilon)x\sin(2\pi u)}-\left(2+\frac{\tfo(-1)\tft(1/2)-\tfo(1)\tft(-1/2)+a_{0,2}t_1}{{a_{1,0}+a_{0,2}t_2}}\right).
\end{align}
By Lemma \ref{sinfacts} and the linearity of $\frac{\frac{\partial \tg}{\partial t_2}}{\frac{\partial \tg}{\partial t_1}}$ in $t_1$ for fixed $t_1$, if $L_1(0,t_2), L_1(1,t_2)\geq 0$, then $L_1(t_1,t_2)\geq 0 $ for all $t_1\in[-1,1]$. We'll also be using the bound developed in the proof of bounding $a_{1,0}$ in Lemma \ref{abounds} that 
\[
e^{-a/3}(1-3\epsilon) \leq \tfo(-1)\tft(1/2)-\tfo(1)\tft(-1/2)\leq e^{-a/3}(1+6\epsilon)
\]
\\
If $t_1=1$, then for $t_2\in[-1/2,0]$, we obtain
\begin{align}
\frac{\tfo(-1)\tft(1/2)-\tfo(1)\tft(-1/2)+a_{0,2}t_1}{a_{1,0}+a_{0,2}t_2}
&=
\frac{\tfo(-1)\tft(1/2)-\tfo(1)\tft(-1/2)+a_{0,2}}{a_{1,0}+a_{0,2}t_2}\\
&\leq \frac{1+6\epsilon +e^{a/3}a^{u}_{0,2}}{e^{a/3}(a^{l}_{1,0}+t_2a^{u}_{0,2})}
\end{align} 
Technically, this lower bound requires that $e^{a/3}(a^{l}_{1,0}+t_2a^{u}_{0,2})>0$ for all $t_2\in [-\frac12,0]$, but this quantity is linear in $a$ so it's easy to check.   
Thus, for $t_2\in[-1/2,0]$,
\[
L_1\geq \frac{3u(1-\epsilon)2\pi}{(1+\epsilon)\sin(2\pi u)}-2-
\frac{1+6\epsilon +e^{a/3}a^{u}_{0,2}}{e^{a/3}(a^{l}_{1,0}+t_2a^{u}_{0,2})}
\]
and
we claim $\displaystyle{\frac{1+6\epsilon +e^{a/3}a^{u}_{0,2}}{e^{a/3}(a^{l}_{1,0}+t_2a^{u}_{0,2})}}$ is decreasing in $a$, from which it would follow that it suffices to check that the bound is at least 0 only when $a=9.6$ (as the other terms have no dependence on $a$).

To that end, note that as a function of 
$a$
\[
\frac{1+6\epsilon +e^{a/3}a^{u}_{0,2}}{e^{a/3}(a^{l}_{1,0}+t_2a^{u}_{0,2})}
\]
is rational (with numerator and denominator both linear) and so the sign of its derivative is independent of $a$. Thus, checking it is negative for all $a$ is simple. 


In summary, for any $a\geq 9.6$, we have $L_1(1,t_2)> 0$ when $t_2\in[-1/2,0]$ if 
\[
\frac{3u(1-\epsilon)2\pi}{(1+\epsilon)\sin(2\pi u)}-2-
\frac{1+6\epsilon +e^{a/3}a^{u}_{0,2}}{e^{a/3}(a^{l}_{1,0}+t_2 a^{u}_{0,2})}> 0
\]
holds for $a=9.6$.
The $x$ terms have disappeared as we took the limit $x\rightarrow0$. As we have shown this inequality is a difference of two increasing functions in $u$, this inequality is verified in \cite{Mathematica} using the same interval partition approach described in Section \ref{largea4Pt}\, which reduces our check to a finite number of point evaluations.\\

The case where $t_1=0$ is more simple. Here we again use \eqref{L1bound} with $t_1=0$ to obtain
\begin{align}
L(0,t_2)
&\geq\frac{12u(1-\epsilon)}{(1+\epsilon)\sin(2\pi u)}-\left(2+\frac{\tfo(-1)\tft(1/2)-\tfo(1)\tft(-1/2)}{a_{1,0}+a_{0,2}t_2}\right)\\
&\geq\frac{12u(1-\epsilon)}{(1+\epsilon)\sin(2\pi u)}-\left(2+\frac{1+6\epsilon}{e^{a/3}(a^{l}_{1,0}+a^{u}_{0,2}t_2)}\right)
\end{align}
and with the definitions of $a^l_{1,0}$ and $a^u_{0,2}$ this quantity is clearly increasing in $a$ and so it suffices to check when $a=9.6$ (the denominator is positive and increasing in $a$). Again, this inequality is handled in Mathematica with finitely many point evaluations as we have a difference of increasing functions in $u$. \\

\subsubsection{Log Derivative Estimates for the proof of Lemma \ref{logderivprop}}
\label{proofoflogderivprop}
It remains to show:
\begin{enumerate}
    \item $N(-1/2)<0$
    \item $L_2(0,-\frac15)> 0$
    \item $L_2(-\frac{\sqrt{2}}{2},0)>0$
\end{enumerate}

First, 
\begin{align*}
N(-1/2)&=b_{0,0}a_{0,2}-a_{0,1}a_{1,0}+\frac{a_{0,2}}{2}(2a_{1,0}-\frac12a_{0,2})\\
&=a_{0,2}(a_{0,0}+a_{1,0})-a_{0,1}a_{1,0}\\
&\leq a^{u}_{0,2}(a^{u}_{0,0}+a^{u}_{1,0})-a^{l}_{0,1}a^{l}_{1,0}\\
\end{align*}
Now this last quantity (up to a factor of $e^{2a/3}$) is quadratic in $a$, concave down, along with negative and decreasing for $a$, as shown in the notebook.\\

Next, we show $L_2(t_1,t_2):= \frac{\tfo(t_1)}{\tfo'(t_1)}-t_1-\frac{b_{0,0}+a_{0,1}t_2+a_{0,2}t_2^2}{a_{1,0}+a_{0,2}t_2} \geq 0$ at $(0,-1/5)$, or equivalently that 
\[
e^{a/3+a/16}\left[\tfo(t_1)(a_{1,0}+a_{0,2}(-1/5)) - \tfo'(t_1)(b_{0,0}+a_{0,1}(-1/5)+a_{0,2}(-1/5)^2) \right]\geq 0
\]
Using lemmas \ref{basicderivs} and \ref{basicvalues}, we have
\begin{align}
e^{a/3+a/16}\left[\tfo(t_1)(a_{1,0}+a_{0,2}(-1/5))- \tfo'(t_1)(b_{0,0}+a_{0,1}(-1/5)+a_{0,2}(-1/5)^2)\right]\geq \\
e^{a/3+a/16} \left[ e^{-a/16}(a^{l}_{1,0}+a^{u}_{0,2}(-1/5)) - a e^{-a/16}/(4\pi) (b^{u}_{0,0}+a^{l}_{0,1}(-1/5)+a^{u}_{0,2}(-1/5)^2) \right]\\
=
 e^{a/3}(a^{l}_{1,0}+a^{u}_{0,2}(-1/5)) - a/(4\pi) e^{a/3}(b^{u}_{0,0}+a^{l}_{0,1}(-1/5)+a^{u}_{0,2}(-1/5)^2) \geq 0,
\end{align}
and this last inequality is easy to show because the lower bound is quadratic and convex in $a$ with positive value and derivative, at $a=9.6$.

The case where $t_1=-\sqrt{2}/2,t_2=0$ is similar but requires a little more care. 
Unfortunately, the bounds 
\[
\tfo(t_1)\geq e^{-a x^2},  \hspace{.5cm}  \tfo'(t_1)\leq a x e^{-ax^2}/(\pi \sin(2\pi x)
\]
are too coarse to work for all $a\geq 9.6$. Instead, we must truncate one fewer term to get our lower bound for $\tfo$ and use two more terms for our upper bound of $\tfo'$ (see the proof of Lemma \ref{basicderivs}), to obtain 
\begin{align}
\frac{\tfo(t_1)}{\tfo'(t_1)}&\geq 
( e^{-ax^2}+e^{-a(x-1)^2} )
\frac{\pi \sin(2\pi x)}
{ a\left(xe^{-ax^2}+ (x-1)e^{-ax^2}+(x+1)e^{-a(x+1)^2}\right) }\\
&=\frac{\pi \sin(2\pi x)}{a} \frac{1+e^{-a(1-2x)}}{x-(1-x)e^{-a(1-2x)}+(x+1)e^{-a(1+2x)}}.
\end{align}
So for $t_1=-\sqrt{2}/2$, we obtain
\begin{align}
\frac{\tfo(-\sqrt{2}/2)}{\tfo'(-\sqrt{2}/2)}&\geq 
\frac{\pi \sqrt{2}}{2a} \frac{1+e^{-a/4}}{\frac38-\frac58 e^{-a/4}+\frac{11}{8} e^{-7a/4}}\\
&= \frac{4\pi \sqrt{2}}{a}
\frac{1+e^{-a/4}}{3-e^{-a/4}(5-11 e^{-3a/2})}\\
&\geq \frac{4\pi \sqrt{2}}{a}
\frac{1+e^{-a/4}}{3-(5-\epsilon)e^{-a/4}}.
\end{align}
Thus, 
\[
L_2(-\sqrt{2}/2,0)\geq \frac{4\pi \sqrt{2}}{a}
\frac{1+e^{-a/4}}{3-(5-\epsilon)e^{-a/4}}+\sqrt{2}/2-\frac{e^{a/3}b_{0,0}^{u}}{e^{a/3}a_{1,0}^{l}}
\]
which is nonnegative if and only if
\[
4\pi \sqrt{2}(e^{a/3}a^{l}_{1,0})\frac{1+e^{-a/4}}{3-(5-\epsilon)e^{-a/4}}  +\sqrt{2}/2 a (e^{a/3}a^{l}_{1,0})-a(e^{a/3}b^{u}_{0,0}) \geq 0
\]

Now if $9.6 \leq  a\leq c$, then we have the inequality
\begin{align}
\label{L2 inequality}
4\pi \sqrt{2}(e^{a/3}a^{l}_{1,0})\frac{1+e^{-c/4}}{3-(5-\epsilon)e^{-c/4}}   \leq 
4\pi \sqrt{2}(e^{a/3}a^{l}_{1,0})\frac{1+e^{-a/4}}{3-(5-\epsilon)e^{-a/4}}  
\end{align}
since 
\[
\frac{1+e^{-c/4}}{3-(5-\epsilon)e^{-c/4}}
\]
is decreasing in $c$ for $c\geq 9.6$. So it remains to show in the accompanying Mathematica document that for every $a\in \R$, there is some choice of $c\geq a$ such that
\[
L_3(a,c):= 4\pi \sqrt{2}(e^{a/3}a^{l}_{1,0})\frac{1+e^{-c/4}}{3-(5-\epsilon)e^{-c/4}}+\sqrt{2}/2 a (e^{a/3}a^{l}_{1,0})-a(e^{a/3}b^{u}_{0,0})> 0 
\]
In particular, we do so by showing that for each $i=1,\dots,6$ and the sequence $a_0=9.6,9.8,10,10.2,11,12,\infty=a_6$, we have $L_3(a,a_i)\geq 0$ for $a\geq a_{i-1}$. 
Each of these checks is easy since $L_3(a,c)$ is quadratic in $a$ for fixed $c$. 
As with the $a\leq \pi^2$ case, all of these checks and algebraic simplifications are verified in \cite{Mathematica}.


\section*{Acknowledgments}
The  authors thank Henry Cohn, Denali Relles, Larry Rolen, Ed Saff, and Yujian Su for helpful discussions at various stages of this project and also thank the anonymous reviewers for their careful reading and suggestions.
\nocite{*}

\bibliographystyle{amsplain}
\bibliography{A2citations.bib}

\end{document}